\documentclass[12pt]{article}
\usepackage[T1]{fontenc}
\usepackage{amsmath}
\usepackage{amssymb}
\usepackage{amsthm}
\usepackage{tikz}

\textwidth 16cm
\oddsidemargin 0cm
\evensidemargin 0cm

\newtheorem{theorem}{Theorem}[section]
\newtheorem{definition}[theorem]{Definition}
\newtheorem{lemma}[theorem]{Lemma}

\newtheorem{proposition}[theorem]{Proposition}
\newtheorem{corollary}[theorem]{Corollary}
\newtheorem{sublemma}[theorem]{Sublemma}

\def\fl#1{\smash{\mathop{\hbox to 12mm{ \rightarrowfill\ }}\limits^{\textstyle #1}}}

\newcommand{\lhp}{\textrm{\raisebox{-0.5ex}{\huge (}}}
\newcommand{\lbp}{\textrm{\raisebox{-0.2ex}{\Large (}}}
\newcommand{\rhp}{\textrm{\raisebox{-0.5ex}{\huge )}}}
\newcommand{\rbp}{\textrm{\raisebox{-0.2ex}{\Large )}}}

\newcommand{\mi}{{\underline{i}}}
\newcommand{\mk}{{\underline{k}}}
\newcommand{\ml}{{\underline{\ell}}}
\newcommand{\mj}{{\underline{j}}}
\newcommand{\me}{{\underline{\varepsilon}}}
\newcommand{\meta}{{\underline{\eta}}}
\newcommand{\mnu}{{\underline{\nu}}}
\newcommand{\mtau}{{\underline{\tau}}}
\newcommand{\mo}{{\underline{0}}}

\newcommand{\cs}[1]{\textrm{{\Large $\sharp$}}_{#1}}

\newcommand{\proofof}[1]{\noindent{\textit{Proof of #1.}}}
\newcommand{\fin}{\hfill$\square$\bigskip}

\title{Finite type invariants of rational homology 3-spheres}
\author{Delphine Moussard}
\date{ }

\begin{document}

\maketitle

\begin{abstract}
We consider the rational vector space generated by all rational homology spheres up to orientation-preserving homeomorphism, 
and the filtration defined on this space by Lagrangian-preserving rational homology handlebody replacements. We identify the graded space 
associated with this filtration with a graded space of augmented Jacobi diagrams.

\ \\
MSC 2010: 57M27 57N10 57N65

\ \\
Keywords: homology sphere; homology handlebody; Lagrangian-preserving surgery; borromean surgery; finite type invariant; Jacobi diagram. 
\end{abstract}

\tableofcontents

    \section{Introduction}

    \subsection{Finite type invariants}

The greatest achievements in the theories of finite type invariants are theorems that express the graded spaces 
associated with topological filtrations of vector spaces generated by knots or manifolds as combinatorial vector spaces
generated by Feynman diagrams. The two main examples of these theorems, that are useful to classify invariants 
and to evaluate their power, concern the Vassiliev filtration of the 
space generated by the knots in $S^3$, and the Goussarov-Habiro filtration of the space generated by the integral
homology 3-spheres ($\mathbb{Z}$HS), that are oriented compact 3-manifolds with the same integral homology as $S^3$. 
The graded space associated with the Vassiliev filtration was identified with a space of Jacobi diagrams by an isomorphism 
induced by the Kontsevich integral (see \cite{Kon} and the Bar-Natan article \cite{BN}). Several filtrations of the space generated 
by the $\mathbb{Z}$HS's were defined. In \cite{GGP}, Garoufalidis, Goussarov and Polyak compared various 
filtrations, and defined a surjective map from a graded space of Jacobi diagrams to the graded space associated 
with the Goussarov-Habiro filtration. In \cite{Le}, Le proved that this map is an isomorphism by showing that the LMO invariant
that he constructed in \cite{LMO} with the help of Murakami and Ohtsuki is a universal finite type invariant of $\mathbb{Z}$HS's.
In \cite{AL}, Auclair and Lescop defined the Goussarov-Habiro filtration and the properties of the graded space,
algebraically, using Lagrangian-preserving integral homology handlebody replacements.

In this article, we will consider the rational vector 
space generated by all the rational homology spheres ($\mathbb{Q}$HS), 
that are the oriented compact 3-manifolds with the same rational homology as $S^3$. 
We will define a filtration on this space by means of LP-surgeries, that are Lagrangian-preserving rational homology handlebody replacements.
Our main result (Theorem \ref{thprinc}) identifies the graded space associated with this filtration with a graded space of diagrams.
The role of the LMO invariant in the integral case will be held here by the KKT invariant of rational homology spheres
constructed by Kontsevich, and proved to be a universal finite type invariant of $\mathbb{Z}$HS's by Kuperberg and Thurston in \cite{KT}. 
Lescop has proved in \cite{Les} that the KKT invariant $Z_{KKT}=(Z_{n,KKT})_{n\in\mathbb{N}}$ satisfies a universality property with respect 
to LP-surgeries. Massuyeau has proved in \cite{Mas} that the LMO invariant $Z_{LMO}=(Z_{n,LMO})_{n\in\mathbb{N}}$ satisfies the same property. 
As we prove at the end of Section \ref{secfil}, these results and our main theorem imply that $Z_{LMO}$ and $Z_{KKT}$ are equivalent 
in the following sense:
\begin{theorem} \label{th+}
 Let $M$ and $N$ be $\mathbb{Q}$HS's such that $|H_1(M;\mathbb{Z})|=|H_1(N;\mathbb{Z})|$, where $|.|$ denotes the cardinality. 
Then, for any $n\in\mathbb{N}$:
$$\lbp Z_{k,LMO}(M)=Z_{k,LMO}(N)\textrm{ for all } k\leq n \rbp \Leftrightarrow \lbp Z_{k,KKT}(M)=Z_{k,KKT}(N)\textrm{ for all } k\leq n \rbp .$$
\end{theorem}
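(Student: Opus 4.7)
My plan is to deduce the equivalence directly from the universality of the two invariants with respect to the LP-filtration, combined with Theorem~\ref{thprinc}. Let $V$ denote the rational vector space generated by $\mathbb{Q}$HS's, and $(\mathcal{F}_n)_{n\geq 0}$ the LP-filtration on $V$ defined in Section~\ref{secfil}. Since an LP-surgery preserves $|H_1(\cdot;\mathbb{Z})|$, the hypothesis that $M$ and $N$ share this cardinality is precisely what is needed to have $[M]-[N] \in \mathcal{F}_1$; without it, neither side of the stated equivalence could reach beyond degree $0$.

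The strategy is to reformulate each side of the equivalence as a single membership condition $[M]-[N] \in \mathcal{F}_{n+1}$. By Lescop's result in \cite{Les}, $Z_{k,KKT}$ is a finite type invariant of degree $k$ for the LP-filtration, and its degree-$k$ part realises, via Theorem~\ref{thprinc}, the identification of $\mathcal{F}_k/\mathcal{F}_{k+1}$ with the corresponding space of augmented Jacobi diagrams. By Massuyeau's result in \cite{Mas}, the analogous statement holds for $Z_{k,LMO}$. In particular, for each of the two invariants and each $k$, the degree-$k$ part vanishes on $\mathcal{F}_{k+1}$ and induces an injection on $\mathcal{F}_k/\mathcal{F}_{k+1}$.

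Granted this, the argument becomes a short induction on $k$. Suppose $Z_{k,LMO}(M) = Z_{k,LMO}(N)$ for all $k \leq n$. Starting from $[M]-[N] \in \mathcal{F}_1$, assume inductively that $[M]-[N] \in \mathcal{F}_k$ for some $1 \leq k \leq n$; its class modulo $\mathcal{F}_{k+1}$ is detected injectively by the degree-$k$ part of $Z_{LMO}$, which vanishes by hypothesis, so $[M]-[N] \in \mathcal{F}_{k+1}$. After $n$ steps we obtain $[M]-[N] \in \mathcal{F}_{n+1}$, and hence $Z_{k,KKT}(M) = Z_{k,KKT}(N)$ for every $k\leq n$, since $Z_{k,KKT}$ vanishes on $\mathcal{F}_{k+1} \supseteq \mathcal{F}_{n+1}$. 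The reverse implication follows by swapping the roles of the two invariants.

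The main obstacle I anticipate is verifying that the universality statements of \cite{Les} and \cite{Mas} really yield the \emph{injectivity} of the induced maps on the graded pieces $\mathcal{F}_k/\mathcal{F}_{k+1}$ in the present rational setting, and not only a weaker compatibility statement. This in turn rests on a careful handling of the $|H_1|$-decomposition of the filtration: the graded pieces split according to the cardinality of $H_1$, and the inductive step must be carried out inside the summand corresponding to the common value $|H_1(M;\mathbb{Z})| = |H_1(N;\mathbb{Z})|$, where Theorem~\ref{thprinc} furnishes the bijection with diagrams on which the above argument relies.
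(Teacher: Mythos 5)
Your overall strategy --- reformulating both sides as the single condition that $M-N$ lies deep in the LP-filtration --- is exactly the paper's (Lemma \ref{lemma+} plus the observation that the same lemma holds with $Z_{LMO}$ in place of $Z_{KKT}$, by \cite{Mas}). But the step you rely on to get there contains a genuine error, and the obstacle you flag at the end is real but your proposed fix does not resolve it.

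The false claim is that ``for each of the two invariants and each $k$, the degree-$k$ part vanishes on $\mathcal{F}_{k+1}$ and induces an injection on $\mathcal{F}_k/\mathcal{F}_{k+1}$.'' First, a bookkeeping point: $Z_k$ is a finite type invariant of degree $2k$ for the LP-filtration (Lemma \ref{lemmainvadd}), so the relevant membership condition is $M-N\in\mathcal{F}_{2n+1}$, not $\mathcal{F}_{n+1}$. More seriously, no single $Z_k$ (nor the collection $Z_j$, $j\le k$) is injective on $\mathcal{G}_{2k}$. By Theorem \ref{thprinc}, $\mathcal{G}_m\cong\mathcal{A}_m^{aug}$, and this space contains, besides connected Jacobi diagrams, (a) diagrams with weighted vertices, detected only by the invariants $\nu_p$, and (b) disconnected augmented diagrams, whose dual classes $G_{m,\iota}^{(m)}$ for $\iota\in\mathcal{T}_m^\pi$ are detected only by \emph{products} of lower-degree additive invariants (Proposition \ref{propinduction}); the additive invariant $p^c\circ Z_{k}$ kills all of these by construction. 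Your closing paragraph addresses only (a): restricting to the summand with fixed $|H_1|$ disposes of the weighted vertices, but does nothing for (b). For example, the class of $M-N$ in $\mathcal{G}_2$ can have a nonzero component along $(M_p-S^3)\sharp(M_q-S^3)$, which is invisible to $Z_1$ and to every $\nu_r$ taken singly.

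What actually closes the gap --- and what the paper's Lemma \ref{lemma+} does --- is Proposition \ref{propinduction}: the full family $(\lambda_{k,i})_{i\in\mathcal{T}_k}$, consisting of the $\nu_p$, the $Z$-derived additive invariants, \emph{and all their products}, is a basis of $\frac{\mathcal{I}_{2n}}{\mathcal{I}_0}$ dual to a basis of $\frac{\mathcal{F}_1}{\mathcal{F}_{2n+1}}$. One then uses that $M-N$ is a difference of two genuine manifolds, so that $\lambda\mu(M-N)=\lambda(M)\mu(M)-\lambda(N)\mu(N)$ vanishes as soon as the factors agree on $M$ and $N$; together with $|H_1(M)|=|H_1(N)|$ (which forces $\nu_p(M)=\nu_p(N)$ for all $p$) and the hypothesis on the $Z_{k}$, this kills every $\lambda_{k,i}(M-N)$ and hence forces $M-N\in\mathcal{F}_{2n+1}$. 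Your induction on the filtration degree then goes through, but only with this enlarged detecting family, not with $Z$ alone.
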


    \subsection{The Goussarov-Habiro filtration} \label{subsecGH}

Throughout the article, the manifolds will be compact, connected, and oriented.
When it does not seem to cause confusion, we will use the same notation for a curve and its homology class.

The {\em standard Y-graph} is the graph $\Gamma_0\subset \mathbb{R}^2$ represented in Figure \ref{figY}. 
With $\Gamma_0$ is associated a regular neighborhood $\Sigma(\Gamma_0)$ of $\Gamma_0$ in the plane. 
\begin{figure}[htb] 
\begin{center}
\begin{tikzpicture} [scale=0.15]
\newcommand{\feuille}[1]{
\draw[rotate=#1,thick,color=gray] (0,-11) circle (5);
\draw[rotate=#1,thick,color=gray] (0,-11) circle (1);
\draw[rotate=#1,line width=8pt,color=white] (-2,-6.42) -- (2,-6.42);
\draw[rotate=#1,thick,color=gray] (2,-1.15) -- (2,-6.42);
\draw[rotate=#1,thick,color=gray] (-2,-1.15) -- (-2,-6.42);
\draw[rotate=#1] (0,0) -- (0,-8);
\draw[rotate=#1] (0,-11) circle (3);}
\feuille{0}
\feuille{120}
\feuille{-120}
\draw (-4,10) node{$\scriptstyle{leaf}$};
\draw[->] (-5,9) -- (-6.3,7.5);
\draw (11.5,-1) node{$\scriptstyle{internal\ vertex}$};
\draw[<-] (0.5,-0.1) -- (4,-1);
\draw (4.7,-9.2) node{$\Gamma_0$};
\draw[color=gray] (6.5,-16.8) node{$\Sigma(\Gamma_0)$};
\end{tikzpicture}
\end{center}
\caption{the standard Y-graph}\label{figY}
\end{figure}
Consider a 3-manifold $M$ and an embedding $h:\Sigma(\Gamma_0)\to M$. The image $\Gamma$ of $\Gamma_0$ is a {\em Y-graph}, and 
$\Sigma(\Gamma)=h(\Sigma(\Gamma_0))$ is the {\em associated surface} of $\Gamma$. The Y-graph $\Gamma$ is equipped with 
the framing induced by $\Sigma(\Gamma)$. The looped edges of a Y-graph are called \emph{leaves}. 
The vertex incident to three different edges is the {\em internal vertex}. 
\begin{figure}[htb] 
\begin{center}
\begin{tikzpicture} [scale=0.15]
\begin{scope}
\newcommand{\feuille}[1]{
\draw[rotate=#1] (0,0) -- (0,-8);
\draw[rotate=#1] (0,-11) circle (3);}
\feuille{0}
\feuille{120}
\feuille{-120}
\draw (3,-4) node{$\Gamma$};
\end{scope}
\draw[very thick,->] (21.5,-3) -- (23.5,-3);
\begin{scope}[xshift=1200]
\newcommand{\bras}[1]{
\draw[rotate=#1] (0,-1.5) circle (2.5);
\draw [rotate=#1,white,line width=8pt] (-0.95,-4) -- (0.95,-4);
\draw[rotate=#1] {(0,-11) circle (3) (1,-3.9) -- (1,-7.6)};
\draw[rotate=#1,white,line width=6pt] (-1,-5) -- (-1,-8.7);
\draw[rotate=#1] {(-1,-3.9) -- (-1,-8.7) (-1,-8.7) arc (-180:0:1)};}
\bras{0}
\draw [white,line width=6pt,rotate=120] (0,-1.5) circle (2.5);
\bras{120}
\draw [rotate=-120,white,line width=6pt] (-1.77,0.27) arc (135:190:2.5);
\draw [rotate=-120,white,line width=6pt] (1.77,0.27) arc (45:90:2.5);
\bras{-120}
\draw [white,line width=6pt] (-1.77,0.27) arc (135:190:2.5);
\draw [white,line width=6pt] (1.77,0.27) arc (45:90:2.5);
\draw (-1.77,0.27) arc (135:190:2.5);
\draw (1.77,0.27) arc (45:90:2.5);
\draw (3.5,-4.5) node{$L$};
\end{scope}
\end{tikzpicture}
\end{center}
\caption{Y-graph and associated surgery link}\label{figborro}
\end{figure}

Consider a Y-graph $\Gamma$ in a 3-manifold $M$. Associate with $\Gamma$ the six-component link $L$
represented in Figure \ref{figborro}. 
The \emph{borromean surgery on $\Gamma$} is the surgery along the framed link $L$. 
As proved by Matveev in \cite{Mat}, a borromean surgery can be realised by cutting a genus 3 handlebody 
(a regular neighborhood of the Y-graph) and regluing it another way. A \emph{Y-link} in a 3-manifold is a collection of disjoint Y-graphs.

Consider the rational vector space $\mathcal{F}_0^\mathbb{Z}$ generated by all $\mathbb{Z}$HS's up to orientation-preserving 
homeomorphism. Let $\mathcal{F}_n^\mathbb{Z}$ denote the subspace generated by all the 
$$[M;\Gamma]=\sum_{I\subset\{1,..,n\}}(-1)^{|I|}M(\cup_{i\in I}\Gamma_i),$$
where $M$ is a $\mathbb{Z}$HS, the $\Gamma_i$ are disjoint Y-graphs in $M$, $\Gamma=\cup_{i=1}^n\Gamma_i$, 
and $M(\cup_{i\in I}\Gamma_i)$ is the manifold obtained from $M$ by surgery on the $\Gamma_i$ for $i\in I$.
Here and in all the article, $|I|$ stands for the cardinality of the set $I$.
The associated quotients $\displaystyle \mathcal{G}_n^\mathbb{Z}=\frac{\mathcal{F}_n^\mathbb{Z}}{\mathcal{F}_{n+1}^\mathbb{Z}}$
can be described in terms of Jacobi diagrams.

\begin{figure}[htb] 
\begin{center}
\begin{tikzpicture} [scale=0.3]
\draw (0,4) -- (2,2);
\draw (2,2) -- (4,4);
\draw (2,2) -- (2,0);
\draw (5,2) node{+};
\draw (8,2) .. controls +(2,0) and +(2.5,-1) .. (6,4);
\draw[white,line width=6pt] (8,2) .. controls +(-2,0) and +(-2.5,-1) .. (10,4);
\draw (8,2) .. controls +(-2,0) and +(-2.5,-1) .. (10,4);
\draw (8,0) -- (8,2);
\draw (11,2) node{=};
\draw (12,2) node{0};
\draw (18,4) -- (20,3) -- (20,1) -- (18,0);
\draw (20,1) -- (22,0);
\draw (20,3) -- (22,4);
\draw (23,2) node{-};
\draw (24,4) -- (25,2) -- (27,2) -- (28,4);
\draw (24,0) -- (25,2);
\draw (27,2) -- (28,0);
\draw (29,2) node{+};
\draw (30,4) -- (33,2) -- (34,0);
\draw[white,line width=6pt] (31,2) -- (34,4);
\draw (30,0) -- (31,2) -- (34,4);
\draw (31,2) -- (33,2);
\draw (35,2) node{=};
\draw (36,2) node{0};
\end{tikzpicture}
\end{center}
\caption{AS and IHX relations} \label{figasihx}
\end{figure}
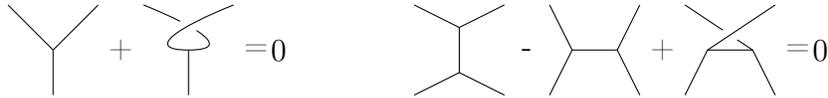
A \emph{Jacobi diagram} is a trivalent graph with oriented vertices. An orientation of a vertex of such a diagram is a cyclic order 
of the three half-edges that meet at this vertex. In the pictures, this orientation is induced by the cyclic order  
\raisebox{-1.5ex}{\begin{tikzpicture} [scale=0.2]
\newcommand{\tiers}[1]{
\draw[rotate=#1,color=white,line width=4pt] (0,0) -- (0,-2);
\draw[rotate=#1] (0,0) -- (0,-2);}
\draw (0,0) circle (1);
\draw[<-] (-0.05,1) -- (0.05,1);
\tiers{0}
\tiers{120}
\tiers{-120}
\end{tikzpicture}}. 
The \emph{degree} of a Jacobi diagram is half the number of its vertices. Note that it is an integer.
Let $\mathcal{A}_n$ denote the rational vector space generated by all degree $n$ Jacobi diagrams, quotiented out by the AS 
and IHX relations (Figure \ref{figasihx}). 
The space $\mathcal{A}_0$ is generated by the empty diagram.
Let $\mathcal{A}_n^c$ denote the subspace of $\mathcal{A}_n$ generated by the connected diagrams.

Let $\Gamma$ be a Jacobi diagram of degree $n$. Let $\varphi:\Gamma\hookrightarrow\mathbb{R}^3$ 
be an embedding such that the orthogonal projection on $\mathbb{R}^2\times\{0\}$ of $\varphi(\Gamma)$ is regular, 
and hence induces a framing of $\varphi(\Gamma)$.
\begin{figure}[htb] 
\begin{center}
\begin{tikzpicture} [scale=0.2]
\draw (-36,0) -- (-18,0);
\draw (-36,0) node{$\scriptscriptstyle{\bullet}$};
\draw (-18,0) node{$\scriptscriptstyle{\bullet}$};
\draw[->,line width=1.5pt,>=latex] (-10.5,0) -- (-7.5,0);
\draw (0,0) node{$\scriptscriptstyle{\bullet}$};
\draw (0,0) -- (6,0);
\draw (8,0) circle (2);
\draw[color=white,line width=6pt] (8,0) arc (-180:-90:2);
\draw (10,0) circle (2);
\draw[color=white,line width=6pt] (10,0) arc (0:90:2);
\draw (10,0) arc (0:90:2);
\draw (12,0) -- (18,0);
\draw (18,0) node{$\scriptscriptstyle{\bullet}$};
\end{tikzpicture}
\end{center}
\caption{Replacement of an edge} \label{figedge}
\end{figure}
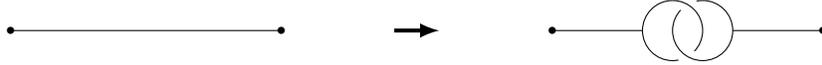 
\begin{figure}[thb] 
\begin{center}
\begin{tikzpicture} [scale=0.9]
\begin{scope} [scale=1.35]
\draw (0,2) -- (0,0) -- (2,0) -- (2,2) -- (0,2) -- (2,0);
\draw [color=white,line width=5pt] (0.5,0.5) -- (1.5,1.5);
\draw (0,0) -- (2,2);
\draw (0,0) node{$\scriptscriptstyle{\bullet}$};
\draw (0,2) node{$\scriptscriptstyle{\bullet}$};
\draw (2,0) node{$\scriptscriptstyle{\bullet}$};
\draw (2,2) node{$\scriptscriptstyle{\bullet}$};
\draw[->,line width=1.5pt,>=latex] (3.8,1) -- (4.3,1);
\end{scope}
\begin{scope} [xshift=220,scale=0.15]
\newcommand{\hopf}[2]{
\draw[rotate=#1,yshift=#2] (0,0) -- (6,0);
\draw[rotate=#1,yshift=#2] (8,0) circle (2);
\draw[rotate=#1,yshift=#2,color=white,line width=6pt] (8,0) arc (-180:-90:2);
\draw[rotate=#1,yshift=#2] (10,0) circle (2);
\draw[rotate=#1,yshift=#2,color=white,line width=6pt] (10,0) arc (0:90:2);
\draw[rotate=#1,yshift=#2] (10,0) arc (0:90:2);
\draw[rotate=#1,yshift=#2] (12,0) -- (18,0);}
\hopf{0}{0}
\hopf{90}{0}
\hopf{0}{18cm}
\hopf{90}{-18cm}
\newcommand{\hop}[1]{
\draw[rotate=#1] (0,0) -- (6,0);
\draw[rotate=#1] (8,0) circle (2);
\draw[rotate=#1,color=white,line width=6pt] (8,0) arc (-180:-90:2);
\draw[rotate=#1] (10,0) circle (2);
\draw[rotate=#1,color=white,line width=6pt] (10,0) arc (0:90:2);
\draw[rotate=#1] (10,0) arc (0:90:2);}
\hop{40}
\begin{scope} [yshift=18cm]
\hop{-40}
\end{scope}
\draw (9.2,10.3) -- (18,0);
\draw[color=white,line width=5pt] (9.55,8.35) -- (16.9,16.7);
\draw (9.2,7.7) -- (18,18);
\draw (0,0) node{$\scriptscriptstyle{\bullet}$};
\draw (0,18) node{$\scriptscriptstyle{\bullet}$};
\draw (18,0) node{$\scriptscriptstyle{\bullet}$};
\draw (18,18) node{$\scriptscriptstyle{\bullet}$};
\end{scope}
\end{tikzpicture}
\end{center}
\caption{Jacobi diagram and associated Y-link}
\end{figure}
Now associate a Y-link $\tilde{\Gamma}$ in $S^3$ with $\Gamma$ by replacing all edges of $\varphi(\Gamma)$ as indicated in Figure 
\ref{figedge}. 
\begin{lemma}[GGP, Corollary 4.2, Corollary 4.6, Theorem 4.11] \label{lemmaphi}
 The bracket $[S^3;\tilde{\Gamma}]\in\mathcal{G}_{2n}^\mathbb{Z}$ only depends on 
the class of $\Gamma$ in $\mathcal{A}_n$. Hence it defines: 
$$\begin{array}{cccl}
   \Phi : & \mathcal{A}_n & \to & \mathcal{G}_{2n}^\mathbb{Z} \\
    & \Gamma & \mapsto & [S^3;\Gamma]:=[S^3;\tilde{\Gamma}]
  \end{array}.$$ 
\end{lemma}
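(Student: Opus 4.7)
The plan is to verify three independent properties that together say $\Phi$ is well-defined on $\mathcal{A}_n$: (a) the class $[S^3;\tilde{\Gamma}]$ in $\mathcal{G}_{2n}^\mathbb{Z}$ does not depend on the choice of regular embedding $\varphi$ (nor on the induced framing); (b) reversing the cyclic order at one vertex of $\Gamma$ multiplies this class by $-1$; and (c) if $I$, $H$, $X$ are diagrams that coincide outside a small disk where they look as in the IHX picture of Figure \ref{figasihx}, then $\Phi(I) - \Phi(H) + \Phi(X) = 0$ in $\mathcal{G}_{2n}^\mathbb{Z}$.

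The common tool is the following observation. Whenever a local modification of $\tilde{\Gamma}$ inside a ball $B \subset S^3$ can be realised by borromean surgery on an auxiliary Y-graph $\Gamma_0 \subset B$ disjoint from $\tilde{\Gamma}$, the corresponding change of the bracket is exactly $[S^3;\tilde{\Gamma} \cup \Gamma_0]$, which involves $2n+1$ Y-graphs and therefore lies in $\mathcal{F}_{2n+1}^\mathbb{Z}$; modulo this subspace it vanishes. To prove (a) I would note that two regular projections of the same abstract trivalent graph differ by a sequence of planar isotopies, edge-crossing changes, and framing changes; planar isotopies move $\tilde{\Gamma}$ by an ambient isotopy of $S^3$, and each crossing change or framing change of an edge of $\varphi(\Gamma)$ can be realised by borromean surgery on one further Y-graph sitting in a small ball near the crossing, so the principle above applies. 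For (b) I would observe that flipping the cyclic order at an internal vertex $v$ of $\Gamma$ amounts to performing a local half-twist of the three leaves of the Y-graph of $\tilde{\Gamma}$ attached at $v$; by the elementary Y-surgery calculus of \cite{GGP}, such a half-twist changes the surgery by an overall sign together with surgery on one additional Y-graph, so modulo $\mathcal{F}_{2n+1}^\mathbb{Z}$ its effect on $[S^3;\tilde{\Gamma}]$ is exactly multiplication by $-1$.

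The main obstacle is the IHX relation in (c). For this I would invoke the local IHX identity for Y-graph surgeries, established by Goussarov and Habiro and used in \cite{GGP}: in any ambient 3-manifold, the alternating combination of the three local configurations of the pair of linked Y-graphs corresponding to the I, H and X patterns on the two edges meeting at an internal vertex differs, modulo borromean surgeries on strictly more Y-graphs, from zero. Splicing this local identity into $\tilde{\Gamma}$ outside the ball where $I$, $H$, $X$ differ, and using the multilinearity of the bracket together with the principle from the previous paragraph, one obtains $\Phi(I) - \Phi(H) + \Phi(X) \in \mathcal{F}_{2n+1}^\mathbb{Z}$, hence the IHX relation for $\Phi$. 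Combining (a), (b) and (c) gives the lemma.
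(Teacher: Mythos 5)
The paper itself offers no proof of this lemma: it is quoted verbatim from \cite{GGP} (Corollary 4.2, Corollary 4.6, Theorem 4.11), so any argument is necessarily a reconstruction. Your architecture is the right one --- check (a) independence of the embedding and framing, (b) the AS relation, (c) the IHX relation --- and your ``common tool'' is sound: if a modification of $\tilde{\Gamma}$ is realised by surgery on an extra Y-graph $\Gamma_0$ disjoint from $\tilde{\Gamma}$, then the brackets before and after differ by $[S^3;\tilde{\Gamma}\cup\Gamma_0]\in\mathcal{F}_{2n+1}^{\mathbb{Z}}$. Deferring the clasper IHX identity to Goussarov--Habiro and \cite{GGP} in step (c) is also exactly what the paper does.

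The gap is in the mechanism you propose for step (a). A change of the framing of an edge by a full twist \emph{cannot} be realised by borromean surgery on an auxiliary Y-graph: each edge of $\tilde{\Gamma}$ carries two antiparallel strands of a component of the associated surgery link $L$, and a full twist of that band changes the self-linking of that component by $\pm 2$, whereas borromean surgeries preserve the whole linking matrix. So no Y-graph $\Gamma_0$ (nor any sequence of them) does what you claim; the correct tool is the half-twist lemma (Lemma \ref{lemmahalftwist}, i.e.\ GGP Lemma 4.4), applied twice, and the same lemma applied three times is what produces the sign $-1$ in your step (b), rather than ``an overall sign together with surgery on one additional Y-graph.'' For crossing changes the situation is better --- a band passing through a band or a leaf preserves the linking matrix of $L$, so by Murakami--Nakanishi such a move is realised by \emph{some} finite sequence of borromean surgeries in the complement --- but it is not realised by one Y-graph sitting in a small ball near the crossing (a Y-graph in a ball whose leaves bound disjoint disks gives a trivial surgery by Lemma \ref{lemmadisk}, and otherwise it inserts a Borromean pattern, not a crossing change). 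The direct and standard route, which is what GGP's Corollary 4.2 actually provides and what the paper recalls as Lemma \ref{lemmaslide} in Section \ref{secborro}, is the edge-slide move: a crossing change of an edge with another strand is a band-sum of that edge with a meridian of the strand, and sliding an edge along a framed knot preserves the bracket modulo $\mathcal{F}_{2n+1}^{\mathbb{Z}}$. With these two lemmas substituted for your auxiliary-Y-graph mechanism, your steps (a) and (b) go through and the proof is complete modulo the cited IHX identity.
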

Moreover:
\begin{theorem}[Garoufalidis, Goussarov, Polyak \cite{GGP}, Habiro \cite{Hab}, Le \cite{Le}] 
 For $n$ odd, $\mathcal{G}_n^\mathbb{Z}=0$. For $n$ even, the map
$\Phi:\mathcal{A}_{\frac{n}{2}}\to\mathcal{G}_n^\mathbb{Z}$ is an isomorphism. 
\end{theorem}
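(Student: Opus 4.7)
The plan is to prove the theorem in two stages: a topological stage giving surjectivity of $\Phi$ together with the odd-degree vanishing, and an invariant-theoretic stage giving injectivity via a universal finite type invariant.

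For surjectivity, I would start with an arbitrary generator $[M;\Gamma_1,\ldots,\Gamma_{2k}] \in \mathcal{F}_{2k}^{\mathbb{Z}}/\mathcal{F}_{2k+1}^{\mathbb{Z}}$. By Matveev's theorem, $M = S^3(\Delta)$ for some Y-link $\Delta$ in $S^3$, so expanding the bracket multilinearly in its Y-components yields $[M;\Gamma] \equiv [S^3;\Gamma'] \pmod{\mathcal{F}_{2k+1}^{\mathbb{Z}}}$ for some enlarged Y-link $\Gamma'$ in $S^3$, after pushing auxiliary terms into higher filtration. A clasper-calculus analysis in the spirit of Habiro then normalizes $\Gamma'$: leaves bounding disjoint disks in $S^3\setminus\Gamma'$ can be discarded, and pairwise linking numbers between leaves can be reduced to values in $\{-1,0,+1\}$ via handle slides, all modulo $\mathcal{F}_{2k+1}^{\mathbb{Z}}$. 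What remains is precisely the combinatorial data of a degree-$k$ Jacobi diagram: each Y-graph becomes a trivalent vertex (cyclic order dictated by the surface framing) and each unit-linking pair of leaves becomes an edge. The AS relation follows by reversing the cyclic order at an internal vertex, which corresponds to running the surgery backward and gives the opposite sign modulo $\mathcal{F}_{2k+1}^{\mathbb{Z}}$; the IHX relation follows from a local three-term identity between Y-link configurations that differ by an elementary edge move.

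For odd $n$ the same reduction produces a would-be Jacobi diagram of half-integer degree $n/2$, which does not exist, forcing the normal form to vanish; so $\mathcal{G}_n^{\mathbb{Z}} = 0$. Alternatively, an independent parity argument using the rotational symmetry of the Y-graph combined with the AS sign flip shows $[S^3;\Gamma] \equiv -[S^3;\Gamma] \pmod{\mathcal{F}_{n+1}^{\mathbb{Z}}}$ when $n$ is odd.

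The hard part is injectivity for even $n = 2k$, and here I would invoke the LMO invariant $Z_{LMO}$. Its degree-$k$ component $Z_{k,LMO}$ takes values in $\mathcal{A}_k$, is a finite type invariant of order at most $2k$, and vanishes on $\mathcal{F}_{2k+1}^{\mathbb{Z}}$, so it descends to a linear map $\overline{Z_{k,LMO}} : \mathcal{G}_{2k}^{\mathbb{Z}} \to \mathcal{A}_k$. It then suffices to verify that $\overline{Z_{k,LMO}} \circ \Phi$ is a nonzero scalar multiple of the identity on $\mathcal{A}_k$, by a direct leading-order computation of $Z_{LMO}(S^3(\tilde{\Gamma}))$ for $\Gamma$ a Jacobi diagram: the dominant contribution matches $\Gamma$ itself, while lower-order Y-graph configurations produce higher-degree diagrams. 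This one-sided inverse forces $\Phi$ to be injective. The real depth of the argument is concealed in the existence and universality of $Z_{LMO}$ itself; once these are available, the remainder is a combinatorial check, and that delegation to the universal invariant is the principal obstacle of the proof.
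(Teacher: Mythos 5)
This theorem is not proved in the paper at all: it is quoted from \cite{GGP}, \cite{Hab} and \cite{Le}, and your sketch reproduces exactly the standard argument of those references (and of the paper's own Section \ref{secborro}, which re-derives the surjectivity half in the form of Corollary \ref{corJacobi}) --- clasper calculus for surjectivity and the odd-degree vanishing, Le's universality of the LMO invariant for injectivity --- so the approach is the intended one. The only thing you should strike is the proposed ``alternative parity argument'' for odd $n$ via a rotational symmetry of the Y-graph: a cyclic rotation preserves the cyclic order at the vertex and produces no AS sign, so it does not yield $[S^3;\Gamma]\equiv -[S^3;\Gamma]$; the correct reason for the odd-degree vanishing is simply your primary one, namely that $3n$ leaves cannot be completely paired into Hopf links when $n$ is odd, so some leaf ends up null-homologous in the complement and the bracket vanishes.
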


    \subsection{Statement of the results}

We first define the filtration on the rational vector space $\mathcal{F}_0$ generated by all $\mathbb{Q}$HS's up to 
orientation-preserving homeomorphism.

\begin{definition}
 For $g\in \mathbb{N}$, a \emph{genus $g$ rational (resp. integral) homology handlebody} ($\mathbb{Q}$HH, resp. $\mathbb{Z}$HH) 
is a 3-manifold which is compact, oriented, 
and which has the same homology with rational (resp. integral) coefficients as the standard genus $g$ handlebody.
\end{definition}
Such a $\mathbb{Q}$HH (resp. $\mathbb{Z}$HH) is connected, and its boundary is necessarily homeomorphic to the standard genus $g$ surface.

\begin{definition}
The \emph{Lagrangian} $\mathcal{L}_A$ of a $\mathbb{Q}$HH $A$ is the kernel of the map 
$$i_*: H_1(\partial A;\mathbb{Q})\to H_1(A;\mathbb{Q})$$
induced by the inclusion. Two $\mathbb{Q}$HH's $A$ and $B$ have \emph{LP-identified} boundaries if we have a homeomorphism 
$h:\partial A\to\partial B$ such that $h_*(\mathcal{L}_A)=\mathcal{L}_B$.
\end{definition}
The Lagrangian of a $\mathbb{Q}$HH $A$ is indeed a Lagrangian subspace of $H_1(\partial A;\mathbb{Q})$ 
with respect to the intersection form.

Consider a $\mathbb{Q}$HS $M$, a $\mathbb{Q}$HH $A\subset M$, and a $\mathbb{Q}$HH $B$ whose boundary is LP-identified with $\partial A$.
Set $M(\frac{B}{A})=(M\setminus Int(A))\cup_{\partial A=\partial B}B$. We say that the $\mathbb{Q}$HS 
$M(\frac{B}{A})$ is obtained from $M$ by \emph{Lagrangian preserving surgery}, or \emph{LP-surgery}.
Note that a borromean surgery is a special type of LP-surgery.
 If $(A_i)_{1\leq i\leq n}$ is a family of disjoint $\mathbb{Q}$HH's in $M$, and if, for each $i$, $B_i$ is a $\mathbb{Q}$HH 
whose boundary is LP-identified with $\partial A_i$, we denote by $M((\frac{B_i}{A_i})_{1\leq i\leq n})$ 
the manifold obtained from $M$ by the $n$ LP-surgeries $(\frac{B_i}{A_i})$.

Let $\mathcal{F}_n$ denote the subspace of $\mathcal{F}_0$ generated by the 
$$[M;(\frac{B_i}{A_i})_{1\leq i \leq n}]=\sum_{I\subset \{ 1,...,n\}} (-1)^{|I|} M((\frac{B_i}{A_i})_{i\in I})$$ 
for all $\mathbb{Q}$HS's $M$ and all families of $\mathbb{Q}$HH's $(A_i,B_i)_{1\leq i \leq n}$, where the $A_i$ are embedded in $M$ 
and disjoint, and each $\partial B_i$ is 
LP-identified with the corresponding $\partial A_i$. Since $\mathcal{F}_{n+1}\subset \mathcal{F}_n$, this defines a filtration. 
Set $\mathcal{G}_n=\mathcal{F}_n / \mathcal{F}_{n+1}$ and $\mathcal{G}=\oplus_{n\in\mathbb{N}} \mathcal{G}_n$.
\begin{definition}
 A \emph{finite type invariant of degree at most $n$} of rational homology spheres is a linear map 
$\lambda: \mathcal{F}_0 \to \mathbb{Q}$ such that $\lambda(\mathcal{F}_{n+1})=0$. 
It is said to be \emph{additive} if $\lambda(M\sharp N)=\lambda(M)+\lambda(N)$ for all $\mathbb{Q}$HS's $M$ and $N$.
\end{definition}
Let $\mathcal{I}_n$ (resp. $\mathcal{I}_n^c$) denote the rational vector space of all invariants (resp. additive invariants) 
of degree at most $n$. Set $\mathcal{H}_n = \mathcal{I}_n / \mathcal{I}_{n-1}$ and 
$\mathcal{H}=\oplus_{n\in\mathbb{N}} \mathcal{H}_n$. Note that $\mathcal{I}_n$ 
is canonically isomorphic to $(\mathcal{F}_0 / \mathcal{F}_{n+1})^*:=Hom(\frac{\mathcal{F}_0}{\mathcal{F}_{n+1}},\mathbb{Q})$.
We have an exact sequence:
$$0 \to \mathcal{G}_n \to \frac{\mathcal{F}_0}{\mathcal{F}_{n+1}} \to \frac{\mathcal{F}_0}{\mathcal{F}_n} \to 0.$$
Since the functor $Hom(.,\mathbb{Q})$ is exact, 
the dual sequence $$0 \to \mathcal{I}_{n-1} \to \mathcal{I}_n \to (\mathcal{G}_n)^* \to 0$$ is also exact.
Thus $\mathcal{H}_n \cong (\mathcal{G}_n)^*$.

We will call \emph{augmented diagram of degree $n$} the union of a Jacobi diagram of degree $k\leq\frac{n}{2}$ 
and of $(n-2k)$ weighted vertices, where the weights are prime integers.
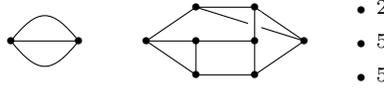
\begin{figure}[htb]
\begin{center}
\begin{tikzpicture} [scale=0.3]
\draw (0,0) -- (3,0) (0,0) .. controls +(1.2,1.5) and +(-1.2,1.5) .. (3,0) (0,0) .. controls +(1.2,-1.5) and +(-1.2,-1.5) .. (3,0);
\draw (6,0) -- (8.2,1.5) -- (10.8,1.5) -- (13,0) -- (10.8,-1.5) -- (8.2,-1.5) -- (6,0) -- (8.2,0) (8.2,1.5) -- (13,0)
(10.8,0) -- (8.2,0) -- (8.2,-1.5);
\draw[color=white,line width=5pt]  (10.8,0.2) -- (10.8,1.3);
\draw  (10.8,1.5) -- (10.8,-1.5);
\draw (16,1.5) node{${\scriptscriptstyle \bullet}{\scriptstyle\ 2}$} 
(16,0) node{${\scriptscriptstyle \bullet}{\scriptstyle\ 5}$} 
(16,-1.5) node{${\scriptscriptstyle \bullet}{\scriptstyle\ 5}$};
\draw (0,0) node{$\scriptscriptstyle{\bullet}$};
\draw (3,0) node{$\scriptscriptstyle{\bullet}$};
\draw (6,0) node{$\scriptscriptstyle{\bullet}$};
\draw (8.2,1.5) node{$\scriptscriptstyle{\bullet}$};
\draw (10.8,1.5) node{$\scriptscriptstyle{\bullet}$};
\draw (13,0) node{$\scriptscriptstyle{\bullet}$};
\draw (10.8,-1.5) node{$\scriptscriptstyle{\bullet}$};
\draw (8.2,-1.5) node{$\scriptscriptstyle{\bullet}$};
\draw (10.8,0) node{$\scriptscriptstyle{\bullet}$};
\draw (8.2,0) node{$\scriptscriptstyle{\bullet}$};
\end{tikzpicture}
\end{center}
\caption{augmented diagram of degree 13} \label{figaug}
\end{figure}
Note that the degree of an augmented diagram is equal to its number of vertices. 
Let $\mathcal{A}_n^{aug}$ denote the rational vector space generated by all augmented diagrams of degree $n$, quotiented out 
by the AS and IHX relations. The main goal of this article is to prove the following theorem:
\begin{theorem} \label{thprinc}
 For $n\in\mathbb{N}$, $\mathcal{A}_n^{aug}\cong\mathcal{G}_n$.
\end{theorem}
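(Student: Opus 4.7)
The plan is to build an explicit map $\psi_n : \mathcal{A}_n^{aug} \to \mathcal{G}_n$ in the spirit of the integral map $\Phi$ of Lemma~\ref{lemmaphi}, to show it is surjective by a reduction to elementary LP-surgeries, and then to invoke the universality of the KKT invariant to obtain injectivity. Given an augmented diagram $D$ of degree $n$ whose Jacobi part $J$ has degree $k$ and whose isolated weighted vertices carry primes $p_1,\ldots,p_{n-2k}$, I would set
\[
  \psi_n(D) \;=\; \bigl[\, S^3\,;\, \tilde J \cup (\tfrac{B_{p_1}}{A_{p_1}}) \cup \cdots \cup (\tfrac{B_{p_{n-2k}}}{A_{p_{n-2k}}}) \,\bigr] \in \mathcal{G}_n,
\]
where $\tilde J$ is the Y-link in $S^3$ associated with $J$ as in Lemma~\ref{lemmaphi} (a borromean surgery is a special LP-surgery), and each $(A_{p_i},B_{p_i})$ is a fixed LP-pair of genus-$1$ $\mathbb{Q}$HH's embedded in $S^3$ disjointly from everything else, with $A_{p_i}$ a solid torus and $H_1(B_{p_i};\mathbb{Z}) \cong \mathbb{Z}/p_i\mathbb{Z}$.

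To check that $\psi_n$ is well defined, I need independence from the embedding of $J$, independence from the specific LP-pairs realising each prime, and vanishing on the AS and IHX relators. The AS and IHX invariance reproduces the calculation behind Lemma~\ref{lemmaphi}: a local modification of the Jacobi part alters the bracket only by an element of $\mathcal{F}_{n+1}$. The genuinely new ingredients are two commutation-type lemmas: first, that LP-surgeries supported on disjoint $\mathbb{Q}$HH's can be freely reordered modulo $\mathcal{F}_{n+1}$; second, that two genus-$1$ LP-pairs whose $B$'s share the same finite cyclic $H_1$ define the same class in $\mathcal{G}_1$ (hence, after stabilising with trivial higher-degree pieces, in $\mathcal{G}_n$).

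For surjectivity I plan a two-step reduction of a generic generator $[M;(\tfrac{B_i}{A_i})_{1\le i\le n}]$ of $\mathcal{F}_n$. Since every $\mathbb{Q}$HS $M$ is obtained from $S^3$ by finitely many LP-surgeries, a telescoping expansion rewrites this bracket, modulo $\mathcal{F}_{n+1}$, as a bracket based at $S^3$ with $n$ LP-pieces. It then remains to decompose each individual LP-pair $(A_i,B_i)$, modulo higher filtration, into two elementary flavours: Y-graph borromean surgeries (contributing trivalent vertices of $J$) and genus-$1$ LP-pairs with $|H_1(B)|$ prime (contributing isolated weighted vertices). The genus-reduction step parallels the integral Auclair--Lescop treatment, while extracting the prime content rests on a multiplicativity lemma of the form: if $|H_1(B;\mathbb{Z})| = ab$ with $\gcd(a,b)=1$, then $(A,B)$ is equivalent modulo higher degree to two disjoint LP-pairs of $H_1$-orders $a$ and $b$. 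Iterating this factorises $|H_1|$ into primes and produces the isolated weighted vertices.

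Injectivity is the main obstacle and I would derive it from Lescop's theorem \cite{Les} that $Z_{KKT}$ is a universal finite type invariant with respect to the LP-filtration: the component $Z_{n,KKT}$ vanishes on $\mathcal{F}_{n+1}$, hence descends to a map $\mathcal{G}_n \to \mathcal{A}_n^{aug}$, and universality asserts that on a length-$n$ LP-bracket this induced map returns, up to a non-zero combinatorial factor, the naturally associated augmented diagram. Computing $Z_{n,KKT}\circ \psi_n$ on the spanning set of augmented diagrams would then yield an invertible diagonal map, proving that $\psi_n$ is injective and completing the proof. The delicate technical points are confirming that a genus-$1$ LP-piece of prime order $p$ is detected by the KKT invariant precisely as the weighted vertex of weight $p$, distinguishable from contributions of the Jacobi part, and pinning down the scalar normalisations so that the composition is genuinely the identity rather than merely a bijection.
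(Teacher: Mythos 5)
Your explicit map $\psi_n$ is exactly the isomorphism the paper describes after the theorem statement, and your surjectivity outline (telescoping, then reduction to elementary surgeries via Theorem~\ref{thelsur}) matches the paper's Lemmas~\ref{lemmaredtori} and~\ref{lemmared}. The genuine gap is in your injectivity step. You propose to deduce injectivity from the universality of $Z_{KKT}$ alone, asserting that ``a genus-$1$ LP-piece of prime order $p$ is detected by the KKT invariant precisely as the weighted vertex of weight $p$.'' This is false: $Z_{KKT}$ takes values in ordinary Jacobi diagrams and does not see the weighted vertices at all. The component $Z_{k,KKT}$ is a finite type invariant of degree $2k$, so it vanishes on any bracket of filtration length $n>2k$; and on a length-$n$ bracket $[S^3;\tilde J,(\frac{B_{p_i}}{B^3})_i]$ with Jacobi part of degree $k<n/2$, the component $Z_{n/2,KKT}$ returns $0$ by Lescop's splitting formula, since there is no degree-$\frac{n}{2}$ Jacobi diagram supported on only $2k$ borromean pieces. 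Already for $n=1$ the space $\mathcal{G}_1$ is infinite-dimensional (Proposition~\ref{propdeg1}) while $Z_{KKT}$ contributes nothing; the classes $[S^3;\frac{B_p}{B^3}]$ are separated only by the arithmetic invariants $\nu_p=v_p(|H_1|)$ of Proposition~\ref{propinv}. More seriously, the disjoint-union diagrams (the classes $\cs{t}(G_{k_t,i_t}^{(k_t)})^{\sharp\varepsilon_t}$) are not separated by any single additive invariant: one needs \emph{products} of the $\nu_p$ and of the $\Gamma^*\circ p^c\circ Z_{k,KKT}$, together with a proof that these products pair diagonally (up to triangular corrections) with the connected-sum classes. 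That computation is the content of Lemmas~\ref{lemmaprod}, \ref{lemmad1}, \ref{lemmad1'} and \ref{lemmad3}, and it is where the real work of the linear independence lies; ``invertible diagonal map'' does not come for free from universality.

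A secondary gap sits in your surjectivity reduction: the elementary surgeries of Theorem~\ref{thelsur} come in \emph{three} flavours, and the genus-$1$ replacements of a standard torus by a $d$-torus are neither borromean surgeries nor connected sums. Your ``genus-$1$ LP-pairs with $|H_1(B)|$ prime'' conflates these with the genus-$0$ pieces $\frac{B_p}{B^3}$ that actually realise the weighted vertices (note a $d$-torus has $H_1\cong\mathbb{Z}_d\oplus\mathbb{Z}$, which is infinite). Showing that a genus-$1$ elementary surgery inside a length-$n$ bracket can be traded, modulo $\mathcal{F}_{n+1}$, for connected sums and borromean surgeries is a genuinely new ingredient of the rational setting with no analogue in Auclair--Lescop; it requires the classification of degree-$1$ invariants of framed rational homology tori (Proposition~\ref{propinvtori} and Lemmas~\ref{lemmasub}, \ref{lemmatype2}, \ref{lemmat2}). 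Finally, note that the paper's actual proof is architecturally dual to yours: rather than proving surjectivity and injectivity of $\psi_n$ directly, it classifies the graded space of invariants $\mathcal{H}_n\cong(\mathcal{G}_n)^*$ via a Hopf-algebra (Milnor--Moore) decomposition into additive invariants and their products (Propositions~\ref{propdual} and~\ref{propinduction}), and only then dualises; the KKT invariant enters only to produce the additive invariants dual to \emph{connected} Jacobi diagrams in even degrees.
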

This result will follow from Proposition \ref{propdeg1}, Proposition \ref{propdual},
and Proposition \ref{propinduction}. An isomorphism can be described in the following way. Consider an augmented diagram $\Gamma_a$ of 
degree $n$ given by a Jacobi diagram $\Gamma$ of degree $k$, and $(n-2k)$ vertices with weights $(p_i)_{1\leq i\leq n-2k}$. 
Define $\varphi(\Gamma)\subset S^3$ and the associated Y-link $\tilde{\Gamma}$ as before. For each $i$, consider a 
rational homology ball $B_{p_i}$ such that $H_1(B_{p_i};\mathbb{Z})=\frac{\mathbb{Z}}{p_i\mathbb{Z}}$. Then define the image of $\Gamma_a$ as 
$[S^3;\tilde{\Gamma},(\frac{B_{p_i}}{B^3})_{1\leq i\leq n-2k}]\in\mathcal{G}_n$.

Since connected sums are LP-surgeries of genus 0, one can easily see that $\mathcal{G}_0\cong\mathbb{Q}S^3$.
In Section \ref{secdeg1}, we give a description of $\mathcal{G}_1$.
\begin{proposition} \label{propdeg1}
 For any prime integer $p$, fix a $\mathbb{Q}$HS $M_p$ such that $|H_1(M_p)|=p$.
Then $(M_p-S^3)_{p\ prime}$ is a basis for $\mathcal{G}_1$.
\end{proposition}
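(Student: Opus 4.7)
For linear independence, for each prime $p$ I would construct the invariant $\phi_p\colon\mathcal{F}_0\to\mathbb{Z}$ defined on $\mathbb{Q}$HSs by $\phi_p(M)=v_p(|H_1(M;\mathbb{Z})|)$ and extended linearly. Using the order formula $|H_1(M(\tfrac{B}{A}))|=|H_1(M)|\cdot|T_B|/|T_A|$ (where $T_\bullet$ denotes the torsion part of $H_1$ of a $\mathbb{Q}$HH), the per-surgery contributions $v_p(|T_B|/|T_A|)$ are log-additive across disjoint LP-surgeries, so the two-surgery inclusion-exclusion bracket is killed by $\phi_p$; hence $\phi_p$ descends to a map $\mathcal{G}_1\to\mathbb{Z}$ satisfying $\phi_p(M_q-S^3)=\delta_{p,q}$, which gives linear independence of the $(M_p-S^3)_p$.

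For spanning I would argue in three reduction steps. First, establish connected-sum additivity in $\mathcal{G}_1$: $X\#Y\equiv X+Y-S^3\pmod{\mathcal{F}_2}$ for $\mathbb{Q}$HSs $X,Y$, realized as the two-surgery bracket $[S^3;(\tfrac{X\setminus B^3}{B_1^3},\tfrac{Y\setminus B^3}{B_2^3})]\in\mathcal{F}_2$ for disjoint balls $B_1^3,B_2^3\subset S^3$ (using that $X\setminus B^3$ is itself a $\mathbb{Q}$HB). Second, invoking a Matveev-type extension stating that any two $\mathbb{Q}$HHs with LP-identified boundary differ by a sequence of borromean surgeries and genus-$0$ $\mathbb{Q}$HB-replacements, a telescoping argument gives $[M;\tfrac{B}{A}]\equiv\sum_i[M;\tfrac{F_i}{E_i}]\pmod{\mathcal{F}_2}$ for elementary surgeries; since $\mathcal{G}_1^\mathbb{Z}=0$ forces every single borromean-surgery class into $\mathcal{F}_2^\mathbb{Z}\subset\mathcal{F}_2$, only the genus-$0$ pieces survive, and by the first step these combine into differences $\hat B_2-\hat B_1$ where $\hat B:=B\cup_{S^2}B^3$ denotes the closure of a $\mathbb{Q}$HB. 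Third, induct on $n=|H_1(\hat B)|$ to prove $\hat B-S^3\equiv\sum_p v_p(n)(M_p-S^3)\pmod{\mathcal{F}_2}$: the base $n=1$ follows from $\mathcal{G}_1^\mathbb{Z}=0$, and for $p\mid n$, embedding a $\mathbb{Q}$HB $A\subset\hat B$ with $|H_1(A)|=p$ and replacing it by a ball produces $\hat B'$ with $|H_1(\hat B')|=n/p$, so that connected-sum additivity yields $\hat B-\hat B'\equiv\hat A-S^3\pmod{\mathcal{F}_2}$.

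The crux is then to identify $\hat A\equiv M_p\pmod{\mathcal{F}_2}$, i.e., to show that any two $\mathbb{Q}$HSs with $|H_1|=p$ are equivalent modulo $\mathcal{F}_2$. My plan is to connect any two such $\mathbb{Q}$HSs by a sequence of $|H_1|$-preserving LP-surgeries and to reapply the reduction of the second step: each such surgery decomposes modulo $\mathcal{F}_2$ into a combination of borromean brackets (which vanish by $\mathcal{G}_1^\mathbb{Z}=0$) together with cancelling pairs of $|H_1|$-changing $\mathbb{Q}$HB-replacements whose two-surgery bracket already lies in $\mathcal{F}_2$. The technical heart---and the step I expect to be the hardest, since linking forms on $H_1=\mathbb{Z}/p$ can differ between $\mathbb{Q}$HSs with matching $|H_1|$---is establishing a sufficient realization statement guaranteeing that such a connecting sequence of LP-surgeries exists, and this is where I anticipate most of the paper's auxiliary machinery to be deployed.
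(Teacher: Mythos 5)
Your overall architecture (dual invariants $\nu_p$ for freeness, a reduction-plus-classification argument for generation) matches the paper's, but both halves contain genuine gaps. On the freeness side, the ``order formula'' $|H_1(M(\frac{B}{A}))|=|H_1(M)|\cdot|T_B|/|T_A|$ is false: replacing a trivially embedded solid torus $T$ in $S^3$ by a $d$-torus $T_d$ (Sublemma \ref{sublemmasur}) yields $H_1=\mathbb{Z}_d\oplus\mathbb{Z}_d$, of order $d^2$, whereas $|Tors(H_1(T_d))|/|Tors(H_1(T))|=d$. The change in $|H_1|$ depends not only on the torsion of the replaced pieces but on how the Lagrangian lattice of the piece meets the lattice carried by the exterior (the integers $d_i$ of Lemma \ref{lemmaduality} enter, as does the embedding). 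What is actually true, and what the paper proves in Subsection \ref{subsecinv}, is that the \emph{ratio} $|H_1(M)|/|H_1(M(\frac{A'}{A}))|$ is unchanged by a disjoint LP-surgery; the proof compares the two lattices $F,F'\subset H_1(\partial A;\mathbb{Z})$ coming from the exteriors $X$ and $X'=X(\frac{B'}{B})$ and uses that the determinant of the change of basis cancels between the computations for $A$ and for $A'$. Your conclusion (that $\nu_p$ kills $\mathcal{F}_2$) is correct, but the mechanism you give for it is not.

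The generation argument has more serious structural problems, all traceable to underestimating the genus-$1$ moves. First, the ``Matveev-type extension'' you invoke, with only genus-$0$ and genus-$3$ moves, is false: a $d$-torus and a standard solid torus have LP-identified boundaries, yet the index $|\mathcal{L}_A^T/\mathcal{L}_A^{\mathbb{Z}}|$ (equal to $d$ versus $1$) is preserved both by borromean surgeries and by connected sums with rational homology balls, so $d$-torus replacements cannot be eliminated --- this is precisely why Theorem \ref{thelsur} includes them, and the surviving genus-$1$ brackets $[M;\frac{T_d}{T_0}]$ are nonzero in $\mathcal{G}_1$ (e.g.\ $\nu_p([S^3;\frac{T_p}{T_0}])=-2$). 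Second, your induction on $n=|H_1(\hat B)|$ breaks at cyclic prime-power torsion: a $\mathbb{Q}$HS with $H_1\cong\mathbb{Z}_{p^2}$, such as a lens space $L(p^2,q)$, need not contain a rational homology ball of order $p$ whose replacement by $B^3$ divides $|H_1|$ by $p$; the paper instead proves $G_{p^{k+k'}}\sim_2 G_{p^k}+G_{p^{k'}}$ by performing two Hopf-linked $d$-torus surgeries in $S^3$ (Lemma \ref{lemmagroup}) --- again an irreducibly genus-$1$ phenomenon. Finally, the ``crux'' you correctly identify (that any two $\mathbb{Q}$HS's with $|H_1|=p$ agree modulo $\mathcal{F}_2$) is not resolved in the paper by realizing a connecting chain of $|H_1|$-preserving LP-surgeries; it follows from Matveev's theorem that $\mathbb{Q}$HS's with isomorphic linking pairings are borromean-equivalent, hence $=_2$-equivalent (Lemma \ref{lemmalink}), combined with the Wall/Kawauchi--Kojima/Miranda presentations of the semigroup of linkings (the relation $2A_{p}=2B_{p}$ plus connected-sum additivity forces $A_p\sim_2 B_p$). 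Without this input from the algebraic classification of linking forms, your argument does not close.
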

\paragraph{Remark:}
The $\mathbb{Q}$HS's $M_p$ are not unique in $\mathcal{F}_0$, but we will see in Subsection \ref{subsecgene} that they are 
unique modulo $\mathcal{F}_2$.

We will show in Subsection \ref{subsecgene} that the family $(M_p-S^3)_{p\ prime}$ generates $\mathcal{G}_1$.
To see that it is a basis, we will prove the following proposition in Subsection \ref{subsecinv}.

For a prime integer $p$, let $v_p$ denote the $p$-adic valuation, defined on $\mathbb{N}\setminus \{0\}$ 
by $v_p(p^kn)=k$ if $n$ is prime to $p$.
\begin{proposition} \label{propinv}
 For any prime $p$, define a linear map $\nu_p$ on $\mathcal{F}_0$ by setting $\nu_p(M)=v_p(|H_1(M)|)$ when $M$ is a $\mathbb{Q}$HS. 
Then $\nu_p$ is a degree 1 invariant of $\mathbb{Q}$HS's.
\end{proposition}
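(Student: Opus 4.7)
The plan is to prove the stronger multiplicative identity
\[
|H_1(M;\mathbb{Z})|\cdot |H_1(M_{12};\mathbb{Z})| \;=\; |H_1(M_1;\mathbb{Z})|\cdot |H_1(M_2;\mathbb{Z})|,
\]
where $M_I$ denotes the manifold obtained from $M$ by performing the LP-surgeries $(B_i/A_i)$ for $i\in I\subset\{1,2\}$. Applying $v_p$ to both sides yields $\nu_p([M;(\frac{B_i}{A_i})_{1\leq i\leq 2}])=0$, which is precisely the required degree $1$ property.

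Set $C=M\setminus(Int(A_1)\cup Int(A_2))$; a rational Mayer--Vietoris check shows that $C$ is itself a $\mathbb{Q}$HH of genus $g_1+g_2$. For each choice $X_i\in\{A_i,B_i\}$, write $M^{X_1X_2}$ for the resulting $\mathbb{Q}$HS and use the integer Mayer--Vietoris exact sequence
\[
0 \to H_1(\partial A_1)\oplus H_1(\partial A_2) \xrightarrow{\alpha^{X_1X_2}} H_1(C)\oplus H_1(X_1)\oplus H_1(X_2) \to H_1(M^{X_1X_2};\mathbb{Z}) \to 0,
\]
with left-exactness coming from $H_2(M^{X_1X_2};\mathbb{Z})=0$ and the vanishing of $H_2$ for each handlebody piece. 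Write $H_1(Y;\mathbb{Z})=F_Y\oplus T_Y$ with $F_Y$ free and $T_Y$ finite torsion. Comparing $\alpha^{X_1X_2}$ with its induced map $\alpha_F^{X_1X_2}$ on the free parts via a direct cokernel computation (both maps are injective over $\mathbb{Z}$), one obtains
\[
|H_1(M^{X_1X_2};\mathbb{Z})| \;=\; |T_C|\cdot|T_{X_1}|\cdot|T_{X_2}|\cdot|\det\alpha_F^{X_1X_2}|.
\]

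The key step is to factor $|\det\alpha_F^{X_1X_2}|$ by exploiting the LP condition. The integral kernel $K_i$ of the inclusion-induced map $H_1(\partial A_i;\mathbb{Z})\to F_{X_i}$ is a primitive sublattice of $H_1(\partial A_i;\mathbb{Z})$ (its quotient embeds in the torsion-free group $F_{X_i}$), and its rationalization equals $\mathcal{L}_{A_i}$. The LP-identification $\mathcal{L}_{A_i}=\mathcal{L}_{B_i}$ therefore forces $K_i$ to be the same sublattice of $H_1(\partial A_i;\mathbb{Z})$ for $X_i=A_i$ and $X_i=B_i$. Choosing a complement $K_i'$ of $K_i$ in $H_1(\partial A_i;\mathbb{Z})$, the matrix of $\alpha_F^{X_1X_2}$ in the source basis $(K_1,K_2,K_1',K_2')$ becomes block upper triangular, producing
\[
|\det\alpha_F^{X_1X_2}| \;=\; |\det\Psi|\cdot|\det u_{X_1}|\cdot|\det u_{X_2}|,
\]
where $\Psi\colon K_1\oplus K_2\to F_C$ is induced by the inclusions $\partial A_i\hookrightarrow C$ and $u_{X_i}\colon K_i'\to F_{X_i}$ is induced by $\partial A_i\hookrightarrow X_i$. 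Hence $|H_1(M^{X_1X_2};\mathbb{Z})|=\Gamma\cdot\tau(X_1)\cdot\tau(X_2)$, with $\Gamma$ depending only on $C$ and the common Lagrangians $K_1,K_2$ (and so independent of $X_1,X_2$), and $\tau(X_i):=|T_{X_i}|\cdot|\det u_{X_i}|$ a local invariant of $X_i$ with its LP-marked boundary. The multiplicative identity follows immediately by substitution.

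The main obstacle is the algebraic bookkeeping: one must verify that $K_i$ is primitive, that $|\det u_{X_i}|$ does not depend on the choice of complement $K_i'$ (which again uses primitivity of $K_i$, so that $u_{X_i}$ is intrinsically the injective map $H_1(\partial A_i;\mathbb{Z})/K_i\to F_{X_i}$), and that the block-triangular structure of $\alpha_F^{X_1X_2}$ holds in the claimed basis. Once these algebraic points are settled, the LP-identification of boundaries performs the geometric work essentially for free.
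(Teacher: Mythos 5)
Your proof is correct, and it reduces the statement to exactly the same key identity as the paper, namely
$|H_1(M)|\cdot|H_1(M(\frac{B_1}{A_1},\frac{B_2}{A_2}))|=|H_1(M(\frac{B_1}{A_1}))|\cdot|H_1(M(\frac{B_2}{A_2}))|$,
but the homological implementation is genuinely different. The paper works with the exact sequence of the pair $(M,A_1)$, writing $|H_1(M)|=|H_1(X,\partial X)|\cdot|\mathrm{Im}(H_1(A_1)\to H_1(M))|$ with $X=M\setminus Int(A_1)$; the second surgery only changes $X$ into $X'$, and the comparison of the two images is controlled by a change-of-basis matrix $R\in GL_g(\mathbb{Q})$ between integral spannings of the common Lagrangian $\mathcal{L}_X=\mathcal{L}_{X'}$, whose determinant cancels in the ratio. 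You instead cut along both handlebodies at once and run a symmetric Mayer--Vietoris computation, obtaining the cleaner product formula $|H_1(M^{X_1X_2})|=\Gamma\cdot\tau(X_1)\cdot\tau(X_2)$ via a block-triangular determinant; the LP condition enters through the observation that the primitive sublattice $K_i=(i_*)^{-1}(Tors(H_1(X_i)))$ (the paper's $\mathcal{L}_{X_i}^T$) is the saturation of the rational Lagrangian and hence independent of the choice $X_i\in\{A_i,B_i\}$. Your version makes the ``local factorization'' of $|H_1|$ explicit and symmetric in the two surgery sites, at the cost of slightly heavier lattice bookkeeping; the paper's version is asymmetric but avoids the three-piece decomposition. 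Both arguments rest on the same mechanism: Lagrangian preservation pins down the determinant data up to a factor that is common to all four manifolds.

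One small inaccuracy: $C=M\setminus(Int(A_1)\cup Int(A_2))$ is \emph{not} a $\mathbb{Q}$HH in the sense of the paper. Its boundary is disconnected, and $H_2(C;\mathbb{Q})\cong\mathbb{Q}$ (the class $[\partial A_1]=-[\partial A_2]$ is nonzero, as one sees already for two disjoint balls in $S^3$). Likewise, the parenthetical appeal to ``the vanishing of $H_2$ for each handlebody piece'' is both false for $C$ and unnecessary: injectivity of $\alpha^{X_1X_2}$ needs only $H_2(M^{X_1X_2};\mathbb{Z})=0$. Fortunately the only property of $C$ your argument actually uses is $\dim H_1(C;\mathbb{Q})=g_1+g_2$, which does follow from the rational Mayer--Vietoris sequence (using $H_1(M;\mathbb{Q})=H_2(M;\mathbb{Q})=0$), so the block-square structure and the rest of the proof are unaffected. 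You should restate that claim accordingly rather than asserting $C$ is a $\mathbb{Q}$HH.
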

Since $\nu_p(M_p)=1$, $\nu_p(M_q)=0$ for any prime $q\neq p$, and $\nu_p(S^3)=0$ for any prime $p$,
this result shows that the family $(M_p-S^3)_{p\ prime}$ is free.

\begin{corollary} \label{cordeg1}
 $\displaystyle \frac{\mathcal{I}_1}{\mathcal{I}_0}=\mathcal{I}_1^c= \prod_{p\ prime} \mathbb{Q}\,\nu_p$.
\end{corollary}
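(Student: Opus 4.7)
The strategy is to combine the two preceding propositions with the duality $\mathcal{I}_1/\mathcal{I}_0\cong(\mathcal{G}_1)^*$ that was recorded in the introduction. Proposition \ref{propdeg1} gives a basis $(M_p-S^3)_{p\ \textrm{prime}}$ of $\mathcal{G}_1$, so as a vector space $(\mathcal{G}_1)^*\cong\prod_{p\ \textrm{prime}}\mathbb{Q}$, with dual basis the coordinate functionals $M_q-S^3\mapsto\delta_{pq}$. The plan is therefore to identify the class of $\nu_p$ in $\mathcal{I}_1/\mathcal{I}_0$ with this dual basis element, then promote the resulting bijection to the statement for $\mathcal{I}_1^c$ itself.

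First I would record the three elementary facts about $\nu_p$. By Proposition \ref{propinv}, $\nu_p\in\mathcal{I}_1$. From $\nu_p(M_q)=\delta_{pq}$ and $\nu_p(S^3)=v_p(1)=0$, the image of $\nu_p$ under the isomorphism $\mathcal{I}_1/\mathcal{I}_0\cong(\mathcal{G}_1)^*$ is precisely the coordinate functional dual to $M_p-S^3$. Next, $\nu_p$ is additive: for two $\mathbb{Q}$HS's, $H_1(M\sharp N;\mathbb{Z})=H_1(M;\mathbb{Z})\oplus H_1(N;\mathbb{Z})$, so $|H_1(M\sharp N)|=|H_1(M)|\cdot|H_1(N)|$, and additivity of $v_p$ on the multiplicative monoid $\mathbb{N}\setminus\{0\}$ yields $\nu_p(M\sharp N)=\nu_p(M)+\nu_p(N)$. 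Thus $\nu_p\in\mathcal{I}_1^c$.

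Second, I would check that an arbitrary formal combination $\sum_p c_p\nu_p$ with $c_p\in\mathbb{Q}$ defines a legitimate element of $\mathcal{I}_1^c$: for any fixed $\mathbb{Q}$HS $M$, only the primes dividing $|H_1(M;\mathbb{Z})|$ contribute, so the sum $\sum_p c_p\,v_p(|H_1(M)|)$ is finite. This produces a linear injection $\prod_{p\ \textrm{prime}}\mathbb{Q}\,\nu_p\hookrightarrow\mathcal{I}_1^c$, and the composition
$$\prod_{p\ \textrm{prime}}\mathbb{Q}\,\nu_p\ \longrightarrow\ \mathcal{I}_1^c\ \hookrightarrow\ \mathcal{I}_1\ \twoheadrightarrow\ \mathcal{I}_1/\mathcal{I}_0\ \cong\ (\mathcal{G}_1)^*$$
sends $(c_p)_p$ to the functional $M_q-S^3\mapsto c_q$, hence is a bijection by Proposition \ref{propdeg1}.

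Finally I would observe that the quotient map $\mathcal{I}_1^c\to\mathcal{I}_1/\mathcal{I}_0$ is itself injective, because $\mathcal{I}_0$ is the line of constant invariants and the only additive constant is $0$. The chain above is then a composition of two injections whose total is bijective, forcing both maps to be isomorphisms, which is exactly the double equality stated in the corollary. There is no genuine obstacle here beyond what is already packaged in Propositions \ref{propdeg1} and \ref{propinv}; the only points requiring a moment's care are the multiplicativity $|H_1(M\sharp N)|=|H_1(M)||H_1(N)|$ needed for additivity of $\nu_p$, and the finiteness observation that legitimizes infinite formal sums $\sum_p c_p\nu_p$ as actual invariants.
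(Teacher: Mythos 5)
Your argument is correct and is exactly the proof the paper intends: the corollary is left as an immediate consequence of the duality $\mathcal{I}_1/\mathcal{I}_0\cong(\mathcal{G}_1)^*$ from the introduction, the basis $(M_p-S^3)_{p\ prime}$ of $\mathcal{G}_1$ from Proposition \ref{propdeg1}, and the evaluations $\nu_p(M_q)=\delta_{pq}$, $\nu_p(S^3)=0$ recorded after Proposition \ref{propinv}. Your added checks (additivity of $\nu_p$ via $|H_1(M\sharp N)|=|H_1(M)||H_1(N)|$, finiteness of $\sum_p c_p\nu_p(M)$ so that the full product $\prod_p\mathbb{Q}\,\nu_p$ consists of genuine invariants, and injectivity of $\mathcal{I}_1^c\to\mathcal{I}_1/\mathcal{I}_0$ because the only additive constant invariant is $0$) are the right details to supply and all hold.
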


In Section \ref{secadd}, we prove:
\begin{proposition} \label{propdual}
 For $n>1$, $\displaystyle \frac{\mathcal{I}_n^c}{\mathcal{I}_{n-1}^c}\cong(\mathcal{A}_\frac{n}{2}^c)^*$ if $n$ is even,
and $\displaystyle \frac{\mathcal{I}_n^c}{\mathcal{I}_{n-1}^c}\cong 0$ if $n$ is odd.
\end{proposition}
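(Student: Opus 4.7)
The plan is to dualize Proposition \ref{propdual}, reducing it to a calculation of $\mathcal{G}_n$ modulo its ``decomposables'' under connected sum, and then to identify this quotient using the standard diagrammatic surjection together with the universality of the KKT invariant.

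First I would set up the algebra. Let $\mathcal{F}_+:=\ker(\mathcal{F}_0\to\mathbb{Q})$, the kernel of the augmentation sending each $\mathbb{Q}$HS to $1$. A short check gives $\mathcal{F}_+=\mathcal{F}_1$ (one writes $M-S^3=-[S^3;\frac{M\setminus B^3}{B^3}]$) and $\mathcal{F}_i\sharp\mathcal{F}_j\subset\mathcal{F}_{i+j}$ (two disjoint LP-surgery families can be placed on opposite sides of the connected-sum sphere). An invariant $\lambda\in\mathcal{I}_n$ is additive iff $\lambda(S^3)=0$ and $\lambda$ vanishes on $\mathcal{F}_+\sharp\mathcal{F}_+$. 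Using the splitting $\mathcal{F}_0=\mathbb{Q}\cdot S^3\oplus\mathcal{F}_+$ and the exactness of $\mathrm{Hom}(-,\mathbb{Q})$, a direct diagram chase yields
$$\mathcal{I}_n^c/\mathcal{I}_{n-1}^c\;\cong\;(\mathcal{G}_n/\mathcal{D}_n)^*,$$
where $\mathcal{D}_n\subset\mathcal{G}_n$ is the image of $\mathcal{F}_+\sharp\mathcal{F}_+\cap\mathcal{F}_n$. So it suffices to show $\mathcal{G}_n/\mathcal{D}_n\cong\mathcal{A}^c_{n/2}$ for $n$ even and $\mathcal{G}_n=\mathcal{D}_n$ for $n>1$ odd.

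Second, for even $n$ I would define a surjection $\Psi:\mathcal{A}^c_{n/2}\twoheadrightarrow\mathcal{G}_n/\mathcal{D}_n$ by $\Gamma\mapsto[S^3;\tilde\Gamma]$, as in Lemma \ref{lemmaphi}; the AS and IHX relations follow from the local arguments of \cite{GGP},\cite{AL}, which carry over unchanged to the rational setting. For its surjectivity and for the odd case, I would use the fact (Matveev, \cite{AL}, \cite{Mas}) that any LP-surgery on a $\mathbb{Q}$HH is equivalent to a sequence of borromean surgeries on Y-graphs and replacements of $B^3$ by rational homology balls $B_p$. Unfolding this decomposition expresses each element of $\mathcal{G}_n$, modulo $\mathcal{F}_{n+1}$, as a linear combination of brackets $[S^3;\Gamma_1,\ldots,\Gamma_n]$ in which each $\Gamma_i$ is either a Y-graph or a weighted ball. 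By the $\sharp$-compatibility of Paragraph~1, any such bracket whose underlying augmented-diagram shape is separated by a sphere lies in $\mathcal{D}_n$. A degree count shows that connected shapes of total degree $n>1$ only exist when $n$ is even, namely connected Jacobi diagrams of degree $n/2$: a weighted vertex contributes degree $1$ and can be connected only if it is the whole shape, i.e.\ if $n=1$.

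Third, for injectivity of $\Psi$ in even degree I would pair with the KKT invariant. Since $Z_{KKT}$ is group-like under connected sum, the degree-$n/2$ part of $\log Z_{KKT}$ is an additive invariant with values in $\mathcal{A}^c_{n/2}$; pairing with $\Gamma\in\mathcal{A}^c_{n/2}$ gives an additive invariant of degree $\le n$, and by Lescop's universality theorem \cite{Les} its restriction to the image of $\Psi$ detects $\Gamma$ itself up to a nonzero scalar. Hence these invariants separate $\mathcal{A}^c_{n/2}$ and $\Psi$ is injective. The main obstacle is the generation step of Paragraph~2: faithfully reducing an arbitrary LP-surgery bracket modulo $\mathcal{F}_{n+1}$ to the combinatorial form above and verifying that disconnected shapes are genuinely decomposable under $\sharp$; this is the technical heart of the argument and is where the weighted-vertex surgeries from the rational setting (as opposed to the integer one) must be carefully tracked.
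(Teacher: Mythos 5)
Your overall architecture (dualize to a statement about $\mathcal{G}_n$ modulo decomposables, generate by combinatorial brackets, separate with the KKT invariant) is parallel to the paper's, and your third step is essentially the paper's Lemma \ref{lemmainvadd}. But there is a genuine gap in your second step, and it is exactly where the rational setting differs from the integral one. You claim that ``any LP-surgery on a $\mathbb{Q}$HH is equivalent to a sequence of borromean surgeries on Y-graphs and replacements of $B^3$ by rational homology balls $B_p$,'' citing Matveev, \cite{AL} and \cite{Mas}. No such reduction exists in those references: Matveev's theorem concerns closed manifolds, and \cite[Lemma 4.11]{AL} applies only to $\mathbb{Z}$HH's. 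The correct reduction is Theorem \ref{thelsur} of this paper, and it unavoidably involves a third type of elementary surgery, the genus~$1$ replacement of a standard solid torus by a $d$-torus. These genus~$1$ moves cannot be absorbed into connected sums and borromean surgeries at the level of $\mathbb{Q}$HH's, so after unfolding an LP-surgery bracket you are left with brackets containing $d$-torus replacements, which fit neither into your ``Y-graph'' nor your ``weighted ball'' shapes, and your degree count does not apply to them.

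Disposing of these genus~$1$ terms is the technical heart of the paper's proof and requires a separate argument that your proposal does not supply: one must show that an additive invariant of degree $n>1$ vanishes on every bracket $[M;\frac{T_d}{T_0},(\frac{A_i'}{A_i})_{1\leq i\leq n-1}]$ (Lemma \ref{lemmat2}). The paper does this by freezing everything except the torus, obtaining a degree~$1$ invariant $\bar\lambda$ of \emph{framed rational homology tori}, and then invoking the classification $\mathcal{G}_1(\Sigma_1)=\bigoplus_{p}\mathbb{Q}[T_0;\frac{B_p}{B^3}]$ (Proposition \ref{propinvtori}). That classification in turn rests on a doubling trick (Lemma \ref{lemmasub} and Lemma \ref{lemmatype2}: performing the genus~$1$ surgery twice in parallel rewrites $2[T;\frac{E'}{E}]$ modulo $\mathcal{F}_2(\Sigma_1)$ as a connected sum plus borromean surgeries) and on the vanishing of alternating trilinear forms on $H_1(T;\mathbb{Q})\cong\mathbb{Q}$ (Lemma \ref{lemmaF1}). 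None of this is a routine ``careful tracking of weighted vertices''; without it, your surjectivity claim for $\Psi$ in the even case and your vanishing claim in the odd case are both unproved.
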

In this proof, we will use the description of finite type invariants of degree 1 of framed rational homology tori 
given in Subsection \ref{subsectori}.

In Section \ref{secfil}, we use the structures of graded algebras on $\mathcal{G}$ and $\mathcal{H}$ in order to show 
that any finite type invariant $\lambda$ such that $\lambda(S^3)=0$ can be written as a sum of products of additive invariants.
More precisely, let $\mathcal{I}_n^\pi$ denote the subspace of $\mathcal{I}_n$ generated by all the products 
$\prod_{1\leq i\leq k} \lambda_i$, where $k>1$, the $\lambda_i$ are additive invariants of degree $k_i<n$, 
and $\sum_{1\leq i\leq k} k_i\leq n$. 
Our version of the Milnor-Moore theorem about the structure of Hopf algebras implies that:
\begin{proposition} \label{propadd} 
For all $n>0$, $\mathcal{I}_n=\mathcal{I}_0\oplus\mathcal{I}_n^c\oplus\mathcal{I}_n^\pi$.
\end{proposition}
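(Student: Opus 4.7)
My plan is to endow $\mathcal{G}$ with the structure of a graded, connected, commutative, cocommutative Hopf algebra over $\mathbb{Q}$, and to invoke the Milnor--Moore structure theorem applied to its dual.

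The product on $\mathcal{F}_0$ is connected sum (unit $S^3$), and $\mathcal{F}_n$ is an ideal since $[M;(\tfrac{B_i}{A_i})]\sharp N=[M\sharp N;(\tfrac{B_i}{A_i})]$. The coproduct is the grouplike map $\Delta(M)=M\otimes M$ with counit $\epsilon(M)=1$. The key technical identity, which I would verify by direct expansion (the $n=1$ case already captures the phenomenon), is
\[
\Delta\bigl([M;(\tfrac{B_i}{A_i})_{1\le i\le n}]\bigr)=\sum_{I\sqcup I'=\{1,\ldots,n\}}[M;(\tfrac{B_i}{A_i})_{i\in I}]\otimes\bigl[M\bigl((\tfrac{B_i}{A_i})_{i\in I}\bigr);(\tfrac{B_i}{A_i})_{i\in I'}\bigr].
\]
This places $\Delta(\mathcal{F}_n)$ inside $\sum_{i+j=n}\mathcal{F}_i\otimes\mathcal{F}_j$, so $\Delta$ descends to $\mathcal{G}$; cocommutativity is immediate from the symmetry of the sum. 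Since $\mathcal{G}_0=\mathbb{Q}\cdot S^3$, the connected graded commutative cocommutative bialgebra $\mathcal{G}$ is automatically a Hopf algebra.

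Dualizing yields on $\mathcal{H}$ the graded commutative cocommutative Hopf structure whose multiplication is the pointwise product of invariants, and whose primitives $P(\mathcal{H})_n$ are the classes of additive invariants of degree $n$ (since the additivity condition $\lambda(M\sharp N)=\lambda(M)+\lambda(N)$ is precisely the primitive equation dual to $\sharp$). The Milnor--Moore theorem in characteristic zero then gives an isomorphism of graded Hopf algebras $\mathcal{H}\cong S(P(\mathcal{H}))$, whence for each $n\ge 1$
\[
\mathcal{H}_n = P(\mathcal{H})_n\oplus\bigl[S^{\ge 2}(P(\mathcal{H}))\bigr]_n,
\]
the second summand being the products of at least two positive-degree primitives of total degree $n$.

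The Proposition now follows by induction on $n$. The base case $n=1$ is Corollary \ref{cordeg1} together with $\mathcal{I}_1^\pi=0$. For the inductive step, given $\lambda\in\mathcal{I}_n$, write $[\lambda]\in\mathcal{H}_n$ as primitive plus decomposable and lift: the primitive part to some $\mu_c\in\mathcal{I}_n^c$, and the decomposable part to a product $\mu_\pi\in\mathcal{I}_n^\pi$ of additive representatives of its factors; then $\lambda-\mu_c-\mu_\pi\in\mathcal{I}_{n-1}$, and the inductive hypothesis applies. For directness of the sum, evaluation at $S^3$ kills both $\mathcal{I}_n^c$ and $\mathcal{I}_n^\pi$ (since additives vanish on $S^3$ by $\lambda(S^3\sharp S^3)=2\lambda(S^3)$), so $c+\mu_c+\mu_\pi=0$ forces $c=0$; projecting the remaining equation to $\mathcal{H}_n$ and using the Milnor--Moore direct sum of primitives and decomposables forces $\mu_c,\mu_\pi\in\mathcal{I}_{n-1}$, and the induction closes the argument. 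The main obstacle is the first step, specifically the subset-splitting identity for $\Delta$ and its consequence for the filtration; once the Hopf algebra structure on $\mathcal{G}$ is in place, the rest of the argument (duality, Milnor--Moore, inductive lifting) is formal.
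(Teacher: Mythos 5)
Your overall strategy is the one the paper itself has in mind --- it describes its argument as ``our version of the Milnor--Moore theorem'' and as ``an explicit and elementary proof of this result in our setting'' --- and your subset-splitting identity for $\Delta$ is correct: it is exactly the dual form of the paper's formula
$\lambda\mu([M;(\frac{B_i}{A_i})_{i\in I}])=\sum_{J\subset I}\lambda([M;(\frac{B_i}{A_i})_{i\in J}])\,\mu([M((\frac{B_i}{A_i})_{i\in J});(\frac{B_i}{A_i})_{i\in I\setminus J}])$,
and it does make $\mathcal{G}$ a connected graded bicommutative Hopf algebra. The gap is in the claim that ``the rest \dots is formal.'' Because $\mathcal{G}_1\cong\bigoplus_{p}\mathbb{Q}(M_p-S^3)$ is infinite-dimensional, $\mathcal{H}$ is \emph{not} a Hopf algebra: the coproduct dual to $\sharp$ lands only in a completed tensor product. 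Concretely, $\lambda=\sum_{p}\nu_p^2$ is a well-defined element of $\mathcal{I}_2$ (the sum is locally finite), and $\lambda(M\sharp N)-\lambda(M)-\lambda(N)=2\sum_p\nu_p(M)\nu_p(N)$ cannot be written as a finite sum $\sum_j\lambda_j'(M)\lambda_j''(N)$, since the bilinear form $\sum_p\nu_p\otimes\nu_p$ has infinite rank on $\mathcal{G}_1$. For the same reason the conclusion you want from Milnor--Moore, $\mathcal{H}_n=P(\mathcal{H})_n\oplus[S^{\geq 2}(P(\mathcal{H}))]_n$ with the literal symmetric powers, fails: this $\lambda$ is neither additive nor congruent modulo $\mathcal{I}_1$ to a finite linear combination of products of additive invariants. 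So the theorem cannot be invoked off the shelf on $\mathcal{H}$; one must either apply it to $\mathcal{G}$ and dualize into \emph{completed} symmetric powers (direct products rather than direct sums --- note the paper states $\frac{\mathcal{I}_n^\pi}{\mathcal{I}_{n-1}^\pi}=\prod_{i\in\mathcal{T}_n^\pi}\mathbb{Q}\lambda_{n,i}$, not $\bigoplus$), or reprove the structure result by hand while carrying these infinite, locally finite sums along.

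The paper does the latter, and that is precisely the content your proposal skips. It shows that $\mu(M_1,M_2)=\lambda(M_1\sharp M_2)-\lambda(M_1)-\lambda(M_2)$ has degree at most $n-1$ in $M_1$ for fixed $M_2$ (via $\mu([M;(\frac{A_i'}{A_i})_{1\leq i\leq n}],M_2)=-\lambda([M;(\frac{A_i'}{A_i})_{1\leq i\leq n},\frac{B_2}{B^3}])=0$), expands $\mu$ in the inductively constructed dual systems $(G_{k,i}^{(n)})$ and $(\lambda_{k,i})$ of Subsection \ref{subsecdual}, shows the coefficient functionals $\beta_{k,i}$ have degree at most $n-k$, and then uses commutativity and associativity of $\sharp$ to force the coefficients into the symmetric binomial pattern; the upshot is an explicit, possibly infinite but locally finite, correction $\sum_{m,\iota}\alpha_{m,\iota}\prod_t\lambda_{k_t,i_t}^{\varepsilon_t}$ whose subtraction from $\lambda$ is additive. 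Your inductive lifting and the directness of the sum are fine once that decomposition is in hand, but they are the easy part; the hard part is making the Milnor--Moore mechanism work in this non-finite-type setting, and that is where your argument currently has a hole.
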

We will obtain this result as a consequence of Proposition \ref{propinduction}.

In order to describe the spaces of additive invariants, we shall prove that LP-surgeries can be reduced to more specific moves.
\begin{definition}
 Consider a positive integer $d$. We call \emph{$d$-torus} a rational homology torus such that: 
\begin{itemize}
 \item $H_1(\partial T_d;\mathbb{Z})=\mathbb{Z} \alpha \oplus \mathbb{Z}\beta$, with $<\alpha,\beta>=1$,
 \item $d\alpha=0$ in $H_1(T_d;\mathbb{Z})$,
 \item $\beta=d\gamma$ in $H_1(T_d;\mathbb{Z})$, where $\gamma$ is a curve in $T_d$,
 \item $H_1(T_d;\mathbb{Z})=\mathbb{Z}_d \alpha \oplus \mathbb{Z} \gamma$.
\end{itemize}
\end{definition}

\begin{definition}
 An \emph{elementary surgery} is an LP-surgery among the following ones:
\begin{enumerate}
 \item connected sum (genus 0),
 \item LP-replacement of a standard torus by a $d$-torus (genus 1),
 \item borromean surgery (genus 3).
\end{enumerate}
\end{definition}

In Section \ref{secelsur}, we prove:
\begin{theorem} \label{thelsur}
 If $A$ and $B$ are two $\mathbb{Q}$HH's with LP-identified boundaries, then $B$ can be obtained from $A$ 
by a finite sequence of elementary surgeries and their inverses in the interior of the $\mathbb{Q}$HH's.
\end{theorem}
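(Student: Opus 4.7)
The plan is to prove Theorem~\ref{thelsur} by a two-step reduction: first eliminate the torsion in $H_{1}(A;\mathbb{Z})$ and $H_{1}(B;\mathbb{Z})$ using only genus-$1$ elementary surgeries, and then appeal to the integral analogue of the statement already established in the literature.

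The key intermediate result I would aim for is a reduction lemma: \emph{every $\mathbb{Q}$HH $A$ is equivalent, modulo elementary surgeries performed in its interior, to a $\mathbb{Z}$HH $\widetilde{A}$ of the same genus with $\partial\widetilde{A}=\partial A$}. Since an LP-surgery in the interior of $A$ does not change $\partial A$ and, via a Mayer--Vietoris argument, also does not change the Lagrangian $\mathcal{L}_{A}\subset H_{1}(\partial A;\mathbb{Q})$, such a reduction preserves all the boundary data. I would prove the lemma by induction on $|H_{1}(A;\mathbb{Z})_{\textrm{tors}}|$, the base case being $A$ itself a $\mathbb{Z}$HH. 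In the inductive step, I choose a prime $p$ dividing this torsion and a class $x\in H_{1}(A;\mathbb{Z})$ of order $p$, represent $x$ by an embedded oriented simple closed curve $\delta\subset\textrm{int}(A)$, and, using $px=0$, find an embedded compact oriented surface $F\subset A$ with $\partial F=p\delta$. After adjusting $F$ by ambient isotopy, compression and tubing, a regular neighborhood of $\delta\cup F$ should produce an embedded $p$-torus $T_{p}\subset A$. Replacing $T_{p}$ by a standard solid torus is an inverse elementary surgery of type~(2), and a short Mayer--Vietoris computation shows that this surgery kills exactly the $\mathbb{Z}/p$ summand generated by $x$, strictly decreasing $|H_{1}(A;\mathbb{Z})_{\textrm{tors}}|$.

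Once $A$ and $B$ are reduced to $\mathbb{Z}$HHs $\widetilde{A}$ and $\widetilde{B}$ with LP-identified boundaries, I invoke the theorem of Matveev~\cite{Mat}, in the handlebody formulation used by Auclair and Lescop in~\cite{AL}: any two such $\mathbb{Z}$HHs differ by a finite sequence of borromean surgeries, which are elementary surgeries of type~(3), in their interiors. Concatenating the reduction from $A$ to $\widetilde{A}$, these borromean surgeries from $\widetilde{A}$ to $\widetilde{B}$, and the inverse of the reduction from $B$ to $\widetilde{B}$, yields the desired sequence of elementary surgeries from $A$ to $B$.

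The main obstacle lies in the geometric construction of the $p$-torus inside $A$: one must produce a genus-$1$ submanifold whose integral homology has exactly the structure required in the definition of a $p$-torus (including the relation $\beta=d\gamma$ between boundary and interior classes). The surface $F$ bounding $p\delta$ may have arbitrary genus and complicated intersections with $\delta$, and a naive regular neighborhood of $\delta\cup F$ will generally have higher-genus boundary. Careful use of compressions, framing adjustments, and auxiliary type-(1) connected sums to localise the torsion into a standard region should allow one to simplify this neighborhood into a genuine $p$-torus. A secondary verification is that the process can be iterated without the torsion ever growing back, so that each step makes measurable progress toward the $\mathbb{Z}$HH goal.
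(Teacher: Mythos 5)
Your two-step plan founders on its first step: the reduction lemma you propose (eliminating the torsion of $H_1(A;\mathbb{Z})$ by genus-$1$ elementary surgeries in $Int(A)$) is false, for a numerical reason. Set $\theta_p(A)=v_p\bigl(|\mathcal{L}_A^T/\mathcal{L}_A^\mathbb{Z}|\cdot|Tors(H_1(A;\mathbb{Z}))|\bigr)$; by Lemma \ref{lemmaduality} this equals $v_p(|H_1(\widehat{A})|)$ for the $\mathbb{Q}$HS $\widehat{A}$ obtained by gluing a standard handlebody to $A$ so that the curves $\beta_i$ bound disks. A genus-$1$ elementary surgery replaces a standard torus by a $d$-torus, and a Mayer--Vietoris determinant computation (cf.\ Sublemma \ref{sublemmasur}, where $H_1$ becomes $\mathbb{Z}_d\alpha\oplus\mathbb{Z}_d\gamma$) shows that this multiplies $|H_1|$ of any ambient $\mathbb{Q}$HS by exactly $d^2$ --- the point being that $|\mathcal{L}_{T_d}^T/\mathcal{L}_{T_d}^\mathbb{Z}|\cdot|Tors(H_1(T_d))|=d\cdot d$. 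A borromean surgery preserves $|H_1|$ of the ambient closed manifold (Matveev). Hence $\theta_p$ is invariant modulo $2$ under elementary surgeries of genus $1$ and $3$ performed in $Int(A)$, while $\theta_p=0$ for a $\mathbb{Z}$HH and $\theta_p=1$ for, say, a punctured lens space $L(p,1)$. So no sequence of genus-$1$ and genus-$3$ moves can turn a punctured $L(p,1)$ into an integral homology ball; the same count shows a punctured $L(p,1)$ contains no embedded $p$-torus at all (replacing one by a solid torus would produce a $\mathbb{Q}$-homology ball with $|H_1|=1/p$), so the geometric construction you flag as ``the main obstacle'' is not merely delicate but impossible. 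The genus-$0$ move is the only one that can change $\theta_p$ by an odd amount, so it cannot be relegated to an auxiliary role.

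This is also why the paper organizes the proof differently. There the torsion of $H_1(A;\mathbb{Z})$ is removed by a \emph{single connected sum}: when $\mathcal{L}_A^T/\mathcal{L}_A^\mathbb{Z}=0$, Lemmas \ref{lemma3} and \ref{lemma4} (tunneling plus \cite[Lemma 4.11]{AL}) show that $A$ is borromean-equivalent to $H_g\sharp M$ for a $\mathbb{Q}$HS $M$ realizing the linking form of $Tors(H_1(A))$ (Corollary \ref{corcaspart}). The genus-$1$ surgeries serve an entirely different purpose: the induction is on $|\mathcal{L}_A^T/\mathcal{L}_A^\mathbb{Z}|=\prod_i d_i$, i.e.\ on the divisibility defect in the relations $\beta_i=d_i\gamma_i+t_i$, and one $d_1$-torus replacement together with a tunnel around $\gamma$ strictly decreases this quantity at the cost of one extra handle, with Lemma \ref{lemmacaspart2} closing the loop. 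If you want to keep a ``kill the torsion first'' outline, the killing mechanism must be the connected sum, and you then still need an argument in the spirit of Lemma \ref{lemma4} to show that the torsion can be localized into a connected summand --- which is where the real work lies.
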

This proposition generalises a result of Auclair and Lescop which says that any two $\mathbb{Z}$HH's 
with LP-identified boundaries can be obtained from one another by a finite sequence of borromean surgeries in the interior 
of the $\mathbb{Z}$HH's (\cite[Lemma 4.11]{AL}).

In Section \ref{secborro}, we recall some facts about borromean surgeries proved in \cite{GGP}, and we give consequences 
of these facts that are useful in the sequel.

\paragraph{Acknowledgements} I wish to thank the referee for his careful reading. My thanks also go to my advisor, Christine Lescop, 
for her helpful advice and rigorous supervision. 

    \section{Elementary surgeries} \label{secelsur}

      \subsection{Homological properties of $\mathbb{Q}$HH's}

\begin{definition}
 Consider the genus $g$ compact surface $\Sigma_g$. A basis $(\alpha_i,\beta_i)_{1\leq i\leq g}$ of $H_1(\Sigma_g;\mathbb{Z})$
is called \emph{symplectic} if the matrix of the intersection form in $(\alpha_1,..\alpha_g,\beta_1,..,\beta_g)$ 
is $\begin{pmatrix} 0 & I_g \\ -I_g & 0 \end{pmatrix}$.
\end{definition}

\paragraph{Notation} We denote by $Tors(H)$ the torsion submodule of a module $H$.

\begin{lemma} \label{lemmaprel}
If $A$ is a genus $g$ $\mathbb{Q}$HH, then:
\begin{itemize}
 \item $H_1(A;\mathbb{Z})\cong\mathbb{Z}^g\oplus Tors(H_1(A;\mathbb{Z}))$,
 \item $H_2(A;\mathbb{Z})=0$,
 \item $H_2(A,\partial A;\mathbb{Z})\cong (\frac{H_1(A;\mathbb{Z})}{Tors(H_1(A;\mathbb{Z}))})^*\cong\mathbb{Z}^g$.
\end{itemize} 
\end{lemma}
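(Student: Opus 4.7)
The plan is to derive all three assertions from standard algebraic topology on the compact connected orientable 3-manifold with boundary $A$: the structure theorem for finitely generated abelian groups, the universal coefficient theorem (UCT), and Poincaré-Lefschetz duality. Before tackling the three bullets I would note that $\partial A$ is connected (it is the standard genus $g$ surface), that $A$ is orientable with a preferred fundamental class $[A,\partial A]\in H_3(A,\partial A;\mathbb{Z})\cong\mathbb{Z}$, that $H_3(A;\mathbb{Z})=0$ since $A$ has nonempty boundary, and that $H_0(A;\mathbb{Z})\cong\mathbb{Z}$ is free.

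For the first bullet, $H_1(A;\mathbb{Z})$ is a finitely generated abelian group and thus splits as $\mathbb{Z}^r\oplus Tors(H_1(A;\mathbb{Z}))$. The rank $r$ equals $\dim_{\mathbb{Q}}H_1(A;\mathbb{Z})\otimes\mathbb{Q}=\dim_{\mathbb{Q}}H_1(A;\mathbb{Q})=g$ by the $\mathbb{Q}$HH hypothesis.

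For the third bullet, I would apply Poincaré-Lefschetz duality to obtain $H_2(A,\partial A;\mathbb{Z})\cong H^1(A;\mathbb{Z})$, then invoke UCT to split
\[H^1(A;\mathbb{Z})\cong\operatorname{Hom}(H_1(A;\mathbb{Z}),\mathbb{Z})\oplus\operatorname{Ext}(H_0(A;\mathbb{Z}),\mathbb{Z}).\]
The Ext term vanishes because $H_0(A;\mathbb{Z})\cong\mathbb{Z}$ is free. Since any homomorphism to $\mathbb{Z}$ annihilates torsion, $\operatorname{Hom}(H_1(A;\mathbb{Z}),\mathbb{Z})\cong\bigl(H_1(A;\mathbb{Z})/Tors(H_1(A;\mathbb{Z}))\bigr)^{\!*}\cong(\mathbb{Z}^g)^{*}\cong\mathbb{Z}^g$, using the first bullet.

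For the second bullet, I would use the pair long exact sequence
\[H_3(A;\mathbb{Z})\to H_3(A,\partial A;\mathbb{Z})\to H_2(\partial A;\mathbb{Z})\to H_2(A;\mathbb{Z})\to H_2(A,\partial A;\mathbb{Z}).\]
The leftmost group is $0$, and the connecting map $\mathbb{Z}\cong H_3(A,\partial A;\mathbb{Z})\to H_2(\partial A;\mathbb{Z})\cong\mathbb{Z}$ sends $[A,\partial A]$ to the fundamental class $[\partial A]$, so it is an isomorphism. Hence $H_2(A;\mathbb{Z})$ injects into $H_2(A,\partial A;\mathbb{Z})\cong\mathbb{Z}^g$ (third bullet) and is therefore free; but $H_2(A;\mathbb{Q})=0$ forces its rank to be $0$, so $H_2(A;\mathbb{Z})=0$. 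There is no real obstacle here; the only thing to watch is the order of the arguments, since the second bullet uses the third to conclude that the target of the injection is free.
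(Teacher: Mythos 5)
Your proposal is correct, and for the first and third bullets it is essentially identical to the paper's proof (rank count over $\mathbb{Q}$ for the first; Poincar\'e--Lefschetz duality $H_2(A,\partial A;\mathbb{Z})\cong H^1(A;\mathbb{Z})$ plus the universal coefficient theorem for the third). The only real divergence is in the second bullet. The paper gets torsion-freeness of $H_2(A;\mathbb{Z})$ directly from duality on the other side, $H_2(A;\mathbb{Z})\cong H^1(A,\partial A;\mathbb{Z})\cong \operatorname{Hom}(H_1(A,\partial A;\mathbb{Z}),\mathbb{Z})$, and then kills it with $H_2(A;\mathbb{Q})=0$. You instead run the long exact sequence of the pair, use that $\partial\colon H_3(A,\partial A;\mathbb{Z})\to H_2(\partial A;\mathbb{Z})$ carries $[A,\partial A]$ to $[\partial A]$ and is therefore an isomorphism (here you need the connectedness of $\partial A$, which the paper records just after the definition), deduce that $H_2(A;\mathbb{Z})$ injects into the free group $H_2(A,\partial A;\mathbb{Z})$, and again finish with the rank count. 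Both arguments are sound; the paper's is a one-line application of the same duality-plus-UCT machinery it already set up for the third bullet, while yours trades that for a geometric fact about the fundamental class and forces the third bullet to be proved first, as you correctly note.
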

\begin{proof}
The first point is given by $H_1(A;\mathbb{Z})\otimes\mathbb{Q}\cong H_1(A;\mathbb{Q})\cong\mathbb{Q}^g$.

By the Poincar\'e duality, we have $H_2(A;\mathbb{Z})\cong H^1(A,\partial A;\mathbb{Z})$. The universal coefficient theorem gives
$H^1(A,\partial A;\mathbb{Z})\cong Hom(H_1(A,\partial A;\mathbb{Z}),\mathbb{Z})$. Hence
$H_2(A;\mathbb{Z})$ is torsion free. Since $H_2(A;\mathbb{Q})=0$, we get the second point.

The last point also follows from the Poincar\'e duality and the universal coefficient theorem:
$$H_2(A,\partial A;\mathbb{Z})\cong H^1(A;\mathbb{Z})\cong Hom(H_1(A;\mathbb{Z}),\mathbb{Z})\cong\mathbb{Z}^g.$$ 
\end{proof}

\begin{lemma} \label{lemmaduality}
Consider a genus $g$ $\mathbb{Q}$HH $A$. Consider the map $i_*: H_1(\partial A;\mathbb{Z}) \to H_1(A;\mathbb{Z})$ induced 
by the inclusion. Set: $$\mathcal{L}_A^\mathbb{Z}=Ker(i_*),\quad\mathcal{L}_A^T=(i_*)^{-1}(Tors(H_1(A;\mathbb{Z}))).$$
Then there is a symplectic basis $(\alpha_i,\beta_i)_{1\leq i\leq g}$ of $H_1(\partial A;\mathbb{Z})$, 
a family $(\gamma_i)_{1\leq i\leq g}$ of curves in $A$, and positive integers $d_i$, ${1\leq i\leq g}$, such that:
$$\mathcal{L}_A^\mathbb{Z}=\bigoplus_{1\leq i\leq g}\mathbb{Z}(d_i\alpha_i), \quad 
\mathcal{L}_A^T=\bigoplus_{1\leq i\leq g} \mathbb{Z} \alpha_i, \quad 
\frac{H_1(A;\mathbb{Z})}{Tors(H_1(A;\mathbb{Z}))}=\bigoplus_{1\leq i\leq g} \mathbb{Z}\gamma_i,$$ 
and $\beta_i=d_i\gamma_i$ in $\frac{H_1(A;\mathbb{Z})}{Tors(H_1(A;\mathbb{Z}))}$ for $1\leq i\leq g$.

In particular, $\displaystyle \frac{\mathcal{L}_A^T}{\mathcal{L}_A^\mathbb{Z}}$ and 
$\displaystyle \frac{H_1(A;\mathbb{Z})}{Tors(H_1(A;\mathbb{Z}))\oplus (\oplus_{1\leq i\leq g} \mathbb{Z} \beta_i)}$
are isomorphic to $\displaystyle \prod_{1\leq i\leq g} \frac{\mathbb{Z}}{d_i\mathbb{Z}}$.
\end{lemma}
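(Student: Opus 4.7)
The plan is to proceed in three stages: identify the relevant $\mathbb{Z}$-sublattices of $H_1(\partial A;\mathbb{Z})$, produce an adapted symplectic basis via Smith normal form, and use Poincar\'e-Lefschetz duality to pair the $\beta_i$ with suitable generators of $H_1(A;\mathbb{Z})/Tors(H_1(A;\mathbb{Z}))$.

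First I observe that $\mathcal{L}_A^T$ is the saturation of $\mathcal{L}_A^\mathbb{Z}$ in $H_1(\partial A;\mathbb{Z})\cong\mathbb{Z}^{2g}$, since $i_*(x)\in Tors(H_1(A;\mathbb{Z}))$ is equivalent to $nx\in\mathcal{L}_A^\mathbb{Z}$ for some positive integer $n$. Hence $\mathcal{L}_A^T$ is primitive, and its rational span coincides with the Lagrangian $\mathcal{L}_A^\mathbb{Z}\otimes\mathbb{Q}$ of $H_1(\partial A;\mathbb{Q})$, so $\mathcal{L}_A^T$ is a primitive isotropic sublattice of rank $g$. The standard fact that such a sublattice extends to a symplectic basis yields $(\alpha_i,\beta_i)_{1\leq i\leq g}$ of $H_1(\partial A;\mathbb{Z})$ with $\mathcal{L}_A^T=\bigoplus_i\mathbb{Z}\alpha_i$. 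Applying Smith normal form to $\mathcal{L}_A^\mathbb{Z}\hookrightarrow\mathcal{L}_A^T$, then modifying the $\alpha_i$ by some $M\in GL_g(\mathbb{Z})$ (and correspondingly the $\beta_i$ by the contragredient $M^{-T}$, to preserve symplecticity), produces positive integers $d_i$ with $\mathcal{L}_A^\mathbb{Z}=\bigoplus_i\mathbb{Z}(d_i\alpha_i)$.

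The technical heart, and my expected main obstacle, is constructing the $\gamma_i$ with $\beta_i=d_i\gamma_i$ modulo torsion. Since $H_2(A;\mathbb{Z})=0$ by Lemma \ref{lemmaprel}, the long exact sequence of the pair $(A,\partial A)$ shows that the connecting map $\partial:H_2(A,\partial A;\mathbb{Z})\to H_1(\partial A;\mathbb{Z})$ is injective with image $\mathcal{L}_A^\mathbb{Z}$, so it admits a basis $(S_i)$ with $\partial S_i=d_i\alpha_i$. Poincar\'e-Lefschetz duality and the universal coefficient theorem identify $H_2(A,\partial A;\mathbb{Z})$ with $Hom(H_1(A;\mathbb{Z})/Tors,\mathbb{Z})$, and the resulting pairing is computed, on classes coming from $\partial A$, by the intersection form on the boundary: $\langle S, i_*[c]\rangle=\partial S\cdot c$. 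Taking $(\gamma_i)$ to be the basis of $H_1(A;\mathbb{Z})/Tors$ dual to $(S_i)$, the computation $\partial S_i\cdot\beta_j=d_i\delta_{ij}$ on $\partial A$ gives $\pi(i_*\beta_j)=d_j\gamma_j$, where $\pi$ denotes the projection modulo torsion; the $\gamma_i$ are then realised by curves in $A$.

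The final statements follow: $\mathcal{L}_A^T/\mathcal{L}_A^\mathbb{Z}\cong\prod_i\mathbb{Z}/d_i\mathbb{Z}$ by inspection, while the relation $\pi(i_*\beta_i)=d_i\gamma_i\neq 0$ ensures $Tors(H_1(A;\mathbb{Z}))$ and the classes $i_*\beta_i$ lie in direct sum inside $H_1(A;\mathbb{Z})$, whose quotient identifies via $\pi$ with $(\bigoplus_i\mathbb{Z}\gamma_i)/(\bigoplus_i\mathbb{Z}\,d_i\gamma_i)\cong\prod_i\mathbb{Z}/d_i\mathbb{Z}$.
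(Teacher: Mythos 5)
Your argument is correct and follows essentially the same route as the paper's proof: an adapted (symplectic) basis for the pair $\mathcal{L}_A^\mathbb{Z}\subset\mathcal{L}_A^T\subset H_1(\partial A;\mathbb{Z})$ obtained from elementary divisors, then the long exact sequence of $(A,\partial A)$, the Poincar\'e--Lefschetz dual basis $(\gamma_i)$ to the $(S_i)$ with $\partial S_i=d_i\alpha_i$, and the computation $\langle S_j,\beta_i\rangle_A=\langle d_j\alpha_j,\beta_i\rangle_{\partial A}=\delta_{ij}d_j$. The only difference is cosmetic (you saturate first and then apply Smith normal form with a contragredient correction on the $\beta_i$, whereas the paper takes an adapted basis for $\mathcal{L}_A^\mathbb{Z}$ first and then adjusts the $\beta_i$ to be symplectic).
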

\begin{proof}
The exact sequence over $\mathbb{Z}$ associated with $(A,\partial A)$ yields the following exact sequence:
$$0 \to H_2(A,\partial A) \to H_1(\partial A) \fl{i_*} H_1(A),$$ 
thus $\mathcal{L}_A^\mathbb{Z}$ is a free submodule of rank $g$ of $H_1(\partial A;\mathbb{Z})$. 
Hence there is a basis $(\alpha_i,\beta_i)_{1\leq i\leq g}$ of $H_1(\partial A;\mathbb{Z})$, 
and positive integers $d_i$, ${1\leq i\leq g}$, such that $(d_i\alpha_i)_{1\leq i\leq g}$ is a basis of $\mathcal{L}_A^\mathbb{Z}$.
It follows that $\mathcal{L}_A^T=\bigoplus_{1\leq i\leq g} \mathbb{Z} \alpha_i$.
Since the intersection form is trivial on $\mathcal{L}_A^T$, we can choose the $\beta_i$ 
in such a way that the basis $(\alpha_i,\beta_i)_{1\leq i\leq g}$ is symplectic.

The boundary map $H_2(A,\partial A) \to H_1(\partial A)$ in the above exact sequence induces an isomorphism 
$H_2(A,\partial A;\mathbb{Z})\cong\mathcal{L}_A^\mathbb{Z}$.
Thus we can choose a basis $(S_i)_{1\leq i\leq g}$ of $H_2(A,\partial A;\mathbb{Z})$ such that 
$\partial S_i=d_i\alpha_i$ for $1\leq i\leq g$. 
Let $(\gamma_i)_{1\leq i\leq g}$ denote the basis of $\frac{H_1(A;\mathbb{Z})}{Tors(H_1(A;\mathbb{Z}))}$ 
Poincar\'e dual to $(S_i)_{1\leq i\leq g}$.

For $1\leq i,j\leq g$, $<S_j,\beta_i>_A=<d_j\alpha_j,\beta_i>_{\partial A}=\delta_{ij} d_i$, where $\delta_{ij}$ is the Kronecker 
delta, equal to 1 if $i=j$, and 0 otherwise. Thus $\beta_i=d_i\gamma_i$
in $\frac{H_1(A;\mathbb{Z})}{Tors(H_1(A;\mathbb{Z}))}$.
\end{proof}

\begin{corollary} \label{corz}
 Let $A$ be a $\mathbb{Q}$HH. If the map $H_1(\partial A;\mathbb{Z})\to H_1(A;\mathbb{Z})$, induced by the inclusion 
$\partial A \hookrightarrow A$, is surjective, then $A$ is a $\mathbb{Z}$HH.
\end{corollary}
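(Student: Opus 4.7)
The plan is to extract everything from Lemma \ref{lemmaduality} and track what surjectivity of $i_*$ forces on the invariants $d_i$ and on $\mathrm{Tors}(H_1(A;\mathbb{Z}))$. Since Lemma \ref{lemmaprel} already gives $H_2(A;\mathbb{Z})=0$ and $H_1(A;\mathbb{Z})\cong \mathbb{Z}^g\oplus\mathrm{Tors}(H_1(A;\mathbb{Z}))$, the only thing to check is that the torsion vanishes, which will make $A$ a $\mathbb{Z}$HH.

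First I would apply Lemma \ref{lemmaduality} to obtain a symplectic basis $(\alpha_i,\beta_i)$ of $H_1(\partial A;\mathbb{Z})$, curves $\gamma_i$ in $A$, and positive integers $d_i$ such that $i_*(\alpha_i)\in\mathrm{Tors}(H_1(A;\mathbb{Z}))$ and $i_*(\beta_i)\equiv d_i\gamma_i$ modulo torsion. Reducing $i_*$ modulo torsion, its image is generated by the $d_i\gamma_i$, so surjectivity of $i_*$ forces the quotient $\bigoplus\mathbb{Z}\gamma_i/\bigoplus\mathbb{Z}(d_i\gamma_i)\cong \prod\mathbb{Z}/d_i\mathbb{Z}$ to vanish, giving $d_i=1$ for all $i$.

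With every $d_i=1$, Lemma \ref{lemmaduality} yields $\mathcal{L}_A^T=\mathcal{L}_A^{\mathbb{Z}}$, so each $\alpha_i$ lies in $\ker(i_*)$ and the image of $i_*$ is exactly $\langle i_*(\beta_1),\dots,i_*(\beta_g)\rangle$. Writing $i_*(\beta_i)=\gamma_i+t_i$ with $t_i\in\mathrm{Tors}(H_1(A;\mathbb{Z}))$, any element of the image has the form $\sum n_i\gamma_i+\sum n_i t_i$, and lies in the torsion subgroup only when all $n_i=0$. Hence this image meets $\mathrm{Tors}(H_1(A;\mathbb{Z}))$ trivially. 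Surjectivity then forces $\mathrm{Tors}(H_1(A;\mathbb{Z}))=0$, so $H_1(A;\mathbb{Z})\cong\mathbb{Z}^g$ and $A$ is a $\mathbb{Z}$HH.

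The only real subtlety is remembering that surjectivity is a stronger condition than being onto modulo torsion: the first reduction gives $d_i=1$, but one still has to rule out torsion, which requires the observation that the lifts of $\gamma_i$ in the image form a free rank-$g$ subgroup meeting the torsion only at zero. Everything else is a direct unpacking of Lemma \ref{lemmaduality}.
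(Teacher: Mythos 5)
Your proof is correct and follows the route the paper intends: Corollary \ref{corz} is stated without proof as an immediate consequence of Lemma \ref{lemmaduality}, and your argument (surjectivity modulo torsion forces all $d_i=1$, whence the image of $i_*$ is the free subgroup generated by the $i_*(\beta_i)$, which meets the torsion trivially, so surjectivity kills the torsion) is exactly the natural unpacking of that lemma together with Lemma \ref{lemmaprel}. No gaps.
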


      \subsection{$d$-tori}

\begin{lemma} \label{lemmaTd}
 For any positive integer $d$, there exists a $d$-torus $T_d$.
\end{lemma}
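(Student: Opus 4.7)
My plan is to construct $T_d$ explicitly by a Dehn surgery on a cable knot inside the standard solid torus, and then verify the four homological conditions directly.

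First, I would start from $V=D^2\times S^1$ with its preferred meridian-longitude basis $(\mu,\lambda)$ of $\partial V$ ($\mu$ bounds a disk in $V$, $\lambda$ generates $H_1(V;\mathbb{Z})$). Pick a torus $T\subset V$ parallel to $\partial V$, and let $K\subset T$ be the simple closed curve representing $\mu+d\lambda$ in $H_1(T;\mathbb{Z})$ (a $(1,d)$-cable of the core). Equip $K$ with the framing induced by $T$, and write $(m,\ell)$ for the corresponding meridian-longitude basis of $\partial N(K)$, so that $\ell$ is a pushoff of $K$ inside $T$. I then define
$$T_d=(V\setminus\mathrm{int}\, N(K))\cup_\phi(S^1\times D^2),$$
where $\phi$ sends the meridian of the new solid torus to a curve on $\partial N(K)$ of class $\ell-dm$. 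Since $\gcd(-d,1)=1$, this Dehn filling is well-defined, and the resulting compact manifold has boundary $\partial V=T^2$.

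Next I would compute $H_1(T_d;\mathbb{Z})$. A meridian disk of $V$ meets $K$ algebraically $d$ times, which yields the relation $\mu=dm$ in $H_1(V\setminus N(K);\mathbb{Z})$. The $T$-framed longitude satisfies $[\ell]=[K]=\mu+d\lambda=d(m+\lambda)$ in $H_1(V\setminus N(K);\mathbb{Z})$. A short Mayer-Vietoris computation on $V=(V\setminus N(K))\cup N(K)$ then gives $H_1(V\setminus N(K);\mathbb{Z})=\mathbb{Z}\lambda\oplus\mathbb{Z} m$, and the filling relation $\ell-dm=0$ becomes $d\lambda=0$. Hence
$$H_1(T_d;\mathbb{Z})=\mathbb{Z}_d\lambda\oplus\mathbb{Z} m.$$

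Finally I would set $\alpha=\lambda$, $\beta=\mu$ and $\gamma=m$ (reversing orientations if needed to arrange $\langle\alpha,\beta\rangle=1$) and read off all four conditions: $(\alpha,\beta)$ is a symplectic basis of $H_1(\partial T_d;\mathbb{Z})$; $d\alpha=d\lambda=0$; $\beta=\mu=dm=d\gamma$; and $H_1(T_d;\mathbb{Z})=\mathbb{Z}_d\alpha\oplus\mathbb{Z}\gamma$. Lemma \ref{lemmaprel} together with a brief check in rational coefficients (via the long exact sequence of $(T_d,\partial T_d)$ and Poincar\'e-Lefschetz duality) confirms that $T_d$ is genuinely a genus-$1$ $\mathbb{Q}$HH. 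The only mildly nontrivial point is the algebraic intersection number $\langle D,K\rangle=d$ for the meridian disk $D$ and the cable $K$; everything else is orientation bookkeeping.
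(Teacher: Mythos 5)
Your construction is correct, but it follows a genuinely different route from the paper's. The paper builds $T_d$ by attaching a $2$-handle to a standard genus $2$ handlebody $A$ along a simple closed curve $c$ with $c=a_1+db_2$ in $H_1(\partial A;\mathbb{Z})$ (invoking Meyerson to realise this primitive class by an embedded curve), and then reads off $H_1(T_d;\mathbb{Z})=\langle b_1,b_2\mid db_2=0\rangle$ directly from the handle presentation; the curves $\alpha,\beta,\gamma$ are exhibited on $\partial A$. You instead drill out a $(1,d)$-curve $K$ on a boundary-parallel torus in the solid torus $V$ and refill along the slope $\ell-dm$, then extract the four conditions from the relations $\mu=dm$ and $\ell=\mu+d\lambda$ via Mayer--Vietoris. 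Your key computations check out: the punctured meridian disk gives $\mu=dm$, the exterior $V\setminus N(K)$ has $H_1\cong\mathbb{Z}\lambda\oplus\mathbb{Z}m$ (free, since it sits in a short exact sequence of free abelian groups), the filling kills $\ell-dm=d\lambda$, and the resulting manifold is a genus $1$ $\mathbb{Q}$HH by the long exact sequence of the pair over $\mathbb{Q}$. One cosmetic remark: citing Lemma \ref{lemmaprel} is not quite apposite, since that lemma describes consequences of being a $\mathbb{Q}$HH rather than a criterion for it; the substantive verification is the rational homology check you sketch, which does suffice. What each approach buys: the paper's handle attachment is arguably more economical (one presentation, no knot exterior to analyse) and fits the handlebody-centric viewpoint used elsewhere in the section, while your Dehn surgery description makes the roles of $\alpha=\lambda$, $\beta=\mu$, $\gamma=m$ and the relation $\beta=d\gamma$ especially transparent and only uses the standard realisation of primitive classes on $T^2$ by embedded curves.
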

\begin{proof}
 Consider the standard genus 2 handlebody $A$ represented in Figure \ref{figA}.
\begin{figure}[htb] 
\begin{center}
\begin{tikzpicture} [scale=0.5]
\draw (0,0) ..controls +(0,1) and +(-2,1) .. (4,2);
\draw (4,2) ..controls +(2,-1) and +(-2,-1) .. (8,2);
\draw (8,2) ..controls +(2,1) and +(0,1) .. (12,0);
\draw (0,0) ..controls +(0,-1) and +(-2,-1) .. (4,-2);
\draw (4,-2) ..controls +(2,1) and +(-2,1) .. (8,-2);
\draw (8,-2) ..controls +(2,-1) and +(0,-1) .. (12,0);

\draw (2,0) ..controls +(0.5,-0.25) and +(-0.5,-0.25) .. (4,0);
\draw (2.3,-0.1) ..controls +(0.6,0.2) and +(-0.6,0.2) .. (3.7,-0.1);
\draw (3,-0.2) ..controls +(0.2,-0.5) and +(0.2,0.5) .. (3,-2.3);
\draw[dashed] (3,-0.2) ..controls +(-0.2,-0.5) and +(-0.2,0.5) .. (3,-2.3);
\draw (3,0)ellipse(1.6 and 0.8);
\draw [->] (3.15,-1.2)--(3.15,-1.3);
\draw (3.65,-1.3) node {$\scriptstyle{a_1}$};
\draw [->] (4.6,-0.05)--(4.6,0.05);
\draw (5.1,0) node {$\scriptstyle{b_1}$};

\draw (8,0) ..controls +(0.5,-0.25) and +(-0.5,-0.25) .. (10,0);
\draw (8.3,-0.1) ..controls +(0.6,0.2) and +(-0.6,0.2) .. (9.7,-0.1);
\draw (9,-0.2) ..controls +(0.2,-0.5) and +(0.2,0.5) .. (9,-2.3);
\draw[dashed] (9,-0.2) ..controls +(-0.2,-0.5) and +(-0.2,0.5) .. (9,-2.3);
\draw (9,0)ellipse(1.6 and 0.8);
\draw [->] (9.15,-1.2)--(9.15,-1.3);
\draw (9.65,-1.3) node {$\scriptstyle{a_2}$};
\draw [->] (10.6,-0.05)--(10.6,0.05);
\draw (11.1,0) node {$\scriptstyle{b_2}$};
\end{tikzpicture}
\caption{The handlebody $A$}\label{figA}
\end{center}
\end{figure}
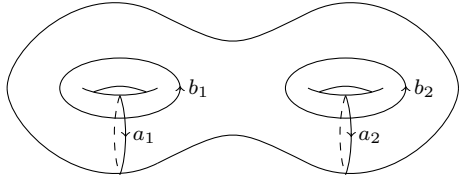
 Consider a curve $c$ on $\partial A$ such that $c=a_1+db_2$ in $H_1(\partial A;\mathbb{Z})$. According to Meyerson in \cite{Mey}, 
since $c$ is primitive, it can be chosen simple and closed. The torus $T_d$ will be obtained from $A$ by adding a 2-handle to $A$ 
along $c$ as follows. Define $T_d=A\cup_h (D^2\times [-1,1])$, where $h: \partial D^2\times[-1,1]\to \partial A$ is an embedding 
such that $h(\partial D^2\times \{0\})=c$.
We have $H_1(T_d;\mathbb{Z})=<b_1,b_2\,|\,db_2=0>$.

Moreover, we can define curves $\alpha$, $\beta$, $\gamma$, on $\partial A$, 
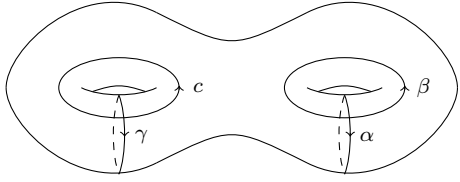
\begin{figure}[htb] 
\begin{center}
\begin{tikzpicture} [scale=0.5]
\draw (0,0) ..controls +(0,1) and +(-2,1) .. (4,2);
\draw (4,2) ..controls +(2,-1) and +(-2,-1) .. (8,2);
\draw (8,2) ..controls +(2,1) and +(0,1) .. (12,0);
\draw (0,0) ..controls +(0,-1) and +(-2,-1) .. (4,-2);
\draw (4,-2) ..controls +(2,1) and +(-2,1) .. (8,-2);
\draw (8,-2) ..controls +(2,-1) and +(0,-1) .. (12,0);

\draw (2,0) ..controls +(0.5,-0.25) and +(-0.5,-0.25) .. (4,0);
\draw (2.3,-0.1) ..controls +(0.6,0.2) and +(-0.6,0.2) .. (3.7,-0.1);
\draw (3,-0.2) ..controls +(0.2,-0.5) and +(0.2,0.5) .. (3,-2.3);
\draw[dashed] (3,-0.2) ..controls +(-0.2,-0.5) and +(-0.2,0.5) .. (3,-2.3);
\draw (3,0)ellipse(1.6 and 0.8);
\draw [->] (3.15,-1.2)--(3.15,-1.3);
\draw (3.6,-1.3) node {$\scriptstyle{\gamma}$};
\draw [->] (4.6,-0.05)--(4.6,0.05);
\draw (5.1,0) node {$\scriptstyle{c}$};

\draw (8,0) ..controls +(0.5,-0.25) and +(-0.5,-0.25) .. (10,0);
\draw (8.3,-0.1) ..controls +(0.6,0.2) and +(-0.6,0.2) .. (9.7,-0.1);
\draw (9,-0.2) ..controls +(0.2,-0.5) and +(0.2,0.5) .. (9,-2.3);
\draw[dashed] (9,-0.2) ..controls +(-0.2,-0.5) and +(-0.2,0.5) .. (9,-2.3);
\draw (9,0)ellipse(1.6 and 0.8);
\draw [->] (9.15,-1.2)--(9.15,-1.3);
\draw (9.6,-1.3) node {$\scriptstyle{\alpha}$};
\draw [->] (10.6,-0.05)--(10.6,0.05);
\draw (11.1,0) node {$\scriptstyle{\beta}$};
\end{tikzpicture}
\caption{The surface $\partial A$}\label{figbord}
\end{center}
\end{figure}
with $\alpha=b_2$, $\beta=-a_2-db_1$ and $\gamma=-b_1$ in $H_1(\partial A;\mathbb{Z})$ such that the boundary of $A$ 
is homeomorphic to the surface represented in Figure \ref{figbord}.
We get $H_1(T_d;\mathbb{Z})=<\gamma,\alpha\,|\,d\alpha=0>$, 
and $H_1(\partial T_d;\mathbb{Z})=\mathbb{Z}\alpha\oplus\mathbb{Z}\beta$.\end{proof}

Given a curve $\gamma$ in a 3-manifold $M$, we will call {\em exterior of $\gamma$ in $M$} the complement of the open tubular 
neighborhood of $\gamma$ in $M$.

\begin{lemma} \label{lemmameridian}
  Let $d$ be a positive integer. Let $T_d$ be a $d$-torus. Let  $\gamma$ be 
a curve in $T_d$ whose homology class generates $\frac{H_1(T_d;\mathbb{Z})}{Tors(H_1(T_d;\mathbb{Z}))}$. 
Let $m(\gamma)$ and $\ell(\gamma)$ be respectively a meridian and a parallel of $\gamma$. 
For any integer $k$, there is a symplectic basis $(\alpha,\beta)$ of $H_1(\partial T_d;\mathbb{Z})$ such that $d\alpha=0$ 
in $H_1(T_d;\mathbb{Z})$, and such that the curve $\beta-d\ell(\gamma)+km(\gamma)$ bounds a surface in the exterior of $\gamma$ in $T_d$.
\end{lemma}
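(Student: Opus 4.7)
The plan is to choose $\beta$ so that $\beta - d\ell(\gamma) + k\,m(\gamma)$ becomes null-homologous in the exterior $E = T_d \setminus \mathrm{Int}\,N(\gamma)$, and then invoke the standard fact that a null-homologous $1$-cycle on the boundary of a compact oriented $3$-manifold bounds a properly embedded oriented surface.

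First I would apply Lemma \ref{lemmaduality} with $g = 1$ to obtain a symplectic basis $(\alpha_0,\beta_0)$ of $H_1(\partial T_d;\mathbb{Z})$, a generator $S$ of $H_2(T_d,\partial T_d;\mathbb{Z})$ with $\partial S = d\alpha_0$, and a curve $\gamma_0 \subset T_d$ Poincar\'e-dual to $S$ in $H_1(T_d)/\mathrm{Tors}$, with $\beta_0 = d\gamma_0$ modulo torsion. Since $H_1(\partial T_d;\mathbb{Z})$ is torsion-free, a primitive $\alpha$ satisfies $d\alpha = 0$ in $H_1(T_d)$ iff $\alpha = \pm\alpha_0$; so in any valid symplectic basis $(\alpha,\beta)$ one has $\alpha = \pm\alpha_0$ and $\beta = \beta_0 + n\alpha$ for some $n \in \mathbb{Z}$. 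After possibly flipping signs, I would arrange $[\beta_0] = d[\gamma]$ in $H_1(T_d;\mathbb{Z})$, using that $[\gamma] = \pm[\gamma_0]$ modulo torsion and that $d$ kills the torsion of $H_1(T_d)$. The Mayer--Vietoris sequence for $T_d = E \cup N(\gamma)$, combined with $H_2(T_d) = 0$ from Lemma \ref{lemmaprel}, then shows that $H_1(E;\mathbb{Z}) \to H_1(T_d;\mathbb{Z})$ is surjective with cyclic kernel generated by $[m(\gamma)]$. Hence there are integers $c, e$ with $d\alpha_0 = c\,m(\gamma)$ and $\beta_0 - d\ell(\gamma) = e\,m(\gamma)$ in $H_1(E;\mathbb{Z})$.

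The heart of the argument is to show $c = \pm 1$. My plan is to realize $S$ by a properly embedded surface $\Sigma \subset T_d$ transverse to $\gamma$; then $\Sigma \cap E$ is a properly embedded surface in $E$ whose boundary equals $d\alpha_0$ on $\partial T_d$ and $-(\Sigma \cdot \gamma)\,m(\gamma)$ on $\partial N(\gamma)$, giving $c = \Sigma \cdot \gamma = \langle [S],[\gamma] \rangle_{T_d}$. This intersection pairing factors through $H_1(T_d)/\mathrm{Tors}$, and because $[\gamma] = \pm[\gamma_0]$ modulo torsion while $[S]$ is Poincar\'e-dual to $[\gamma_0]$, one gets $c = \pm 1$. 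Once this is in place, $m(\gamma) = cd\,\alpha_0$ in $H_1(E;\mathbb{Z})$, so setting $\beta := \beta_0 - cd(e+k)\alpha_0$ yields
\[
[\beta - d\ell(\gamma) + k\,m(\gamma)] = (e+k)\,m(\gamma) - cd(e+k)\,\alpha_0 = 0
\]
in $H_1(E;\mathbb{Z})$, and the conclusion follows. The main obstacle is the identification $c = \pm 1$: it is precisely there that one really uses that $\gamma$ generates $H_1(T_d)/\mathrm{Tors}$; the rest is bookkeeping on signs and on the cyclic kernel of $H_1(E) \to H_1(T_d)$.
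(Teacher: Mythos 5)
Your proposal is correct and follows essentially the same route as the paper: choose a symplectic basis with $d\alpha=0$ and $\beta_0=d\gamma$ in $H_1(T_d;\mathbb{Z})$, observe that in the homology of the exterior both $\beta_0-d\ell(\gamma)$ and $d\alpha$ are multiples of $m(\gamma)$ (the latter being $\pm m(\gamma)$ via a relative $2$-cycle meeting $\gamma$ once), and then correct $\beta_0$ by a suitable multiple of $d\alpha$. Your version merely makes explicit two points the paper leaves implicit, namely the Mayer--Vietoris identification of $\ker\bigl(H_1(E)\to H_1(T_d)\bigr)$ with $\mathbb{Z}\,m(\gamma)$ and the duality argument giving $c=\pm1$.
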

\begin{proof}
 Let $X$ be the exterior of $\gamma$ in $T_d$. Consider a symplectic basis $(\alpha,\beta_0)$ of $H_1(\partial T_d;\mathbb{Z})$ 
such that $d\alpha=0$ and $\beta_0=d\gamma$ in $H_1(T_d;\mathbb{Z})$. 
There is an integer $k_0$ such that $\beta_0-d\ell(\gamma)+k_0m(\gamma)$ bounds a surface in $X$, {\em i.e.} is trivial in $H_1(X;\mathbb{Z})$. 
Since $d\alpha$ bounds a surface in $T_d$ that $\gamma$ meets once, $d\alpha-m(\gamma)$ is trivial in $H_1(X;\mathbb{Z})$. 
Let $k$ be any integer, and set $\beta=\beta_0+(k_0-k)d\alpha$. 
The curve $\beta-d\ell(\gamma)+km(\gamma)=\lbp\beta_0-d\ell(\gamma)+k_0m(\gamma)\rbp+(k_0-k)\lbp d\alpha-m(\gamma)\rbp$ 
is trivial in $H_1(X;\mathbb{Z})$. 
\end{proof}

      \subsection{Relating $\mathbb{Q}$HH's by elementary surgeries}

In this subsection, we prove Theorem \ref{thelsur}.

\begin{definition}
Consider a $\mathbb{Q}HH$ $A$.
Consider a simple closed curve $\gamma\subset A$.
Consider a disk $D\subset\partial A$.
Consider two distinct points $y$ and $z$ in $Int(D)$, and a path $s$ from $z$ to $y$ in $Int(D)$.
Consider a cylinder $C=h(D^2\times [0,1])\subset A$, where $h$ is an embedding such that:
\begin{itemize}
 \item $h(D^2\times\{0\})$ (resp. $h(D^2\times\{1\})$) is a disk $D_y$ (resp. $D_z$) in $Int(D)$,
 \item $h(0,0)=y$ and $h(0,1)=z$,
 \item $C\cap\partial A=D_y\cup D_z$,
 \item $h(\{0\}\times[0,1])\cup s$ is homologous to $\gamma$ in $A$.
\end{itemize}
We will call \emph{tunnel around $\gamma$} such a cylinder $C$.
\end{definition}

\begin{lemma} \label{lemma1}
 Let $A$ be a $\mathbb{Q}$HH of genus $g$. Let $\gamma$ be a simple closed curve in $A$. Let $C$ be a tunnel around $\gamma$. 
Set $B=\overline{A\setminus C}$. Then $B$ is a $\mathbb{Q}$HH of genus $g+1$.
\end{lemma}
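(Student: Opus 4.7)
The plan is to compute $\partial B$ and the rational homology of $B$ by a Mayer--Vietoris argument on the decomposition $A = B \cup C$.

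First I would identify the relevant topology. Writing $C = h(D^2 \times [0,1])$ as a $3$-ball in $A$, its frontier decomposes as $D_y \cup D_z \cup A_0$, where $A_0 = h(\partial D^2 \times [0,1]) \subset Int(A)$ is an annulus joining $\partial D_y$ to $\partial D_z$. Taking closures in $A$, one finds $B \cap C = A_0$ (the end disks $D_y, D_z$ drop out because $C$ sits on their interior side), so
$$\partial B = \bigl(\partial A \setminus (Int(D_y) \cup Int(D_z))\bigr) \cup A_0,$$
which is the result of adding a handle to the closed oriented surface $\partial A$ of genus $g$ and is therefore a closed oriented surface of genus $g+1$. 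Connectedness of $B$ follows because any component of $B$ not meeting the connected set $A_0$ would be disjoint from $C$ and hence be a component of the connected manifold $A$, a contradiction.

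Then I would apply the Mayer--Vietoris sequence with $\mathbb{Q}$ coefficients to $A = B \cup C$, using $H_*(A_0;\mathbb{Q}) = H_*(S^1;\mathbb{Q})$, $H_*(C;\mathbb{Q}) = H_*(pt;\mathbb{Q})$, and the values of $H_*(A;\mathbb{Q})$ recorded in Lemma~\ref{lemmaprel}. Degree $2$ yields $H_2(B;\mathbb{Q}) = 0$ immediately. Since $A_0, B, C$ are all connected, the map $H_0(A_0;\mathbb{Q}) \to H_0(B;\mathbb{Q}) \oplus H_0(C;\mathbb{Q})$ is injective, so the connecting homomorphism $H_1(A;\mathbb{Q}) \to H_0(A_0;\mathbb{Q})$ vanishes and the remainder of the sequence collapses to
$$0 \to H_1(A_0;\mathbb{Q}) \to H_1(B;\mathbb{Q}) \to H_1(A;\mathbb{Q}) \to 0,$$
giving $H_1(B;\mathbb{Q}) \cong \mathbb{Q}^{g+1}$. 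Together with the boundary computation above, this exhibits $B$ as a $\mathbb{Q}$HH of genus $g+1$.

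I do not anticipate a significant obstacle; the only mildly delicate point is the identification $B \cap C = A_0$, which requires carefully distinguishing the lateral annulus of $C$ from its end disks when taking the closure of $A \setminus C$ inside $A$.
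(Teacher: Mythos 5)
Your argument is correct and is essentially the paper's: both hinge on the identification of $C\cap B$ with the lateral annulus $\partial D^2\times[0,1]$ of the cylinder and then run a standard exact sequence for the decomposition $A=B\cup C$, using $H_2(A;\mathbb{Q})=0$ from Lemma \ref{lemmaprel}. The only cosmetic difference is that the paper uses the long exact sequence of the pair $(A,B)$ together with excision, $H_i(A,B;\mathbb{Q})\cong H_i(D^2\times[0,1],\partial D^2\times[0,1];\mathbb{Q})$, where you use Mayer--Vietoris; these yield the same computation.
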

\begin{proof}
 Consider the pair $(A,B)$. By excision, for $i\in\mathbb{N}$, $H_i(A,B;\mathbb{Q})\cong H_i(C,C\cap B;\mathbb{Q})$. 
Since $(C,C\cap B)\cong(D^2\times[0,1],(\partial D^2)\times[0,1])$, it follows that 
$H_i(A,B;\mathbb{Q})=0$ if $i\neq 2$, and $H_2(A,B;\mathbb{Q})\cong \mathbb{Q}$. The exact sequence over $\mathbb{Q}$ associated 
with the pair $(A,B)$ yields the following exact sequence:
$$0\to H_2(B)\to 0\to H_2(A,B)\cong\mathbb{Q}\to H_1(B)\to H_1(A)\cong \mathbb{Q}^g\to 0.$$
Hence $H_2(B;\mathbb{Q})=0$ and $H_1(B;\mathbb{Q})\cong \mathbb{Q}^{g+1}$.
\end{proof}

\begin{lemma} \label{lemma2}
 Let $A$ be a $\mathbb{Q}$HH of genus $g$. The quotient $\displaystyle \frac{H_1(A;\mathbb{Z})}{H_1(\partial A;\mathbb{Z})}$ is a torsion 
module. Set $\displaystyle \frac{H_1(A;\mathbb{Z})}{H_1(\partial A;\mathbb{Z})}=\oplus_{i=1}^n \frac{\mathbb{Z}}{d_i\mathbb{Z}} \mu_i$. 
Let $C_i$, $1\leq i\leq n$, be pairwise disjoint tunnels around the $\mu_i$. 
Then $B=\overline{A\setminus (\cup_{1\leq i\leq n} C_i)}$ is a $\mathbb{Z}$HH of genus $g+n$.
\end{lemma}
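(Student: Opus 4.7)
The plan is first to dispense with the torsion claim in the statement: applying the long exact sequence of $(A,\partial A;\mathbb{Q})$, together with Poincaré–Lefschetz duality and Lemma \ref{lemmaprel}, gives $H_1(A,\partial A;\mathbb{Q}) \cong H^2(A;\mathbb{Q})^* \cong H_2(A;\mathbb{Q})^* = 0$, so $H_1(A;\mathbb{Q})$ is generated by the image of $H_1(\partial A;\mathbb{Q})$ and hence $H_1(A;\mathbb{Z})/H_1(\partial A;\mathbb{Z})$ is torsion. Iterating Lemma \ref{lemma1} once for each tunnel $C_i$ already shows that $B$ is a $\mathbb{Q}$HH of genus $g+n$. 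To promote this to a $\mathbb{Z}$HH statement, by Corollary \ref{corz} it suffices to prove that the inclusion $\partial B \hookrightarrow B$ induces a surjection $H_1(\partial B;\mathbb{Z}) \to H_1(B;\mathbb{Z})$.

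The key tool is the long exact sequence of the pair $(A,B)$ with integer coefficients. By excision,
$$H_*(A,B;\mathbb{Z}) \cong \bigoplus_{i=1}^n H_*(C_i, C_i\cap B;\mathbb{Z}) \cong \bigoplus_{i=1}^n H_*(D^2\times[0,1],\partial D^2\times[0,1];\mathbb{Z}),$$
which vanishes except in degree two, where it is $\mathbb{Z}^n$, generated by the transverse disks $D_i = h_i(D^2\times\{1/2\})$. The long exact sequence therefore collapses to
$$\mathbb{Z}^n \xrightarrow{\partial} H_1(B;\mathbb{Z}) \to H_1(A;\mathbb{Z}) \to 0,$$
and the connecting map $\partial$ sends the $i$-th generator to the class of the meridian $m_i = \partial D_i$, a simple closed curve lying on the cylindrical wall $\partial C_i\cap B \subset \partial B$.

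It thus remains to lift every class of $H_1(A;\mathbb{Z})$ to $H_1(B;\mathbb{Z})$ through a class in the image of $H_1(\partial B;\mathbb{Z})$. The image of $H_1(\partial A;\mathbb{Z})$ lifts trivially, as $\partial A\setminus \bigcup_i(D_{y_i}\cup D_{z_i}) \subset \partial B$ already realises all of its classes. For each generator $\mu_i$ of the torsion quotient, the definition of the tunnel says that $\mu_i$ is homologous in $A$ to $h_i(\{0\}\times[0,1]) \cup s_i$; pushing the core of the cylinder onto a longitude on $\partial C_i\cap B$ produces, after a small perturbation of the endpoints of $s_i$ from $y_i,z_i$ onto $\partial D_{y_i}, \partial D_{z_i}$, a loop lying entirely on $\partial B$ that is homologous in $A$ to $\mu_i$. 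Combining these lifts with the fact that the meridians $m_i$ themselves lie on $\partial B$, one deduces that $H_1(\partial B;\mathbb{Z}) \to H_1(B;\mathbb{Z})$ is surjective, and Corollary \ref{corz} concludes. The main delicate point I expect is precisely this endpoint perturbation, together with the verification that the cylindrical longitudes and the truncated paths $s_i$ assemble into honest loops on $\partial B$ without interfering with one another or with $\partial A$.
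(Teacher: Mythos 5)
Your proof is correct and follows essentially the same route as the paper: both reduce, via Lemma \ref{lemma1} and Corollary \ref{corz}, to the surjectivity of $H_1(\partial B;\mathbb{Z})\to H_1(B;\mathbb{Z})$, and both deduce it from the defining property of the tunnels, namely that their cores together with the paths $s_i$ realise the generators $\mu_i$ of $H_1(A;\mathbb{Z})/H_1(\partial A;\mathbb{Z})$. The paper packages this in one line as the vanishing of $H_1(B,\partial B;\mathbb{Z})\cong H_1(A,\partial A\cup(\cup_i C_i);\mathbb{Z})$ by excision, whereas you unpack the same content through the exact sequence of $(A,B)$ and explicit lifts of generators (including the meridians spanning the kernel of $H_1(B)\to H_1(A)$); the ``delicate'' endpoint perturbation you flag is routine and does not affect the argument.
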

\begin{proof}
The fact that $\displaystyle \frac{H_1(A;\mathbb{Z})}{H_1(\partial A;\mathbb{Z})}$ is a torsion module follows from Lemma \ref{lemmaduality}.

By Lemma \ref{lemma1}, $B$ is a $\mathbb{Q}$HH of genus $g+n$. Hence, by Corollary \ref{corz}, it suffices to show that 
the map $H_1(\partial B;\mathbb{Z}) \to H_1(B;\mathbb{Z})$ induced by the inclusion is surjective, or, equivalently, 
that $H_1(B,\partial B;\mathbb{Z})$ is trivial. By excision, $H_1(B,\partial B;\mathbb{Z})$ is isomorphic to 
$H_1(A,\partial A \cup (\cup_{1\leq i\leq n} C_i);\mathbb{Z})$, which is trivial by definition of the $C_i$'s.
\end{proof}

For a 3-manifold $A$, let $lk_A : Tors(H_1(A;\mathbb{Z}))\times Tors(H_1(A;\mathbb{Z})) \to \mathbb{Q}/\mathbb{Z}$ denote the {\em linking form} 
on $A$, defined in the following way. Consider disjoint representatives $\alpha,\beta$ of two homology classes
in $Tors(H_1(A;\mathbb{Z}))$. Consider a surface $S\subset A$, transverse to $\beta$, such that $\partial S=k\alpha$ 
for some positive integer $k$. Then $lk_A(\alpha,\beta)=\frac{1}{k}<S,\beta>$, where $<.,.>$ is the algebraic intersection 
number in $A$. For a $\mathbb{Q}$HS $M$, the linking form $lk_M$ is defined on $H_1(M;\mathbb{Z})\times H_1(M;\mathbb{Z})$, and it is known 
to be bilinear, symmetric, and non degenerate. 

\begin{lemma} \label{lemma3}
Let $A$ be a $\mathbb{Q}$HH of genus $g$. Assume $\frac{\mathcal{L}_A^T}{\mathcal{L}_A^\mathbb{Z}}=0$. 
Then there exists a $\mathbb{Q}$HS $M$ such that $(H_1(M;\mathbb{Z}),lk_M)$ is isomorphic to $(Tors(H_1(A;\mathbb{Z})),lk_A)$.
\end{lemma}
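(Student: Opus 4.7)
My plan is to construct $M$ by capping off $\partial A$ with a standard genus $g$ handlebody along a carefully chosen gluing. The hypothesis $\mathcal{L}_A^T = \mathcal{L}_A^\mathbb{Z}$ combined with Lemma \ref{lemmaduality} forces every integer $d_i$ to equal $1$, so I obtain a symplectic basis $(\alpha_i,\beta_i)_{1\le i\le g}$ of $H_1(\partial A;\mathbb{Z})$ such that each $\alpha_i$ is null-homologous in $A$ and each $\beta_i$ coincides with a class $\gamma_i$ modulo $T := \mathrm{Tors}(H_1(A;\mathbb{Z}))$, with $(\gamma_i)$ a $\mathbb{Z}$-basis of $H_1(A;\mathbb{Z})/T$. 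Let $H_g$ be the standard genus $g$ handlebody with symplectic basis $(m_i,\ell_i)$ of $\partial H_g$, where $m_i$ bounds a disk. I set $M := A\cup_\phi H_g$ along an orientation-reversing homeomorphism $\phi:\partial A\to\partial H_g$ sending $\alpha_i\mapsto \ell_i$ and $\beta_i\mapsto m_i$; such a $\phi$ exists because both bases are symplectic.

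I then compute $H_1(M;\mathbb{Z})$ by Mayer--Vietoris, as the cokernel of $H_1(\partial A;\mathbb{Z}) \to H_1(A;\mathbb{Z})\oplus H_1(H_g;\mathbb{Z})$. The image of $\alpha_i$ is $(0,\ell_i)$ and kills all of $H_1(H_g)$, while the image of $\beta_i$ is $(\gamma_i + t_i,0)$ with $t_i := \beta_i-\gamma_i\in T$. Splitting $H_1(A;\mathbb{Z})=F\oplus T$ with $F=\bigoplus_i \mathbb{Z}\gamma_i$ and defining $\pi:F\oplus T\to T$ by $\pi(\gamma_i)=-t_i$ and $\pi|_T=\mathrm{id}_T$, I check that $\pi(\gamma_i+t_i)=0$ while $\pi$ is surjective with kernel exactly $\langle \gamma_i+t_i\rangle$. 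Hence $H_1(M;\mathbb{Z})\cong T$, with the isomorphism induced by the inclusion $A\hookrightarrow M$. In particular $H_1(M;\mathbb{Q})=0$, so by Poincar\'e duality and the universal coefficient theorem $M$ is a $\mathbb{Q}$HS.

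It remains to show that this inclusion-induced isomorphism intertwines the linking forms. For $\tau_1,\tau_2\in T$ represented by disjoint cycles in $A$, pick $k>0$ and a surface $S\subset A$ with $\partial S=k\tau_1$. The same $S$ bounds $k\tau_1$ in $M$, and the algebraic intersection $\langle S,\tau_2\rangle$ is computed entirely inside $A$, so $lk_M(\tau_1,\tau_2)=\frac{1}{k}\langle S,\tau_2\rangle=lk_A(\tau_1,\tau_2)$. The main technical point is the algebraic identification $H_1(A;\mathbb{Z})/\langle\gamma_i+t_i\rangle\cong T$, where the hypothesis $d_i=1$ is crucial: any $d_i>1$ would introduce an extra $\mathbb{Z}/d_i\mathbb{Z}$-factor. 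Existence of the orientation-reversing $\phi$ realizing the prescribed homological data is standard.
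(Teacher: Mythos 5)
Your proposal is correct and follows essentially the same route as the paper: cap off $\partial A$ with a standard genus $g$ handlebody, gluing so that the $\beta_i$ become meridians (bounding disks) and the $\alpha_i$ become longitudes, then observe that $H_1(M;\mathbb{Z})\cong \mathrm{Tors}(H_1(A;\mathbb{Z}))$ and that linking numbers of torsion classes are computed by surfaces lying entirely in $A$. You merely spell out the Mayer--Vietoris cokernel computation and the retraction $\pi$ explicitly, where the paper simply invokes the splitting $H_1(A;\mathbb{Z})=\mathrm{Tors}(H_1(A))\oplus(\oplus_i\mathbb{Z}\beta_i)$ from Lemma \ref{lemmaduality}.
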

\begin{proof}
 By Lemma \ref{lemmaduality}, there is a symplectic basis $(\alpha_i,\beta_i)_{1\leq i\leq g}$ of $H_1(\partial A;\mathbb{Z})$ 
such that the $\alpha_i$ are null-homologous in $A$, and $H_1(A;\mathbb{Z})=Tors(H_1(A))\oplus(\oplus_{1\leq i\leq g} \mathbb{Z}\beta_i)$.
Consider a standard handlebody $H_g$, and a symplectic basis $(a_i,b_i)_{1\leq i\leq g}$ of $H_1(\partial H_g;\mathbb{Z})$,
where each $a_i$ bounds a disk in $H_g$. Construct a $\mathbb{Q}$HS $M$ by gluing $A$ and $H_g$ 
along their boundaries, in such a way that, for $1\leq i \leq g$, $\alpha_i$ is identified with $b_i$, and $\beta_i$ is 
identified with $a_i$. We have $H_1(M;\mathbb{Z})\cong Tors(H_1(A;\mathbb{Z}))$. Moreover, the linkings of the curves in $A$ are 
preserved, thus the linking forms on $H_1(M)$ and $Tors(H_1(A))$ are isomorphic.
\end{proof}

\begin{lemma} \label{lemma4}
Let $A$ and $A'$ be $\mathbb{Q}$HH's of genus $g$ with LP-identified boundaries. Assume $\frac{\mathcal{L}_A^T}{\mathcal{L}_A^\mathbb{Z}}=0$ and 
$\frac{\mathcal{L}_{A'}^T}{\mathcal{L}_{A'}^\mathbb{Z}}=0$. 
If $(Tors(H_1(A)),lk_A)$ is isomorphic to $(Tors(H_1(A')),lk_{A'})$, then $A$ and $A'$ can be obtained from one another 
by a finite sequence of borromean surgeries.
\end{lemma}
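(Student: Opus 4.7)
The plan is to reduce to Auclair--Lescop's \cite[Lemma 4.11]{AL} for $\mathbb{Z}$HH's together with Matveev's classification of closed $3$-manifolds up to borromean equivalence \cite{Mat}.

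First, I would close $A$ and $A'$ into $\mathbb{Q}$HS's by gluing a common standard handlebody $H_g$ as in Lemma \ref{lemma3}. The hypothesis $\mathcal{L}^T=\mathcal{L}^\mathbb{Z}$ makes $\mathcal{L}^\mathbb{Z}$ a primitive sublattice of $H_1(\partial A;\mathbb{Z})$, so the LP-identification $h$ preserves the integer Lagrangians, and I can pick symplectic bases of $H_1(\partial A;\mathbb{Z})$ and $H_1(\partial A';\mathbb{Z})$ with all the integers $d_i$ of Lemma \ref{lemmaduality} equal to $1$, matched by $h_*$. Gluing the same $H_g$ as in Lemma \ref{lemma3} along these bases yields $\mathbb{Q}$HS's $M=A\cup H_g$ and $M'=A'\cup H_g$ with
$$(H_1(M;\mathbb{Z}),lk_M)\cong(Tors(H_1(A;\mathbb{Z})),lk_A)\cong(Tors(H_1(A';\mathbb{Z})),lk_{A'})\cong(H_1(M';\mathbb{Z}),lk_{M'}).$$
Matveev's theorem then provides a finite sequence of borromean surgeries relating $M$ and $M'$.

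The remaining step, which is the main obstacle, is to localize these borromean surgeries inside $A$ (resp.\ $A'$), away from the common handlebody $H_g$. A borromean surgery is supported in a genus-$3$ handlebody neighborhood of a Y-graph; since $H_g$ deformation retracts onto its $1$-skeleton, any such neighborhood sitting inside $H_g$ can be pushed by ambient isotopy through a collar of $\partial A=\partial H_g$ into $A$, preserving the surgery. Carrying all surgeries out inside $A$ (resp.\ $A'$) produces a $\mathbb{Q}$HH $\tilde A$, related to $A$ by borromean surgeries in its interior, with $\tilde A\cup H_g=M'=A'\cup H_g$; since $\tilde A$ and $A'$ are both the complement of $H_g$ in $M'$ with the same gluing data, they coincide as $\mathbb{Q}$HH's with LP-identified boundary. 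The principal difficulty is precisely this localization: the $3$-dimensional support of a borromean surgery cannot be made to avoid the $3$-dimensional $H_g$ by generic transversality, and the ambient isotopy must exploit the specific handlebody structure of $H_g$.
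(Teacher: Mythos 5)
There is a genuine gap, and it sits exactly where you place the ``principal difficulty'' --- but the real obstruction is not the one you name. Closing $A$ and $A'$ into $\mathbb{Q}$HS's $M=A\cup H_g$ and $M'=A'\cup H_g$ and invoking Matveev's theorem only yields a sequence of borromean surgeries transforming $M$ into a manifold \emph{abstractly homeomorphic} to $M'$. The localization of the Y-graphs into $A$ is actually the easy part (a Y-graph is a $1$-complex, so it can be pushed off a spine of $H_g$ by general position and then out through the collar, as you say). The problem is the last sentence: after performing the surgeries inside $A$ you obtain $\tilde A$ with a homeomorphism $\tilde A\cup H_g\cong M'$, but this homeomorphism comes from Matveev's theorem and carries no information about the embedded handlebody. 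It need not send $H_g\subset\tilde A\cup H_g$ to $H_g\subset A'\cup H_g$, and two embeddings of $H_g$ in the same $\mathbb{Q}$HS can be non-isotopic with non-homeomorphic complements (already in $S^3$ for $g=1$). So ``both are the complement of $H_g$ in $M'$'' does not let you conclude $\tilde A\cong A'$; and even if the complements agreed, you would still have to match the resulting boundary identification with the prescribed LP-identification $\partial A\cong\partial A'$. The lemma is a statement \emph{rel boundary}, and the closed-manifold classification is strictly weaker; what you would need to finish is precisely a relative version of Matveev's theorem, i.e.\ the lemma itself.

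The paper avoids this by never passing to closed manifolds. It drills pairwise disjoint tunnels around representatives of a basis $(\mu_i)$ of $Tors(H_1(A))$ and around the corresponding $(\mu_i')$ in $A'$ (chosen with equal pairwise linking numbers in $\mathbb{Q}$), so that by Lemma \ref{lemma2} the complements $B$ and $B'$ are $\mathbb{Z}$HH's of genus $g+n$; the matching of linking numbers is exactly what allows the given LP-identification $\partial A\cong\partial A'$ to extend to an LP-identification $\partial B\cong\partial B'$. Then \cite[Lemma 4.11]{AL} --- which \emph{is} a rel-boundary statement --- produces borromean surgeries in the interior of $B$ carrying $B$ to $B'$, and regluing the tunnels gives the result for $A$ and $A'$. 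If you want to salvage your outline, you should replace the appeal to Matveev by this tunneling reduction to the Auclair--Lescop lemma (which your opening sentence announces but your argument never actually uses).
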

\begin{proof}
Consider a basis $(\mu_i)_{1\leq i\leq n}$ of $Tors(H_1(A))$, and its image $(\mu_i')_{1\leq i\leq n}$ under an isomorphism 
$(Tors(H_1(A)),lk_A)\cong (Tors(H_1(A')),lk_{A'})$. Fix framed representatives of the $\mu_i$ and $\mu_i'$ such that 
$lk(\mu_i,\mu_j)=lk(\mu_i',\mu_j')\in\mathbb{Q}$ for $1\leq i,j\leq n$. Consider pairwise disjoint tunnels $C_i$ (resp. $C_i'$) around 
the $\mu_i$ (resp. $\mu_i'$). Set $B=\overline{A\setminus (\cup_{1\leq i\leq n} C_i)}$ and 
$B'=\overline{A'\setminus (\cup_{1\leq i\leq n} C_i')}$. Extend the identification $\partial A\cong \partial A'$ to an identification 
$\partial B\cong\partial B'$ so that 
the longitude of each $\mu_i$ is identified with the longitude of the corresponding $\mu_i'$. By Lemma \ref{lemma2}, 
$B$ and $B'$ are $\mathbb{Z}$HH's of genus $g+n$. The equality between the linking numbers ensures that the identification 
of their boundaries preserves the Lagrangian. Thus, 
by \cite[Lemma 4.11]{AL}, $B$ can be obtained from $B'$ by a finite sequence of borromean surgeries. Gluing back the cylinders, 
we get that $A$ can be obtained from $A'$ by a finite sequence of borromean surgeries.
\end{proof}

\begin{corollary} \label{corcaspart}
 Consider a $\mathbb{Q}$HH $A$ such that $\mathcal{L}_A^T/\mathcal{L}_A^\mathbb{Z}=0$. 
Let $H_g$ be a standard handlebody such that $\partial H_g$ and $\partial A$ are LP-identified.
Then there exists a $\mathbb{Q}$HS $M$ such that $A$ is obtained from $H_g\sharp M$ by a finite sequence of borromean surgeries.
\end{corollary}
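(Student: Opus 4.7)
\proofof{Corollary \ref{corcaspart}.}
The plan is to produce a candidate $\mathbb{Q}$HS $M$ from Lemma \ref{lemma3}, check that $H_g\sharp M$ satisfies the hypotheses of Lemma \ref{lemma4} together with $A$, and then invoke that lemma.

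First, since $\mathcal{L}_A^T/\mathcal{L}_A^\mathbb{Z}=0$, Lemma \ref{lemma3} produces a $\mathbb{Q}$HS $M$ with $(H_1(M;\mathbb{Z}),lk_M)\cong(Tors(H_1(A;\mathbb{Z})),lk_A)$. I now check the relevant invariants of $H_g\sharp M$. Performing the connected sum in the interior of $H_g$ leaves the boundary unchanged, so $\partial(H_g\sharp M)=\partial H_g$ as a framed surface, and by hypothesis this is LP-identified with $\partial A$. The Mayer--Vietoris sequence (or the standard computation for connected sums) gives $H_1(H_g\sharp M;\mathbb{Z})\cong H_1(H_g;\mathbb{Z})\oplus H_1(M;\mathbb{Z})\cong\mathbb{Z}^g\oplus H_1(M;\mathbb{Z})$, so $H_g\sharp M$ is a $\mathbb{Q}$HH of genus $g$ whose torsion subgroup is naturally identified with $H_1(M;\mathbb{Z})$. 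Because linking numbers are computed via surfaces that can be chosen disjoint from the connected-sum $2$-sphere, the linking form on $Tors(H_1(H_g\sharp M;\mathbb{Z}))$ agrees with $lk_M$. Hence $(Tors(H_1(H_g\sharp M;\mathbb{Z})),lk_{H_g\sharp M})\cong(Tors(H_1(A;\mathbb{Z})),lk_A)$.

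Next, I verify that $\mathcal{L}_{H_g\sharp M}^T/\mathcal{L}_{H_g\sharp M}^\mathbb{Z}=0$. Writing $(a_i,b_i)_{1\leq i\leq g}$ for the symplectic basis of $H_1(\partial H_g;\mathbb{Z})$ in which the $a_i$ bound disks in $H_g$, the map $H_1(\partial H_g;\mathbb{Z})\to H_1(H_g\sharp M;\mathbb{Z})$ sends $a_i\mapsto 0$ and $b_i\mapsto b_i$, where the classes of the $b_i$ generate the free summand $\mathbb{Z}^g$. Thus $\mathcal{L}_{H_g\sharp M}^\mathbb{Z}=\bigoplus\mathbb{Z}a_i=\mathcal{L}_{H_g\sharp M}^T$, so the quotient vanishes.

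All the hypotheses of Lemma \ref{lemma4} are now in place: $A$ and $A':=H_g\sharp M$ are genus $g$ $\mathbb{Q}$HH's with LP-identified boundaries, both satisfy $\mathcal{L}^T/\mathcal{L}^\mathbb{Z}=0$, and their torsion groups are isomorphic as linking modules. Therefore $A$ is obtained from $H_g\sharp M$ by a finite sequence of borromean surgeries, as claimed. The only step that requires care (rather than routine verification) is making sure the boundary identification used to apply Lemma \ref{lemma4} is really the one given in the hypothesis, which is automatic since the connected sum is performed away from $\partial H_g$ and therefore preserves both the surface and its Lagrangian.
\fin
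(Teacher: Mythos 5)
Your proof is correct and follows exactly the route the paper intends: the statement is presented as an immediate corollary of Lemma \ref{lemma3} (which supplies $M$) and Lemma \ref{lemma4} (applied to $A$ and $H_g\sharp M$), and your verifications that $H_g\sharp M$ is a genus $g$ $\mathbb{Q}$HH with vanishing $\mathcal{L}^T/\mathcal{L}^{\mathbb{Z}}$, the correct torsion linking form, and the inherited LP-identification are precisely the routine checks the paper leaves implicit.
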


\begin{lemma} \label{lemmacaspart2}
 Let $A$ be a genus $g$ $\mathbb{Q}$HH. Let $H_g$ be a standard handlebody such that $\partial H_g$ and $\partial A$ are LP-identified.
Assume there are a symplectic basis $(\alpha_i,\beta_i)_{1\leq i\leq g}$ 
of $H_1(\partial A;\mathbb{Z})$, a curve $\gamma$ in $A$, and a positive integer $d$ such that 
$H_1(A;\mathbb{Z})=\frac{\mathbb{Z}}{d\mathbb{Z}}\alpha_1\oplus\mathbb{Z}\gamma\oplus(\bigoplus_{2\leq i\leq g}\mathbb{Z}\beta_i)$ 
and $\beta_1=d\gamma$. Then there are a solid torus $T_0$ embedded in $H_g$, a $d$-torus $T_d$, and an LP-identification 
$\partial T_d\cong\partial T_0$, such that $A$ is obtained from $H_g(\frac{T_d}{T_0})$ by a finite sequence of borromean surgeries.
\end{lemma}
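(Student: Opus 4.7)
The strategy parallels the proof of Lemma \ref{lemma4}, with an extra preliminary step: since $\mathcal{L}_A^T/\mathcal{L}_A^\mathbb{Z} \cong \mathbb{Z}/d\mathbb{Z}$ is non-trivial (the Lagrangian corollary hypothesis fails for $A$), I first capture this $\mathbb{Z}/d\mathbb{Z}$ torsion by performing one $d$-torus surgery inside $H_g$, obtaining an auxiliary $\mathbb{Q}$HH $A'=H_g(T_d/T_0)$ with the same $\mathbb{Z}$-homological structure as $A$, and then reduce the comparison of $A$ and $A'$ to the integral case handled by \cite[Lemma 4.11]{AL}.

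\smallskip

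\textbf{Step 1 (construction of $T_0, T_d$ and computation of $H_1(A')$).} Let $(a_i,b_i)_{1\le i\le g}$ be the standard symplectic basis of $H_1(\partial H_g;\mathbb{Z})$ with the $a_i$ bounding in $H_g$, identified via the LP-identification with $(\alpha_i,\beta_i)_{1\le i\le g}$. Push $b_1$ slightly into $\mathrm{Int}(H_g)$ to get a simple closed curve $c$, and let $T_0$ be a tubular neighborhood of $c$: its meridian $m_c$ bounds a disk in $T_0$ and in $H_g$ (via an annulus, $a_1$ is homologous to $m_c$ in $H_g\setminus\mathrm{Int}(T_0)$), and a longitude $\ell_c$ represents $b_1$. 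Take $T_d$ from Lemma \ref{lemmaTd} and identify $\partial T_d\cong\partial T_0$ by $\alpha\leftrightarrow m_c$, $\beta\leftrightarrow \ell_c$; since both $m_c$ and $\alpha$ are rationally null-homologous, this is LP. A Mayer--Vietoris computation for $A'=(H_g\setminus\mathrm{Int}(T_0))\cup T_d$ then gives
$$H_1(A';\mathbb{Z})\cong\frac{\mathbb{Z}}{d\mathbb{Z}} m_c\oplus \mathbb{Z}\gamma_{T_d}\oplus\bigoplus_{i\ge 2}\mathbb{Z} b_i,$$
with $a_1=m_c$ of order $d$, $b_1=d\gamma_{T_d}$, $a_i=0$ for $i\ge 2$. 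Under the LP-identification $\partial A'=\partial H_g\cong\partial A$, this matches the prescribed decomposition of $H_1(A;\mathbb{Z})$ with $\gamma_{T_d}$ playing the role of $\gamma$; in particular the torsion linking forms $(H_1(A'),lk_{A'})|_{Tors}$ and $(Tors(H_1(A)),lk_A)$ are isomorphic by symmetry of the presentations.

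\smallskip

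\textbf{Step 2 (reduction to the integral case by tunneling).} For both $A$ and $A'$ the quotient $H_1/i_*H_1(\partial)$ equals $\mathbb{Z}/d\mathbb{Z}$, generated by $\gamma$ (resp.\ $\gamma_{T_d}$). By Lemma \ref{lemma2}, tunneling once around $\gamma$ in $A$ and once around $\gamma_{T_d}$ in $A'$ produces $\mathbb{Z}$HH's $B$ and $B'$, each of genus $g+1$. Extend the identification $\partial A\cong\partial A'$ across the two tunnel annuli by matching meridians to meridians and longitudes to longitudes; using Lemma \ref{lemmameridian} to choose the parallel of $\gamma$ (resp.\ $\gamma_{T_d}$) inside $A$ (resp.\ $A'$), I arrange that the integer $k$ appearing there matches, so that the homological data on the new genus $1$ piece of the boundary is identical on both sides. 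This matching of parallels encodes exactly the equality of the relevant self-linking numbers in $A$ and $A'$, guaranteeing that the extended identification $\partial B\cong\partial B'$ is Lagrangian-preserving.

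\smallskip

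\textbf{Step 3 (applying \cite{AL} and filling in).} Since $B$ and $B'$ are $\mathbb{Z}$HH's of the same genus with LP-identified boundaries, \cite[Lemma 4.11]{AL} produces a finite sequence of borromean surgeries in $\mathrm{Int}(B)$ transforming $B$ into $B'$. The removed tunnels are $3$-balls meeting the boundary in two disks, so gluing them back in both $B$ and $B'$ does not interfere with the borromean surgeries (which we may isotope away from the tunnels). Hence $A$ is obtained from $A'=H_g(T_d/T_0)$ by that same sequence of borromean surgeries.

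\smallskip

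The main obstacle is Step 2: one must verify that the natural identification of the two new tunnel annuli can be arranged to preserve the Lagrangian of the genus $g+1$ $\mathbb{Z}$HH's $B$ and $B'$. This is precisely the role of Lemma \ref{lemmameridian}, which gives the flexibility to shift the parallel by any integer multiple of the meridian, compensating for the a priori difference of self-linking of $\gamma$ in $A$ and $\gamma_{T_d}$ in $A'$ so that the boundary identification becomes LP.
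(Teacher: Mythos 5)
Your proof follows the paper's argument essentially verbatim: the same construction of $A'=H_g(\frac{T_d}{T_0})$ from a tubular neighborhood of a curve representing $b_1$, the same tunneling of $\gamma$ and $\gamma'$ into genus $g+1$ $\mathbb{Z}$HH's via Lemma \ref{lemma2}, the same matching of the integer $k$ so that the extended identification $\partial B\cong\partial B'$ is Lagrangian-preserving, and the same appeal to \cite[Lemma 4.11]{AL} followed by regluing the tunnels. One imprecision to fix: Lemma \ref{lemmameridian} does not shift the parallel of $\gamma$ --- replacing a parallel $\ell$ by $\ell+m$ only changes $k$ by multiples of $d$, which cannot correct a mismatch of $k$ modulo $d$; the lemma instead varies the curve $\beta'$ on $\partial T_d$ by multiples of $d\alpha'$ (thereby achieving any integer $k$), so you should first tunnel $A$ to determine $k$ and only then choose $\beta'$, hence the gluing $\partial T_d\cong\partial T_0$, accordingly, rather than fixing the identification $\beta\leftrightarrow\ell_c$ already in Step 1.
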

\begin{proof}
 Consider a tunnel $C$ around $\gamma$ in $A$. Set $B=\overline{A\setminus C}$. By Lemma \ref{lemma2}, $B$ is a $\mathbb{Z}$HH 
of genus $g+1$. There is a surface $S\subset B$ such that 
$\partial S\subset \partial B$ is homologous to $\beta_1 -d\ell+km$ in $\partial B$, where $m$ is a meridian of $\gamma$, $\ell$ is a longitude
 of $\gamma$, and $k$ is an integer. Consider simple closed curves $\sigma_1$ and $\sigma_2$ in $\partial B$
such that $\sigma_1=m-d\alpha_1$ and $\sigma_2=\beta_1 -d\ell+km$ in $H_1(\partial B)$.
Then $(\sigma_1,\sigma_2,\alpha_2,\dots,\alpha_g)$ is a basis of $\mathcal{L}_B^\mathbb{Z}$.

Consider the symplectic basis $(a_i,b_i)_{1\leq i\leq g}$ 
of $H_1(\partial H_g;\mathbb{Z})$ image of $(\alpha_i,\beta_i)_{1\leq i\leq g}$ by the LP-identification $\partial A\cong\partial H_g$. 
Consider a simple closed curve representing $b_1$
in $Int(H_g)$, and a tubular neighborhood $T_0$ of this curve. Consider a $d$-torus $T_d$, a symplectic basis $(\alpha',\beta')$ 
of $H_1(\partial T_d;\mathbb{Z})$, and a curve $\gamma'$ in $T_d$, such that 
$H_1(T_d;\mathbb{Z})=\frac{\mathbb{Z}}{d\mathbb{Z}}\alpha'\oplus\mathbb{Z}\gamma'$ and $\beta'=d\gamma'$. 
By Lemma \ref{lemmameridian}, $\beta'$ can be chosen so that $\beta'-d\ell(\gamma')+km(\gamma')$ bounds a surface 
in the exterior of $\gamma'$ in $T_d$ (where $k$ is the integer that appears when tunneling $A$).
Choose an LP-identification $\partial T_d\cong\partial T_0$ that identifies $\beta'$ with a curve 
on $\partial T_0$ homologous to $b_1$ in $H_g\setminus Int(T_0)$. Set $A'=H_g(\frac{T_d}{T_0})$.

Consider a tunnel $C'$ around $\gamma'$ in $A'$. Set $B'=\overline{A'\setminus C'}$. By Lemma \ref{lemma2}, $B'$ is a $\mathbb{Z}$HH 
of genus $g+1$. Like in $B$, there is a surface $S'$ in $B'$ 
bounded by $b_1 -d\ell(\gamma')+km(\gamma')$, and we can define a basis of $\mathcal{L}_{B'}^\mathbb{Z}$ similarly. 
Hence the LP-identification $\partial A\cong\partial H_g\cong \partial A'$ 
extends to an LP-identification $\partial B\cong\partial B'$. By \cite[Lemma 4.11]{AL}, $B$ can be obtained from $B'$ 
by a finite sequence of borromean surgeries. Gluing back the cylinders, we get that $A$ can be obtained from $A'=H_g(\frac{T_d}{T_0})$ 
by a finite sequence of borromean surgeries.
\end{proof}

\proofof{Theorem \ref{thelsur}}
It suffices to prove the result when $B$ is a standard handlebody.
We will proceed by induction on $|\mathcal{L}_A^T/\mathcal{L}_A^\mathbb{Z}|$. 
The case $|\mathcal{L}_A^T/\mathcal{L}_A^\mathbb{Z}|=1$ is given by Corollary \ref{corcaspart}.

Consider a $\mathbb{Q}$HH $A$ of genus $g$ with $|\mathcal{L}_A^T/\mathcal{L}_A^\mathbb{Z}|>1$, and a standard genus $g$ 
handlebody $H_g$ whose boundary is LP-identified with $\partial A$. 
By Lemma \ref{lemmaduality}, there is a symplectic basis $(\alpha_i,\beta_i)_{1\leq i\leq g}$ of $H_1(\partial A;\mathbb{Z})$, 
positive integers $d_i$, and a basis $(\gamma_i)_{1\leq i\leq g}$ of $H_1(A;\mathbb{Z})/Tors(H_1(A;\mathbb{Z}))$, 
such that, in $H_1(A;\mathbb{Z})$, $d_i\alpha_i=0$ and $\beta_i=d_i\gamma_i + t_i$, with $t_i\in Tors(H_1(A;\mathbb{Z}))$. 
Note that $|\mathcal{L}_A^T/\mathcal{L}_A^\mathbb{Z}|=\prod_{1\leq i\leq g} d_i$. Assume $d_1>1$.

Consider a tubular neighborhood $T$ of $t_1$, with a meridian $m(t_1)$.
Consider a $d_1$-torus $T_{d_1}$, a basis $(\alpha,\beta)$ of $H_1(\partial T_{d_1};\mathbb{Z})$, and a curve $t$ in $T_{d_1}$, 
such that $d_1\alpha=0$ and $\beta=d_1t$ in $H_1(T_{d_1};\mathbb{Z})$. Define an LP-surgery $(\frac{T_{d_1}}{T})$ by identifying 
$\alpha$ with $m(t_1)$ and $\beta$ with $t_1$. Set $A'=A(\frac{T_{d_1}}{T})$.
In $A'$, $t_1=d_1 t$, thus we have $\beta_1=d_1\gamma$ with $\gamma=\gamma_1+t$. 

Consider a tunnel $C$ around $\gamma$. Set $B=\overline{A'\setminus C}$. By Lemma \ref{lemma1}, $B$ is a $\mathbb{Q}$HH 
of genus $(g+1)$. There is a surface $S\subset B$ such that 
$\partial S\subset \partial B$ is homologous to $\beta_1 -d_1 \ell+km$ in $\partial B$, where $m$ is a meridian of $\gamma$, $\ell$ is a longitude 
of $\gamma$, and $k$ is an integer. Consider simple closed curves $\sigma_1$ and $\sigma_2$ in $\partial B$
such that $\sigma_1=m-d_1\alpha_1$ and $\sigma_2=\beta_1 -d_1 \ell+k m$ in $H_1(\partial B)$. The curves $\sigma_1$ 
and $\sigma_2$ are null-homologous in $B$, and $(\sigma_1,\sigma_2,\alpha_2,\dots,\alpha_g)$ is a basis of $\mathcal{L}_B^\mathbb{Z}$. 
Hence $|\mathcal{L}_B^T/\mathcal{L}_B^\mathbb{Z}|<|\mathcal{L}_A^T/\mathcal{L}_A^\mathbb{Z}|$.

Consider a genus $(g+1)$ standard handlebody $H_{g+1}$ of boundary $\partial B$, where the $\sigma_i$ and the $\alpha_i$ bound disks 
in $H_{g+1}$. By induction, $B$ can be obtained from $H_{g+1}$ by a finite sequence of elementary surgeries or their inverses. 
Gluing back the cylinder $C$ to $H_{g+1}$, we get a genus $g$ $\mathbb{Q}$HH $\tilde{A}$ satisfying 
$H_1(\tilde{A})=\frac{\mathbb{Z}}{d_1\mathbb{Z}}\alpha_1\oplus\mathbb{Z}\gamma\oplus(\oplus_{2\leq i\leq g}\mathbb{Z}\beta_i)$, 
such that $A'$ can be obtained from $\tilde{A}$ by a finite sequence of elementary surgeries or their inverses. 
Hence $A$ can be obtained from $\tilde{A}$ by a finite sequence of elementary surgeries or their inverses. 
Since $\partial \tilde{A}$ and $\partial H_g$ are both LP-identified with $\partial A$, they are LP-identified with each other. 
By Lemma \ref{lemmacaspart2}, $\tilde{A}$ can be obtained from $H_g$ by a finite sequence of elementary surgeries 
or their inverses. \fin

\paragraph{Remark}
We could have defined elementary surgeries by restricting the genus 1 case to LP-replacements of standard tori by $p$-tori, 
for $p$ prime, and keep Theorem \ref{thelsur} true. Indeed, consider a $d$-torus $T_d$ and the usual curve $\gamma$ in $T_d$ that generates 
$\frac{H_1(T_d;\mathbb{Z})}{Torsion}$. One can check that an LP-replacement of a tubular neighborhood of $\gamma$ 
by a $d'$-torus produces a $dd'$-torus. Hence, for any positive integer $d$, a $d$-torus $T_d$ can be obtained from a standard torus 
by a finite sequence of ``prime'' elementary surgeries of genus 1. Use then the ``tunneling method'' to see that any $d$-torus 
can be obtained from this $T_d$, with the right choice of longitude, by a finite sequence of borromean surgeries.

   \section{Borromean surgeries and clasper calculus} \label{secborro}

Fix a 3-manifold $M$, possibly with boundary. Let $\mathcal{F}_0^{\mathbb{Z}}(M)$ denote the rational vector space 
generated by all the 3-manifolds that can be obtained from $M$ by a finite sequence of borromean surgeries, up 
to orientation-preserving homeomorphism. Let $\mathcal{F}_n^{\mathbb{Z}}(M)$ denote the subspace generated by 
the $[M;\Gamma]$ for all $m$-component Y-link $\Gamma$ in $M$, with $m\geq n$. Let ``$=_n$'' denote the equality modulo $\mathcal{F}_{n+1}^{\mathbb{Z}}(M)$. 

\begin{lemma}[GGP, Corollary 4.3] \label{lemmacut}
Let $\Gamma$ be an $n$-component Y-link in a 3-manifold $M$. Let $\ell$ be a leaf of $\Gamma$. Let $\gamma$ be a framed arc 
starting at the vertex incident to $\ell$ and ending in another point of $\ell$, embedded in $M$ as the core of a band glued to 
the associated surface of $\Gamma$ as shown in Figure \ref{figcut}. The arc $\gamma$ splits the leaf $\ell$ into two leaves 
$\ell'$ and $\ell''$. Denote by $\Gamma'$ and $\Gamma''$ the Y-links obtained from $\Gamma$ by replacing the leaf $\ell$ 
by $\ell'$ and $\ell''$ respectively. Then $[M;\Gamma]=_n[M;\Gamma']+[M;\Gamma'']$.
\end{lemma}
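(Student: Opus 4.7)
The plan is to realize $[M;\Gamma] - [M;\Gamma'] - [M;\Gamma'']$ as (up to sign) the bracket of an auxiliary $(n+1)$-component Y-link, which lies in $\mathcal{F}_{n+1}^{\mathbb{Z}}(M)$ by definition of the filtration. Concretely, I would construct a 2-component Y-sublink $(Y',Y'')$ lying in a regular neighborhood of $\Gamma_1 \cup \gamma$ (where $\Gamma_1$ is the component of $\Gamma$ carrying the leaf $\ell$) and disjoint from $\Gamma_2,\ldots,\Gamma_n$, satisfying three honest surgery equivalences: $M(Y') \cong M(\Gamma'_1)$, $M(Y'') \cong M(\Gamma''_1)$, and $M(Y'\cup Y'') \cong M(\Gamma_1)$. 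Here $\Gamma'_1$ and $\Gamma''_1$ denote the components of $\Gamma'$ and $\Gamma''$ obtained from $\Gamma_1$ by the cut.

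Once $(Y',Y'')$ is in place, set $\Lambda = Y' \cup Y'' \cup \Gamma_2 \cup \ldots \cup \Gamma_n$; this is an $(n+1)$-component Y-link, so $[M;\Lambda] \in \mathcal{F}_{n+1}^{\mathbb{Z}}(M)$ by definition. Expanding $[M;\Lambda]$ and splitting the index set according to its intersection with $\{Y',Y''\}$, a twofold application of the general splitting formula $[M;Y\cup\Delta] = [M;\Delta] - [M(Y);\Delta]$ (obtained by separating terms of the alternating sum according to whether $Y$ belongs to the index subset) yields
\[
[M;\Lambda] = [M;\Gamma_2,\ldots,\Gamma_n] - [M(\Gamma'_1);\Gamma_2,\ldots,\Gamma_n] - [M(\Gamma''_1);\Gamma_2,\ldots,\Gamma_n] + [M(\Gamma_1);\Gamma_2,\ldots,\Gamma_n],
\]
which is exactly $[M;\Gamma'] + [M;\Gamma''] - [M;\Gamma]$ by the same formula applied to $\Gamma$, $\Gamma'$, and $\Gamma''$. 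Hence $[M;\Gamma] =_n [M;\Gamma'] + [M;\Gamma'']$, as desired.

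For the construction of $(Y',Y'')$, I would work in the regular neighborhood $N$ of $\Gamma_1 \cup \gamma$, replacing the single clasper $\Gamma_1$ by a 2-component one. Take $Y'$ to be $\Gamma'_1$ itself and $Y''$ to be a small auxiliary Y-graph placed along the band $\gamma$, whose three leaves consist of a copy of $\ell''$ together with two small meridional loops around the edge and leaf of $Y'$ adjacent to the cut. In terms of the associated six-component framed surgery links of Figure~\ref{figborro}, this should be arranged so that a handle slide across the meridional leaves of $Y''$ converts the disjoint union of the two six-component surgery diagrams for $Y'$ and $Y''$ into the single six-component diagram for $\Gamma_1$, in which the leaf loop appears as the band sum $\ell'\sharp_\gamma\ell'' = \ell$. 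The same diagram analysis should show that surgery on $Y''$ alone produces $M(\Gamma''_1)$, the meridional leaves of $Y''$ being arranged so that they unlink from $Y'$ in its absence.

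The main obstacle is precisely this local Kirby calculus verification: choosing the framings and linkings on the auxiliary meridional leaves of $Y''$ so that all three surgery equivalences $M(Y')\cong M(\Gamma'_1)$, $M(Y'')\cong M(\Gamma''_1)$, $M(Y'\cup Y'')\cong M(\Gamma_1)$ hold simultaneously. Once this delicate pictorial computation in $N$ is carried out, the rest of the proof is a mechanical manipulation of alternating sums.
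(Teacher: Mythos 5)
First, note that the paper does not prove this statement: it is quoted verbatim as \cite[Corollary 4.3]{GGP}, so there is no internal proof to compare against. Judged on its own terms, your algebraic skeleton is correct and is indeed the standard route: if a two-component Y-link $(Y',Y'')$ supported in a neighborhood $N$ of $\Gamma_1\cup\gamma$ disjoint from $\Gamma_2,\dots,\Gamma_n$ satisfies $M(Y')\cong M(\Gamma'_1)$, $M(Y'')\cong M(\Gamma''_1)$ and $M(Y'\cup Y'')\cong M(\Gamma_1)$ by homeomorphisms that are the identity outside $N$, then the expansion of $[M;\Lambda]$ for the $(n+1)$-component link $\Lambda=Y'\cup Y''\cup\Gamma_2\cup\dots\cup\Gamma_n$ gives exactly $[M;\Gamma']+[M;\Gamma'']-[M;\Gamma]\in\mathcal{F}_{n+1}^{\mathbb{Z}}(M)$, as you compute.

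The genuine gap is that the existence of such a pair $(Y',Y'')$ \emph{is} the entire topological content of the lemma, and you both defer it (``the main obstacle is precisely this local Kirby calculus verification'') and propose a candidate that cannot work as described. You take $Y''$ to be a Y-graph whose three leaves are a copy of $\ell''$ together with two small meridians of the edge and leaf of $Y'$ adjacent to the cut, and you simultaneously require that these meridians ``unlink from $Y'$ in its absence'' and that $M(Y'')\cong M(\Gamma''_1)$. These two requirements are incompatible: once $Y'$ is deleted, a meridional leaf of $Y''$ bounds a disk in the complement of the rest of $Y''$, and by Lemma \ref{lemmadisk} surgery on such a Y-graph is trivial, so $M(Y'')\cong M$, which is not $M(\Gamma''_1)$ in general. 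The correct second component (as in GGP's proof via their Theorem 3.1, or Habiro's leaf-splitting move) is an honest parallel copy of $\Gamma''_1$ --- with leaf $\ell''$ and push-offs of the two \emph{original} remaining leaves and edges of $\Gamma_1$ --- positioned relative to $Y'=\Gamma'_1$ so that simultaneous surgery on the pair reproduces surgery on $\Gamma_1$, while each component separately is isotopic in $M$ to $\Gamma'_1$, resp.\ $\Gamma''_1$. Without that construction (or an explicit appeal to the corresponding move in clasper calculus), the argument establishes only the reduction, not the lemma.
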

\begin{figure}[htb] 
\begin{center}
\begin{tikzpicture} [scale=0.4]
\newcommand{\edge}[1]{
\draw[rotate=#1] (0,0) -- (0,-1);
\draw[rotate=#1,gray,very thick] (1,-0.6) -- (1,-1);
\draw[rotate=#1,gray,very thick] (-1,-0.6) -- (-1,-1);
\draw[rotate=#1,dashed] (0,-1) -- (0,-3);
\draw[rotate=#1,dashed,gray,very thick] (1,-1) -- (1,-3);
\draw[rotate=#1,dashed,gray,very thick] (-1,-1) -- (-1,-3);}
\edge{60} \edge{-60}
\draw (0,0) -- (0,3);
\draw (-2,3) -- (2,3) (-2,6) -- (2,6);
\draw[dashed] (0,3) -- (0,6);
\draw (0.5,4.5) node{$\gamma$};
\draw (2,3) arc (-90:90:1.5);
\draw (-2,6) arc (90:270:1.5);
\draw (3.2,3) node{$\ell$};
\begin{scope} [gray,very thick]
\draw (2,4) -- (1,4) -- (1,5) -- (2,5);
\draw (-2,4) -- (-1,4) -- (-1,5) -- (-2,5);
\draw (2,4) arc (-90:90:0.5);
\draw (-2,5) arc (90:270:0.5);
\draw (1,0.6) -- (1,2) -- (2,2) (2,7) -- (-2,7);
\draw (-1,0.6) -- (-1,2) -- (-2,2);
\draw (2,2) arc (-90:90:2.5);
\draw (-2,7) arc (90:270:2.5);
\end{scope}
\draw[->,line width=1.5pt] (8,2) -- (9,2);
\begin{scope} [xshift=26cm]
\edge{60} \edge{-60}
\draw (0,0) -- (0,3);
\draw (0,3) -- (2,3) (0,6) -- (2,6);
\draw (0,3) -- (0,6);
\draw (2,3) arc (-90:90:1.5);
\draw (3.2,3) node{$\ell''$};
\begin{scope} [gray,very thick]
\draw (2,4) -- (1,4) -- (1,5) -- (2,5);
\draw (2,4) arc (-90:90:0.5);
\draw (1,0.6) -- (1,2) -- (2,2) (2,7) -- (-1,7);
\draw (-1,0.6) -- (-1,7);
\draw (2,2) arc (-90:90:2.5);
\end{scope}
\end{scope}
\begin{scope} [xshift=18cm]
\edge{60} \edge{-60}
\draw (0,0) -- (0,3);
\draw (-2,3) -- (0,3) (-2,6) -- (0,6);
\draw (0,3) -- (0,6);
\draw (-2,6) arc (90:270:1.5);
\draw (0.5,4.5) node{$\ell'$};
\begin{scope} [gray,very thick]
\draw (-2,4) -- (-1,4) -- (-1,5) -- (-2,5);
\draw (-2,5) arc (90:270:0.5);
\draw (1,0.6) -- (1,7) (1,7) -- (-2,7);
\draw (-1,0.6) -- (-1,2) -- (-2,2);
\draw (-2,7) arc (90:270:2.5);
\end{scope}
\end{scope}
\end{tikzpicture}
\caption{cutting a leaf} \label{figcut}
\end{center}
\end{figure}

\begin{lemma}[GGP, Lemma 4.8] \label{lemmaframing}
Let $\Gamma$ be an $n$-component Y-link in a 3-manifold $M$. If $\Gamma$ has a leaf $\ell$ that bounds a disk in $M\setminus(\Gamma\setminus\ell)$ 
and has framing 1, then $[M;\Gamma]=0$.
\end{lemma}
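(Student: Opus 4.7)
The result is a standard ``trivial leaf'' theorem of Y-graph calculus, and the ``$=$'' in its statement means a literal equality of vectors in $\mathcal{F}_0^{\mathbb{Z}}(M)$. My strategy splits naturally into two parts: first establishing the geometric statement that the borromean surgery on the relevant Y-graph is trivial, then deducing the lemma by a telescoping cancellation in the alternating sum defining $[M;\Gamma]$.

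\textbf{Geometric claim.} Let $\Gamma_1$ be the single Y-graph containing the special leaf $\ell$. I claim $M(\Gamma_1) \cong M$. To prove this I use Matveev's presentation of the borromean surgery as Dehn surgery along the $6$-component framed link $L$ of Figure \ref{figborro}. After isotoping $\Gamma_1$ along the disk $D$ given by hypothesis, the surgery component of $L$ following $\ell$ becomes an unknot of framing $\pm 1$ bounding a disk $D'$ that is disjoint from the four surgery components corresponding to the other two leaves of $\Gamma_1$ and meets only the small companion circle of $\ell$ (the component of $L$ that clasps $\ell$), transversely in one point. A Kirby blow-down of this $\pm 1$-framed unknot then produces a full twist around $D'$ on the clasping companion, converting it into a $0$-framed unknot bounding a disk; this disk can then be used to blow-down the companion, and iterated Kirby moves along the remaining borromean chain trivialize the whole surgery diagram. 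The net effect is that $M(\Gamma_1)$ is homeomorphic to $M$.

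\textbf{Telescoping deduction.} Because the disk $D$ bounded by $\ell$ lies in $M\setminus(\Gamma_1\setminus\ell)$, and because the other Y-graphs $\Gamma_i$ ($i\geq 2$) are disjoint from $\Gamma_1$ with small regular neighborhoods, I may isotope $D$ so that it is also disjoint from $\cup_{i\geq 2}\Sigma(\Gamma_i)$. Then for every $I\subset\{1,\ldots,n\}$ containing $1$, the leaf $\ell$ of $\Gamma_1$ still bounds a disk with framing $1$ in the complement of $\Gamma_1\setminus\ell$ inside the surgered manifold $M((\cup_{i\in I\setminus\{1\}}\Gamma_i))$, and the geometric claim applied there yields
\begin{align*}
M\bigl(\cup_{i\in I}\Gamma_i\bigr) \;=\; M\bigl(\cup_{i\in I\setminus\{1\}}\Gamma_i\bigr).
\end{align*}
Splitting the defining sum of $[M;\Gamma]$ according to whether $1\in I$ and substituting $J=I\setminus\{1\}$ in the second piece, the two sums cancel:
\begin{align*}
[M;\Gamma] \;=\; \sum_{J\not\ni 1}(-1)^{|J|}M(\cup_{i\in J}\Gamma_i) \;+\; \sum_{J\not\ni 1}(-1)^{|J|+1}M(\cup_{i\in J}\Gamma_i) \;=\; 0.
\end{align*}

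\textbf{Main obstacle.} The crux is the Kirby-calculus step in the geometric claim: one must check that framing $1$ (and not some other integer) is exactly what makes the blow-down produce a secondary unknot with framing that again admits a further blow-down, so that the cascade terminates in the trivial surgery. This requires a careful bookkeeping of the framings on all six components of $L$ and of the signs introduced by each handle-slide and blow-down, which I would do by working in a standard model where $\Sigma(\Gamma_1)$ is the genus-$3$ handlebody of Figure \ref{figY} and all framings are computed against the blackboard framing coming from the associated surface.
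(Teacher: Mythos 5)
Your proof cannot be repaired as written, because the geometric claim at its heart is false. Take $M=S^3$ and let $\Gamma_1$ be the standard planar Y-graph whose three leaves bound disjoint round disks, with the associated surface given a positive curl at each leaf so that every leaf has framing $1$; each leaf then satisfies the hypothesis of the lemma. In the six-component surgery link of Figure \ref{figborro}, each leaf is a $+1$-framed unknot whose spanning disk meets the link only in one point of its $0$-framed companion circle. Blowing down the leaf does not convert the companion into a $0$-framed unknot, as you assert: it changes its coefficient from $0$ to $-1$ (the change is $-\mathrm{lk}^2=-1$). After the three blow-downs one is left with the Borromean rings with framings $(-1,-1,-1)$, and blowing down one of those components turns the other two into a Whitehead link, not an unlink; the cascade does not terminate in the empty diagram. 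In fact $(-1,-1,-1)$-surgery on the Borromean rings is the Poincar\'e sphere $\Sigma(2,3,5)$ --- this is precisely Matveev's basic example showing that $S^3$ and $\Sigma(2,3,5)$ differ by one borromean surgery --- and its Casson invariant is $\pm1\neq 0$, so $M(\Gamma_1)\not\cong M$. Hence your telescoping identity $M(\cup_{i\in I}\Gamma_i)=M(\cup_{i\in I\setminus\{1\}}\Gamma_i)$ fails, and with it the whole argument. (Had your claim been true, this lemma would be indistinguishable from Lemma \ref{lemmadisk}; the fact that the $0$-framed case is stated as a homeomorphism while the $1$-framed case is only a vanishing of a bracket is already a warning.)

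The same example shows that for $n=1$ the conclusion cannot be a literal equality in $\mathcal{F}_0^{\mathbb{Z}}(M)$, since $[S^3;\Gamma_1]=S^3-\Sigma(2,3,5)\neq 0$ there; the statement has to be read, as in [GGP], modulo $\mathcal{F}_{n+1}^{\mathbb{Z}}(M)$, which is all the present paper ever uses (it only invokes the lemma to make $[M;\Gamma]$ mod $\mathcal{F}_{n+1}^{\mathbb{Z}}(M)$ independent of the framings of the leaves). The paper itself gives no proof --- it is a citation of [GGP, Lemma 4.8] --- but the mechanism there is necessarily a cancellation inside the graded quotient rather than a trivialization of the surgery: for instance, reversing the orientation of a $1$-framed trivial leaf is absorbed by an isotopy supported in a ball around its disk while also being realized by a half twist of the adjacent edge, and Lemma \ref{lemmahalftwist} then yields $[M;\Gamma]=_n-[M;\Gamma]$, hence $[M;\Gamma]=_n0$ over $\mathbb{Q}$. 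Any correct argument must produce a relation of this kind in $\mathcal{F}_n^{\mathbb{Z}}(M)/\mathcal{F}_{n+1}^{\mathbb{Z}}(M)$; a purely Kirby-calculus trivialization of $M(\Gamma_1)$ is not available.
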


These two lemmas imply that the class of $[M;\Gamma]$ modulo $\mathcal{F}_{n+1}^{\mathbb{Z}}(M)$ does not depend on the framing 
of the leaves.

\begin{lemma}[GGP, Corollary 4.2] \label{lemmaslide}
Let $\Gamma$ be an $n$-component Y-link in a 3-manifold $M$. Let $K$ be a framed knot in $M\setminus\Gamma$. Let $\Gamma'$ be obtained from $\Gamma$ 
be sliding an edge of $\Gamma$ along $K$ (see Figure \ref{figslide}). Then $[M;\Gamma]=_n[M;\Gamma']$.
\end{lemma}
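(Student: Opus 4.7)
My plan is to absorb the edge slide into a single application of Lemma \ref{lemmacut} followed by Lemma \ref{lemmaframing}, exploiting the remark between those two lemmas that the framing of a leaf is irrelevant modulo $\mathcal{F}_{n+1}^{\mathbb{Z}}(M)$.

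First, I would localize the move. The slide of an edge $e$ of a component $Y_0 \subset \Gamma$ along $K$ modifies $\Gamma$ only inside a regular neighborhood $U$ of the union of $K$ with a band joining $K$ to $e$; outside $U$, the two Y-links $\Gamma$ and $\Gamma'$ coincide. The other $n-1$ components of $\Gamma$ therefore lie outside $U$ and may be treated as fixed spectators throughout the argument, so the problem reduces to comparing the two local Y-graphs $Y_0, Y_0' \subset U$.

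Second, I would apply Lemma \ref{lemmacut} to $\Gamma'$. Geometrically, $Y_0'$ differs from $Y_0$ by replacing a straight portion of the leaf incident to $e$ by an arc making one detour around $K$. I would introduce a framed arc $\gamma \subset U$ whose two endpoints lie on the leaf of $Y_0'$ adjacent to $e$, and whose core traces a small meridional loop of $K$ sitting on the band, so that cutting along $\gamma$ as in Figure \ref{figcut} splits that leaf into two leaves: one that, after an ambient isotopy in $U$, reproduces $Y_0$, and one, call it $\ell''$, equal to a meridian of $K$ bounding a disk disjoint from everything else. Write $Y_0''$ for the resulting Y-graph, and $\Gamma''$ for $\Gamma$ with $Y_0$ replaced by $Y_0''$. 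Lemma \ref{lemmacut} then yields $[M;\Gamma'] =_n [M;\Gamma] + [M;\Gamma'']$.

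Third, I would kill the correction. Because the framing of a leaf only matters modulo $\mathcal{F}_{n+1}^{\mathbb{Z}}(M)$, I may normalize the framing of $\ell''$ to equal $1$ without changing the class of $[M;\Gamma'']$ modulo the required order. Since $\ell''$ bounds a disk in $M \setminus (\Gamma'' \setminus \ell'')$, Lemma \ref{lemmaframing} then forces $[M;\Gamma''] = 0$, and rearranging gives $[M;\Gamma] =_n [M;\Gamma']$.

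The main obstacle will be the precise geometric setup of the arc $\gamma$ so that the two summands produced by Lemma \ref{lemmacut} really are, on one side, an isotopic copy of $Y_0$, and on the other, a Y-graph with a meridional leaf of $K$ to which Lemma \ref{lemmaframing} applies. The intuition is clear (an edge slide along $K$ morally grafts a meridian of $K$ onto a leaf), but reading off the correct framed arc from the data of the slide and tracking framings through the isotopies in $U$ is the most delicate bookkeeping step.
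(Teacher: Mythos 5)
There is nothing to compare against here: the paper does not prove this statement, it cites it as Corollary~4.2 of \cite{GGP}. Evaluating your argument on its own merits, the strategy (reduce the move to Lemma~\ref{lemmacut} plus a disk-bounding leaf killed by Lemma~\ref{lemmaframing}) is the right flavour of clasper calculus, but the pivotal second step contains a genuine gap. The slide modifies an \emph{edge} of $Y_0$, not a leaf: in $\Gamma'$ the leaf adjacent to $e$ is the same curve as in $\Gamma$, and it is the edge that makes the detour around $K$. Lemma~\ref{lemmacut} only splits \emph{leaves}, so before you can cut anything you must convert the detour on the edge into a modification of the adjacent leaf. That conversion is not an ambient isotopy: pushing the detour past the leaf's vertex changes the edge/leaf decomposition of the graph, hence the associated six-component surgery link, and asserting that this does not change the bracket is essentially the statement you are trying to prove. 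Making it rigorous requires an extra move (e.g.\ the insertion of a trivially Hopf-linked pair of leaves into an edge, as in \cite{GGP} or Habiro's calculus) which is not among the lemmas available in this paper. As written, the claim that ``$Y_0'$ differs from $Y_0$ by replacing a straight portion of the leaf incident to $e$ by an arc making one detour around $K$'' is simply not what the move does.

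A secondary inaccuracy: even after a correct edge-to-leaf transfer, the split-off leaf $\ell''$ would not be a meridian of $K$ but the banded boundary of an annulus around a parallel of $K$ (two antiparallel longitudes joined by a band). Since $K$ is disjoint from $\Gamma$, that curve does still bound a disk in the complement of $\Gamma''\setminus\ell''$, so the final vanishing step via Lemma~\ref{lemmaframing} would survive; but the misidentification is a symptom of the same problem, namely that the geometric content of the slide has not been correctly translated into a leaf operation. I would either supply the missing edge-splitting move and redo the bookkeeping with the correct curve, or simply keep this lemma as a citation to \cite{GGP}.
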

\begin{figure}[htb] 
\begin{center}
\begin{tikzpicture} [scale=0.3]
\newcommand{\edge}[1]{
\draw[rotate=#1] (0,0) -- (0,-1.5);
\draw[rotate=#1,dashed] (0,-1.5) -- (0,-3);}
\edge{120} \edge{240}
\draw (0,0) -- (0,-6);
\draw (1,-7) arc (0:180:1);
\draw[dashed] (-1,-7) arc (180:360:1);
\draw (-4,-4.5) arc (-90:90:1.5);
\draw[dashed] (-4,-1.5) arc (90:270:1.5);
\draw (-4,-5.5) node{$K$};
\draw (1,-1) node{$\Gamma$};
\draw[->,line width=1.5pt] (6.5,-3) -- (7.5,-3);
\begin{scope} [xshift=18cm]
\edge{120} \edge{240}
\draw (0,0) -- (0,-2.5) (0,-3.5) -- (0,-6);
\draw (1,-7) arc (0:180:1);
\draw[dashed] (-1,-7) arc (180:360:1);
\draw (-4,-4.5) arc (-90:90:1.5);
\draw[dashed] (-4,-1.5) arc (90:270:1.5);
\draw (-4,-4.8) arc (-90:90:1.8);
\draw[dashed] (-4,-1.2) arc (90:270:1.8);
\draw[color=white,line width=3pt] (-2.26,-2.53) -- (-2.26,-3.48);
\draw (-2.26,-2.5) -- (0,-2.5) (-2.26,-3.5) -- (0,-3.5);
\draw (1,-1) node{$\Gamma'$};
\end{scope}
\end{tikzpicture}
\caption{sliding an edge} \label{figslide}
\end{center}
\end{figure}

\begin{lemma}[GGP, Lemma 4.4] \label{lemmahalftwist}
 Let $\Gamma$ be an $n$-component Y-link in a 3-manifold $M$. Let $\Gamma'$ be obtained from $\Gamma$ by twisting 
the framing of an edge by a half twist. Then $[M;\Gamma']=_n -[M;\Gamma]$.
\end{lemma}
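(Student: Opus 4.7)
The plan is to combine Lemma \ref{lemmacut} (leaf splitting) and Lemma \ref{lemmaframing} (vanishing of trivial $\pm 1$-framed leaves), using Lemma \ref{lemmaslide} for auxiliary isotopies, to reduce the half-twist to a local decomposition whose pieces can be identified explicitly.

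\textbf{First}, I would localize the half-twist. By Lemma \ref{lemmaslide}, I can slide edges of $\Gamma'$ and isotope the associated surface so that the half-twist is confined to a small tubular portion of the edge $e$, sitting close to the leaf $\ell$ at the end of $e$. On this small piece the band representing the framed edge has a half twist, whereas outside of this piece the framed graph agrees with $\Gamma$.

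\textbf{Second}, I would apply Lemma \ref{lemmacut} to the leaf $\ell$ with a carefully chosen framed arc $\gamma$ that enters the half-twisted region, so that $\ell$ splits into two new leaves $\ell'$ and $\ell''$. This gives, modulo $\mathcal{F}_{n+1}^{\mathbb{Z}}(M)$,
\[
[M;\Gamma']=[M;\Gamma_{\ell'}]+[M;\Gamma_{\ell''}].
\]
The arc $\gamma$ is arranged so that one of the resulting leaves, say $\ell''$, is a small unknot bounding a disk disjoint from the remaining $(n-1)$ Y-graphs and from the rest of the modified component, while its framing is forced to equal $\pm 1$ by the half-twist contribution of the band.

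\textbf{Third}, by Lemma \ref{lemmaframing} the term $[M;\Gamma_{\ell''}]$ vanishes modulo $\mathcal{F}_{n+1}^{\mathbb{Z}}(M)$. I am left with $[M;\Gamma_{\ell'}]$, which after an isotopy looks like $\Gamma$ with the half-twist replaced by an ordinary band but with the orientation of the leaf $\ell$ (inherited from the orientation of the cutting arc $\gamma$) reversed relative to the leaf orientation that would produce $+[M;\Gamma]$. Running the same cutting procedure backwards on $\Gamma$ itself, and comparing orientations of the two cuts, should yield $[M;\Gamma_{\ell'}]=_{n}-[M;\Gamma]$.

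\textbf{The main obstacle} is the sign: a direct use of Lemma \ref{lemmacut} adds rather than subtracts terms, so the $-1$ must come from an asymmetry between the two cuts, specifically from the reversed orientation of the surviving leaf or from a second, compensating application of the cut lemma. A rigorous implementation probably chains two cuts—one on $\Gamma'$ and one on $\Gamma$, along parallel but oppositely oriented framed arcs positioned across the half-twisted band—so that all trivial-framing terms are killed by Lemma \ref{lemmaframing} while the two surviving terms combine to give $[M;\Gamma]+[M;\Gamma']\in\mathcal{F}_{n+1}^{\mathbb{Z}}(M)$. The delicate point is checking, by a careful local picture of the half-twisted band together with the two cutting arcs, that the framings produced on the small unknotted leaves are indeed $\pm 1$ and that the two residual Y-links realize exactly $\Gamma$ and $\Gamma'$.
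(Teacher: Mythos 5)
The paper does not actually prove this statement: it is imported from \cite{GGP} (Lemma 4.4 there) and used as a black box, so there is no in-paper proof to compare yours against. Judged on its own terms, your proposal has a genuine gap at the one place where the content of the lemma lives, namely the minus sign, and your final paragraph essentially concedes this. Steps 1--3 (localize the half twist near the leaf $\ell$ at the end of the edge, cut $\ell$ so as to pinch off a small trivial $\pm 1$-framed loop, kill that term by Lemma \ref{lemmaframing}) only show that $[M;\Gamma']$ is unchanged when the framing of $\ell$ is shifted by an integer: the surviving leaf $\ell'$ is isotopic to the original leaf with the \emph{same} orientation and a different framing, not to the reversed leaf. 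This is exactly the framing-independence already recorded in the paper right after Lemma \ref{lemmaframing}, and it carries no sign. The claim that $\ell'$ realizes $\Gamma$ with the leaf orientation reversed, and that such a reversal costs a factor $-1$, is the whole point of the lemma and is nowhere established. (Note also that Lemma \ref{lemmaframing} as stated requires framing $+1$, not $\pm 1$.)

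The mechanism that actually produces the sign is a \emph{single} application of Lemma \ref{lemmacut}, not to $\Gamma$ or $\Gamma'$ separately, but to an auxiliary Y-link $\Gamma_0$ whose distinguished leaf is the band sum of the leaf $\ell$ of $\Gamma$ with an oppositely oriented parallel push-off --- the same ``$\ell-\ell'$'' template the paper uses later in Lemma \ref{lemmah} and Figure \ref{figll'}. That combined leaf bounds an embedded disk disjoint from the rest of $\Gamma_0$, with framing $0$ or $1$ according to the band, so $[M;\Gamma_0]=0$ by Lemma \ref{lemmadisk} or Lemma \ref{lemmaframing}; cutting it along the band gives $0=_n[M;\Gamma]+[M;\Gamma'']$, where $\Gamma''$ carries the reversed leaf and is isotopic, up to a leaf-framing change, to $\Gamma'$ (absorbing the half twist of the edge into the adjacent leaf reverses its orientation, as the paper notes after the lemma). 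Your closing sentence names the right target identity $[M;\Gamma]+[M;\Gamma']\in\mathcal{F}_{n+1}^{\mathbb{Z}}(M)$, but proposes to reach it by two separate cuts, one on $\Gamma$ and one on $\Gamma'$; each such cut expresses its own bracket as a sum of two others, and no combination of the cited lemmas lets you add the two outputs to get zero. Without the single-cut construction on the auxiliary trivial leaf, the proof is incomplete at its crucial step.
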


In the following, we will consider {\em oriented Y-links}, defined as follows. 
A Y-graph is {\em oriented} if its associated surface is oriented. 
An orientation of a Y-graph induces an orientation of its leaves and of its internal vertex, as shown in Figure 
\ref{figorientY}, where the surface drawn is given the standard orientation of the plane. 
\begin{figure}[htb]
 \begin{center}
\begin{tikzpicture} [scale=0.15]
\draw (0,0) circle (1.6);
\draw[->] (1.6,-0.1) -- (1.6,0.1);
\newcommand{\feuille}[1]{
\draw[rotate=#1,line width=4pt,color=white] (0,-1) -- (0,-2);
\draw[rotate=#1,thick,color=gray] (0,-11) circle (5);
\draw[rotate=#1,thick,color=gray] (0,-11) circle (1);
\draw[rotate=#1,line width=8pt,color=white] (-2,-6.42) -- (2,-6.42);
\draw[rotate=#1,thick,color=gray] (2,-1.15) -- (2,-6.42);
\draw[rotate=#1,thick,color=gray] (-2,-1.15) -- (-2,-6.42);
\draw[rotate=#1] (0,0) -- (0,-8);
\draw[rotate=#1] (0,-11) circle (3);
\draw[rotate=#1,->] (-3,-10.9) -- (-3,-11.1);}
\feuille{0}
\feuille{120}
\feuille{-120}
\end{tikzpicture}
 \end{center}
\caption{oriented Y-graph} \label{figorientY}
\end{figure}
A Y-link is oriented if its components are oriented. In this setting, one can twist the framing of an edge only 
by an integral number of twists. A half twist corresponds to a change of orientation of the adjacent leaf.

Let $\Gamma$ be an oriented Y-link in a 3-manifold $M$.
The above results imply that the class of $[M;\Gamma]$ modulo $\mathcal{F}_{n+1}^\mathbb{Z}(M)$ does not depend on the edges 
of $\Gamma$ and on the incident vertices of the leaves of $\Gamma$. We shall see that, in some sense, it only depends 
on the homology classes of the leaves.

\begin{lemma}[GGP, Lemma 2.2] \label{lemmadisk}
 Let $\Gamma$ be a Y-graph in a 3-manifold $M$, which has a 0-framed leaf $\ell$ that bounds a disk in $M\setminus(\Gamma\setminus\ell)$. 
Then $M(\Gamma)\cong M$.
\end{lemma}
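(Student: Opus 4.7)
The plan is to prove $M(\Gamma) \cong M$ by showing that, under the stated hypotheses, the borromean surgery on $\Gamma$ can be realized by an ambient isotopy of $M$, using the bounding disk to standardize the position of $\Gamma$.

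First I would use the disk $D \subset M \setminus (\Gamma \setminus \ell)$ bounded by $\ell$, together with the $0$-framing hypothesis, to perform an ambient isotopy of $\Gamma$ in $M$, reducing to the case where the ``arm'' of $\Gamma$ ending at $\ell$ lies as a trivially embedded, $0$-framed unknotted loop inside a small $3$-ball $B$ disjoint from $\Gamma \setminus \ell$, with exactly one edge of $\Gamma$ entering $B$ to reach the internal vertex. The $0$-framing hypothesis is precisely what is needed so that the framed associated surface $\Sigma(\Gamma)$ can be carried along during this isotopy without introducing extra twists near $\ell$: it says that the $\Sigma(\Gamma)$-framing agrees with the Seifert framing of $\ell$ induced by $D$, so the arm of $\Sigma(\Gamma)$ adjacent to $\ell$ isotopes to a standard flat annulus-plus-disk cap.

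Second, I would invoke the description of borromean surgery via the six-component surgery link $L$ of Figure \ref{figborro} and simplify $L$ by Kirby calculus in the standardized position. The leaf $\ell$ appears in $L$ as a $0$-framed unknot lying in $B$, and the Borromean ring $R_1$ encircling the edge incident to $\ell$ crosses the disk $D$ once. A handle slide of $R_1$ across $D$ unlinks $R_1$ from $\ell$, leaving $\ell$ as an isolated $0$-framed unknot cancelling with the slid $R_1$ (which is now a $0$-framed meridian of $\ell$). Because $R_1, R_2, R_3$ form a Borromean link, removing $R_1$ unlinks $R_2$ and $R_3$ from each other, so the remaining $4$-component sublink $\{\ell_2, \ell_3, R_2, R_3\}$ splits as two disjoint Hopf-like pieces. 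Iterating the same cancellation argument on each Hopf piece reduces $L$ to the empty link, and surgery on the empty link is the identity, so $M(\Gamma) \cong M$.

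The main obstacle is the bookkeeping in this Kirby-calculus reduction: one must verify that each successive slide preserves the framings correctly and that the final cancellations are valid, in particular that after the initial removal of the $(\ell, R_1)$ pair the remaining components are genuinely organized as two unlinked Hopf-like pairs thanks to the Borromean linking of $R_1, R_2, R_3$. The $0$-framing hypothesis is crucial exactly at the first step, since it makes the initial slide along $D$ yield a cancelling $0$-framed pair rather than introducing a nontrivial twist.
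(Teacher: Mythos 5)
First, a remark on the comparison: the paper does not prove this statement at all --- it is quoted as \cite[Lemma 2.2]{GGP} --- so your attempt can only be measured against the standard argument, which is indeed the Kirby-calculus cancellation on the six-component surgery link that you are aiming for. Your set-up (the ball $B=N(D\cup\ell)$ meeting $\Gamma$ only in the edge incident to $\ell$, and the role of the $0$-framing) is fine.

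However, the central Kirby-calculus step is genuinely wrong as written. A handle slide of $R_1$ over $\ell$ changes $lk(R_1,\ell)$ by the framing of $\ell$, which is $0$; since $lk(R_1,\ell)=\pm 1$, no slide over $\ell$ can ``unlink $R_1$ from $\ell$''. The sentence is also self-contradictory: $R_1$ cannot simultaneously be unlinked from $\ell$ and be ``a $0$-framed meridian of $\ell$''. Worse, if $\ell$ really became a split $0$-framed unknot, surgery on it would produce an $S^2\times S^1$ connected summand, which would contradict the very conclusion $M(\Gamma)\cong M$. The correct move is not a slide at all but a direct handle \emph{cancellation} of the pair $(\ell,R_1)$: $\ell$ is a $0$-framed unknot whose disk $D$ meets the rest of the surgery link in exactly the one point $D\cap R_1$, and that is precisely the hypothesis of the cancellation lemma. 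One must then say what happens to $R_2,R_3$, which pass Borromean-wise through the span of $R_1$: since $lk(R_i,R_1)=0$, the cancellation modifies them only by band sums with algebraically cancelling pairs of parallels of $\ell$, which is an isotopy because $\ell$ bounds $D$ away from them; after this isotopy $R_2$ and $R_3$ bound disjoint disks (Borromean rings minus a component form an unlink). Your last step has a matching imprecision: the remaining four components do \emph{not} split into ``two disjoint Hopf-like pieces'' --- the leaves $\ell_2,\ell_3$ are arbitrary framed knots in $M$, possibly knotted and linked with each other and with the ambient topology, and they need not be $0$-framed. What the iteration actually uses is that each $R_i$ ($i=2,3$) is a $0$-framed unknot bounding a disk met once by $\ell_i$ and by nothing else, so the same cancellation applies with $R_i$, not $\ell_i$, playing the role of the cancelling $0$-framed unknot. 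With these corrections the skeleton of your argument does give the lemma, but as stated the key step fails.
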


\begin{lemma} \label{lemmatriv}
 Let $\Gamma$ be an oriented $n$-component Y-link in a 3-manifold $M$. Assume $\Gamma$ has a leaf $\ell$ which is trivial 
in $H_1(M\setminus(\Gamma\setminus\ell);\mathbb{Z})$. Then $[M;\Gamma]=_n0$.
\end{lemma}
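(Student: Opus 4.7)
The plan is to induct on the genus of a surface bounded by $\ell$. Set $X := M \setminus (\Gamma \setminus \ell)$. Since $[\ell] = 0$ in $H_1(X;\mathbb{Z})$, there exists a compact oriented surface $S \subset X$ with $\partial S = \ell$; let $g$ denote its genus.

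In the base case $g = 0$, the leaf $\ell$ bounds a disk in $X$. Since the class of $[M;\Gamma]$ modulo $\mathcal{F}_{n+1}^{\mathbb{Z}}(M)$ does not depend on the framing of the leaves (an observation recorded just after Lemma \ref{lemmaframing}), I may assume $\ell$ has framing $1$ and then apply Lemma \ref{lemmaframing} directly to obtain $[M;\Gamma] = 0$.

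For the inductive step $g \geq 1$, I choose a properly embedded arc $\gamma \subset S$ with both endpoints on $\partial S = \ell$ and apply Lemma \ref{lemmacut} to write $[M;\Gamma] =_n [M;\Gamma'] + [M;\Gamma'']$, where the new leaves $\ell'$ and $\ell''$ satisfy the homological identity $[\ell'] + [\ell''] = [\ell]$ in $H_1(X;\mathbb{Z})$ (which follows from the orientation conventions of the cut: the arc $\gamma$ is traversed oppositely in $\ell'$ and $\ell''$). For $g \geq 2$, I take $\gamma$ to be a separating arc on $S$ splitting it into two connected surfaces $S_1, S_2 \subset X$ of genera $g_1, g_2$ with $g_1 + g_2 = g$ and both $g_i \geq 1$. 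Then $\ell'$ bounds $S_1$ and $\ell''$ bounds $S_2$ in $X$, so the inductive hypothesis yields $[M;\Gamma'] =_n [M;\Gamma''] =_n 0$.

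For $g = 1$, I take $\gamma$ to be non-separating on $S$, so that $S \setminus \gamma$ is an annulus in $X$ cobounded by $\ell'$ and $\ell''$. Since $[\ell'] + [\ell''] = 0$ in $H_1(X;\mathbb{Z})$, the curves $\ell'$ and $\ell''$ traverse the annulus in opposite directions, so an ambient isotopy in $X$ carries $\ell''$ onto $\ell'$ with reversed orientation. This exhibits $\Gamma''$ as a Y-link obtained from $\Gamma'$ by reversing the orientation of its leaf, and Lemma \ref{lemmahalftwist} (which implements an orientation reversal by a half twist on the adjacent edge) gives $[M;\Gamma''] =_n -[M;\Gamma']$; hence $[M;\Gamma] =_n 0$.

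The main obstacle is the $g=1$ case: one has to trace through the orientation conventions of Lemma \ref{lemmacut} to confirm that $[\ell'] + [\ell''] = [\ell]$ with the correct sign, so that cobounding an annulus really does force the two leaves to run oppositely, and then verify that the combination of the isotopy in $X$ and the half twist produces exactly the sign needed for the cancellation $[M;\Gamma'] + [M;\Gamma''] =_n 0$.
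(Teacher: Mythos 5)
Your proof is correct, and its scaffolding (pick a Seifert surface for $\ell$ in $X=M\setminus(\Gamma\setminus\ell)$, use Lemma \ref{lemmacut} along separating arcs to drop the genus, settle the disk-bounding case with the framing lemmas) coincides with the paper's. The genuine divergence is in the genus $1$ step. The paper cuts $\ell$ into four leaves and re-glues them by pairs, arranging that each resulting leaf bounds a disk, and then finishes with Lemma \ref{lemmadisk}: everything is funnelled into the disk case. You instead cut once along a non-separating arc, observe that $\ell'$ and $\ell''$ are the two components of the oriented boundary of an annulus in $X$ and hence anti-parallel, and cancel the two terms against each other using the orientation-reversal sign of Lemma \ref{lemmahalftwist} together with the independence of the bracket from edges, framings and attaching points. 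This trades the paper's geometric re-gluing for an algebraic cancellation; it is arguably cleaner, at the cost of the orientation bookkeeping you flag, which does work out: $\ell'*\ell''=\ell=\partial S$ forces the cut-open annulus to induce opposite orientations on its two boundary circles, so the isotopy across the annulus really carries $\ell''$ to $\ell'$ reversed. Two minor points, at the same level of terseness as the paper itself: the arc of Lemma \ref{lemmacut} is required to start at the internal vertex, so your properly embedded arc in $S$ must be completed by a path running along the edge from the vertex to $\ell$; and in the base case you use Lemma \ref{lemmaframing} with framing $1$ where the paper uses the $0$-framed Lemma \ref{lemmadisk} --- both are legitimate since the class modulo $\mathcal{F}_{n+1}^{\mathbb{Z}}(M)$ is framing-independent.
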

\begin{proof}
We can assume that $\ell$ is 0-framed. 
 The leaf $\ell$ bounds a surface $\Sigma$ whose interior does not meet $\Gamma$. First assume $\Sigma$ has a positive genus. 
Thanks to Lemma \ref{lemmacut}, we can assume $\Sigma$ has genus 1. Apply Lemma \ref{lemmacut} to decompose $\ell$ into four leaves, 
and apply it again to re-glue them by pairs, as shown in Figure \ref{figdec}. 
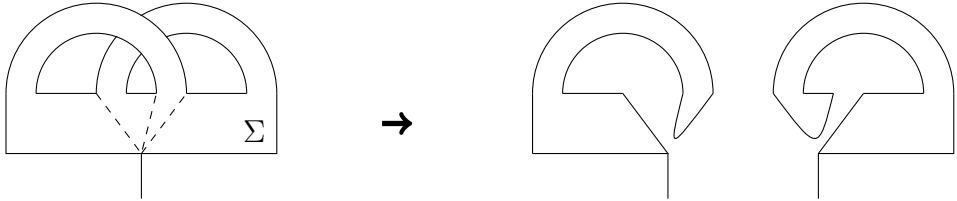
\begin{figure}[htb]
\begin{center}
\begin{tikzpicture} [scale=0.2]
\draw (0,-3) -- (0,0);
\draw (-9,4) -- (-9,0) -- (9,0) -- (9,4);
\draw (-7,4) -- (-3,4) (-1,4) -- (1,4) (3,4) -- (7,4);
\draw (7,4) arc (0:180:4);
\draw (9,4) arc (0:180:6);
\draw[line width=11,color=white] (2,4) arc (0:180:5);
\draw (1,4) arc (0:180:4);
\draw (3,4) arc (0:180:6);
\draw[dashed] (-3,4) -- (0,0) -- (1,4) (0,0) -- (3,4);
\draw (7.5,1.5) node{$\Sigma$};
\begin{scope} [xshift=35cm]
\draw (0,-3) -- (0,0) (-9,0) -- (-9,4);
\draw (-7,4) -- (-3,4);
\draw (1,4) arc (0:180:4);
\draw (3,4) arc (0:180:6);
\draw (1,4) .. controls +(-1,-4) and +(-3,-4) .. (3,4);
\draw (-9,0) -- (0,0) -- (-3,4);
\end{scope}
\draw[->,line width=2pt] (16,2) -- (18,2);
\begin{scope} [xshift=45cm]
\draw (0,-3) -- (0,0) -- (9,0) -- (9,4);
\draw (-1,4) -- (1,4) (3,4) -- (7,4);
\draw (7,4) arc (0:180:4);
\draw (9,4) arc (0:180:6);
\draw (0,0) -- (3,4);
\draw (-3,4) .. controls +(3,-4) and +(-1,-4) .. (1,4);
\end{scope}
\end{tikzpicture}
\end{center}
\caption{decomposing a leaf} \label{figdec}
\end{figure}
This leads us to the case of a leaf which bounds a disk. The result follows then from Lemma \ref{lemmadisk}. 
\end{proof}

\begin{lemma} \label{lemmah}
 Let $\Gamma$ be an $n$-component Y-link in a 3-manifold $M$. Let $\ell$ be a leaf of $\Gamma$. Fix $\Gamma\setminus\ell$. 
Then the class of $[M;\Gamma]\ mod\ \mathcal{F}_{n+1}^\mathbb{Z}(M)$ is a linear function 
of $\ell\in H_1(M\setminus(\Gamma\setminus\ell);\mathbb{Q})$.
\end{lemma}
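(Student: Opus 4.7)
The plan is to prove this in three steps: well-definedness on integral homology, additivity over $\mathbb{Z}$, and extension to a $\mathbb{Q}$-linear map. Throughout, I use the clasper moves recalled above, in particular Lemmas \ref{lemmacut}, \ref{lemmahalftwist} and \ref{lemmatriv}.

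\textbf{Well-definedness on $H_1(M\setminus(\Gamma\setminus\ell);\mathbb{Z})$.} Given two leaf representatives $\ell_1,\ell_2$ of the same class in $H_1(M\setminus(\Gamma\setminus\ell);\mathbb{Z})$, I would show that the two resulting brackets coincide modulo $\mathcal{F}_{n+1}^\mathbb{Z}(M)$. Write $\Gamma_i$ for the Y-link obtained from $\Gamma$ by installing $\ell_i$ as leaf. Inside $M\setminus(\Gamma\setminus\ell)$, form a single embedded loop $\ell^\ast$ by band-summing $\ell_1$ with a reversed parallel copy of $\ell_2$; then $\ell^\ast$ is null-homologous in $M\setminus(\Gamma\setminus\ell)$. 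Cutting $\ell^\ast$ along the band used for the connected sum, Lemma \ref{lemmacut} gives
$$[M;\Gamma^\ast] =_n [M;\Gamma_1] + [M;\Gamma_2^-],$$
where $\Gamma_2^-$ carries $\ell_2$ with the opposite orientation. By the remark following Lemma \ref{lemmahalftwist}, reversing the orientation of one leaf multiplies the bracket by $-1$ modulo $\mathcal{F}_{n+1}^\mathbb{Z}(M)$, so $[M;\Gamma_2^-] =_n -[M;\Gamma_2]$. Since $\ell^\ast$ is null-homologous in the complement of the rest of $\Gamma$, Lemma \ref{lemmatriv} gives $[M;\Gamma^\ast] =_n 0$, and combining these three equalities yields $[M;\Gamma_1] =_n [M;\Gamma_2]$.

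\textbf{$\mathbb{Z}$-additivity.} Given classes $c_1,c_2 \in H_1(M\setminus(\Gamma\setminus\ell);\mathbb{Z})$, choose disjoint simple closed representatives $\ell_1$ and $\ell_2$ and band-sum them within $M\setminus(\Gamma\setminus\ell)$ to get a single loop $\ell_1\#\ell_2$ representing $c_1+c_2$. Cutting along the connecting band, Lemma \ref{lemmacut} immediately yields
$$[M;\Gamma_{\ell_1\#\ell_2}] =_n [M;\Gamma_{\ell_1}] + [M;\Gamma_{\ell_2}].$$
Together with Step 1 this defines a group homomorphism $\phi: H_1(M\setminus(\Gamma\setminus\ell);\mathbb{Z}) \to \mathcal{F}_n^\mathbb{Z}(M)/\mathcal{F}_{n+1}^\mathbb{Z}(M)$.

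\textbf{Extension to $\mathbb{Q}$.} The target is a $\mathbb{Q}$-vector space, so $\phi$ annihilates the torsion subgroup and factors uniquely through a $\mathbb{Q}$-linear map on $H_1(M\setminus(\Gamma\setminus\ell);\mathbb{Q}) = H_1(M\setminus(\Gamma\setminus\ell);\mathbb{Z})\otimes\mathbb{Q}$, as desired. The only delicate point in the argument is ensuring that the band sums and their cutting arcs can be realised disjointly from $\Gamma\setminus\ell$ and with a chosen framing; framings are irrelevant modulo $\mathcal{F}_{n+1}^\mathbb{Z}(M)$ by Lemmas \ref{lemmacut} and \ref{lemmaframing}, and embedded disjoint representatives exist by standard general position in the 3-manifold $M\setminus(\Gamma\setminus\ell)$.
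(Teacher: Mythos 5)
Your proof is correct and follows essentially the same route as the paper's: the paper also forms the leaf $\ell-\ell'$ by banding $\ell$ to a reversed copy of a homologous leaf, kills it with Lemma \ref{lemmatriv}, recovers well-definedness and additivity from Lemma \ref{lemmacut}, and passes to $\mathbb{Q}$ by observing that the target is a rational vector space. Your extra care with the orientation-reversal sign (via Lemma \ref{lemmahalftwist}) and with framings just makes explicit what the paper leaves implicit.
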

\begin{proof}
 Consider an $n$-component Y-link $\Gamma'$ that has a leaf $\ell'$ such that $\Gamma'\setminus\ell'$ coincides with 
$\Gamma\setminus\ell$ and $\ell'$ is homologous to $\ell$ in $M\setminus(\Gamma\setminus\ell)$. Construct another 
$n$-component Y-link $\Gamma^\delta$ by replacing the leaf $\ell$ by $\ell-\ell'$ in $\Gamma$ (see Figure \ref{figll'}). 
\begin{figure}[htb] 
\begin{center}
\begin{tikzpicture} [scale=0.5]
\newcommand{\leaf}[3]{
\draw[xshift=#1,yshift=#2] (0,0) -- (0,-2);
\draw[xshift=#1,yshift=#2] (-2,2) arc (-180:0:2);
\draw[xshift=#1,yshift=#2,dashed] (2,2) arc (0:180:2);
\draw[xshift=#1,yshift=#2,->] (1.3,0.5) -- (1.5,0.7);
\draw[xshift=#1,yshift=#2] (1.8,0.2) node{#3};}
\leaf{1cm}{1cm}{$\ell'$}
\draw[line width=4pt,color=white] (0,2) circle (2);
\leaf{0}{0}{$\ell$}
\draw[dashed] (-0.4,0.04) -- (0.6,1.04) (-0.8,0.17) -- (0.2,1.17);
\draw[->,line width=2pt] (6,1) -- (7,1);
\begin{scope} [xshift=12cm]
\newcommand{\sleaf}[2]{
\draw[xshift=#1,yshift=#2] (-2,2) arc (-180:0:2);
\draw[xshift=#1,yshift=#2,dashed] (2,2) arc (0:180:2);}
\sleaf{1cm}{1cm}
\draw[line width=4pt,color=white] (0,2) circle (2);
\sleaf{0}{0}
\draw (0,0) -- (0,-2);
\draw[->] (1.3,0.5) -- (1.5,0.7);
\draw (2.5,0.3) node{$\ell-\ell'$};
\draw (-0.4,0.04) -- (0.6,1.04) (-0.8,0.17) -- (0.2,1.17);
\draw[line width=4.5pt,color=white] (-0.7,0) -- (0.5,1.21);
\end{scope}
\end{tikzpicture}
\end{center}
\caption{the leaf $\ell-\ell'$} \label{figll'}
\end{figure}
By Lemma \ref{lemmatriv}, 
$[M;\Gamma^\delta]=0$. Thus Lemma \ref{lemmacut} implies $[M;\Gamma]=_n[M;\Gamma']$. Hence, for $\Gamma\setminus\ell$ fixed, 
$[M;\Gamma]\ mod\ \mathcal{F}_{n+1}^\mathbb{Z}(M)$ only depends on the class of $\ell$ in $H_1(M\setminus(\Gamma\setminus\ell);\mathbb{Z})$. 
The linearity follows from Lemma \ref{lemmacut}. Since the $\mathcal{F}_n^\mathbb{Z}(M)$ are rational vector spaces, 
$[M;\Gamma]\ mod\ \mathcal{F}_{n+1}^\mathbb{Z}(M)$ only depends on the rational homology class of $\ell$.
\end{proof}

Now, in the case of $\mathbb{Q}$HS's, we want to restrict the set of generators of 
$\mathcal{F}_n^\mathbb{Z}(M)/\mathcal{F}_{n+1}^\mathbb{Z}(M)$ to brackets defined by Jacobi diagrams.

Lemma \ref{lemmaslide} implies:
\begin{lemma} \label{lemmaindJacobi}
Let $J$ be a Jacobi diagram of degree $\frac{n}{2}$. Equip $J$ with a framing induced by an immersion of $J$ in the plane. 
Embed the framed diagram $J$ in a 3-manifold $M$. Let $\Gamma$ be the oriented $n$-component Y-link obtained from $J$ by replacing 
its edges as shown in Figure \ref{figrempla}. Then the class of $[M;\Gamma]$ modulo $\mathcal{F}_{n+1}^\mathbb{Z}(M)$ does not depend 
on the embedding and framing of $J$.
\end{lemma}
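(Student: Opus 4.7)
The plan is to decompose any change of planar immersion (which induces the framing) together with any change of embedding of the framed diagram into a finite sequence of elementary moves, and to verify that each such move preserves the class of $[M;\Gamma]$ modulo $\mathcal{F}_{n+1}^{\mathbb{Z}}(M)$. Two such pairs can always be joined by a finite sequence of: (i) ambient isotopies of the embedding in $M$; (ii) full twists added to the framing of an edge of $J$, that is, Reidemeister~I modifications of the planar immersion of the abstract diagram; and (iii) local crossing changes between two strands of the embedded graph in $M$. For (i), the Y-links $\Gamma$ and $\Gamma'$ before and after the isotopy are ambient isotopic in $M$, hence they yield the same surgery manifolds and $[M;\Gamma]=[M;\Gamma']$.

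For (ii), applying the replacement of Figure~\ref{figedge} to an edge carrying a full twist of framing differs from applying it to the same edge without the twist only by an integer framing change located on either a leaf or a Y-graph edge of the resulting link $\Gamma$. Integer framing changes on leaves preserve the class modulo $\mathcal{F}_{n+1}^{\mathbb{Z}}(M)$ by the framing-invariance noted just after Lemma~\ref{lemmaframing}; integer framing changes on Y-graph edges amount to an even number of half-twists, so by Lemma~\ref{lemmahalftwist} their effect on the class is trivial.

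For (iii), a local crossing change between two strands of the embedded graph $J$ translates into a local crossing change between two strands of $\Gamma$, each being either a leaf or a Y-graph edge. I would realize any such crossing change as one or more edge-slides of $\Gamma$ along small framed push-offs of the opposing strand. Such a push-off, taken inside a tubular neighborhood of that strand but off it, is a framed knot in $M\setminus\Gamma$, so Lemma~\ref{lemmaslide} applies and each slide preserves the class modulo $\mathcal{F}_{n+1}^{\mathbb{Z}}(M)$. The main obstacle is the bookkeeping here: one has to check that every type of local crossing change (edge--edge, edge--leaf, or leaf--leaf) between strands of $\Gamma$ can in fact be produced by such push-off slides, with the auxiliary knots always chosen disjoint from $\Gamma$. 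Once this is done, concatenating these invariances along a connecting sequence of the three elementary moves yields the lemma.
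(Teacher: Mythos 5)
Your overall strategy --- reduce the change of embedding and framing to elementary modifications and check each against Lemmas \ref{lemmaframing}, \ref{lemmaslide} and \ref{lemmahalftwist} --- is the one the paper intends (its entire proof is the phrase ``Lemma \ref{lemmaslide} implies''), and your treatment of isotopies and of full framing twists is correct. But your reduction step has a genuine gap: isotopies together with crossing changes generate exactly the \emph{homotopies} of the embedded graph, and two embeddings of $J$ in a $3$-manifold $M$ need not be homotopic when $\pi_1(M)\neq 1$ (take $M$ a lens space and two embeddings of a $\theta$-graph, one contained in a ball and one with a loop generating $\pi_1(M)$; the lemma is stated for arbitrary $M$ and is used for $\mathbb{Q}$HS's, whose fundamental group can be nontrivial). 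To bridge non-homotopic embeddings you must use Lemma \ref{lemmaslide} at full strength: after isotoping the two Y-links so that they agree on the internal vertices and on the balls containing the Hopf-linked pairs of leaves, each Y-graph edge of one differs from the corresponding edge of the other by a connected sum with some loop, and that loop can be realized, after pushing it off $\Gamma$, as a framed knot $K$ in $M\setminus\Gamma$ along which the edge is slid. Crossing changes are then the special case where $K$ is a meridian of the opposing strand.

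Two further points need repair. First, the auxiliary knot realizing a crossing change is a \emph{meridian} of the opposing strand, not a push-off: if the opposing strand is an arc its push-off is not even a closed curve, and if it is a leaf, band-summing the edge with a parallel copy of it is a handle slide, which does not change the crossing. Second, Lemma \ref{lemmaslide} only permits sliding \emph{edges}, so a leaf--leaf crossing change cannot be produced this way at all; the bookkeeping you defer must be resolved either by arranging, as above, that the two Y-links coincide on neighbourhoods of the leaves, so that every required modification involves at least one Y-graph edge, or by invoking Lemmas \ref{lemmacut} and \ref{lemmah}, which show that a leaf matters only through its homology class in the complement of the rest of the Y-link.
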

\begin{figure}[htb] 
\begin{center}
\begin{tikzpicture} [scale=0.2]
\draw (-36,0) -- (-18,0);
\draw (-36,0) node{$\scriptscriptstyle{\bullet}$};
\draw (-18,0) node{$\scriptscriptstyle{\bullet}$};
\draw[->,line width=1.5pt,>=latex] (-10.5,0) -- (-7.5,0);
\draw (0,0) node{$\scriptscriptstyle{\bullet}$};
\draw (0,0) -- (6,0);
\draw (8,0) circle (2);
\draw[color=white,line width=6pt] (8,0) arc (-180:-90:2);
\draw (10,0) circle (2);
\draw[color=white,line width=6pt] (10,0) arc (0:90:2);
\draw (10,0) arc (0:90:2);
\draw[->] (11.5,1.3) -- (11.3,1.5);
\draw[->] (6.5,-1.3) -- (6.7,-1.5);
\draw (12,0) -- (18,0);
\draw (18,0) node{$\scriptscriptstyle{\bullet}$};
\end{tikzpicture}
\end{center}
\caption{Replacement of an edge} \label{figrempla}
\end{figure}
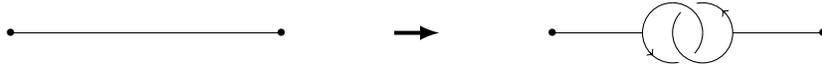 

In the sequel, we will denote by $[M;J]$ the class of $[M;\Gamma]$ modulo $\mathcal{F}_{n+1}^\mathbb{Z}(M)$.

\begin{lemma} \label{lemmasimplify}
Let $\Gamma$ be an oriented $n$-component Y-link in a 3-manifold $M$. Assume that all the leaves of $\Gamma$ are trivial 
in $H_1(M;\mathbb{Q})$. Then $[M;\Gamma]$ is equal to a $\mathbb{Q}$-linear combination 
of terms $[M;J]$ for some Jacobi diagrams $J$, modulo $\mathcal{F}_{n+1}^\mathbb{Z}(M)$.
\end{lemma}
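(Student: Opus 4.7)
The strategy is to apply Lemma \ref{lemmah} iteratively, replacing each leaf of $\Gamma$ by a rational combination of meridians of other components of $\Gamma$, until the Y-link takes the Hopf-pair configuration associated to Jacobi diagrams by Lemma \ref{lemmaindJacobi}.

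Fix a leaf $\ell$ of $\Gamma$. Since $[\ell]=0$ in $H_1(M;\mathbb{Q})$, the class of $\ell$ in $H_1(M\setminus(\Gamma\setminus\ell);\mathbb{Q})$ lies in the kernel of the map to $H_1(M;\mathbb{Q})$ induced by the inclusion. A Mayer--Vietoris argument applied to the decomposition of $M$ into a tubular neighbourhood of $\Gamma\setminus\ell$ and its complement shows that this kernel is generated over $\mathbb{Q}$ by the meridians of the edges and leaves of $\Gamma\setminus\ell$. Hence
\[
[\ell]=\sum_{e}a_e[m_e]+\sum_{\ell'}b_{\ell'}[m_{\ell'}]\quad\textrm{in }H_1(M\setminus(\Gamma\setminus\ell);\mathbb{Q}),
\]
with rational coefficients $a_e$ and $b_{\ell'}$. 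Lemma \ref{lemmah} then expresses $[M;\Gamma]$ modulo $\mathcal{F}_{n+1}^\mathbb{Z}(M)$ as a rational combination of classes $[M;\Gamma_s]$, where each $\Gamma_s$ differs from $\Gamma$ only by the replacement of $\ell$ by a single one of these meridians.

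The next step eliminates meridians of edges. At the internal vertex of a Y-graph, the three meridians of the incident edges satisfy a linear relation in the complement of the Y-graph, so any meridian of an edge is homologous there to a combination of meridians of the adjacent leaves. Another application of Lemma \ref{lemmah} thus reduces to the case where each $\Gamma_s$ has $\ell$ replaced by a meridian of a leaf $\ell'$ of $\Gamma\setminus\ell$; the pair $(\ell,\ell')$ is then precisely the Hopf link of Figure \ref{figrempla}. Iterating the procedure over the remaining leaves of $\Gamma$ produces, modulo $\mathcal{F}_{n+1}^\mathbb{Z}(M)$, a rational combination of Y-links whose leaves come in Hopf pairs, which is exactly a combination of brackets $[M;J]$ for Jacobi diagrams $J$.

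The main subtlety lies in the bookkeeping of the iteration: once $\ell$ has been replaced by a meridian of $\ell'$, later processing of $\ell'$ takes place in a modified complement, and one must avoid a runaway nesting of meridians. The key observation is that every new leaf created by the procedure is a meridian of a curve in $M$, hence bounds a meridian disk and remains null-homologous in $H_1(M;\mathbb{Q})$, so the hypothesis of the lemma persists throughout the recursion. Arranging the iteration carefully (for instance, first expressing every original leaf as a combination of meridians of components of $\Gamma$, and only afterwards simplifying the edge-meridians via the vertex relation) yields the desired Jacobi-diagram form.
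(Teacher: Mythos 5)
Your proof is correct and follows essentially the same strategy as the paper: reduce each leaf to a combination of meridians of the other leaves and recognise the surviving terms as the Hopf-pair configurations of Jacobi diagrams. The only real difference is how the meridian decomposition is obtained --- you deduce it homologically from Lemma \ref{lemmah} together with a Mayer--Vietoris computation of the kernel, whereas the paper clears denominators, bounds each leaf by a surface and cuts the leaf along that surface using Lemmas \ref{lemmacut} and \ref{lemmatriv}; since Lemma \ref{lemmah} is itself proved from those two lemmas the routes are equivalent, and your handling of the final bookkeeping (checking that spurious nested meridians either vanish by Lemma \ref{lemmatriv} or close up into consistent Hopf pairs) is no less complete than the paper's own.
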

\begin{proof}
 Suppose $\Gamma$ has a leaf $\ell$ which is non trivial in $H_1(M;\mathbb{Z})$. Then there is a positive integer $k$ 
such that $k\ell=0$ in $H_1(M;\mathbb{Z})$. Denote by $\Gamma'$ the Y-link obtained from $\Gamma$ by replacing 
the leaf $\ell$ by a leaf homologous to $k\ell$ in $H_1(M\setminus(\Gamma\setminus\ell);\mathbb{Z})$. 
By Lemma \ref{lemmah}, we have $[M;\Gamma]=_n\frac{1}{k}[M;\Gamma']$. 
Thus we can assume that all the leaves of $\Gamma$ are null-homologous in $M$. As we have seen above, we also can assume 
that they are 0-framed. 

Such leaves bound embedded surfaces in $M$. Thanks to Lemma \ref{lemmaslide}, we can assume that the interior of these surfaces 
do not meet the edges of $\Gamma$. Consider a leaf $\ell$ of $\Gamma$. Apply Lemma \ref{lemmacut} to cut $\ell$ into some leaves which are 
meridians of other leaves, and one leaf which bounds a surface in $M\setminus(\Gamma\setminus \ell)$. The last one can be excluded 
by applying Lemma \ref{lemmatriv}. Cutting similarly each leaf of $\Gamma$, we obtain Y-links whose leaves 
are linked by pairs, in the pattern of Hopf links. Since Lemma \ref{lemmah} allows us to change the orientation of a leaf, 
modulo a sign, we get Y-links obtained from Jacobi diagrams.
\end{proof}

\begin{corollary} \label{corJacobi}
 Let $M$ be a $\mathbb{Q}$HS. Then $\frac{\mathcal{F}_n^{\mathbb{Z}}(M)}{\mathcal{F}_{n+1}^{\mathbb{Z}}(M)}$ 
is generated by the $[M;J]$ for all Jacobi diagrams $J$ of degree $\frac{n}{2}$. 
In particular, if $n$ is odd, $\mathcal{F}_n^{\mathbb{Z}}(M)=\mathcal{F}_{n+1}^{\mathbb{Z}}(M)$.
\end{corollary}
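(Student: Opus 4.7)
The plan is to derive this corollary as an essentially immediate consequence of Lemma \ref{lemmasimplify}. By definition, $\mathcal{F}_n^{\mathbb{Z}}(M)/\mathcal{F}_{n+1}^{\mathbb{Z}}(M)$ is spanned by brackets $[M;\Gamma]$ where $\Gamma$ ranges over $n$-component oriented Y-links in $M$ (any Y-link with strictly more components already lies in $\mathcal{F}_{n+1}^{\mathbb{Z}}(M)$). So I only need to reduce each such $[M;\Gamma]$ to a linear combination of brackets coming from Jacobi diagrams.

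First I would observe that the hypothesis of Lemma \ref{lemmasimplify} is automatic here: since $M$ is a $\mathbb{Q}$HS, $H_1(M;\mathbb{Q})=0$, hence every leaf of $\Gamma$ is trivial in $H_1(M;\mathbb{Q})$. Applying Lemma \ref{lemmasimplify} directly yields that $[M;\Gamma]$ is congruent, modulo $\mathcal{F}_{n+1}^{\mathbb{Z}}(M)$, to a $\mathbb{Q}$-linear combination of brackets $[M;J]$ associated with Jacobi diagrams $J$, as defined via Lemma \ref{lemmaindJacobi}.

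Next I would check the degree bookkeeping. By construction (see Figure \ref{figrempla} and Lemma \ref{lemmaindJacobi}), the Y-link built out of a Jacobi diagram $J$ has exactly one Y-graph at each trivalent vertex of $J$; since a degree $k$ Jacobi diagram has $2k$ vertices, the associated Y-link has $2k$ components. Thus $[M;J]$ represents a class in $\mathcal{F}_{2k}^{\mathbb{Z}}(M)/\mathcal{F}_{2k+1}^{\mathbb{Z}}(M)$, and the only Jacobi diagrams that can contribute to the degree $n$ graded piece are those of degree $n/2$. This immediately gives the first assertion.

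Finally, for the parity statement: if $n$ is odd, there is no Jacobi diagram of degree $n/2$ (the degree of a Jacobi diagram is an integer), so the reduction above exhibits every generator of $\mathcal{F}_n^{\mathbb{Z}}(M)/\mathcal{F}_{n+1}^{\mathbb{Z}}(M)$ as an empty sum, hence zero. This forces $\mathcal{F}_n^{\mathbb{Z}}(M)=\mathcal{F}_{n+1}^{\mathbb{Z}}(M)$. There is no serious obstacle here; the work has already been done in Lemma \ref{lemmasimplify}, and all that remains is to package it with the vanishing $H_1(M;\mathbb{Q})=0$ and the elementary count of components.
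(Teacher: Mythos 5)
Your proof is correct and follows exactly the route the paper intends: the corollary is stated without a separate proof precisely because it is the specialization of Lemma \ref{lemmasimplify} to the case $H_1(M;\mathbb{Q})=0$, combined with the observation that the Y-link associated with a degree $k$ Jacobi diagram has $2k$ components. Your degree bookkeeping and the parity argument for odd $n$ match the paper's implicit reasoning.
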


We end the section by focusing the case of Y-graphs.
\begin{lemma} \label{lemmaa}
 Let $\Gamma$ be an oriented Y-graph in a 3-manifold $M$. Suppose that $\Gamma$ has two leaves $\ell$ and $\ell'$ that bound disks 
in $M\setminus(\Gamma\setminus(\ell\cup\ell'))$ and that form a positive Hopf link. Then $[M;\Gamma]=_2 0$.
\end{lemma}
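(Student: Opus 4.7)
The plan is to apply Lemma~\ref{lemmacut} to each of the two Hopf-linked leaves in turn, reducing $\Gamma$ modulo $\mathcal{F}_2^{\mathbb{Z}}(M)$ to a Y-graph in standardized form whose two relevant leaves form a small positive Hopf link inside a ball, and then to finish by a local analysis.

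For the first reduction, I would pick a framed arc $\gamma$ lying on $D$ that runs from the point $v$ where the edge meets $\ell$ to another point on $\ell$, chosen so that $\gamma$ splits $D$ into a subdisk $D_1$ avoiding the intersection $p = D \cap \ell'$ and a subdisk $D_2$ containing $p$. Applying Lemma~\ref{lemmacut} gives $[M;\Gamma] \equiv [M;\Gamma_1] + [M;\Gamma_2] \pmod{\mathcal{F}_2^{\mathbb{Z}}(M)}$, where $\Gamma_1$ and $\Gamma_2$ have new leaves $\ell_1 = \partial D_1$ and $\ell_2 = \partial D_2$ respectively. Since $D \subset M \setminus (\Gamma \setminus (\ell \cup \ell'))$ and $D_1 \cap \ell' = \emptyset$ by construction, the subdisk $D_1$ lies in $M \setminus (\Gamma_1 \setminus \ell_1)$, so Lemma~\ref{lemmatriv} yields $[M;\Gamma_1] \equiv 0 \pmod{\mathcal{F}_2^{\mathbb{Z}}(M)}$. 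Hence $[M;\Gamma] \equiv [M;\Gamma_2]$, where $\ell_2$ is now essentially a small meridian loop around $\ell'$. Applying the symmetric reduction to $\ell'$ inside $\Gamma_2$ using the disk $D'$ (with an arc separating the intersection with $\ell_2$) produces a Y-graph $\tilde{\Gamma}$ with $[M;\Gamma] \equiv [M;\tilde{\Gamma}] \pmod{\mathcal{F}_2^{\mathbb{Z}}(M)}$, where the two reduced leaves form a small standard positive Hopf link inside a ball $B \subset M$, while the third leaf, the internal vertex, and the rest of the edges are unaffected.

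To conclude $[M;\tilde{\Gamma}] \in \mathcal{F}_3^{\mathbb{Z}}(M)$, I would perform a local handle-slide computation in $B$: slide the edge incident to one of the small Hopf-linked leaves along a parallel copy of its partner (Lemma~\ref{lemmaslide}) to unhook it, after which the slid leaf bounds a small disk disjoint from the rest of the Y-link, so Lemma~\ref{lemmatriv} gives vanishing modulo $\mathcal{F}_2^{\mathbb{Z}}(M)$. The main obstacle is that each of Lemmas~\ref{lemmacut}, \ref{lemmaslide}, and \ref{lemmah} applied to a one-component Y-link only yields equalities modulo $\mathcal{F}_2^{\mathbb{Z}}(M)$, so obtaining a genuine $\mathcal{F}_3^{\mathbb{Z}}(M)$-statement requires using the positive Hopf link structure to force a cancellation: the error term introduced by the handle slide must be identifiable via a clasper-calculus realization of the slide with a three-component Y-link bracket, so that it lies tautologically in $\mathcal{F}_3^{\mathbb{Z}}(M)$. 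The positivity of the Hopf link is essential, as with a negative linking the corresponding signs would differ; reconciling the signs coming from all three steps, and confirming that the remaining one-component contributions cancel, is the delicate part of the argument.
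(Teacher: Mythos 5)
Your first two paragraphs are harmless but essentially restate the hypothesis: the leaves already bound disks missing the rest of $\Gamma$ and form a Hopf link, so after the cutting reductions you are back to the same local picture of a small standard positive Hopf link. The genuine content of the lemma is this local statement, and that is where your argument has a gap. The final ``unhooking'' step does not work: Lemma~\ref{lemmaslide} reroutes an \emph{edge} of $\Gamma$ around a framed knot $K$ disjoint from $\Gamma$; it does not modify the leaves, so sliding the edge adjacent to $\ell$ along a parallel copy of $\ell'$ leaves the Hopf linking between $\ell$ and $\ell'$ intact. Nor can you then invoke Lemma~\ref{lemmatriv}: the leaf $\ell$ is a meridian of $\ell'$ and hence is \emph{nontrivial} in $H_1(M\setminus(\Gamma\setminus\ell))$ --- this is exactly the case Lemma~\ref{lemmatriv} does not cover, and it is the whole reason Lemma~\ref{lemmaa} exists as a separate statement (compare the proof of Lemma~\ref{lemmab}, which reduces to Lemma~\ref{lemmaa} precisely when a leaf is Hopf-linked with another). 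No combination of the formal moves (cutting, sliding, replacing a leaf by a homologous one) can remove the Hopf linking, since by Lemma~\ref{lemmah} the bracket only depends on the class of $\ell$ in $H_1(M\setminus(\Gamma\setminus\ell);\mathbb{Q})$, and the meridian class is nonzero there.

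The missing ingredient is a genuinely topological input, which is what the paper's proof supplies: by \cite[Lemma 2.3]{GGP}, if the curve $\alpha$ obtained from $\ell$, $\ell'$ and their adjacent edges is $0$-framed and bounds a disk whose interior misses $\Gamma$, then the borromean surgery on $\Gamma$ does not change the homeomorphism type of $M$, so $[M;\Gamma]=M-M(\Gamma)=0$ on the nose; Lemma~\ref{lemmaslide} is then used only to reduce the general situation to this one modulo $\mathcal{F}_2^{\mathbb{Z}}(M)$. (A side remark: the positivity of the Hopf link is not what makes the conclusion hold --- by Lemma~\ref{lemmah}, reversing the orientation of $\ell$ exchanges positive and negative Hopf links at the cost of a sign, so both brackets vanish together.)
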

\begin{proof}
If the curve $\alpha$ obtained from the leaves $\ell$ and $\ell'$ and their adjacent edges, as shown in Figure \ref{fighopf}, is 0-framed 
and bounds a disk whose interior does not meet $\Gamma$, then, acoording to \cite[Lemma 2.3]{GGP}, the surgery on $\Gamma$ preserves 
the homeomorphism class of $M$. 
\begin{figure}[htb] 
\begin{center}
\begin{tikzpicture} [scale=0.2]
\draw (0,0) .. controls +(2,-1) and +(2,0) .. (1.5,-5);
\draw (0,0) .. controls +(-2,-1) and +(-2,0) .. (-1.5,-5);
\draw (0,0) -- (0,5);
\draw[dashed] (1.5,6.5) arc (0:180:1.5);
\draw (-1.5,6.5) arc (-180:0:1.5);
\draw[->] (1.1,5.5) -- (1.2,5.6);
\begin{scope} [xshift=-4.5cm,yshift=-5cm,scale=0.5]
\draw (8,0) circle (2);
\draw[color=white,line width=6pt] (8,0) arc (-180:-90:2);
\draw (10,0) circle (2);
\draw[color=white,line width=6pt] (10,0) arc (0:90:2);
\draw (10,0) arc (0:90:2);
\draw[<-] (11.5,-1.3) -- (11.3,-1.5);
\draw (12.5,-4) node{$\ell'$};
\draw[->] (6.5,-1.3) -- (6.7,-1.5);
\draw (5.5,-4) node{$\ell$};
\end{scope}
\draw[->,thick] (7,0.5) -- (8,0.5);
\begin{scope} [xshift=15cm]
\draw (0,0) .. controls +(2,-1) and +(2,0) .. (1.5,-5);
\draw (0,0) .. controls +(-2,-1) and +(-2,0) .. (-1.5,-5);
\draw (0,0) -- (0,5);
\draw[dashed] (1.5,6.5) arc (0:180:1.5);
\draw (-1.5,6.5) arc (-180:0:1.5);
\draw[->] (1.1,5.5) -- (1.2,5.6);
\draw (1.5,-5) -- (-1.5,-5);
\draw[->] (0,-5) -- (0.1,-5);
\draw (0,-5) node[below] {$\alpha$};
\end{scope}
\end{tikzpicture}
\end{center}
\caption{The Y-graph $\Gamma$ and the associated curve $\alpha$} \label{fighopf}
\end{figure}
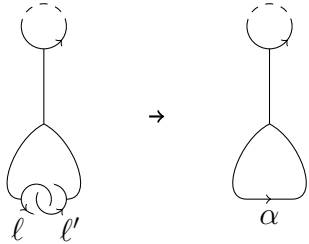
Lemma \ref{lemmaslide} allows us to reduce the proof to this case. 
\end{proof}

\begin{lemma} \label{lemmab}
 Let $\Gamma$ be an oriented Y-graph in a 3-manifold $M$. If $\Gamma$ has a leaf $\ell$ which is trivial in $H_1(M;\mathbb{Q})$, 
then $[M;\Gamma]=_2 0$.
\end{lemma}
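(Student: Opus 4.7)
The plan is to mimic the strategy behind Lemma \ref{lemmah}, but to use Lemma \ref{lemmaa} in place of Lemma \ref{lemmatriv} so as to obtain the stronger conclusion modulo $\mathcal{F}_3^\mathbb{Z}(M)$. The starting point is a homological analysis of the complement. Since the edge of $\Gamma$ adjacent to $\ell$ can be collapsed, $\Gamma\setminus\ell$ has the homotopy type of a wedge of two circles represented by the remaining leaves $\ell_1,\ell_2$; its regular neighborhood is a genus $2$ handlebody, and the kernel of the map $H_1(M\setminus(\Gamma\setminus\ell);\mathbb{Q})\to H_1(M;\mathbb{Q})$ is the $\mathbb{Q}$-span of the meridians $m_1,m_2$ of $\ell_1,\ell_2$. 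The assumption that $\ell$ is trivial in $H_1(M;\mathbb{Q})$ therefore yields a decomposition $\ell=a_1m_1+a_2m_2$ in $H_1(M\setminus(\Gamma\setminus\ell);\mathbb{Q})$ for some $a_1,a_2\in\mathbb{Q}$.

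Applying Lemma \ref{lemmah} to the single Y-graph $\Gamma$ gives
\[
[M;\Gamma]\equiv a_1[M;\Gamma_{m_1}]+a_2[M;\Gamma_{m_2}]\pmod{\mathcal{F}_2^\mathbb{Z}(M)},
\]
where $\Gamma_{m_i}$ is $\Gamma$ with the leaf $\ell$ replaced by a small meridian $m_i$ of $\ell_i$. In each $\Gamma_{m_i}$, the two leaves $m_i$ and $\ell_i$ form a positive Hopf link, and, if $m_i$ is chosen small enough, it bounds a small disk in $M\setminus(\Gamma_{m_i}\setminus(m_i\cup\ell_i))$. Lemma \ref{lemmaa} then becomes the natural tool, provided one can also arrange $\ell_i$ to bound a disk in the same complement; the plan is to further decompose $\ell_i$ via Lemma \ref{lemmacut}, writing its class in $H_1(M\setminus(\Gamma_{m_i}\setminus(m_i\cup\ell_i));\mathbb{Q})$ as a sum of a disk-bounding class and meridian classes, then absorbing the meridian pieces into Hopf-link configurations that satisfy the Lemma \ref{lemmaa} hypothesis. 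Each bracket produced in this way then lies in $\mathcal{F}_3^\mathbb{Z}(M)$.

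The main obstacle is bookkeeping. The reduction via Lemma \ref{lemmah} and the successive cuts via Lemma \ref{lemmacut} are only equalities modulo $\mathcal{F}_2^\mathbb{Z}(M)$, whereas the desired conclusion sits one filtration degree deeper. I must therefore verify that the error terms produced at each step can themselves be expressed as $\mathbb{Q}$-linear combinations of brackets of Y-graphs satisfying the Hopf-link-with-disks hypothesis of Lemma \ref{lemmaa}, so that the entire chain of reductions closes at the $\mathcal{F}_3^\mathbb{Z}(M)$ level rather than at $\mathcal{F}_2^\mathbb{Z}(M)$. Making this closure rigorous, by an inductive argument that simultaneously reduces the homological complexity of the remaining leaf and controls the error, is the delicate part of the proof.
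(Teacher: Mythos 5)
Your opening reduction is sound and genuinely different from the paper's: the paper takes a surface $\Sigma$ bounded by $\ell$ itself (after scaling to make $\ell$ integrally null-homologous) and decomposes the \emph{other} leaves according to their punctures of $\Sigma$, whereas you use Lemma \ref{lemmah} to rewrite $\ell$ as a rational combination of the meridians $m_1,m_2$ of the other leaves. The identification of $\ker\bigl(H_1(M\setminus(\Gamma\setminus\ell);\mathbb{Q})\to H_1(M;\mathbb{Q})\bigr)$ with the span of $m_1,m_2$ is correct (it is the image of the Lagrangian of the genus $2$ handlebody neighborhood of $\Gamma\setminus\ell$), so the reduction to the graphs $\Gamma_{m_i}$ is legitimate.

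The gap is in how you propose to finish. Lemma \ref{lemmaa} requires \emph{both} leaves of the Hopf pair to bound disks in the complement of the rest of the graph, and in $\Gamma_{m_i}$ the leaf $\ell_i$ is completely arbitrary: the hypothesis of the lemma constrains only $\ell$, not $\ell_1,\ell_2$. Your plan to write the class of $\ell_i$ as ``a disk-bounding class plus meridian classes'' cannot work in general, because every meridian available in $M\setminus(\Gamma_{m_i}\setminus\ell_i)$ is rationally null-homologous in $M$, so such a decomposition would force $\ell_i$ to be trivial in $H_1(M;\mathbb{Q})$ --- which it need not be. The step that actually closes the argument is geometric, not homological: the small disk $D$ bounded by $m_i$ meets $\Gamma_{m_i}$ in a single point of $\ell_i$, so Lemma \ref{lemmacut} lets you split $\ell_i$ into a small circle near that puncture (which together with $m_i$ forms a positive Hopf link of two disk-bounding leaves, handled by Lemma \ref{lemmaa}) and a remainder disjoint from $D$ (after which $m_i$ bounds a disk missing the graph, handled by Lemma \ref{lemmadisk} or \ref{lemmatriv}). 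This is exactly the paper's mechanism, transplanted from a Seifert surface of $\ell$ to the meridian disk of $m_i$; without it your argument does not terminate. As for the filtration bookkeeping you flag at the end: the concern is legitimate if ``$=_2$'' is read literally as equality modulo $\mathcal{F}_3^{\mathbb{Z}}(M)$, but note that the paper's own proof performs all its moves only modulo $\mathcal{F}_2^{\mathbb{Z}}(M)$ as well, and the only downstream use of the lemma (in Lemma \ref{lemmaF1}) needs the conclusion modulo $\mathcal{F}_2^{\mathbb{Z}}(M)$ only; so this is not the place where your proposal fails, but it is also not an issue you resolve.
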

\begin{proof}
 As in the proof of Lemma \ref{lemmasimplify}, we can assume that $\ell$ is null-homologous in $M$ and 0-framed. 
Then $\ell$ bounds a surface $\Sigma$. Using Lemma \ref{lemmaslide}, we can assume that its interior $\mathring{\Sigma}$ does not meet
the edges of $\Gamma$. However, it can meet the other leaves. Using Lemma \ref{lemmacut} to decompose the different leaves 
of $\Gamma$, we can restrict to two cases. Either $\mathring{\Sigma}$ does not meet $\Gamma$, 
or $\ell$ is linked with another leaf in the pattern of a Hopf link. Conclude with Lemma \ref{lemmatriv} in the first case. 
In the second case, since Lemma \ref{lemmah} allows us to change the orientation of a leaf, modulo a sign, conclude with 
Lemma \ref{lemmaa}.
\end{proof}

\begin{lemma} \label{lemmaF1}
 Let $\Gamma$ be an oriented Y-graph in a 3-manifold $M$. The class of $[M;\Gamma]$ modulo $\mathcal{F}_2^\mathbb{Z}(M)$ 
only depends on the classes of the leaves of $\Gamma$ in $H_1(M;\mathbb{Q})$. Moreover, the dependance is trilinear 
and alternating.
\end{lemma}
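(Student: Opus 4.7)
The plan is to combine Lemmas \ref{lemmah}, \ref{lemmacut}, \ref{lemmab}, and \ref{lemmahalftwist} in three stages: showing that $[M;\Gamma]$ modulo $\mathcal{F}_2^\mathbb{Z}(M)$ depends only on the rational homology classes of the leaves, deducing trilinearity, and establishing the alternating property.

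For the first stage, fix the skeleton of $\Gamma$ (its internal vertex, edges, and two of the three leaves), and consider alternatives $\ell$ and $\ell'$ for the third leaf with $[\ell] = [\ell']$ in $H_1(M;\mathbb{Q})$. After a small general-position isotopy, $\ell'$ may be taken disjoint from the skeleton. I form the Y-graph $\Gamma^\delta$ with the same skeleton, whose third leaf is the band sum of $\ell$ and $-\ell'$ along a framed band joining them in the complement of the skeleton. Applying Lemma \ref{lemmacut} to split this band-sum leaf back into $\ell$ and $-\ell'$, together with linearity in the leaf class (Lemma \ref{lemmah}), one obtains
\[
[M;\Gamma^\delta] =_1 [M;\Gamma] - [M;\Gamma'],
\]
where $\Gamma'$ denotes the Y-graph with third leaf $\ell'$. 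Since the band-sum leaf has trivial class in $H_1(M;\mathbb{Q})$, Lemma \ref{lemmab} gives $[M;\Gamma^\delta] =_2 0$, hence $[M;\Gamma^\delta] \in \mathcal{F}_2^\mathbb{Z}(M)$, so $[M;\Gamma] \equiv [M;\Gamma']$ modulo $\mathcal{F}_2^\mathbb{Z}(M)$. Iterating over the three leaves completes the first stage.

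Trilinearity follows at once from Lemma \ref{lemmah} combined with the first stage. For the alternating property, observe that the resulting form on $H_1(M;\mathbb{Q})^3$ is naturally defined once one fixes the convention that the three leaves appear in the cyclic order induced by the orientation of the associated surface at the internal vertex. The key step is then to show that reversing the orientation of the associated surface of $\Gamma$ multiplies the bracket by $-1$ modulo $\mathcal{F}_2^\mathbb{Z}(M)$: this reversal simultaneously reverses the cyclic order at the vertex and the induced orientation of each of the three leaves, while only changing the framings of the edges and leaves (which is immaterial modulo $\mathcal{F}_2^\mathbb{Z}(M)$ by the remark following Lemma \ref{lemmaframing}). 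By Lemma \ref{lemmah} (or equivalently Lemma \ref{lemmahalftwist}, via the correspondence between a half-twist on an edge and the reversal of the orientation of its adjacent leaf), reversing one leaf's orientation multiplies the bracket by $-1$; the three reversals combine to give the net factor $(-1)^3 = -1$. A transposition of two arguments in the trilinear form corresponds to reversing the cyclic order, and hence yields a sign change, confirming that the form is alternating.

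The alternating step is the most delicate: it requires interpreting the combinatorial operation of ``reversing the cyclic order at the internal vertex'' as the topological operation of reversing the associated surface orientation, and then decomposing its effect on the bracket into a combination of leaf orientation reversals (where Lemma \ref{lemmah} applies) and framing changes (immaterial modulo $\mathcal{F}_2^\mathbb{Z}(M)$).
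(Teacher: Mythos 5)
Your first two stages coincide with the paper's proof: the paper forms the same Y-graph $\Gamma^\delta$ whose third leaf is $\ell-\ell'$, kills it with Lemma \ref{lemmab}, and deduces both the independence on the representative and the trilinearity from Lemma \ref{lemmacut}, exactly as you do.

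The alternating step is where your write-up does not hold together, even though the idea you are reaching for is the paper's (``exchanging two leaves is equivalent to changing the orientation of the three leaves''). The anchor of that argument is that the surgery, hence the bracket $[M;\Gamma]$, does not depend on the orientation of $\Sigma(\Gamma)$ at all: the six-component surgery link is determined by the framed unoriented graph. So reversing the surface orientation leaves $[M;\Gamma]$ strictly unchanged; what changes is only the \emph{reading} of the data, from the triple $(\ell_1,\ell_2,\ell_3)$ to the triple $(-\ell_3,-\ell_2,-\ell_1)$ (reversed cyclic order, reversed leaf orientations). Writing $B$ for the form, this gives $B(\ell_1,\ell_2,\ell_3)=B(-\ell_3,-\ell_2,-\ell_1)=-B(\ell_3,\ell_2,\ell_1)$ by trilinearity, and $(\ell_3,\ell_2,\ell_1)$ is a transposition of $(\ell_1,\ell_2,\ell_3)$ up to cyclic rotation. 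Your version instead asserts that the surface-orientation reversal ``multiplies the bracket by $-1$,'' justified solely by the $(-1)^3$ coming from the three leaf reversals, while also declaring that this same reversal reverses the cyclic order. Taken literally, your accounting reads $(\textrm{effect of cyclic reversal})\times(-1)^3=-1$, which forces the cyclic reversal to contribute $+1$ and would make the form \emph{symmetric} under transpositions, contradicting your own conclusion. The missing statement --- that the bracket is invariant under the surface-orientation reversal, so the two contributions must multiply to $+1$, forcing the cyclic-order reversal to contribute $-1$ --- is precisely what makes the sign come out right. (A minor additional slip: the reversal does not change the framings either, since the framing is induced by the surface, not by its orientation; this is harmless since framings are immaterial here anyway.)
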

\begin{proof}
 Consider a leaf $\ell$ of $\Gamma$. Consider an oriented Y-graph $\Gamma'$ and a leaf $\ell'$ of $\Gamma'$ such that 
$\Gamma'\setminus\ell'$ coincides with $\Gamma\setminus\ell$ and $\ell'=\ell$ in $H_1(M;\mathbb{Q})$. Construct another 
Y-graph $\Gamma^\delta$ by replacing the leaf $\ell$ by $\ell-\ell'$ in $\Gamma$ (see Figure \ref{figll'}). 
By Lemma \ref{lemmab}, 
$[M;\Gamma^\delta]=0$. Thus Lemma \ref{lemmacut} implies $[M;\Gamma]=_n[M;\Gamma']$. Hence, for $\Gamma\setminus\ell$ fixed, 
$[M;\Gamma]\ mod\ \mathcal{F}_2^\mathbb{Z}(M)$ only depends on the class of $\ell$ in $H_1(M;\mathbb{Q})$. 
The linearity follows from Lemma \ref{lemmacut}. To get the alternating property, note that exchanging two leaves is 
equivalent to changing the orientation of the three leaves.
\end{proof}

  \section{Finite type invariants of degree 1} \label{secdeg1}

    \subsection{The family $(M_p-S^3)_{p\ prime}$ generates $\mathcal{G}_1$} \label{subsecgene}

We denote by ``$=_2$'' the equality modulo $\mathcal{F}_2$. Note that $\mathcal{F}_1$ is generated by the \mbox{$(M-S^3)$.}
For any $\mathbb{Q}$HS $M$, let 
$lk_M:H_1(M;\mathbb{Z})\times H_1(M;\mathbb{Z})\to\frac{\mathbb{Q}}{\mathbb{Z}}$
be the linking form on $H_1(M;\mathbb{Z})$.

\begin{lemma} \label{lemmalink}
Let $M$ and $N$ be $\mathbb{Q}$HS's such that $(H_1(M;\mathbb{Z}),lk_M)\cong (H_1(N;\mathbb{Z}),lk_N)$. Then $M=_2 N$.
\end{lemma}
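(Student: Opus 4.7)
The plan is to realize both $M$ and $N$ as rational homology balls capped off by a standard $B^3$, then to relate these balls by borromean surgeries via Lemma \ref{lemma4}, and finally to observe that each such surgery is invisible modulo $\mathcal{F}_2$.

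First I would set $B_M = M \setminus \mathring{B^3}$ and $B_N = N \setminus \mathring{B^3}$ for open balls embedded in $M$ and $N$. Both $B_M$ and $B_N$ are genus $0$ $\mathbb{Q}$HH's, and since $H_1(\partial B_M;\mathbb{Z}) = H_1(S^2;\mathbb{Z}) = 0$, we trivially have $\mathcal{L}_{B_M}^T = \mathcal{L}_{B_M}^\mathbb{Z} = 0$ (and likewise for $B_N$). Removing an open ball does not affect $H_1$ or the linking form, so $(\mathrm{Tors}(H_1(B_M)), lk_{B_M}) = (H_1(M), lk_M)$ and similarly for $N$. The hypothesis therefore gives $(\mathrm{Tors}(H_1(B_M)), lk_{B_M}) \cong (\mathrm{Tors}(H_1(B_N)), lk_{B_N})$, and the $S^2$ boundaries are trivially LP-identified.

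This places us exactly in the hypotheses of Lemma \ref{lemma4}, which produces a finite sequence of borromean surgeries on Y-graphs $\Gamma_1, \ldots, \Gamma_k$ taking $B_M$ to $B_N$ in their interiors. Capping off with $B^3$ yields a chain of $\mathbb{Q}$HS's $M = M_0, M_1, \ldots, M_k = N$ in which each $M_{i+1}$ is obtained from $M_i$ by the single borromean surgery on $\Gamma_i \subset M_i$.

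To finish, I would observe that since $H_1(M_i;\mathbb{Q}) = 0$ for each $i$, every leaf of $\Gamma_i$ is automatically trivial in $H_1(M_i;\mathbb{Q})$, so Lemma \ref{lemmab} gives $M_i - M_{i+1} = [M_i; \Gamma_i] \in \mathcal{F}_2^\mathbb{Z}(M_i) \subset \mathcal{F}_2$. Telescoping $M - N = \sum_{i=0}^{k-1}(M_i - M_{i+1})$ then yields $M =_2 N$. I do not foresee a serious obstacle; the only subtle point is checking that Lemma \ref{lemma4} really applies in the degenerate genus-$0$ case, but its proof (tunneling around generators of the torsion and invoking \cite[Lemma 4.11]{AL}) goes through unchanged.
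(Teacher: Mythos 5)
Your proof is correct, but it takes a genuinely different route from the paper's. The paper obtains the sequence of borromean surgeries relating $M$ and $N$ directly from Matveev's theorem \cite[Theorem 2]{Mat} applied to the closed manifolds, and then kills each single surgery modulo $\mathcal{F}_2$ by invoking Corollary \ref{corJacobi} (the odd-degree vanishing $\mathcal{F}_1^{\mathbb{Z}}(M)=\mathcal{F}_2^{\mathbb{Z}}(M)$ for a $\mathbb{Q}$HS). You instead puncture $M$ and $N$ and apply the paper's own Lemma \ref{lemma4} in the degenerate genus-$0$ case --- which is legitimate, since for $\partial A=S^2$ the conditions $\mathcal{L}^T_A=\mathcal{L}^{\mathbb{Z}}_A=0$ are vacuous and the tunneling argument plus \cite[Lemma 4.11]{AL} goes through verbatim, as you note --- and then you dispose of each surgery with Lemma \ref{lemmab} rather than Corollary \ref{corJacobi}. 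What your route buys is self-containedness: the existence of the borromean sequence is deduced from the paper's internal machinery (ultimately Auclair--Lescop) rather than from a second external citation to Matveev, and the step ``one borromean surgery is invisible mod $\mathcal{F}_2$'' uses only the elementary Lemma \ref{lemmab} rather than the full reduction to Jacobi diagrams. The paper's version is shorter; yours makes the logical dependencies more explicit. There is no circularity, since both Lemma \ref{lemma4} and Lemma \ref{lemmab} are established before Section \ref{secdeg1}.
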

\begin{proof}
By \cite[Theorem 2]{Mat}, $N$ can be obtained from $M$ by a finite sequence of borromean surgeries.
It suffices to show that $M(\frac{B'}{B})=_2M$ for one borromean surgery $(\frac{B'}{B})$.
This follows from Corollary \ref{corJacobi}.
\end{proof}

Like in \cite{KK}, we call \emph{linking} a pair $(H,\phi)$, where $H$ is a finite abelian group, 
and $\phi$ is a non degenerate symmetric bilinear form on $H$, with values in $\mathbb{Q} / \mathbb{Z}$.
Consider the abelian semigroup $\mathfrak{N}$ of all linkings under orthogonal sum. We have a homomorphism 
$\mathcal{H}$ from the semigroup of all $\mathbb{Q}$HS's under connected sum to $\mathfrak{N}$, given by
$\mathcal{H}(M)=(H_1(M;\mathbb{Z}),lk_M)$. By \cite[Theorem 6.1]{KK}, this homomorphism is onto. So we can define 
an equivalence relation on $\mathfrak{N}$ by $H_1\sim_2 H_2$ if $H_1=\mathcal{H}(M_1)$, $H_2=\mathcal{H}(M_2)$,
and $M_1-S^3=_2 M_2-S^3$.

Note that: $$\hspace{5cm} M\sharp N-S^3=_2 (M-S^3)+(N-S^3).\hspace{4cm} (\star)$$ 
Thus, by Lemma \ref{lemmalink}, in order to prove that 
$(M_p-S^3)_{p\ prime}$ generates $\mathcal{G}_1$, it suffices to show that 
any $H\in\mathfrak{N}$ is 2-equivalent to a direct sum of groups $\mathbb{Z}_p:=\mathbb{Z} / p\mathbb{Z}$, with $p$ prime, 
independently of the associated bilinear form. Since $\mathfrak{N}$ is the direct sum of the abelian semigroups 
$\mathfrak{N}_p$ of linkings on $p$-groups, we restrict ourselves to the study of $p$-groups.

\begin{lemma} \label{lemmalinkings}
 Any linking in $\mathfrak{N}_p$ is 2-equivalent to an orthogonal sum of linkings on cyclic $p$-groups. 
Two linkings defined on the same cyclic group are 2-equivalent.
\end{lemma}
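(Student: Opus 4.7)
The proof splits into the two assertions. In both cases I would exploit the identity $(\star)$, Lemma \ref{lemmalink}, the elementary surgery reduction of Theorem \ref{thelsur}, and the clasper calculus of Section \ref{secborro}. The combination of $(\star)$ and Lemma \ref{lemmalink} already produces a well-defined map $\mathfrak{N}_p / \sim_2 \to \mathcal{G}_1$: the class of $M-S^3$ modulo $\mathcal{F}_2$ depends only on $(H_1(M;\mathbb{Z}), lk_M)$, and the orthogonal sum of linkings corresponds to addition in $\mathcal{G}_1$.

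For the first assertion, I would rely on the classical classification of linkings on finite abelian $p$-groups. For $p$ odd, every linking on a $p$-group is \emph{isomorphic} to an orthogonal sum of linkings on cyclic $p$-groups; combined with Lemma \ref{lemmalink} and $(\star)$ this already gives the result, since any $\mathbb{Q}$HS realizing the decomposed linking is $=_2$ to a connected sum of $\mathbb{Q}$HS's realizing the cyclic summands. For $p = 2$, the exceptional rank-$2$ forms $E_0^k, E_1^k$ on $(\mathbb{Z}/2^k)^2$ do not decompose as orthogonal sums of cyclic linkings, so I would treat them by hand: produce a concrete $\mathbb{Q}$HS realizing such a form (for instance as $S^3$ modified by an LP-surgery on a genus-$2$ handlebody built as in Lemma \ref{lemmaTd}) and use borromean and genus-$1$ elementary surgeries to replace it, modulo $\mathcal{F}_2$, by a connected sum whose summands realize cyclic linkings.

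For the second assertion I would realize the two candidate linkings on $\mathbb{Z}/p^k$ by LP-surgeries $M_i = S^3(\tfrac{T_{p^k}^{(i)}}{T_0})$, $i=1,2$, where $T_0 \subset S^3$ is a fixed solid torus and the two $p^k$-tori $T_{p^k}^{(i)}$ are constructed, via Lemma \ref{lemmaTd} and Lemma \ref{lemmameridian}, so as to produce the two prescribed linking forms. The difference $M_1 - M_2$ is then realized by an LP-replacement of $T_{p^k}^{(1)}$ by $T_{p^k}^{(2)}$ \emph{inside} $M_1$; both $p^k$-tori have LP-identified boundaries, so by Theorem \ref{thelsur} this replacement decomposes into a finite sequence of elementary surgeries performed in the interior of the torus. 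The leaves of the Y-graphs appearing in the borromean pieces, and the curves involved in any intermediate genus-$1$ replacements, all lie in $T_{p^k}^{(1)}$, and the simple structure of a $p^k$-torus forces their rational homology classes in $M_1$ to vanish; Lemma \ref{lemmab} together with Corollary \ref{corJacobi} then ensures that the total contribution of the replacement lies in $\mathcal{F}_2$, so $M_1 - S^3 =_2 M_2 - S^3$.

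The hardest step is the $p=2$ indecomposable case of the first assertion, where classification gives no shortcut: one must exhibit an explicit clasper-theoretic $2$-equivalence between a representative of $E_0^k$ or $E_1^k$ and a connected sum realizing an orthogonal sum of two cyclic linkings. The argument for the second assertion is more routine once one tracks carefully that every clasper produced by Theorem \ref{thelsur} has leaves confined to the interior of the $p^k$-torus and hence null-homologous over $\mathbb{Q}$ in the ambient $\mathbb{Q}$HS.
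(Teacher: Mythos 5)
There is a genuine gap, in fact two. First, for the indecomposable rank-two forms on $2$-groups you explicitly defer the work (``treat them by hand'', ``exhibit an explicit clasper-theoretic $2$-equivalence'') without supplying it, and this is exactly the point where a direct geometric construction is hard. The paper avoids it entirely by an algebraic cancellation trick that your plan never invokes: it quotes the presentations of the semigroups $\mathfrak{N}_p$ by generators and relations (Wall for $p$ odd, Miranda/Kawauchi--Kojima for $p=2$), observes that every relation is an \emph{isomorphism} of orthogonal sums of linkings and hence, by Lemma \ref{lemmalink} and $(\star)$, an equality in $\mathcal{G}_1$, and then cancels in the rational vector space $\mathcal{G}_1$. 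For instance Miranda's relation $A_k+E_k=2A_k+B_k$ immediately gives $E_k\sim_2 A_k+B_k$, disposing of the indecomposable generators $E_k$ (and $F_k$ via $2E_k=2F_k$); and the second assertion follows from $2A_{p^k}=2B_{p^k}$ for $p$ odd (resp.\ $2A_k=2C_k$, $2B_k=2D_k$, $4A_k=4B_k$ for $p=2$) by dividing by $2$ (resp.\ $4$) in $\mathcal{G}_1$. This cancellation in the $\mathbb{Q}$-vector space is the key idea you are missing.

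Second, your argument for the second assertion does not go through as stated. Theorem \ref{thelsur} decomposes the replacement of one $p^k$-torus by the other into elementary surgeries of genus $0$, $1$ \emph{and} $3$, while Lemma \ref{lemmab} and Corollary \ref{corJacobi} only kill the contributions of the borromean (genus $3$) pieces. A genus-$0$ or genus-$1$ elementary surgery contributes at degree $1$ (it changes $H_1$ of the ambient $\mathbb{Q}$HS), and since borromean surgeries preserve the pair $(H_1,lk)$ by Matveev's theorem, two non-isomorphic linkings force the sequence to contain such non-borromean moves; you give no argument that their degree-$1$ contributions cancel. (As a side remark, the surgery $S^3(\frac{T_{p^k}}{T_0})$ of Sublemma \ref{sublemmasur} has $H_1\cong\mathbb{Z}_{p^k}\times\mathbb{Z}_{p^k}$, not $\mathbb{Z}_{p^k}$, so your proposed models do not even realize linkings on the cyclic group.)
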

\begin{proof}
In the case of odd primes $p$, by \cite[Theorem 4]{Wall}, $\mathfrak{N}_p$ has generators
$A_{p^k}$, $B_{p^k}$, $k\geq 1$, and sole relation $2A_{p^k}=2B_{p^k}\ (\mathcal{R}_{p^k})$, where:
\begin{itemize}
 \item $A_{p^k}=(\mathbb{Z}_{p^k}, \phi_A)$, $\phi_A(1,1)=\frac{1}{p^k}$,
 \item $B_{p^k}=(\mathbb{Z}_{p^k}, \phi_B)$, $\phi_B(1,1)=\frac{x}{p^k}$, with $x$ non square modulo $p^k$.
\end{itemize}
The relations $(\star)$ and $(\mathcal{R}_{p^k})$ show that $A_{p^k}\sim_2 B_{p^k}$.

In the case of 2-groups, we will use the presentation of $\mathfrak{N}_2$ given in the introduction of 
\cite{Mir} by Miranda, who gives an alternative version of the description of $\mathfrak{N}_2$ obtained by Kawauchi and Kojima 
in \cite{KK}. The generators are 4 linkings $A_k$, $B_k$, $C_k$, $D_k$, defined on $\mathbb{Z}_{2^k}$,
and 2 linkings $E_k$, $F_k$, defined on $\mathbb{Z}_{2^k}\times\mathbb{Z}_{2^k}$.
The relation $A_k+E_k=2A_k+B_k$ implies $E_k\sim_2 A_k+B_k$, and the relation $2E_k=2F_k$ implies 
$E_k\sim_2 F_k$. So we're lead to the cyclic case. The relations $2A_k=2C_k$, $2B_k=2D_k$, $4A_k=4B_k$,
give $A_k\sim_2 B_k\sim_2 C_k\sim_2 D_k$.\end{proof}

Lemma \ref{lemmalinkings} reduces our study to the case of cyclic groups with arbitrary linkings.
\begin{lemma} \label{lemmagroup}
 Denote by $G_{p^k}$ any linking on $\mathbb{Z}_{p^k}$.
We have $G_{p^{k+k'}}\sim_2 G_{p^k} + G_{p^{k'}}$ for any prime $p$ and any positive integers $k$ and $k'$.
It follows that $G_{p^k}\sim_2 k\, G_p$.
\end{lemma}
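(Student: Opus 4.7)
The plan is to exhibit, via an explicit element of $\mathcal{F}_2$ coming from two disjoint genus $1$ LP-surgeries on a Hopf link in $S^3$, the relation $2G_{p^{k+k'}}\sim_2 2G_{p^k}+2G_{p^{k'}}$, and then to divide by $2$ in the $\mathbb{Q}$-vector space $\mathcal{G}_1$ to conclude.

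For the construction, I take a Hopf link $K_1\cup K_2\subset S^3$ with disjoint tubular neighbourhoods $T_1,T_2$, together with the standard symplectic bases $(m_i,\ell_i)$ of $H_1(\partial T_i;\mathbb{Z})$ in which $m_i$ bounds in $T_i$ and $\ell_i$ is Seifert-framed. The Hopf linking gives $m_1=\ell_2$ and $\ell_1=m_2$ in $H_1$ of the link exterior $Y$. By Lemma \ref{lemmaTd}, I take a $p^k$-torus $T_{p^k}$ and a $p^{k'}$-torus $T_{p^{k'}}$ with symplectic bases $(\alpha_i,\beta_i)$ of their boundaries, and choose LP-identifications $h_i\colon\partial T_{p^{k_i}}\to\partial T_i$ (with $k_1=k$, $k_2=k'$) sending $\alpha_i\mapsto m_i$ and $\beta_i\mapsto\ell_i$; this respects the rational Lagrangians $\mathcal{L}_{T_{p^{k_i}}}=\mathbb{Q}\alpha_i$ and $\mathcal{L}_{T_i}=\mathbb{Q}m_i$. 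The resulting bracket
\[
\bigl[S^3;\bigl(\tfrac{T_{p^k}}{T_1}\bigr),\bigl(\tfrac{T_{p^{k'}}}{T_2}\bigr)\bigr]=S^3-M_{10}-M_{01}+M_{11}
\]
lies in $\mathcal{F}_2$, with the notation $M_{IJ}$ for the manifold obtained by applying the first surgery when $I=1$ and the second when $J=1$.

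The key step is to compute $H_1$ of the three non-trivial manifolds by Mayer--Vietoris. For $M_{10}$, the complement of $T_1$ in $S^3$ is a solid torus with $[K_2]$ generating $H_1$, in which $m_1=[K_2]$ and $\ell_1=0$; combined with $p^k\alpha_1=0$ and $\beta_1=p^k\gamma_1$ in $T_{p^k}$ this yields $H_1(M_{10})\cong\mathbb{Z}_{p^k}\oplus\mathbb{Z}_{p^k}$, and symmetrically $H_1(M_{01})\cong\mathbb{Z}_{p^{k'}}\oplus\mathbb{Z}_{p^{k'}}$. For $M_{11}$ one decomposes $M_{11}=T_{p^k}\cup Y\cup T_{p^{k'}}$ and uses the Hopf-link identifications $m_1=\ell_2$, $\ell_1=m_2$ in $H_1(Y)$: the gluings force $\alpha_1=p^{k'}\gamma_2$ and $\alpha_2=p^k\gamma_1$ in $H_1(M_{11})$, so $p^{k+k'}\gamma_1=p^{k+k'}\gamma_2=0$ and $H_1(M_{11})\cong\mathbb{Z}_{p^{k+k'}}\oplus\mathbb{Z}_{p^{k+k'}}$.

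To conclude, by Lemma \ref{lemmalink} any $\mathbb{Q}$HS with $H_1\cong\mathbb{Z}_{p^j}^2$ has class in $\mathcal{G}_1$ determined by its linking form, which by Lemma \ref{lemmalinkings} together with $(\star)$ equals $2G_{p^j}$. The $\mathcal{F}_2$-bracket thus yields $2G_{p^{k+k'}}\sim_2 2G_{p^k}+2G_{p^{k'}}$, and dividing by $2$ in $\mathcal{G}_1$ gives the first assertion; the second then follows by induction on $k$. I expect the main obstacle to be the Mayer--Vietoris bookkeeping for $M_{11}$, where one has to propagate the Hopf-link identifications between the two boundary tori with care to see that the two surgeries combine so as to multiply the torsion orders from $p^k$ and $p^{k'}$ to $p^{k+k'}$ rather than merely producing a direct sum.
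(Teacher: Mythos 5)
Your proposal is correct and follows essentially the same route as the paper: the paper's proof also performs the two LP-replacements of solid-torus neighbourhoods of a Hopf link in $S^3$ by a $p^k$-torus and a $p^{k'}$-torus (its Sublemma \ref{sublemmasur} is exactly your computation of $H_1(M_{10})$), obtains $H_1\cong\mathbb{Z}_{p^{k+k'}}^2$ for the doubly-surgered manifold from the identifications $\alpha'=\beta=p^k\gamma$ and $\alpha=\beta'=p^{k'}\gamma'$, and concludes from the degree-$2$ bracket relation together with Lemmas \ref{lemmalink} and \ref{lemmalinkings} and $(\star)$. The only difference is that you make explicit the final step (identifying a $\mathbb{Z}_{p^j}^2$-linking with $2G_{p^j}$ and dividing by $2$ in the $\mathbb{Q}$-vector space $\mathcal{G}_1$), which the paper leaves implicit.
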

\begin{proof} We will use the following easy result.
\begin{sublemma} \label{sublemmasur}
 Let $d$ be a positive integer. Let $T_d$ be a $d$-torus. Let $(\alpha,\beta)$ be a symplectic basis of $H_1(\partial T_d;\mathbb{Z})$ 
such that $\alpha$ generates $\mathcal{L}_{T_d}$ and $\beta=d\gamma$ in $H_1(T_d;\mathbb{Z})$.
Let $T$ be a standard solid torus trivially embedded in $S^3$. Define an LP-identification $\partial T_d\cong\partial T$ that 
identifies $\beta$ with the preferred longitude of $T$. Then 
$H_1(S^3(\frac{T_d}{T}))=\mathbb{Z}_d\alpha\oplus\mathbb{Z}_d\gamma$.
\end{sublemma}
In $S^3$, consider two disjoint, trivially embedded, tori $T$ and $T'$, linked in the pattern of a Hopf link. 
Consider the LP-surgeries given by Sublemma \ref{sublemmasur} for $d=p^k$ and for $d=p^{k'}$. 
We still denote by $\alpha$, $\beta$, $\gamma$ (resp. $\alpha'$, $\beta'$, $\gamma'$) the curves defined in the lemma.
We have $H_1(S^3(\frac{T_{p^k}}{T}))=\mathbb{Z}_{p^k}\alpha\times\mathbb{Z}_{p^k}\gamma$ and 
$H_1(S^3(\frac{T_{p^{k'}}}{T'}))=\mathbb{Z}_{p^{k'}}\alpha'\times\mathbb{Z}_{p^{k'}}\gamma'$.
Now, in $S^3(\frac{T_{p^k}}{T},\frac{T_{p^{k'}}}{T'})$, we have $\alpha'=\beta=p^k\gamma$ and $\alpha=\beta'=p^{k'}\gamma'$. 
Thus $H_1(S^3(\frac{T_{p^k}}{T},\frac{T_{p^{k'}}}{T'}))=\mathbb{Z}_{p^{k+k'}}\gamma\times\mathbb{Z}_{p^{k+k'}}\gamma'$. 
Conclude with the following equality: 
$S^3(\frac{T_{p^k}}{T},\frac{T_{p^{k'}}}{T'})-S^3=_2 (S^3(\frac{T_{p^k}}{T})-S^3)+(S^3(\frac{T_{p^{k'}}}{T'})-S^3)$.\end{proof}

This achieves the proof of the first part of Proposition \ref{propdeg1}, namely the fact that the family $(M_p-S^3)_{p\ prime}$ 
generates $\mathcal{G}_1$.

    \subsection{The invariants $\nu_p$} \label{subsecinv}

In this subsection, unless otherwise mentioned, all the homology modules are considered with integral coefficients.
We prove the following proposition that implies Proposition \ref{propinv}. 

\begin{proposition}
 Consider a $\mathbb{Q}$HS $M$, two disjoint genus $g$ $\mathbb{Q}$HH's $A$ and $B$ in $M$, and two $\mathbb{Q}$HH's $A'$ and $B'$ 
whose boundaries are LP-identified with $\partial A$ and $\partial B$ respectively. Then:
$$\frac{|H_1(M)|}{|H_1(M(\frac{A'}{A}))|}=\frac{|H_1(M(\frac{B'}{B}))|}{|H_1(M(\frac{A'}{A},\frac{B'}{B}))|}.$$
\end{proposition}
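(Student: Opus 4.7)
The plan is to compute $|H_1|$ of each of the four manifolds in the statement via a Mayer--Vietoris decomposition with a common exterior, and to show that the $X$-dependent factors cancel in both ratios. Set $X := M \setminus \mathrm{Int}(A \cup B)$, so each manifold $W$ in the statement is of the form $W = X \cup (A^W \sqcup B^W)$ with $A^W \in \{A,A'\}$ and $B^W \in \{B,B'\}$. A direct Mayer--Vietoris computation shows $X$ is connected, and since $H_2(W;\mathbb{Z})=0$ one obtains a short exact sequence
\[0 \to H_1(\partial A;\mathbb{Z}) \oplus H_1(\partial B;\mathbb{Z}) \xrightarrow{\phi_W} H_1(X) \oplus H_1(A^W) \oplus H_1(B^W) \to H_1(W) \to 0,\]
whose outer terms both have free rank $2(g_A+g_B)$ (checked with $\mathbb{Q}$-coefficients).

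The next ingredient is the elementary formula: for an injective homomorphism $\phi: F \to G$ between finitely generated abelian groups of equal rank with $F$ free and $G/\phi(F)$ finite, one has $|G/\phi(F)| = |\det \bar\phi| \cdot |T_G|$, where $\bar\phi: F \to G/T_G$ is the map on the free quotient. Hence
\[|H_1(W)| = |\det \bar\phi_W| \cdot |T_X| \cdot |T_{A^W}| \cdot |T_{B^W}|.\]
To factor the determinant, I would apply Lemma~\ref{lemmaduality} to choose a basis of $H_1(\partial A;\mathbb{Z})$ of the form $(\alpha_i^A, \beta_i^A)$ with $(\alpha_i^A)$ spanning $\mathcal{L}_A^T$, and similarly $(\alpha_j^B, \beta_j^B)$ for $H_1(\partial B;\mathbb{Z})$. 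Since LP-identifications preserve the rational Lagrangians, they also preserve the integer lattices $\mathcal{L}^T$; hence the same basis serves for $A$ and $A'$, and for $B$ and $B'$. In this basis $\bar\imath_{A^W}(\alpha_i^A)=0$, so reordering source columns as $(\alpha^A, \alpha^B, \beta^A, \beta^B)$ the matrix of $\bar\phi_W$ becomes upper block-triangular:
\[\bar\phi_W = \begin{pmatrix} J & * \\ 0 & \mathrm{diag}(-Q_{A^W}, -Q_{B^W}) \end{pmatrix},\]
where $J$ represents the inclusion-induced map $\mathcal{L}_A^T \oplus \mathcal{L}_B^T \to H_1(X)/T_X$, and $|\det Q_{A^W}|$ equals the index $\kappa_{A^W}$ of $\bar\imath_{A^W}(H_1(\partial A;\mathbb{Z}))$ in $H_1(A^W)/T_{A^W}$, an invariant of $A^W$.

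Therefore $|\det \bar\phi_W| = |\det J| \cdot \kappa_{A^W} \cdot \kappa_{B^W}$, where $|\det J|$ depends only on $X$ and on the LP-identified Lagrangians. Substituting,
\[|H_1(W)| = \bigl(|\det J| \cdot |T_X|\bigr) \cdot \bigl(\kappa_{A^W} \cdot |T_{A^W}|\bigr) \cdot \bigl(\kappa_{B^W} \cdot |T_{B^W}|\bigr),\]
so both ratios in the statement collapse to $(\kappa_A \cdot |T_A|) / (\kappa_{A'} \cdot |T_{A'}|)$ and thus coincide. The delicate point is the block-triangular decomposition, which depends crucially on the LP-identification preserving the integer lattices $\mathcal{L}^T$ and not just the rational Lagrangians, so that a single adapted basis of each boundary serves both $\mathbb{Q}$HH's being exchanged.
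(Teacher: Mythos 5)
Your argument is correct, and it takes a genuinely different route from the paper's. The paper works with the exterior $X$ of $A$ alone: it runs the long exact sequence of the pair $(M,A)$ (and its three analogues), uses excision to replace $H_*(M,A)$ by $H_*(X,\partial X)$, reduces each ratio to $|Im(\varphi_2)|/|Im(\varphi_2')|$, and then compares the case ``before surgery on $B$'' with the case ``after surgery on $B$'' by observing that the two integral lattices $F,F'\subset H_1(\partial A;\mathbb{Z})$ (images of $H_2(X,\partial X)$ and $H_2(X',\partial X')$) span the \emph{same} rational Lagrangian, so they differ by a matrix $R\in GL_g(\mathbb{Q})$ whose determinant appears in both numerator and denominator and cancels. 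You instead take the common exterior $X=M\setminus Int(A\cup B)$, write a single Mayer--Vietoris short exact sequence for each of the four manifolds, and extract a closed multiplicative formula $|H_1(W)|=c_X\cdot c_{A^W}\cdot c_{B^W}$ from a block-triangular determinant; the proposition is then immediate. The price of your route is exactly the point you flag: you need the LP-identification to match the integral saturated Lagrangians $\mathcal{L}^T$, not just the rational ones. This does hold, since $\mathcal{L}_A^T=(i_*)^{-1}(Tors\,H_1(A;\mathbb{Z}))=H_1(\partial A;\mathbb{Z})\cap\mathcal{L}_A$ is the saturation of $\mathcal{L}_A^{\mathbb{Z}}$ and is therefore determined by the rational Lagrangian together with the integral lattice of the boundary, both of which a homeomorphism preserving $\mathcal{L}_A$ carries to the corresponding data for $A'$; it would be worth making that one-line justification explicit, and likewise the nonvanishing of $\det J$ and $\det Q_{A^W}$ (both follow from finiteness of $H_1(W)$, or directly from the rational Mayer--Vietoris splitting $H_1(\partial A;\mathbb{Q})=\mathcal{L}_A\oplus\mathcal{L}_X$ and Lemma \ref{lemmaduality}). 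What your approach buys is a cleaner statement: the full factorization of $|H_1(W)|$ into a piece depending only on $X$ and pieces $\kappa_{A^W}|T_{A^W}|$, $\kappa_{B^W}|T_{B^W}|$ depending only on the glued handlebodies, which is symmetric in $A$ and $B$ and gives both ratios at once; the paper's version avoids any statement about integral lattices by letting the rational change-of-basis determinant cancel.
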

\begin{proof}
The exact sequence associated with $(M,A)$ gives:
$$0\to H_2(M,A)\to H_1(A) \to H_1(M) \to H_1(M,A) \to 0.$$
Set $X=M\setminus Int(A)$. By excision, we have $H_i(M,A)=H_i(X,\partial X)$ for any integer $i$.
So the above exact sequence can be rewritten as follows.
$$0\to H_2(X,\partial X)\fl{\varphi_1} H_1(A) \fl{\varphi_2} H_1(M) \fl{\varphi_3} H_1(X,\partial X) \to 0$$
Since $H_1(M)$ is finite, $H_1(X,\partial X)$ also is, and we have $|H_1(M)|=|H_1(X,\partial X)|.|Im(\varphi_2)|$.

Similarly, we have an exact sequence:
$$0\to H_2(X,\partial X)\fl{\varphi'_1} H_1(A') \fl{\varphi'_2} H_1(M(\frac{A'}{A}))\fl{\varphi'_3} H_1(X,\partial X)\to 0.$$
We get: $$\frac{|H_1(M)|}{|H_1(M(\frac{A'}{A}))|}=\frac{|Im(\varphi_2)|}{|Im(\varphi'_2)|}.$$

Similarly arguing with $M(\frac{B'}{B})$ instead of $M$, and setting $X'=X(\frac{B'}{B})$, we have the exact sequences:
$$0\to H_2(X',\partial X')\fl{\psi_1} H_1(A) \fl{\psi_2} H_1(M(\frac{B'}{B})) \fl{\psi_3} H_1(X',\partial X') \to 0,$$
$$0\to H_2(X',\partial X')\fl{\psi'_1} H_1(A')\fl{\psi'_2} H_1(M(\frac{A'}{A},\frac{B'}{B}))\fl{\psi'_3}H_1(X',\partial X')\to 0,$$
and we get:
$$\frac{|H_1(M(\frac{B'}{B}))|}{|H_1(M(\frac{A'}{A},\frac{B'}{B}))|}=\frac{|Im(\psi_2)|}{|Im(\psi'_2)|}.$$

We now relate $|Im(\varphi_2)|$ and $|Im(\psi_2)|$. Since $Im(\varphi_2)\cong\frac{H_1(A)}{Im(\varphi_1)}$ 
and $Im(\psi_2)\cong\frac{H_1(A)}{Im(\psi_1)}$, we shall study $Im(\varphi_1)$ and $Im(\psi_1)$.

The following sublemma gives us additional information about $X$.
\begin{sublemma} \label{sub1}
 If $M$ is a $\mathbb{Q}$HS and if $A$ is a genus $g$ $\mathbb{Q}$HH in $M$, then $X=M\setminus Int(A)$ also is 
a genus $g$ $\mathbb{Q}$HH.
\end{sublemma}
\begin{proof}
It is clear that $H_3(X;\mathbb{Q})=0$ and $H_0(X;\mathbb{Q})=\mathbb{Q}$.

The Mayer-Vietoris sequence associated with $M=A\cup X$ gives:
$$0\to H_3(M;\mathbb{Q})\to H_2(\partial A;\mathbb{Q}) \to H_2(A;\mathbb{Q})\oplus H_2(X;\mathbb{Q}) \to 0.$$
Since $H_3(M;\mathbb{Q})\to H_2(\partial A;\mathbb{Q})$ is an isomorphism that identifies the fundamental classes, 
we have $H_2(X;\mathbb{Q})=0$.

The Mayer-Vietoris sequence also gives an isomorphism $H_1(\partial A;\mathbb{Q}) \cong H_1(A;\mathbb{Q}) \oplus H_1(X;\mathbb{Q})$, 
thus $H_1(X;\mathbb{Q})=\mathbb{Q}^g$.\end{proof}

We have the following commutative diagram, where $i_\star$ is the map induced by the inclusion $i: \partial A \hookrightarrow A$.
\begin{center}
\begin{tikzpicture}
\draw (-0.6,1) node {$H_2(X,\partial X)$};
\draw [->] (0.7,1) -- (1.3,1);
\draw (1,1.3) node {$\partial$};
\draw (2.3,1) node {$H_1(\partial A)$};
\draw [->] (0.7,0.6) -- (1.3,0.2);
\draw (0.9,0.2) node {$\varphi_1$};
\draw [->] (2.3,0.6) -- (2.3,0.2);
\draw (2.6,0.4) node {$i_\star$};
\draw (2.3,-0.2) node {$H_1(A)$};
\draw (5.2,1) node {$H_2(X',\partial X')$};
\draw [->] (3.9,1) -- (3.3,1);
\draw (3.6,1.3) node {$\partial$};
\draw [->] (3.9,0.6) -- (3.3,0.2);
\draw (3.8,0.2) node {$\psi_1$};
\end{tikzpicture}
\end{center}
Denote the images of $H_2(X,\partial X)$ and $H_2(X',\partial X')$ in $H_1(\partial A)$ by $F$ and $F'$ respectively.
Since $\varphi_1$ and $\psi_1$ are injective, the two boundary operators also are.
Thus, by Sublemma \ref{sub1} and Lemma \ref{lemmaprel}, $F$ and $F'$ are free submodules of $H_1(\partial A)$, of rank $g$.
Consider bases $\gamma$ of $F$ and $\gamma'$ of $F'$. Over $\mathbb{Q}$, $F$ generates the Lagrangian $\mathcal{L}_X$,
and $F'$ generates $\mathcal{L}_{X'}$. Since $X'$ is obtained from $X$ by an LP-surgery, we have $\mathcal{L}_X=\mathcal{L}_{X'}$.
Hence we have a matrix $R\in GL_g(\mathbb{Q})$ of change of basis from $\gamma$ to $\gamma'$.
Thus:
$$|Im(\psi_2)|=|\frac{H_1(A)}{Im(\psi_1)}|=|\frac{H_1(A)}{i_\star(F')}|=|\det(R)|.|\frac{H_1(A)}{i_\star(F)}|
=|\det(R)|.|Im(\varphi_2)|.$$

Since the same submodules $F$ and $F'$ occur in the decomposition of $\varphi'_1$ and $\psi'_1$, we also have: 
$$|Im(\psi'_2)|=|\det(R)|.|Im(\varphi'_2)|.$$

Finally,
$$\frac{|H_1(M(\frac{B'}{B}))|}{|H_1(M(\frac{A'}{A},\frac{B'}{B}))|}=\frac{|Im(\psi_2)|}{|Im(\psi'_2)|}
=\frac{|\det(R)|.|Im(\varphi_2)|}{|\det(R)|.|Im(\varphi'_2)|}=\frac{|H_1(M)|}{|H_1(M(\frac{A'}{A}))|}.$$\end{proof}

    \section{Additive invariants of degree $n>1$} \label{secadd}

      \subsection{Degree 1 invariants of framed rational homology tori} \label{subsectori}

Fix a genus 1 surface $\Sigma_1$ and a symplectic basis $(\alpha_0,\beta_0)$ of $H_1(\Sigma_1;\mathbb{Z})$. Define $\mathcal{F}_0(\Sigma_1)$ 
\linebreak[4] as the rational vector space generated by all the rational homology tori $T$, equipped \linebreak[4] 
with an oriented longitude $\ell(T)$. Denote by $m(T)$ the meridian of $T$ that satisfies \linebreak[4] $<m(T),\ell(T)>_{\partial T}=1$. 
The data of the framing is equivalent to the data of an orientation-preserving homeomorphism $h: \Sigma_1 \to \partial T$ such that 
$h_*(\mathbb{Q}\alpha_0)=\mathcal{L}_T$, the equivalence being given by $m(T)=h(\alpha_0)$ and $\ell(T)=h(\beta_0)$. 
In particular, given two framed rational homology tori, we have a canonical 
LP-identification of their boundaries, which identifies the fixed longitudes.
Define a filtration $(\mathcal{F}_n(\Sigma_1))_{n\in\mathbb{N}}$, 
and quotients $(\mathcal{G}_n(\Sigma_1))_{n\in\mathbb{N}}$, as in the case of $\mathbb{Q}$HS's. 
Note that $\mathcal{G}_0(\Sigma_1)\cong\mathbb{Q}$. 

Denote by $T_0$ the standard solid torus with a fixed longitude $\ell(T_0)$. 
For any prime $p$, fix a $\mathbb{Q}$HS $M_p$ such that $H_1(M_p;\mathbb{Z})\cong\mathbb{Z}/p\mathbb{Z}$. Define a
rational homology ball $B_p$ by removing an open ball from $M_p$. In this subsection, we prove:
\begin{proposition} \label{propinvtori}
 $\displaystyle \ \mathcal{G}_1(\Sigma_1)=\bigoplus_{p\ prime} \mathbb{Q} [T_0;\frac{B_p}{B^3}]$
\end{proposition}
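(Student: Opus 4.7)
The plan is to follow the strategy used for Proposition \ref{propdeg1}, adapted to framed rational homology tori. I would prove separately that the family $([T_0;\frac{B_p}{B^3}])_{p\text{ prime}}$ generates $\mathcal{G}_1(\Sigma_1)$ and that it is linearly independent in that quotient.

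For generation, I would apply Theorem \ref{thelsur} to an arbitrary framed rational homology torus $T$ and the standard framed torus $T_0$ (whose boundaries are canonically LP-identified by the framing): $T$ is obtained from $T_0$ by a finite sequence of elementary surgeries, hence $T-T_0\in\mathcal{F}_1(\Sigma_1)$ is a telescoping sum of elementary surgery contributions, and it suffices to treat each of the three types modulo $\mathcal{F}_2(\Sigma_1)$. A single borromean surgery inside a framed rational homology torus $T$ produces an element of $\mathcal{F}_1^\mathbb{Z}(T)\subset\mathcal{F}_1(\Sigma_1)$; by Lemma \ref{lemmaF1} its class modulo $\mathcal{F}_2^\mathbb{Z}(T)$ is an alternating trilinear function of the three leaf classes in $H_1(T;\mathbb{Q})$, and since $\dim H_1(T;\mathbb{Q})=1$ any such function vanishes, so borromean contributions already die modulo $\mathcal{F}_2(\Sigma_1)$. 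A genus 0 LP-surgery in $T$ is a connected sum with a $\mathbb{Q}$HS $M$ in a ball, and by the standard bilinearity trick (the difference $[T;\frac{B_M}{B^3}]-[T_0;\frac{B_M}{B^3}]$ is a two-surgery bracket, hence lies in $\mathcal{F}_2(\Sigma_1)$) its contribution coincides modulo $\mathcal{F}_2(\Sigma_1)$ with $[T_0;\frac{B_M}{B^3}]$, which in turn expands, by the argument of Lemmas \ref{lemmalinkings}--\ref{lemmagroup}, as $\sum_p v_p(|H_1(M;\mathbb{Z})|)\,[T_0;\frac{B_p}{B^3}]$.

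The hard case is the genus 1 elementary surgery, namely the LP-replacement of a standard solid torus $T_0'\subset T$ by a $d$-torus $T_d$. I would aim for an identity $[T;\frac{T_d}{T_0'}]\equiv\sum_p v_p(d)\,[T_0;\frac{B_p}{B^3}]$ modulo $\mathcal{F}_2(\Sigma_1)$. The strategy I would follow is to prove a framed analogue of Lemma \ref{lemmalink} stating that two framed rational homology tori with isomorphic torsion linking forms on $H_1$ are 2-equivalent in $\mathcal{F}_0(\Sigma_1)$, combined with the description of the cyclic case already developed in Sublemma \ref{sublemmasur} and Lemma \ref{lemmagroup}. I expect this 2-equivalence statement to be the main obstacle: one needs to show that the effect of the $d$-torus insertion on the class in $\mathcal{G}_1(\Sigma_1)$ only depends on $d$ and not on how $T_0'$ is embedded in $T$. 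I would approach this via the tunneling arguments of Section \ref{secelsur} to pass to a $\mathbb{Z}$HH setting where borromean surgeries can be iterated and absorbed into terms of $\mathcal{F}_2(\Sigma_1)$.

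For independence, I would define, for each prime $p$, $\tilde\nu_p:\mathcal{F}_0(\Sigma_1)\to\mathbb{Q}$ by $\tilde\nu_p(T)=v_p(|\mathrm{Tors}(H_1(T;\mathbb{Z}))|)$. The exact-sequence argument behind Proposition \ref{propinv} applies almost verbatim with $|H_1|$ replaced by $|\mathrm{Tors}(H_1)|$, because the $\mathbb{Z}$ summand of $H_1(T)$ is carried by the fixed longitude on $\partial T$ and is therefore preserved under any LP-surgery performed in the interior of $T$; this shows that $\tilde\nu_p$ is a degree 1 invariant. The equalities $\tilde\nu_p([T_0;\frac{B_q}{B^3}])=\delta_{pq}$ then force the family $([T_0;\frac{B_p}{B^3}])_{p\text{ prime}}$ to be linearly independent in $\mathcal{G}_1(\Sigma_1)$, completing the proof.
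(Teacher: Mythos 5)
Your outline matches the paper's for the easy parts: generation by elementary surgeries via Theorem \ref{thelsur} and telescoping (this is the paper's Lemma \ref{lemmaredtori}), the vanishing of borromean contributions from Lemma \ref{lemmaF1} and $\dim H_1(T;\mathbb{Q})=1$ (the paper phrases this dually, on $(\mathcal{G}_1(\Sigma_1))^*$, but your direct version is fine), and the reduction of genus $0$ surgeries to the $[T_0;\frac{B_p}{B^3}]$. However, the two steps you yourself identify as delicate are both genuinely broken. First, the genus $1$ case: your proposed key lemma, that two framed rational homology tori with isomorphic torsion linking forms are $2$-equivalent, is not proved and cannot be true as stated. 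The paper's invariant $\mu_p(T)=v_p\bigl(d(T)\,|Tors(H_1(T;\mathbb{Z}))|\bigr)$, with $d(T)=|\mathcal{L}_T^T/\mathcal{L}_T^{\mathbb{Z}}|$, is a degree $1$ invariant that is \emph{not} determined by the torsion linking form (a $p$-torus and $T_0\sharp M_p$ both have torsion $\mathbb{Z}/p\mathbb{Z}$ but $\mu_p$ equal to $2$ and $1$ respectively), so the $2$-equivalence class of a framed torus must remember $d(T)$. Your target identity $[T;\frac{T_d}{T_0'}]\equiv\sum_p v_p(d)[T_0;\frac{B_p}{B^3}]$ has the wrong coefficient for the same reason (for $T_0'$ concentric in $T_0$ one gets $2v_p(d)$, as $\mu_p$ detects). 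The paper avoids all of this with a doubling trick: it performs the genus $1$ surgery on \emph{two} parallel copies $E_1,E_2$ of $E$, shows (Lemma \ref{lemmasub}) that $E(\frac{E_1'}{E_1},\frac{E_2'}{E_2})$ differs from $E'\sharp M$ only by borromean surgeries, and extracts $2[T;\frac{E'}{E}]=T-T'$ mod $\mathcal{F}_2(\Sigma_1)$ from the bracket $[T;\frac{E_1'}{E_1},\frac{E_2'}{E_2}]$, thereby expressing $[T;\frac{E'}{E}]$ through genus $0$ and genus $3$ contributions without ever classifying framed tori up to $2$-equivalence.

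Second, your independence argument also fails: $\tilde\nu_p(T)=v_p(|Tors(H_1(T;\mathbb{Z}))|)$ is \emph{not} a degree $1$ invariant. The longitude $\ell(T)$ does not in general generate the free part of $H_1(T;\mathbb{Z})$ (in a $d$-torus it is $d$ times a generator), so the exact-sequence argument does not transfer "almost verbatim". Concretely, performing one $p$-torus replacement on a concentric copy of $T_0$ gives $|Tors|=p$, while performing it on two disjoint concentric copies gives $|Tors|=p^3$ (this is the computation of $H_1(A)$ in Lemma \ref{lemmasub}), so the second difference of $\tilde\nu_p$ is $0-2+3=1\neq 0$. The paper's correct substitute is $\mu_p(T)=\nu_p(M(T))$, where $M(T)$ is the $\mathbb{Q}$HS obtained by gluing the standard torus to $T$ along the framing; then $|H_1(M(T))|=d(T)\,|Tors(H_1(T))|$ by Lemma \ref{lemmaduality}, and degree $1$ follows from Proposition \ref{propinv}. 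With $\mu_p$ in place of $\tilde\nu_p$ your independence step goes through, since $\mu_p([T_0;\frac{B_q}{B^3}])=-\delta_{pq}$.
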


 Consider a framed rational homology torus $T$. Set $d(T)=|\mathcal{L}_T^T/\mathcal{L}_T^\mathbb{Z}|$ (see Lemma \ref{lemmaduality} 
for the definition of $\mathcal{L}_T^T$ and $\mathcal{L}_T^\mathbb{Z}$). For $p$ prime,
define $\mu_p(T)=v_p(d(T)\, |Tors(H_1(T;\mathbb{Z}))|)$, where $v_p$ denotes the $p$-adic valuation.
\begin{lemma}
 For any prime $p$, $\mu_p$ is a degree 1 invariant of the framed rational homology tori.
\end{lemma}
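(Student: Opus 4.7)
The strategy I would use is to reduce to the closed case by canonically capping off a framed rational homology torus. Given such a $T$, let $\hat T$ denote the $\mathbb{Q}$HS obtained by gluing a standard solid torus $T_0$ to $T$ along $\partial T$ so that the preferred longitude $\ell(T)$ bounds a meridional disk in $T_0$. The plan is to prove the identity $\mu_p(T) = \nu_p(\hat T)$ and then deduce degree-1-ness directly from Proposition \ref{propinv}.

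The main computation is to show that $|H_1(\hat T;\mathbb{Z})| = d(T)\,|Tors(H_1(T;\mathbb{Z}))|$, so that taking $v_p$ matches the definition of $\mu_p$. Using Lemma \ref{lemmaduality} in genus $1$, fix a symplectic basis $(\alpha_1,\beta_1)$ of $H_1(\partial T;\mathbb{Z})$, a curve $\gamma_1$, and an integer $d=d(T)$ such that $d\alpha_1=0$ in $H_1(T)$ and $\beta_1 = d\gamma_1 + \tau$ modulo torsion, with $\tau\in Tors(H_1(T))$. Since adding a $2$-handle along $\ell(T)$ and capping with a $3$-ball gives $H_1(\hat T;\mathbb{Z}) = H_1(T;\mathbb{Z})/\langle i_*\ell(T)\rangle = H_1(T;\mathbb{Z})/\langle d\gamma_1+\tau\rangle$, a snake-lemma diagram chase on the short exact sequence $0\to Tors(H_1(T))\to H_1(T)\to \mathbb{Z}\gamma_1\to 0$ (using that $d\gamma_1+\tau$ has infinite order and intersects the torsion subgroup trivially) yields the extension
\[
0\longrightarrow Tors(H_1(T;\mathbb{Z}))\longrightarrow H_1(\hat T;\mathbb{Z})\longrightarrow \mathbb{Z}/d\mathbb{Z}\longrightarrow 0,
\]
giving the desired order. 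A parallel Mayer--Vietoris computation confirms $H_2(\hat T;\mathbb{Q})=0$, so $\hat T$ is indeed a $\mathbb{Q}$HS. Therefore $\mu_p(T) = v_p(|H_1(\hat T;\mathbb{Z})|) = \nu_p(\hat T)$.

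Finally, the capping construction is compatible with surgeries in the interior of $T$: if $(A_i,B_i)_{i=1,2}$ are disjoint LP-data inside $T$, they remain disjoint from $T_0\subset\hat T$, so $\widehat{T\bigl((\tfrac{B_i}{A_i})_{i\in I}\bigr)} = \hat T\bigl((\tfrac{B_i}{A_i})_{i\in I}\bigr)$ for every $I\subset\{1,2\}$. Applying $\mu_p(\cdot)=\nu_p(\widehat{\cdot})$ term by term converts the alternating sum defining $\mu_p$ on $\mathcal F_2(\Sigma_1)$ into the alternating sum defining $\nu_p$ on $\mathcal F_2$, which vanishes by Proposition \ref{propinv}. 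Hence $\mu_p$ is a degree $1$ invariant.

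The step I expect to require the most care is the order computation in Step 2, because $\tau$ may be a non-trivial torsion element and the interaction between the free and torsion parts under the quotient must be tracked precisely; once that identity is established, the reduction to $\nu_p$ is essentially formal.
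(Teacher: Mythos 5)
Your proposal is correct and follows essentially the same route as the paper: cap off $T$ with a standard solid torus so that $\ell(T)$ bounds a disk, identify $|H_1(\widehat{T};\mathbb{Z})|$ with $d(T)\,|Tors(H_1(T;\mathbb{Z}))|$ via Lemma \ref{lemmaduality}, conclude $\mu_p=\nu_p\circ\widehat{\phantom{T}}$, and transport interior LP-surgeries through the capping to invoke Proposition \ref{propinv}. You merely spell out the order computation (the interaction of $\tau$ with the free part) and the compatibility with interior surgeries in more detail than the paper does.
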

\begin{proof}
 Consider a framed rational homology torus $T$. Define a $\mathbb{Q}$HS $M(T)$ by gluing $T$ and the standard torus $T_0$ along
their boundaries, in such a way that $\ell(T)$ is identified with $m(T_0)$.
We have $H_1(M(T))=\frac{H_1(T)}{\mathbb{Z}\ell(T)}$. By Lemma \ref{lemmaduality}, $|H_1(M(T))|=d(T)\, |Tors(H_1(T))|$.
Thus $\mu_p(T)=v_p(|H_1(M(T))|)=\nu_p(M(T))$. The result follows from the fact that $\nu_p$ is a 
degree 1 invariant of $\mathbb{Q}$HS's.
\end{proof}
\begin{corollary} \label{cordirect}
 The sum $\bigoplus_{p\ prime} \mathbb{Q} [T_0;\frac{B_p}{B^3}]$ is direct.
\end{corollary}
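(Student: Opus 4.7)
The plan is to use the degree 1 invariants $\mu_p$ just constructed as a detection tool: since each $\mu_p$ descends to a linear functional on $\mathcal{G}_1(\Sigma_1)=\mathcal{F}_1(\Sigma_1)/\mathcal{F}_2(\Sigma_1)$, it suffices to show that the $\mu_q$ separate the classes $[T_0;\frac{B_p}{B^3}]$, $p$ prime. Concretely, I would prove
\[
\mu_q\!\left([T_0;\tfrac{B_p}{B^3}]\right)=\delta_{p,q},
\]
from which linear independence (and hence directness of the sum) follows immediately: any finite relation $\sum_i c_i[T_0;\frac{B_{p_i}}{B^3}]=0$ would force every coefficient $c_i=\mu_{p_i}(\cdot)$ to vanish.

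By definition, $[T_0;\frac{B_p}{B^3}]=T_0(\frac{B_p}{B^3})-T_0$, so I would first compute $\mu_q(T_0)$ and $\mu_q(T_0(\frac{B_p}{B^3}))$ separately. For the standard solid torus $T_0$, the meridian bounds a disk and the longitude generates $H_1(T_0;\mathbb{Z})\cong\mathbb{Z}$, so $\mathcal{L}_{T_0}^T=\mathcal{L}_{T_0}^{\mathbb{Z}}$ and $Tors(H_1(T_0;\mathbb{Z}))=0$; hence $d(T_0)=1$ and $\mu_q(T_0)=v_q(1)=0$. For $T:=T_0(\frac{B_p}{B^3})$, a Mayer--Vietoris argument along the gluing sphere gives $H_1(T;\mathbb{Z})\cong H_1(T_0;\mathbb{Z})\oplus H_1(B_p;\mathbb{Z})\cong \mathbb{Z}\oplus\mathbb{Z}/p\mathbb{Z}$. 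The meridian $\alpha$ still bounds a disk in $T$ and the longitude $\beta$ still generates the free part, so $\mathcal{L}_T^T=\mathcal{L}_T^{\mathbb{Z}}=\mathbb{Z}\alpha$, giving $d(T)=1$ and $|Tors(H_1(T;\mathbb{Z}))|=p$. Therefore $\mu_q(T)=v_q(p)=\delta_{p,q}$.

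Combining the two evaluations gives $\mu_q([T_0;\frac{B_p}{B^3}])=\delta_{p,q}$, which is the desired statement. There is no real obstacle here; the only mildly delicate point is verifying that $\mathcal{L}_T^T=\mathcal{L}_T^{\mathbb{Z}}$ after the surgery, but this is immediate from the Mayer--Vietoris computation together with the fact that the meridian of $T_0$ is disjoint from the surgered ball.
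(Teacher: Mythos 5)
Your proof is correct and is exactly the argument the paper intends: the corollary is stated as an immediate consequence of the preceding lemma that the $\mu_p$ are degree $1$ invariants of framed rational homology tori, hence descend to linear functionals on $\mathcal{G}_1(\Sigma_1)$ and detect the generators via $\mu_q([T_0;\frac{B_p}{B^3}])=\pm\delta_{p,q}$, computed just as you do from $d(T_0(\frac{B_p}{B^3}))=1$ and $|Tors(H_1(T_0(\frac{B_p}{B^3});\mathbb{Z}))|=p$. The only nitpick is a sign: with the paper's convention $[M;\frac{B}{A}]=M-M(\frac{B}{A})$ one gets $-\delta_{p,q}$ rather than $+\delta_{p,q}$, which of course changes nothing in the linear independence conclusion.
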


\begin{lemma} \label{lemmaredtori}
The space $\mathcal{G}_1(\Sigma_1)$ is generated by the $[T;\frac{E'}{E}]$, where $(\frac{E'}{E})$ is an elementary surgery.
\end{lemma}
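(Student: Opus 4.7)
The plan is to peel off a single elementary surgery at a time via a telescoping sum. Any generator of $\mathcal{F}_1(\Sigma_1)$ has the form $[T;\frac{A'}{A}] = T(\frac{A'}{A}) - T$ for some framed rational homology torus $T$, some $\mathbb{Q}$HH $A$ embedded in the interior of $T$, and some $\mathbb{Q}$HH $A'$ with $\partial A'$ LP-identified with $\partial A$. The key input is Theorem \ref{thelsur} applied to the pair $(A,A')$: it produces a finite sequence $A = A_0, A_1, \ldots, A_n = A'$ such that each $A_{i+1}$ is obtained from $A_i$ by an elementary surgery $\sigma_i=(\frac{E'_i}{E_i})$, or its inverse, performed in the interior of $A_i$.

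Next, I would observe that for every $i$, the manifold $T_i := T(\frac{A_i}{A})$ is again a framed rational homology torus: its boundary coincides with $\partial T$ and carries the same framing, and each intermediate step is an LP-surgery, so rational homology is preserved. The telescoping identity
\[ [T;\tfrac{A'}{A}] \;=\; T(\tfrac{A'}{A}) - T \;=\; \sum_{i=0}^{n-1} \bigl( T_{i+1} - T_i \bigr) \]
holds in $\mathcal{F}_1(\Sigma_1)$ as an actual equality, not merely modulo $\mathcal{F}_2(\Sigma_1)$. Each summand $T_{i+1}-T_i$ is, depending on whether $\sigma_i$ is an elementary surgery or its inverse, equal to $[T_i;\frac{E'_i}{E_i}]$ or to $-[T_{i+1};\frac{E_i}{E'_i}]$, so it is $\pm$ a bracket of an elementary surgery in the framed rational homology torus $T_i$ or $T_{i+1}$. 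This immediately yields the claim once we check that the "inverse" case also produces an elementary bracket.

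The main point requiring attention is precisely this inversion issue. For genus $0$ it is immediate: both handlebodies involved are rational homology balls, so the pair is elementary in either direction. For genus $3$ it follows from the well-known symmetry of borromean surgery between the two handlebodies. For genus $1$ the asymmetry in the definition (a standard torus replaced by a $d$-torus) must be addressed; here one uses the flexibility discussed in the remark following Theorem \ref{thelsur}, which shows that an LP-replacement of a $d$-torus by a standard torus can itself be realised by a finite sequence of elementary surgeries (prime genus-$1$ replacements and borromean surgeries) in the interior of the $\mathbb{Q}$HH. Substituting this expansion into the telescoping sum completes the proof.
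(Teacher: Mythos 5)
Your first two paragraphs reproduce the paper's proof exactly: the telescoping identity coming from Theorem \ref{thelsur}, together with the reversal identity $[T;\frac{E'}{E}]=-[T(\frac{E'}{E});\frac{E}{E'}]$, is the entire argument. Moreover, that already finishes the job: if $\sigma_i$ is the \emph{inverse} of an elementary surgery, then by definition $(\frac{E_i}{E'_i})$ \emph{is} an elementary surgery, $T_{i+1}$ is again a framed rational homology torus, and so $T_{i+1}-T_i=\pm[T_{i+1};\frac{E_i}{E'_i}]$ is already a scalar multiple of a generator of the stated form. There is nothing further to check. (Minor point: with the paper's convention $[T;\frac{A'}{A}]=T-T(\frac{A'}{A})$, your signs are reversed throughout; this is immaterial for a generation statement.)

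The final paragraph, which you present as ``the main point requiring attention,'' should be deleted — not only because it is redundant, but because its genus~$1$ assertion is false. The remark following Theorem \ref{thelsur} shows that a $d$-torus can be obtained from a standard torus by elementary surgeries of the restricted (prime) type; it does not show, and it cannot be true, that a standard solid torus can be obtained from a $d$-torus with $d>1$ by forward elementary surgeries alone. Indeed, for the closed $\mathbb{Q}$HS $M(T)$ obtained by filling in the boundary as in Subsection \ref{subsectori}, one has $|H_1(M(T_0))|=1$ and $|H_1(M(T_d))|=d^2$ (Sublemma \ref{sublemmasur}), while each forward elementary move multiplies $|H_1|$ of the ambient $\mathbb{Q}$HS by an integer $\geq 1$: connected sum multiplies it by $|H_1|$ of the summand, borromean surgery preserves integral homology, and replacing a standard torus by a $d'$-torus multiplies it by $d'^2$ (this follows from the exact sequences in the Proposition of Subsection \ref{subsecinv}). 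So $|H_1|$ can never decrease, and the reverse genus-$1$ replacement is not a composite of forward elementary surgeries. Because you had already exhibited the inverse step as the bracket $[T_{i+1};\frac{E_i}{E'_i}]$ of a genuine elementary surgery, this error does not invalidate your proof, but as written it is a wrong step presented as the crux of the argument.
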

\begin{proof}
Consider $[T;\frac{A'}{A}]\in \mathcal{F}_1(\Sigma_1)$. By Theorem \ref{thelsur},
$A'$ is obtained from $A$ by a sequence of elementary surgeries, or their inverses, $(\frac{E_i'}{E_i})_{1\leq i\leq k}$.
Set $A_i=A(\frac{E_1'}{E_1})(\frac{E_2'}{E_2})..(\frac{E_i'}{E_i})$. Then:
$$[T;\frac{A'}{A}]=
\sum_{i=0}^{k-1} [T(\frac{A_i}{A});\frac{A_{i+1}}{A_i}]=\sum_{i=0}^{k-1} [T(\frac{A_i}{A});\frac{E_{i+1}'}{E_{i+1}}].$$

Now, for any $[T;\frac{E'}{E}]\in \mathcal{F}_1(\Sigma_1)$, we have 
$[T;\frac{E'}{E}]=-[T(\frac{E'}{E});\frac{E}{E'}].$
\end{proof}

We shall get rid of the elementary surgeries of genus 1 with the help of the following two lemmas.
\begin{lemma} \label{lemmasub}
 Let $E$ be a framed standard torus. Let $E'$ be a framed $d$-torus. Assume $\ell(E')=d\gamma$ in $H_1(E';\mathbb{Z})$ 
for a curve $\gamma$ in $E'$. Embed two disjoint copies $E_1$ and $E_2$ of $E$ in $Int(E)$ so that $\ell(E_1)=\ell(E_2)=\ell(E)$ 
in $H_1(E\setminus Int(E_1\cup E_2);\mathbb{Z})$. Let $E_1'$ and $E_2'$ be two copies of $E'$.
Set $A=E(\frac{E_1'}{E_1},\frac{E_2'}{E_2})$. Then there is a $\mathbb{Q}$HS $M$ 
such that $A$ can be obtained from $E'\sharp M$ by a finite sequence of borromean surgeries.
\end{lemma}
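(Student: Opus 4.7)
My plan is to extend the strategy of Corollary \ref{corcaspart} and Lemma \ref{lemma4}, which cover the case $\mathcal{L}_A^T/\mathcal{L}_A^\mathbb{Z}=0$, to the present setting where this quotient is isomorphic to $\mathbb{Z}/d\mathbb{Z}$ on both sides. First, I would compute $H_1(A;\mathbb{Z})$ via Mayer-Vietoris applied to the decomposition $A = X \cup E_1' \cup E_2'$, where $X = E \setminus \mathrm{Int}(E_1 \cup E_2)$ is homeomorphic to a pair of pants times $S^1$. One finds $H_1(A;\mathbb{Z}) \cong \mathbb{Z}\gamma \oplus (\mathbb{Z}/d\mathbb{Z})^3$, the torsion being generated by the meridians $\mu_i=\alpha_i$ of the two $E_i'$ and by $\delta=\gamma_1-\gamma_2$. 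A direct check then yields $\mathcal{L}_A^T/\mathcal{L}_A^\mathbb{Z} \cong \mathbb{Z}/d\mathbb{Z}$, matching $\mathcal{L}_{E'}^T/\mathcal{L}_{E'}^\mathbb{Z}$.

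Next, I would compute the linking form on $Tors(H_1(A;\mathbb{Z}))$ using the intersection data available inside $E'$ (in particular $\langle S, \gamma'\rangle=1$ for $\partial S=d\alpha'$, from the proof of Lemma \ref{lemmaduality}). This gives $lk_A(\mu_i,\mu_j)=\delta_{ij}\cdot lk_{E'}(\alpha',\alpha')$ and $lk_A(\mu_i,\delta)=\pm 1/d$, which lets me exhibit an orthogonal splitting
\[
\bigl(Tors(H_1(A)), lk_A\bigr) \;\cong\; \bigl(Tors(H_1(E')), lk_{E'}\bigr) \;\oplus\; (H,\phi)
\]
for some non-degenerate linking $(H,\phi)$ on a finite abelian group of order $d^2$. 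By the Kawauchi-Kojima realization result \cite[Theorem 6.1]{KK}, there is a $\mathbb{Q}$HS $M$ with $(H_1(M;\mathbb{Z}), lk_M)\cong(H,\phi)$. Then $E'\sharp M$ is a genus $1$ $\mathbb{Q}$HH sharing with $A$ the quotient $\mathcal{L}^T/\mathcal{L}^\mathbb{Z}$, the torsion linking form, and a canonical LP-identification of boundaries.

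To bypass the hypothesis $\mathcal{L}^T/\mathcal{L}^\mathbb{Z}=0$ required by Lemma \ref{lemma4}, I would tunnel each of $A$ and $E'\sharp M$, in the sense of Lemma \ref{lemma2}, along three curves generating the torsion of $H_1(\cdot)/H_1(\partial\cdot)$: for instance $\mu_1,\delta,\gamma$ in $A$ and $\gamma'$ together with two generators of $H_1(M)$ in $E'\sharp M$. Both tunneled manifolds become $\mathbb{Z}$HH's of genus $4$. Following the proof of Lemma \ref{lemma4}, the isomorphism of torsion linking forms allows one to choose framed tunnel cores with matched pairwise linking numbers, so that the canonical LP-identification of $\partial A\cong\partial(E'\sharp M)$ extends to an LP-identification of the new genus-$4$ boundaries. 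Applying \cite[Lemma 4.11]{AL} to these $\mathbb{Z}$HH's and then regluing the tunnel cylinders produces the desired finite sequence of borromean surgeries from $E'\sharp M$ to $A$.

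The main obstacle lies in the second step: producing $(Tors(H_1(E')), lk_{E'})$ as an orthogonal summand of $(Tors(H_1(A)), lk_A)$ depends on $lk_{E'}(\alpha',\alpha')\in\tfrac{1}{d}\mathbb{Z}/\mathbb{Z}$ being well-behaved with respect to the rest of the linking data, and when $d$ is even the 2-primary piece of the linking classification (compare Lemma \ref{lemmalinkings}) forces additional case analysis to guarantee the splitting, and hence the existence of $M$.
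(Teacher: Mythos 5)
Your overall skeleton (compute $H_1(A;\mathbb{Z})$, produce a $\mathbb{Q}$HS $M$ carrying the extra torsion, tunnel both sides down to $\mathbb{Z}$HH's with LP-identified boundaries, and invoke \cite[Lemma 4.11]{AL}) is the right kind of argument, and your computation of $H_1(A;\mathbb{Z})$ agrees with the paper's. But the step that actually produces $M$ --- the orthogonal splitting $(Tors(H_1(A)),lk_A)\cong(Tors(H_1(E')),lk_{E'})\oplus(H,\phi)$ followed by a Kawauchi--Kojima realization of $(H,\phi)$ --- is a genuine gap, and you flag it yourself without closing it. For a $\mathbb{Q}$HH whose quotient $\mathcal{L}^T/\mathcal{L}^{\mathbb{Z}}$ is nontrivial, the linking form on the torsion need not be nondegenerate: in a $d$-torus the generator $\alpha'$ of $Tors(H_1(E'))$ may satisfy $lk_{E'}(\alpha',\alpha')=0$, so $\mathbb{Z}_d\alpha'$ can be isotropic and then cannot occur as an orthogonal summand of any nondegenerate pairing; nor is the complement $(H,\phi)$ guaranteed to be a linking in the sense required by \cite[Theorem 6.1]{KK}. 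So the existence of $M$ is precisely what your argument does not establish. A secondary problem: after tunneling, the cores $\gamma_1\subset A$ and $\gamma'\subset E'\sharp M$ are not torsion classes, so ``matching pairwise linking numbers'' in the style of Lemma \ref{lemma4} does not apply to them; what must be matched is the integer $k$ for which $\ell(A)-d\ell+km$ bounds in the tunneled manifold (the datum controlled elsewhere by Lemma \ref{lemmameridian}), and your proposal does not address it.

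The paper's proof avoids both difficulties by reversing the order of operations. It first tunnels the single curve $\gamma_1$ in $A$, obtaining a genus $2$ $\mathbb{Q}$HH $B$ with $\mathcal{L}_B^T/\mathcal{L}_B^{\mathbb{Z}}$ trivial; Corollary \ref{corcaspart} then supplies $H_2\sharp M$ directly --- the $\mathbb{Q}$HS $M$ comes from Lemma \ref{lemma3}, which realizes $(Tors(H_1(B)),lk_B)$ as the linking of a $\mathbb{Q}$HS by capping $B$ with a handlebody, with no splitting of linking pairings and no nondegeneracy hypothesis to verify. On the other side it tunnels $\gamma$ in $E'$ to get a genus $2$ $\mathbb{Z}$HH $B'$, arranges the bounding curves $m'-d\alpha$ and $\beta-d\ell'+km'$ so that the LP-identification extends, and applies \cite[Lemma 4.11]{AL} to $B'$ and $H_2$, where no torsion matching is needed at all. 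To salvage your route you would have to prove the splitting of linking forms, which fails in general; the reduction to Corollary \ref{corcaspart} after a single tunnel is the missing idea.
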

\begin{proof}
For $i=1,2$, denote by $\gamma_i$ the copy of $\gamma$ in $E_i'$, so that $\ell(E_i')=d\gamma_i$ in $H_1(E_i';\mathbb{Z})$. 
Note that, in $H_1(A)$, $\ell(A)=\ell(E_1')=\ell(E_2')$, and $m(A)=m(E_1')+m(E_2')$. We have~:
\begin{eqnarray*}
 H_1(A;\mathbb{Z}) &=& <m(E_1'),m(E_2'),\gamma_1,\gamma_2|d\,m(E_1')=0,d\,m(E_2')=0,d\gamma_1=d\gamma_2> \\
  &=& <m(E_1'),m(A),\gamma_1,\gamma_2-\gamma_1|d\,m(E_1')=0,d\,m(A)=0,d(\gamma_2-\gamma_1)=0> \\
  &=& \mathbb{Z}_d\, m(A) \oplus \mathbb{Z}\, \gamma_1 \oplus \mathbb{Z}_d\, m(E_1') \oplus \mathbb{Z}_d\, (\gamma_2-\gamma_1)
\end{eqnarray*}
Note that $\ell(A)=d\gamma_1$. 
Consider a tunnel $C$ around $\gamma_1$. Set $B=\overline{A\setminus C}$. There is a surface $S\subset B$ such that 
$\partial S\subset \partial B$ is homologous to $\ell(A) -d \ell+km$ in $\partial B$, where $m$ is a meridian of $\gamma_1$, $\ell$ is a 
longitude of $\gamma_1$, and $k$ is an integer. Consider simple closed curves $\sigma_1$, $\sigma_2$, $\mu_1$ and $\mu_2$ in $\partial B$
such that $\sigma_1=m-dm(A)$, $\sigma_2=\ell(A) -d \ell+k m$, $\mu_1=-\ell+km(A)$ and $\mu_2=-m(A)$ in $H_1(\partial B)$.
The curves $\sigma_1$ and $\sigma_2$ bound embedded surfaces in $B$, and 
$(\sigma_1,\mu_1,\sigma_2,\mu_2)$ is a symplectic basis of $H_1(\partial B;\mathbb{Z})$. 
Thus $B$ is a genus $2$ $\mathbb{Q}$HH with $|\mathcal{L}_B^T/\mathcal{L}_B^\mathbb{Z}|=0$.
By Lemma \ref{corcaspart}, there are a standard genus 2 handlebody $H_2$ and a $\mathbb{Q}$HS $M$ 
such that $B$ is obtained from $H_2\sharp M$ by a finite sequence of borromean surgeries.

Now consider the $d$-torus $E'$. It is homeomorphic to $E(\frac{E_1'}{E_1})$. Consider a tunnel $C'$ around $\gamma$ in $E'$.
We can choose a meridian $m'$ and a longitude $\ell'$ of $\gamma$ in such a way that 
there are curves $\sigma'_1$ and $\sigma'_2$ on the boundary of $B'=\overline{E'\setminus C'}$ which bound surfaces in $B'$
and which are respectively homologous to $m'-d\alpha$ and $\beta -d \ell'+km'$ in $H_1(\partial B';\mathbb{Z})$.
Thus the LP-identification $\partial E'\cong \partial A$ extends to an LP-identification 
$\partial B'\cong\partial B \cong\partial H_2$. Since $H_1(B';\mathbb{Z})=\mathbb{Z}(\ell'-k\alpha)\oplus\mathbb{Z}m'$, 
$B'$ and $H_2$ are two $\mathbb{Z}$HH whose boundaries are LP-identified. By \cite[Lemma 4.11]{AL}, 
$H_2$ can be obtained from $B'$ by a finite sequence of borromean surgeries. Thus $H_2\sharp M$, and $B$, can be 
obtained from $B'\sharp M$ by a finite sequence of borromean surgeries. Gluing back the cylinders, we see 
that $A$ can be obtained from $E'\sharp M$ by a finite sequence of borromean surgeries. 
\end{proof}
\begin{lemma} \label{lemmatype2}
 The quotient $\mathcal{G}_1(\Sigma_1)$ is generated by the $[T;\frac{E'}{E}]$, where $(\frac{E'}{E})$
is an elementary surgery of genus 0 (connected sum) or 3 (borromean surgery).
\end{lemma}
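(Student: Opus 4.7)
The plan is to combine Lemma \ref{lemmaredtori}, which reduces $\mathcal{G}_1(\Sigma_1)$ to being generated by brackets $[T;\tfrac{E'}{E}]$ of elementary surgeries, with Lemma \ref{lemmasub}, which allows us to swap two parallel genus-1 elementary surgeries for a single connected sum modulo borromean surgeries. Concretely, it suffices to show that any bracket of a genus-1 elementary surgery $[T;\tfrac{E'}{E_0}]$, with $E_0\subset T$ a framed standard torus and $E'$ a framed $d$-torus, lies modulo $\mathcal{F}_2(\Sigma_1)$ in the span of genus-0 and genus-3 elementary surgery brackets.

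The geometric setup is the following. Thicken $E_0$ to a framed standard torus $E\subset T$ with $E_0$ ambient-isotopic to $E$ in $T$, and place inside $Int(E)$ two disjoint framed standard tori $E_1,E_2$, each ambient-isotopic to $E$ in $T$, satisfying the longitude condition $\ell(E_1)=\ell(E_2)=\ell(E)$ in $H_1(E\setminus Int(E_1\cup E_2);\mathbb{Z})$ required by Lemma \ref{lemmasub}. Take copies $E'_1,E'_2$ of $E'$, and transport the LP-identification $\partial E\cong\partial E'$ along the two isotopies so that as framed $\mathbb{Q}$HT's one has
$$T(\tfrac{E'_1}{E_1})\cong T(\tfrac{E'_2}{E_2})\cong T(\tfrac{E'}{E}).$$

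Now expand the degree-2 bracket $[T;\tfrac{E'_1}{E_1},\tfrac{E'_2}{E_2}]\in\mathcal{F}_2(\Sigma_1)$ and substitute the three homeomorphisms above to obtain
$$T\ -\ 2\,T(\tfrac{E'}{E})\ +\ T(\tfrac{E'_1}{E_1},\tfrac{E'_2}{E_2})\ \equiv\ 0\pmod{\mathcal{F}_2(\Sigma_1)}.$$
By Lemma \ref{lemmasub}, there is a $\mathbb{Q}$HS $M$ such that the $\mathbb{Q}$HH $E(\tfrac{E'_1}{E_1},\tfrac{E'_2}{E_2})$ is obtained from $E'\sharp M$ by a finite sequence of borromean surgeries performed in the interior. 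Gluing $T\setminus Int(E)$ back to both sides, the same sequence of borromean surgeries relates $T(\tfrac{E'_1}{E_1},\tfrac{E'_2}{E_2})$ to $T(\tfrac{E'}{E})\sharp M$; telescoping this sequence yields
$$T(\tfrac{E'_1}{E_1},\tfrac{E'_2}{E_2})\ \equiv\ T(\tfrac{E'}{E})\sharp M\ +\ \Pi_3\pmod{\mathcal{F}_2(\Sigma_1)},$$
where $\Pi_3$ is a rational combination of genus-3 (borromean) elementary surgery brackets. Finally, the connected-sum contribution is itself a genus-0 elementary surgery bracket, namely $T(\tfrac{E'}{E})\sharp M - T(\tfrac{E'}{E}) = -[T(\tfrac{E'}{E});\tfrac{B_M}{B^3}]$ with $B_M=M\setminus Int(B^3)$. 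Substituting and rearranging the displayed identity gives
$$[T;\tfrac{E'}{E}]\ \equiv\ [T(\tfrac{E'}{E});\tfrac{B_M}{B^3}]\ -\ \Pi_3\pmod{\mathcal{F}_2(\Sigma_1)},$$
which expresses $[T;\tfrac{E'}{E}]$ in the required span.

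The hard part will be the geometric step of constructing the pair $E_1,E_2\subset Int(E)$: they must be disjoint, each individually ambient-isotopic to $E$ in $T$ so that single surgeries agree, and their fixed longitudes must both equal $\ell(E)$ in the punctured homology group so that Lemma \ref{lemmasub} truly applies; one also needs to verify that the transported LP-identifications $\partial E_i\cong\partial E'_i$ match the one on $\partial E$ up to the isotopy. Once this is in place, the rest is formal manipulation within the filtered space, coupling the degree-2 bracket expansion with the geometric input of Lemma \ref{lemmasub}.
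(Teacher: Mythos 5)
Your proposal is correct and follows essentially the same route as the paper: the paper also doubles the genus-1 surgery into two parallel copies $E_1,E_2\subset Int(E)$, expands the degree-2 bracket $[T;\frac{E_1'}{E_1},\frac{E_2'}{E_2}]$ to get $T-T'=2[T;\frac{E'}{E}]$ modulo $\mathcal{F}_2(\Sigma_1)$, and then uses Lemma \ref{lemmasub} to rewrite $T-T'=[T;\frac{A}{E}]$ as $[T;\frac{E'}{E}]$ plus a genus-0 bracket and a telescoped sum of borromean brackets. The only point worth making explicit (which the paper does) is that the longitude $\ell(E')$ must first be chosen so that $\ell(E')=d\gamma$ in $H_1(E';\mathbb{Z})$, with $\ell(E)$ chosen to match under the LP-identification, so that the hypothesis of Lemma \ref{lemmasub} holds.
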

\begin{proof}
Consider a framed rational homology torus $T$ and an elementary surgery 
$\frac{E'}{E}$ of genus 1 in $T$, \textit{i.e.} $E$ is an embedded standard torus, and $E'$ is a $d$-torus. 
Fix a longitude $\ell(E')$ such that $\ell(E')=d\gamma$ in $H_1(E';\mathbb{Z})$ for a curve $\gamma$ in $E'$. 
Choose the longitude $\ell(E)$ which is identified with $\ell(E')$ by the LP-identification $\partial E\cong\partial E'$. 

Consider the copies $E_1$ and $E_2$ of $E$ in $Int(E)$, the copies $E_1'$ and $E_2'$ of $E'$, 
the rational homology torus $A$, and the $\mathbb{Q}$HS $M$, defined in Lemma \ref{lemmasub}. 
Set $T'=T(\frac{E_1'}{E_1},\frac{E_2'}{E_2})\cong T(\frac{A}{E})$. 
Write $A=E'(\frac{B(M)}{B^3})(\frac{B_1'}{B_1})(\frac{B_2'}{B_2})\dots(\frac{B_k'}{B_k})$, where $B(M)$ 
is the rational homology ball obtained by removing a ball $B^3$ from $M$, and the $(\frac{B_i'}{B_i})$ are borromean surgeries. 
On the one hand, we have $[T;\frac{E_1'}{E_1},\frac{E_2'}{E_2}]=2[T;\frac{E'}{E}]-T+T'$, thus:
$$T-T'=2[T;\frac{E'}{E}]\quad mod\ \mathcal{F}_2(\Sigma_1),$$ and, on the other hand:
$$T-T'= [T;\frac{A}{E}]=[T;\frac{E'}{E}]+[T(\frac{E'}{E});\frac{B(M)}{B^3}]
+\sum_{i=1}^k [T(\frac{E'}{E})(\frac{B(M)}{B^3})(\frac{B_1'}{B_1})..(\frac{B_{i-1}'}{B_{i-1}});\frac{B_i'}{B_i}]$$

Thus: 
$$\hspace{-1cm} [T;\frac{E'}{E}]=[T(\frac{E'}{E});\frac{B(M)}{B^3}]+\sum_{i=1}^k [T(\frac{E'}{E})(\frac{B(M)}{B^3})
(\frac{B_1'}{B_1})..(\frac{B_{i-1}'}{B_{i-1}});\frac{B_i'}{B_i}]\quad mod\ \mathcal{F}_2(\Sigma_1).$$
\end{proof}

We shall now restrict the set of generators $[T;\frac{E'}{E}]$ where $(\frac{E'}{E})$ is an elementary surgery of genus 0.
\begin{lemma} \label{lemmatype1}
Let $T$ be a framed rational homology torus and let $B$ be a rational homology ball.
Then $[T;\frac{B}{B^3}]\in\bigoplus_{p\ prime}\mathbb{Q} [T_0;\frac{B_p}{B^3}]\subset\mathcal{G}_1(\Sigma_1)$.
\end{lemma}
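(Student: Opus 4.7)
The plan is to split the argument into two independent reduction steps: first replace the ambient framed rational homology torus $T$ by the standard one $T_0$ modulo $\mathcal{F}_2(\Sigma_1)$, and then analyze $[T_0;\frac{B}{B^3}]$ by transporting the degree-$1$ decomposition of Proposition \ref{propdeg1} from $\mathcal{G}_1$ to $\mathcal{G}_1(\Sigma_1)$ via connected sum with $T_0$.

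For the first step I would apply Theorem \ref{thelsur} to $T$ and $T_0$ (which have canonically LP-identified boundaries, since both are framed) to obtain a finite sequence of elementary surgeries $\bigl(\frac{E_i'}{E_i}\bigr)_{1\le i\le k}$ in the interior, connecting $T_0$ to $T$ through intermediate framed rational homology tori $T_i$. After arranging by a small isotopy that the surgery ball $B^3\subset T$ used to define $[T;\frac{B}{B^3}]$ is disjoint from each $E_i$, I can write
\[
[T;\tfrac{B}{B^3}]-[T_0;\tfrac{B}{B^3}]=\sum_{i=0}^{k-1}\Bigl((T_{i+1}-T_i)-\bigl(T_{i+1}(\tfrac{B}{B^3})-T_i(\tfrac{B}{B^3})\bigr)\Bigr)=-\sum_{i=0}^{k-1}[T_i;\tfrac{E_{i+1}'}{E_{i+1}},\tfrac{B}{B^3}],
\]
so each summand lies in $\mathcal{F}_2(\Sigma_1)$ and hence $[T;\tfrac{B}{B^3}]\equiv[T_0;\tfrac{B}{B^3}]\bmod\mathcal{F}_2(\Sigma_1)$.

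For the second step I close up $B$ by gluing back a ball to form a $\mathbb{Q}$HS $M$ with $B=B(M)$; then $[T_0;\tfrac{B}{B^3}]=T_0-T_0\sharp M$. Consider the linear map $\Psi:\mathcal{F}_0\to\mathcal{F}_0(\Sigma_1)$ defined on $\mathbb{Q}$HS's by $\Psi(M)=T_0\sharp M$. Since any family of LP-surgeries on $M$ can be isotoped off the ball used for the connected sum, and since connected sum commutes with LP-surgery in the other summand, $\Psi$ sends every bracket $[M;(\frac{A_i'}{A_i})_i]$ to the corresponding bracket $[T_0\sharp M;(\frac{A_i'}{A_i})_i]$; in particular $\Psi(\mathcal{F}_n)\subset\mathcal{F}_n(\Sigma_1)$ and $\Psi$ descends to graded maps $\Psi_*:\mathcal{G}_n\to\mathcal{G}_n(\Sigma_1)$. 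By the generation statement of Proposition \ref{propdeg1}, $M-S^3\equiv\sum_{p\text{ prime}}c_p\,(M_p-S^3)\bmod\mathcal{F}_2$ for some rationals $c_p$, and applying $\Psi$ gives $T_0\sharp M-T_0\equiv\sum_p c_p(T_0\sharp M_p-T_0)\bmod\mathcal{F}_2(\Sigma_1)$, i.e.\ $[T_0;\tfrac{B}{B^3}]\equiv\sum_p c_p[T_0;\tfrac{B_p}{B^3}]\bmod\mathcal{F}_2(\Sigma_1)$. Combining the two steps yields the lemma.

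The only delicate point is verifying that $\Psi$ is filtration-preserving: one must check that a representative of a bracket in $\mathcal{F}_n$ can be chosen with all surgery data disjoint from the chosen connected-sum ball, and that $(T_0\sharp M)((\frac{A_i'}{A_i})_{i\in I})$ is canonically homeomorphic to $T_0\sharp(M((\frac{A_i'}{A_i})_{i\in I}))$. Both are routine since LP-surgeries are local and the connected-sum ball can be isotoped to avoid any finite collection of embedded $\mathbb{Q}$HH's, but this is the step where attention is required; everything else is telescoping and bookkeeping.
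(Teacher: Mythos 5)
Your proof is correct. The second half of your argument is essentially the paper's: the paper's first sublemma writes $T(\frac{B}{B^3})=T\sharp M$ and invokes the fact that $(M_p-S^3)_{p\ \mathrm{prime}}$ generates $\mathcal{G}_1$ (Subsection \ref{subsecgene}), which implicitly relies on exactly your filtration-preserving connected-sum map $\Psi$ to push the congruence $M-S^3\equiv\sum_p c_p(M_p-S^3)\bmod\mathcal{F}_2$ into $\mathcal{G}_1(\Sigma_1)$. Where you genuinely diverge is the reduction of $T$ to $T_0$: you invoke Theorem \ref{thelsur} to produce a chain of elementary surgeries joining $T_0$ to $T$ and then telescope, whereas the paper's second sublemma gets the same congruence in one line by observing that $T\cong T_0(\frac{T}{T_0'})$ for a \emph{single} LP-surgery, where $T_0'$ is $T_0$ with a boundary collar removed, so that $[T_0;\frac{T}{T_0'},\frac{B}{B^3}]=[T_0;\frac{B}{B^3}]-[T;\frac{B}{B^3}]$ visibly lies in $\mathcal{F}_2(\Sigma_1)$. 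Your route does work --- each telescoping term $[T_i;\frac{E_{i+1}'}{E_{i+1}},\frac{B}{B^3}]$ is a degree-$2$ bracket regardless of whether the surgery is elementary or an inverse of one --- but it imports the full strength of Theorem \ref{thelsur} and obliges you to track the ball $B^3$ and the framing through the intermediate manifolds $T_i$ (harmless here, since the connected-sum ball can sit in a boundary collar preserved by all the interior surgeries, and its isotopy class is irrelevant to $T_i\sharp M$). The fact that your telescoping never actually uses elementarity of the $E_i$ is the tell that Theorem \ref{thelsur} is not the right tool for this step: any single LP-surgery realizing $T$ from $T_0$ suffices, and one always exists tautologically.
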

This result follows from the next two sublemmas.
\begin{sublemma}
 Let $T$ be a framed rational homology torus and let $B$ be a rational homology ball. Then, in $\mathcal{G}_1(\Sigma_1)$,
 $[T;\frac{B}{B^3}]$ is a linear combination of the $[T;\frac{B_p}{B^3}]$.
\end{sublemma}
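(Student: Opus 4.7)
The plan is to reduce this sublemma to the spanning result of Subsection \ref{subsecgene} via the operation of connected sum of $T$ with a $\mathbb{Q}$HS.

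First I would cap off the rational homology ball $B$ by a $3$-ball to form the $\mathbb{Q}$HS $M:=B\cup_{S^2}B^3$, so that $B=M\setminus \mathrm{Int}(B^3)$. Because $\partial B\cong S^2$ has trivial Lagrangian, the LP-surgery $T(\frac{B}{B^3})$ coincides with the connected sum $T\sharp M$, and consequently
$$[T;\tfrac{B}{B^3}] \;=\; T - T\sharp M \quad\in\mathcal{F}_1(\Sigma_1).$$
Thus it is enough to understand $T\sharp(M-S^3)$ modulo $\mathcal{F}_2(\Sigma_1)$.

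The auxiliary fact I would establish is that the linear map
$$f_T\colon \mathcal{F}_0 \longrightarrow \mathcal{F}_0(\Sigma_1), \qquad N\longmapsto T\sharp N,$$
sends $\mathcal{F}_n$ into $\mathcal{F}_n(\Sigma_1)$ for every $n$. This follows directly from the definitions: any generator $[N_0;(\frac{A_i'}{A_i})_{1\le i\le n}]$ of $\mathcal{F}_n$ is carried to $[T\sharp N_0;(\frac{A_i'}{A_i})_{1\le i\le n}]$ by realising the $A_i$'s inside the $N_0$-summand of the connected sum, away from the connecting $2$-sphere. This filtration-compatibility is essentially the only technical point in the proof, and it amounts to a bookkeeping check on the locality of LP-surgeries; I do not foresee any genuine obstacle there.

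With this in hand, Subsection \ref{subsecgene} gives rationals $(\lambda_p)_{p\text{ prime}}$, almost all zero, such that
$$M - S^3 \;\equiv\; \sum_{p\text{ prime}} \lambda_p (M_p-S^3) \pmod{\mathcal{F}_2}.$$
Applying $f_T$ and using that $T\sharp M_p = T(\frac{B_p}{B^3})$, I would conclude
$$[T;\tfrac{B}{B^3}] \;=\; -\bigl(T\sharp M - T\bigr) \;\equiv\; -\sum_p \lambda_p\bigl(T\sharp M_p - T\bigr) \;=\; \sum_p \lambda_p\,[T;\tfrac{B_p}{B^3}] \pmod{\mathcal{F}_2(\Sigma_1)},$$
which is exactly the claimed decomposition in $\mathcal{G}_1(\Sigma_1)$.
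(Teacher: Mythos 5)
Your proposal is correct and follows the same route as the paper: cap off $B$ to form the $\mathbb{Q}$HS $M$, identify the genus~0 LP-surgery with the connected sum $T\sharp M$, and push the relation $M-S^3\equiv\sum_p\lambda_p(M_p-S^3)\ \mathrm{mod}\ \mathcal{F}_2$ through $N\mapsto T\sharp N$. The only difference is that you spell out the filtration-compatibility of $N\mapsto T\sharp N$, which the paper leaves implicit.
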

\begin{proof}
 Set $M=B\cup_{\partial B =-\partial B^3} B^3$. We have $T(\frac{B}{B^3})=T\sharp M$ and $T(\frac{B_p}{B^3})=T\sharp M_p$.
Now use that $(M_p-S^3)_{p\ prime}$ generates $\mathcal{G}_1$ (see Subsection \ref{subsecgene}).
\end{proof}
\begin{sublemma}
 For any framed rational homology torus $T$, and any rational homology ball $B$, 
$$[T;\frac{B}{B^3}]=[T_0;\frac{B}{B^3}]\quad mod\ \mathcal{F}_2(\Sigma_1).$$
\end{sublemma}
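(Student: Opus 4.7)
The plan is to prove the identity by a telescoping argument, using Theorem~\ref{thelsur} to pass between framed rational homology tori by LP-surgeries.

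First, I would invoke Theorem~\ref{thelsur} to obtain a finite sequence of elementary surgeries (or their inverses) $\bigl(\frac{E_i'}{E_i}\bigr)_{1\le i\le k}$ supported in the interior of $T_0$ such that successively applying them transforms $T_0$ into (a manifold LP-homeomorphic to) $T$. The framings match because $\ell(T)$ and $\ell(T_0)$ correspond under the canonical LP-identification of the boundaries associated with the given framings, and elementary surgeries are performed in the interior, leaving the boundary framing unchanged. Set $T_0^{(0)}=T_0$ and $T_0^{(i)}=T_0^{(i-1)}\bigl(\frac{E_i'}{E_i}\bigr)$, so $T_0^{(k)}=T$ as framed rational homology tori.

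Next, since $B^3$ may be chosen as a small ball in the interior of $T_0$ away from all the (finitely many) compact surgery loci $E_i$, I may assume $B^3$ is disjoint from each $E_i$. Then for every $i$, the two-surgery bracket
\[
\bigl[T_0^{(i-1)};\tfrac{E_i'}{E_i},\tfrac{B}{B^3}\bigr]
=\bigl[T_0^{(i)};\tfrac{B}{B^3}\bigr]-\bigl[T_0^{(i-1)};\tfrac{B}{B^3}\bigr]
\]
belongs to $\mathcal{F}_2(\Sigma_1)$ by the very definition of the filtration. A telescoping sum then yields
\[
\bigl[T;\tfrac{B}{B^3}\bigr]-\bigl[T_0;\tfrac{B}{B^3}\bigr]
=\sum_{i=1}^{k}\bigl(\bigl[T_0^{(i)};\tfrac{B}{B^3}\bigr]-\bigl[T_0^{(i-1)};\tfrac{B}{B^3}\bigr]\bigr)\in\mathcal{F}_2(\Sigma_1),
\]
which is the desired congruence.

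The only mild technical point is the treatment of inverses of elementary surgeries: if one of the steps $\bigl(\frac{E_i'}{E_i}\bigr)$ is actually an inverse surgery, one rewrites $\bigl[T_0^{(i-1)};\tfrac{E_i'}{E_i}\bigr]=-\bigl[T_0^{(i)};\tfrac{E_i}{E_i'}\bigr]$, which does not affect the telescoping, and the same argument shows the corresponding two-surgery bracket lies in $\mathcal{F}_2(\Sigma_1)$. I do not anticipate any real obstacle here; the proof is a direct consequence of Theorem~\ref{thelsur} combined with the basic bracket identity that turns a composition of surgeries into a sum of brackets of higher filtration degree.
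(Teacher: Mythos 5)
Your argument is correct, but it takes a much heavier route than necessary. The paper's proof does not invoke Theorem \ref{thelsur} at all: it observes that $T$ is obtained from $T_0$ by a \emph{single} LP-surgery of genus $1$, namely $T\cong T_0(\frac{T}{T_0'})$ where $T_0'$ is $T_0$ minus a collar of its boundary, and that $B^3$ can be placed in that collar; expanding the two-surgery bracket $[T_0;\frac{T}{T_0'},\frac{B}{B^3}]\in\mathcal{F}_2(\Sigma_1)$ gives exactly $[T_0;\frac{B}{B^3}]-[T;\frac{B}{B^3}]$. The point you seem to have missed is that $\mathcal{F}_2(\Sigma_1)$ is defined using arbitrary LP-surgeries, not elementary ones, so there is no need to decompose the passage from $T_0$ to $T$ into elementary pieces; the whole telescoping apparatus, and the appeal to the (substantial) Theorem \ref{thelsur}, can be replaced by one bracket identity. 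Your version does go through, and it has the mild virtue of only ever using surgeries from the restricted elementary list, but it buys nothing here and makes the sublemma depend on a theorem it does not need. One small imprecision to fix if you keep your route: the loci $E_i$ live in the successively modified manifolds $T_0^{(i-1)}$, not all in $T_0$, so you cannot choose a single ball in $T_0$ ``away from all the $E_i$''; instead choose, for each $i$, a ball in $T_0^{(i-1)}\setminus E_i$ (which exists since $E_i$ is compact and interior), and use that the class of $[T_0^{(i-1)};\frac{B}{B^3}]$ does not depend on where the ball sits, since the bracket equals $T_0^{(i-1)}-T_0^{(i-1)}\sharp(B\cup B^3)$.
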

\begin{proof}
Define $T_0'$ as $T_0$ minus a regular open neighborhood of its boundary. We can suppose that 
$T_0'$ and $B^3$ are disjoint in $T_0$. We have $T\cong T_0(\frac{T}{T_0'})$, and:
$$[T_0;\frac{T}{T_0'},\frac{B}{B^3}]=[T_0;\frac{B}{B^3}]-[T;\frac{B}{B^3}].$$
\end{proof}

\proofof{Proposition \ref{propinvtori}} 
By Lemmas \ref{lemmatype2} and \ref{lemmatype1}, $\mathcal{G}_1(\Sigma_1)$ is generated by the $[T_0;\frac{B_p}{B^3}]$ and 
the $[T;\frac{A'}{A}]$ where $(\frac{A'}{A})$ is a borromean surgery. Consider $\mu\in(\mathcal{G}_1(\Sigma_1))^*$. 
For all prime integer $p$, set $c_p=\mu([T_0;\frac{B_p}{B^3}])$. Set $\tilde{\mu}=\mu-\sum_{p\ prime}c_p\mu_p$.
The invariant $\tilde{\mu}$ is determined by its values on the terms $[T;\frac{A'}{A}]$, where $(\frac{A'}{A})$ is a 
borromean surgery. Let $\Gamma$ denote the Y-graph associated with the borromean surgery $(\frac{A'}{A})$.
By Lemma \ref{lemmaF1}, if $T$ is fixed, $\tilde{\mu}([T;\frac{A'}{A}])$ only depends on the rational homology classes of 
the three leaves of $\Gamma$, and this dependance is trilinear and alternating.
Since $H_1(T;\mathbb{Q})\cong\mathbb{Q}$, we have $\tilde{\mu}=0$. Hence $\mu=\sum_{p\ prime}c_p\mu_p$. 
This implies that $\mathcal{G}_1(\Sigma_1)$ is generated by the $[T_0;\frac{B_p}{B^3}]$. Conclude with Corollary \ref{cordirect}.
\fin

\begin{corollary} \label{cortori}
 If $\mu$ is a degree 1 invariant of framed rational homology tori, such that $\mu(T_0)=0$ and $\mu(T_0\sharp M_p)=0$
for any prime $p$, then $\mu=0$.
\end{corollary}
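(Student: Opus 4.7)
The plan is to unpack the statement ``degree $1$ invariant'' through the filtration and then quote Proposition \ref{propinvtori} for the only non-trivial part. Since $\mu$ has degree $1$, it vanishes on $\mathcal{F}_2(\Sigma_1)$, so it descends to a linear map $\overline{\mu}:\mathcal{F}_0(\Sigma_1)/\mathcal{F}_2(\Sigma_1)\to\mathbb{Q}$. The short exact sequence
$$0\to\mathcal{G}_1(\Sigma_1)\to\mathcal{F}_0(\Sigma_1)/\mathcal{F}_2(\Sigma_1)\to\mathcal{G}_0(\Sigma_1)\to 0$$
splits (non-canonically), so showing $\overline{\mu}=0$ reduces to checking that $\overline{\mu}$ vanishes on $\mathcal{G}_0(\Sigma_1)$ and on $\mathcal{G}_1(\Sigma_1)$.

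For the $\mathcal{G}_0$ part, I would use that $\mathcal{G}_0(\Sigma_1)\cong\mathbb{Q}$, generated by the class of $T_0$. In fact, by Theorem \ref{thelsur} any framed rational homology torus $T$ is obtained from $T_0$ by a finite sequence of elementary surgeries, each of which contributes a single-surgery bracket lying in $\mathcal{F}_1(\Sigma_1)$, so $T\equiv T_0\ \mathrm{mod}\ \mathcal{F}_1(\Sigma_1)$. Hence the assumption $\mu(T_0)=0$ kills $\overline{\mu}$ on any section of $\mathcal{G}_0$.

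For the $\mathcal{G}_1$ part, I would invoke Proposition \ref{propinvtori}, which gives $\mathcal{G}_1(\Sigma_1)=\bigoplus_{p\text{ prime}}\mathbb{Q}\,[T_0;\frac{B_p}{B^3}]$. By definition of the bracket,
$$[T_0;\tfrac{B_p}{B^3}]=T_0-T_0\bigl(\tfrac{B_p}{B^3}\bigr)=T_0-T_0\sharp M_p,$$
since gluing $B_p$ in place of an embedded ball in $T_0$ is precisely the connected sum with $M_p$. Therefore
$$\mu\bigl([T_0;\tfrac{B_p}{B^3}]\bigr)=\mu(T_0)-\mu(T_0\sharp M_p)=0$$
for every prime $p$, so $\overline{\mu}$ vanishes on a basis of $\mathcal{G}_1(\Sigma_1)$, hence on all of it. Combining the two vanishings yields $\overline{\mu}=0$, i.e.\ $\mu=0$. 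The whole argument is short; the real content sits in Proposition \ref{propinvtori}, which has already been proved, so there is no serious obstacle here — only the bookkeeping of identifying $[T_0;\tfrac{B_p}{B^3}]$ with $T_0-T_0\sharp M_p$ and of invoking Theorem \ref{thelsur} to reduce $\mathcal{G}_0$ to $T_0$.
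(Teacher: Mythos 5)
Your proof is correct and is exactly the argument the paper intends (it states the corollary without proof, as an immediate consequence of Proposition \ref{propinvtori}): a degree~1 invariant is determined by its values on $\mathcal{G}_0(\Sigma_1)\cong\mathbb{Q}\,T_0$ and on $\mathcal{G}_1(\Sigma_1)=\bigoplus_p\mathbb{Q}\,[T_0;\frac{B_p}{B^3}]$, and $[T_0;\frac{B_p}{B^3}]=T_0-T_0\sharp M_p$. The only cosmetic remark is that invoking Theorem \ref{thelsur} for the $\mathcal{G}_0$ step is heavier than necessary, since $T-T_0=-[T_0;\frac{T}{T_0'}]\in\mathcal{F}_1(\Sigma_1)$ directly, as in the second sublemma preceding Proposition \ref{propinvtori}.
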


      \subsection{The quotients $\frac{\mathcal{I}_n^c}{\mathcal{I}_{n-1}^c}$}

The main point of this subsection will be the proof of the next proposition. We will end the subsection by showing that this result 
implies Proposition \ref{propdual}.

\begin{proposition} \label{propgn}
 If $\lambda$ is an additive invariant of degree $n>1$, then $\lambda_{|\mathcal{F}_n}$ is determined by 
$\lambda(\Phi(\mathcal{A}_{\frac{n}{2}}^c))$. In particular, if $n$ is odd, $\lambda_{|\mathcal{F}_n}=0$.
\end{proposition}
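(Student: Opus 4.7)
I will successively strip off the different kinds of elementary surgery from brackets in $\mathcal{F}_n$, so that $\lambda|_{\mathcal{F}_n}$ ends up determined by values $\lambda([S^3;J])$ with $J$ a connected Jacobi diagram of degree $n/2$; when $n$ is odd no such $J$ exists, which automatically gives the second assertion. To begin, Theorem \ref{thelsur} together with the ``Leibniz rule'' $[M;S_1,\ldots,S_n]=[M;S_1,\ldots,S_{n-1}]-[M(S_n);S_1,\ldots,S_{n-1}]$ lets me telescope each LP-factor $B_i/A_i$ into a sequence of elementary surgeries inside $A_i$, reducing $\mathcal{F}_n$ modulo $\mathcal{F}_{n+1}$ to spans of brackets $[M;E_1,\ldots,E_n]$ whose factors all have genus $0$, $1$, or $3$.

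For a genus-$0$ factor $E_1=B/B^3$ with $M_1:=B\cup_\partial B^3$, additivity together with $\lambda(S^3)=0$ makes $\lambda(M(S_J)-M(S_J)\sharp M_1)=-\lambda(M_1)$, independent of $J$; the alternating sum of this constant over $J\subset\{2,\ldots,n\}$ vanishes as soon as $n\geq 2$. For a genus-$1$ factor $T_d/T_0$, I fix $M$ and the remaining $S_2,\ldots,S_n$ and define $\mu(T)=\lambda([M;T/T_0,S_2,\ldots,S_n])$ on $\mathcal{F}_0(\Sigma_1)$. Because $T_0$ and $T$ share the same LP-framed boundary, the replacement $T/T_0$ commutes with any interior surgery, and unfolding $\mu([T;D_1,D_2])$ via the Leibniz rule identifies it with $\pm\lambda$ of a single bracket with $n+2$ factors in $M$, hence with $0$. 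So $\mu$ is a degree-$1$ invariant of framed rational homology tori; clearly $\mu(T_0)=0$, while $\mu(T_0\sharp M_p)=0$ by the genus-$0$ argument applied to the connected-sum nature of the surgery $T_0\sharp M_p/T_0$. Corollary \ref{cortori} then yields $\mu\equiv 0$.

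What remains are brackets $[M;Y_1,\ldots,Y_n]$ in which every $Y_i$ is a Y-graph surgery, i.e.\ elements of $\mathcal{F}_n^\mathbb{Z}(M)\subset\mathcal{F}_n$. Corollary \ref{corJacobi} rewrites such a bracket modulo $\mathcal{F}_{n+1}^\mathbb{Z}(M)\subset\mathcal{F}_{n+1}$ as a combination of $[M;J]$ with $J$ a Jacobi diagram of degree $n/2$; embedding $J$ in a ball $B^3\subset M$ gives $M(\tilde J_I)=M\sharp S^3(\tilde J_I)$, so that additivity together with $\sum_I(-1)^{|I|}=0$ collapses $\lambda([M;J])$ to $\lambda([S^3;J])$. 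If $J=J_1\sqcup J_2$ with both parts nonempty, I realise $J_1,J_2$ in disjoint balls of $S^3$, factoring $S^3(\tilde J_I)$ as a connected sum; additivity splits $\lambda$ into a product of two independent alternating sums, each of which vanishes. Hence $\lambda|_{\mathcal{F}_n}$ is determined by $\lambda([S^3;J])=\lambda(\Phi(J))$ with $J$ connected. I expect the main obstacle to be the genus-$1$ step: one has to check carefully that the four corners of $\mu([T;D_1,D_2])$ really assemble, without ambiguity in the interpretation of $D_1,D_2$ as surgeries in the fixed ambient $M$, into a single bracket with $n+2$ factors, which uses the commutation between $T/T_0$ and its interior surgeries in an essential way.
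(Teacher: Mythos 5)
Your proposal is correct and follows essentially the same route as the paper: reduce to elementary surgeries via Theorem \ref{thelsur} and telescoping, kill genus-$0$ factors by additivity, kill genus-$1$ factors by viewing $T\mapsto\lambda([M;\frac{T}{T_0},S_2,\dots,S_n])$ as a degree-$1$ invariant of framed tori and invoking Corollary \ref{cortori}, then apply Corollary \ref{corJacobi}, push $\Gamma$ into $S^3$, and discard disconnected diagrams. The only quibble is a harmless miscount in the genus-$1$ step (the resulting bracket $[M(\frac{T}{T_0});D_1,D_2,S_2,\dots,S_n]$ has $n+1$ factors, not $n+2$), which does not affect the vanishing.
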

Recall the map $\Phi : \mathcal{A}_n \to \mathcal{G}_{2n}^\mathbb{Z}$ has been defined in Lemma \ref{lemmaphi}. 
Since we have a canonical map $\mathcal{G}_{2n}^\mathbb{Z}\to\mathcal{G}_{2n}$ (we will see later that it is an embedding), 
$\lambda(\Phi(\mathcal{A}_n^c))$ is well defined.

We will often use the following easy formula.
\begin{lemma}
For any $[M;(\frac{A_i'}{A_i})_{1\leq i\leq n}]\in\mathcal{F}_n$,
$$[M;(\frac{A_i'}{A_i})_{1\leq i\leq n}]=[M;(\frac{A_i'}{A_i})_{2\leq i\leq n}]-[M(\frac{A_1'}{A_1});(\frac{A_i'}{A_i})_{2\leq i\leq n}].$$
\end{lemma}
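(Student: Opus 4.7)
The plan is simply to unfold the definition of the bracket and split the sum according to whether the index $1$ appears in the subset or not. By definition,
$$[M;(\tfrac{A_i'}{A_i})_{1\leq i\leq n}]=\sum_{I\subset\{1,\dots,n\}}(-1)^{|I|}M\bigl((\tfrac{A_i'}{A_i})_{i\in I}\bigr).$$
I would partition the family of subsets $I\subset\{1,\dots,n\}$ into those with $1\notin I$ and those with $1\in I$, writing the latter as $I=\{1\}\cup J$ with $J\subset\{2,\dots,n\}$.

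The subsets not containing $1$ reassemble exactly into $[M;(\tfrac{A_i'}{A_i})_{2\leq i\leq n}]$, since that bracket is indexed by subsets of $\{2,\dots,n\}$ with the same sign convention. For the subsets of the form $\{1\}\cup J$, the sign $(-1)^{|I|}$ becomes $-(-1)^{|J|}$, and the manifold $M((\tfrac{A_i'}{A_i})_{i\in I})$ can be rewritten as $M(\tfrac{A_1'}{A_1})\bigl((\tfrac{A_i'}{A_i})_{i\in J}\bigr)$ because the surgeries $(\tfrac{A_i'}{A_i})$ are performed on disjoint $\mathbb{Q}$HH's embedded in $M$ and hence commute as operations on the ambient manifold. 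Factoring out the minus sign, this second contribution is precisely $-[M(\tfrac{A_1'}{A_1});(\tfrac{A_i'}{A_i})_{2\leq i\leq n}]$.

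There is really no obstacle to overcome here; the only small point worth mentioning is that one needs the $A_i$'s to be disjoint in $M$ (which is part of the definition of $\mathcal{F}_n$) so that performing the surgery on $A_1$ first still leaves $A_2,\dots,A_n$ embedded and disjoint in $M(\tfrac{A_1'}{A_1})$, with boundaries still LP-identified with those of the $B_i$'s. Given this, adding the two partial sums yields the stated identity.
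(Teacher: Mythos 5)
Your proof is correct, and it is exactly the computation the paper has in mind: the lemma is stated there without proof as an ``easy formula,'' and splitting the sum over subsets $I\subset\{1,\dots,n\}$ according to whether $1\in I$ is the intended argument. Your remark that disjointness of the $A_i$'s is what allows rewriting $M((\tfrac{A_i'}{A_i})_{i\in\{1\}\cup J})$ as $M(\tfrac{A_1'}{A_1})((\tfrac{A_i'}{A_i})_{i\in J})$ is the right point to flag.
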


\begin{lemma} \label{lemmared}
The space $\mathcal{G}_n$ is generated by the $[M;(\frac{E_i'}{E_i})_{1\leq i\leq n}]$, where the $(\frac{E_i'}{E_i})$ 
are elementary surgeries.
\end{lemma}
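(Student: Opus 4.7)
The plan is to reduce an arbitrary generator of $\mathcal{F}_n$ to a linear combination of brackets of elementary surgeries (and their inverses) by a two-level induction: first along the decomposition given by Theorem \ref{thelsur}, and second over the $n$ coordinates of the bracket.

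The basic tool will be the following telescoping identity. If an LP-surgery $(\frac{A'}{A})$ factors as two successive LP-surgeries $(\frac{B}{A})$ and $(\frac{A'}{B})$ in $M$, then expanding the bracket $[M;\frac{A'}{A},(\frac{A_i'}{A_i})_{2\leq i\leq n}]$ via the identity stated just before the lemma gives
$$[M;\tfrac{A'}{A},(\tfrac{A_i'}{A_i})_{2\leq i\leq n}] = [M;\tfrac{B}{A},(\tfrac{A_i'}{A_i})_{2\leq i\leq n}] + [M(\tfrac{B}{A});\tfrac{A'}{B},(\tfrac{A_i'}{A_i})_{2\leq i\leq n}],$$
and this identity holds in $\mathcal{F}_n$ (no quotienting required).

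Given a generator $[M;(\frac{A_i'}{A_i})_{1\leq i\leq n}]$ of $\mathcal{F}_n$, I would apply Theorem \ref{thelsur} to the first surgery to decompose $(\frac{A_1'}{A_1})$ as a finite sequence $(\frac{E_{1,j}'}{E_{1,j}})_{1\leq j\leq k_1}$ of elementary surgeries and their inverses, performed in the interior of $A_1$. Iterating the telescoping identity along this decomposition rewrites $[M;(\frac{A_i'}{A_i})_{1\leq i\leq n}]$ as a sum of brackets of the form $[N;\frac{E_{1,j}'}{E_{1,j}},(\frac{A_i'}{A_i})_{2\leq i\leq n}]$, where $N$ is $M$ modified by some of the preceding elementary surgeries inside $A_1$. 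Repeating the same argument for the second coordinate (and so on), noting that the $A_i$'s being disjoint ensures that elementary surgeries inside different $A_i$'s remain pairwise disjoint and can be freely permuted in the bracket notation, I end up with a linear combination of brackets where every coordinate is an elementary surgery or the inverse of one.

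Finally, I would eliminate inverses: if $(\frac{E'}{E})$ is the inverse of an elementary surgery $(\frac{E}{E'})$, setting $N=M(\frac{E'}{E})$ and using the same expansion gives
$$[M;\tfrac{E'}{E},(\tfrac{F_i'}{F_i})_{2\leq i\leq n}] = -[N;\tfrac{E}{E'},(\tfrac{F_i'}{F_i})_{2\leq i\leq n}],$$
which replaces the inverse by a genuine elementary surgery at the cost of a sign. Applying this to each inverse in each of the $n$ coordinates completes the reduction, proving the lemma even before passing to $\mathcal{G}_n$. The main bookkeeping obstacle is just tracking the ambient manifolds $N$ produced by the telescoping and checking that the $A_j$'s (for $j>i$) continue to sit disjointly in them, which follows because each elementary surgery in the $A_i$-step is supported in the interior of $A_i$.
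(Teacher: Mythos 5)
Your proposal is correct and follows essentially the same route as the paper: the paper proves this by telling the reader to adapt the proof of Lemma \ref{lemmaredtori}, which is exactly the telescoping of each LP-surgery along its decomposition from Theorem \ref{thelsur} followed by the sign trick $[M;\frac{E'}{E},\dots]=-[M(\frac{E'}{E});\frac{E}{E'},\dots]$ to eliminate inverses. Your write-up just makes explicit the bookkeeping (disjointness of supports, exactness of the identities in $\mathcal{F}_n$ rather than only in $\mathcal{G}_n$) that the paper leaves to the reader.
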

To see this, just adapt the proof of Lemma \ref{lemmaredtori}.

\begin{lemma} \label{lemmat1}
 Let $\lambda$ be an additive invariant of degree $n>1$. Consider $[M;(\frac{E_i'}{E_i})_{1\leq i\leq n}] \in\mathcal{F}_n$.
If at least one of the surgeries $(\frac{E_i'}{E_i})$ is an elementary surgery of genus 0 (connected sum), 
then  $\lambda([M;(\frac{E_i'}{E_i})_{1\leq i\leq n}])=0$.
\end{lemma}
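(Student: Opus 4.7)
Here is my plan for proving Lemma \ref{lemmat1}.

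The plan is to reduce to the case where the genus $0$ surgery is the first one in the list, then use the reordering/splitting formula together with additivity of $\lambda$. Up to renumbering, I may assume $(\frac{E_1'}{E_1})$ is the connected-sum surgery. A genus $0$ LP-surgery on $M$ replaces a small ball $E_1=B^3\subset M$ by a rational homology ball $E_1'$; gluing $E_1'$ to the exterior produces $M\# N$, where $N$ is the $\mathbb{Q}$HS obtained by capping $E_1'$ with $B^3$. Since a ball can be isotoped anywhere, I may also assume that the ball $E_1$ is disjoint from all the other $\mathbb{Q}$HH's $E_i$ ($i\geq 2$) and moreover lies in a small ball disjoint from them, so that the connected-sum operation commutes with the remaining surgeries.

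The key step is then the splitting identity (stated just before Lemma \ref{lemmared}):
$$[M;(\tfrac{E_i'}{E_i})_{1\leq i\leq n}]=[M;(\tfrac{E_i'}{E_i})_{2\leq i\leq n}]-[M\# N;(\tfrac{E_i'}{E_i})_{2\leq i\leq n}].$$
Expanding the right-hand bracket and using that the connected sum with $N$ can be performed away from the $E_i$ gives
$$(M\# N)\bigl((\tfrac{E_i'}{E_i})_{i\in I}\bigr)=M\bigl((\tfrac{E_i'}{E_i})_{i\in I}\bigr)\# N \quad \text{for all } I\subset\{2,\dots,n\}.$$
Applying additivity of $\lambda$ termwise,
$$\lambda\bigl([M\# N;(\tfrac{E_i'}{E_i})_{2\leq i\leq n}]\bigr)=\lambda\bigl([M;(\tfrac{E_i'}{E_i})_{2\leq i\leq n}]\bigr)+\lambda(N)\sum_{I\subset\{2,\dots,n\}}(-1)^{|I|}.$$
Because $n>1$, the set $\{2,\dots,n\}$ is nonempty, so the alternating binomial sum vanishes, leaving
$$\lambda\bigl([M\# N;(\tfrac{E_i'}{E_i})_{2\leq i\leq n}]\bigr)=\lambda\bigl([M;(\tfrac{E_i'}{E_i})_{2\leq i\leq n}]\bigr).$$
Substituting back into the splitting identity yields $\lambda([M;(\tfrac{E_i'}{E_i})_{1\leq i\leq n}])=0$, as required.

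There is no real obstacle here; the only point requiring care is the geometric remark that the connected-sum ball can be chosen disjoint from and unlinked with the other $E_i$'s, so that performing the remaining surgeries in $M\# N$ is the same as performing them in $M$ and then taking a connected sum with $N$. The hypothesis $n>1$ is used precisely to make the combinatorial cancellation $\sum_{I\neq\emptyset}(-1)^{|I|}=0$ possible, which is why the statement fails at $n=1$ (where $\lambda=\nu_p$ distinguishes connected sums).
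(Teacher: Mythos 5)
Your proof is correct and follows essentially the same route as the paper: the paper also assumes the genus $0$ surgery is $(\frac{E_1'}{E_1})$, applies the splitting identity, identifies $M(\frac{E_1'}{E_1})$ with $M\sharp M_1$, and uses additivity to reduce everything to the vanishing alternating sum $\sum_{I\subset\{2,\dots,n\}}(-1)^{|I|}\lambda(M_1)=0$ for $n>1$. Your extra remark about the summing ball being disjoint from the other $E_i$'s is automatic from the hypothesis that the $E_i$ are disjoint, but it is a fair point to make explicit.
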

\begin{proof}
 Assume $\frac{E_1'}{E_1}$ is a connected sum, {\em i.e.} $E_1$ is a ball $B^3$, and $E_1'$ is a rational homology ball.
Define a $\mathbb{Q}$HS $M_1$ by gluing $E_1'$ and a ball $B^3$ along their boundaries.
\begin{eqnarray*}
 \lambda([M;(\frac{E_i'}{E_i})_{1\leq i\leq n}])&=&\lambda([M;(\frac{E_i'}{E_i})_{2\leq i\leq n}])
-\lambda([M\sharp M_1;(\frac{E_i'}{E_i})_{2\leq i\leq n}])\\ &=&-\sum_{I\subset \{2,\dots,n\}} (-1)^{|I|}\lambda(M_1)\\ &=& 0
\end{eqnarray*}
\end{proof}

\begin{lemma} \label{lemmat2}
 Consider a $\mathbb{Q}$HS $M$, and disjoint LP-surgeries $(\frac{T_d}{T_0}),(\frac{A_i'}{A_i})_{1\leq i\leq n-1}$ in $M$, 
where $T_0$ is a standard torus, and $T_d$ is a $d$-torus.
If $\lambda$ is an additive invariant of degree $n>1$, then $\lambda([M;\frac{T_d}{T_0},(\frac{A_i'}{A_i})_{1\leq i\leq n-1}])=0$.
\end{lemma}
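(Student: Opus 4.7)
The plan is to reduce the desired vanishing to a statement about degree $1$ invariants of framed rational homology tori, so that the classification obtained in Proposition \ref{propinvtori} can be brought to bear. For each framed rational homology torus $T$, let $M^T$ denote the $\mathbb{Q}$HS obtained from $M$ by removing the interior of $T_0$ and gluing $T$ in its place via the canonical LP-identification provided by the framings (so that $M^{T_0}=M$ and $M^{T_d}=M(\frac{T_d}{T_0})$). Extending linearly, I would define
$$g: \mathcal{F}_0(\Sigma_1) \to \mathbb{Q}, \qquad g(T) := \lambda\bigl([M^T; (\tfrac{A_i'}{A_i})_{1\leq i\leq n-1}]\bigr).$$

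First I would show that $g$ is a finite type invariant of framed rational homology tori of degree at most $1$. For any $(k+1)$-component bracket of LP-surgeries in the torus region, a direct expansion gives
$$g\bigl([T_0;(\tfrac{T_i'}{T_i})_{1\leq i\leq k+1}]\bigr) = \lambda\bigl([M;(\tfrac{T_i'}{T_i})_{1\leq i\leq k+1},(\tfrac{A_j'}{A_j})_{1\leq j\leq n-1}]\bigr),$$
which lies in $\lambda(\mathcal{F}_{k+n})$. As soon as $k\geq 1$ this is zero because $\lambda$ has degree $n$. The quantity of interest is then exactly $g([T_0;\frac{T_d}{T_0}]) = \lambda([M;\frac{T_d}{T_0},(\frac{A_i'}{A_i})_{1\leq i\leq n-1}])$.

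Next, I would invoke Proposition \ref{propinvtori}: since $[T_0;\frac{T_d}{T_0}]\in\mathcal{F}_1(\Sigma_1)$ and $\mathcal{G}_1(\Sigma_1)=\bigoplus_{p\,\textrm{prime}}\mathbb{Q}[T_0;\frac{B_p}{B^3}]$, there exist rationals $c_p$ (all but finitely many zero) with
$$[T_0;\tfrac{T_d}{T_0}] \equiv \sum_{p\,\textrm{prime}} c_p\,[T_0;\tfrac{B_p}{B^3}] \pmod{\mathcal{F}_2(\Sigma_1)}.$$
Applying $g$ to both sides, the $\mathcal{F}_2(\Sigma_1)$ correction dies by the degree-$1$ property just established, leaving $g([T_0;\frac{T_d}{T_0}])=\sum_p c_p\,g([T_0;\frac{B_p}{B^3}])$.

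Finally, each $g([T_0;\frac{B_p}{B^3}])$ equals $\lambda$ of the bracket $[M;\frac{B_p}{B^3},(\frac{A_j'}{A_j})_{1\leq j\leq n-1}]$, in which $\frac{B_p}{B^3}$ (performed in the former $T_0$-region) is a genus-$0$ elementary surgery, i.e.\ a connected sum. By Lemma \ref{lemmat1} — which is where the additivity of $\lambda$ enters — this vanishes. Hence $g([T_0;\frac{T_d}{T_0}])=0$, which is the claim. I do not expect a serious obstacle: the only conceptual step is the reinterpretation of the bracket as an evaluation of a degree-$1$ torus invariant built from $\lambda$ by fixing the other $n-1$ surgeries; once this is set up, Proposition \ref{propinvtori} and Lemma \ref{lemmat1} do the rest.
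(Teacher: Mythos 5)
Your proof is correct and follows essentially the same route as the paper: the paper also fixes the ambient data and regards $T\mapsto\lambda([M;\frac{T}{T_0},(\frac{A_i'}{A_i})_{1\leq i\leq n-1}])$ as a degree~$1$ invariant of framed rational homology tori, then kills it using the classification of $\mathcal{G}_1(\Sigma_1)$ together with additivity on the connected-sum generators $[T_0;\frac{B_p}{B^3}]$. The only cosmetic difference is that the paper invokes Corollary \ref{cortori} (the dual form of Proposition \ref{propinvtori}) where you expand $[T_0;\frac{T_d}{T_0}]$ in the basis of $\mathcal{G}_1(\Sigma_1)$ directly.
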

\begin{proof}
 Fix $M$, the embedding of $T_0$, and the surgeries $(\frac{A_i'}{A_i})_{1\leq i\leq n-1}$. Fix a longitude $\ell(T_0)$ of $T_0$. 
For any framed rational homology torus $T$, set $\bar{\lambda}(T)=\lambda([M;\frac{T}{T_0},(\frac{A_i'}{A_i})_{1\leq i\leq n-1}])$.
Then $\bar{\lambda}$ is a degree 1 invariant of framed rational homology tori:
$$\bar{\lambda}([T;\frac{B_1'}{B_1},\frac{B_2'}{B_2}]) 
= \lambda\lbp -[M(\frac{T}{T_0});\frac{B_1'}{B_1},\frac{B_2'}{B_2},(\frac{A_i'}{A_i})_{1\leq i\leq n-1}] \rbp 
= 0.$$
We have $\bar{\lambda}(T_0)=\lambda(0)=0$, and:
$$\bar{\lambda}(T_0\sharp M_p) = \lambda\lbp [M;\frac{B_p}{B^3},(\frac{A_i'}{A_i})_{1\leq i\leq n-1}]\rbp =0,$$
since $\lambda$ is additive, and $n-1>0$.
By Corollary \ref{cortori}, $\bar{\lambda}=0$.
\end{proof}

\proofof{Proposition \ref{propgn}} 
By Lemmas \ref{lemmat1} and \ref{lemmat2}, an additive invariant $\lambda$ 
of degree $n>1$ is determined on $\mathcal{F}_n$ by its values on the 
$[M;(\frac{B_i'}{B_i})_{1\leq i\leq n}]$, for all $\mathbb{Q}$HS's $M$ and all sets of $n$ disjoint borromean surgeries 
$(\frac{B_i'}{B_i})_{1\leq i\leq n}$ in $M$. Hence, by Corollary \ref{corJacobi}, $\lambda$ is determined on $\mathcal{F}_n$ 
by the $\lambda([M;\Gamma])$ for all $\mathbb{Q}$HS $M$ and all Jacobi diagram $\Gamma$ of degree $\frac{n}{2}$. 

We can write $M=M\sharp S^3$ and suppose $\Gamma$ is embedded in $S^3$. Hence for an additive invariant $\lambda$ of degree $n$, 
we have $\lambda([M;\Gamma])=\lambda([S^3;\Gamma])$. 

If the Jacobi diagram $\Gamma$ is not connected, we can assume that $\Gamma$ is made of two components $\Gamma_1$ and $\Gamma_2$ 
that are embedded in disjoint balls in $S^3$. Noting that $(S^3,\Gamma)=(S^3,\Gamma_1)\sharp(S^3,\Gamma_2)$, it is easy 
to see that any additive invariant vanishes on $[S^3;\Gamma]$ in this case.
\fin

Proposition \ref{propdual} follows from Proposition \ref{propgn} in the case of odd degrees. 
For even degrees, it is a consequence of the following lemma.
\begin{lemma} \label{lemmainvadd}
Let $n>1$ be an even integer. 
Let $(\Gamma_{n,i})_{i\in C_n}$ be a basis of diagrams of the finite dimensional vector space $\mathcal{A}_{\frac{n}{2}}^c$. 
Let $(\Gamma_{n,i}^*)_{i\in C_n}$ be the dual basis of $(\mathcal{A}_\frac{n}{2}^c)^*$. 
Let $Z_{\frac{n}{2}}$ denote the degree $\frac{n}{2}$ part of the KKT invariant. 
Let $p^c: \mathcal{A}_\frac{n}{2}\to\mathcal{A}_\frac{n}{2}^c$ be the projection that maps any non 
connected diagram to 0 and which restricts to the identity on $\mathcal{A}_\frac{n}{2}^c$. 
For $i\in C_n$, set $\lambda_{n,i}=\Gamma_{n,i}^*\circ p^c \circ Z_\frac{n}{2}$. 
Then $(\lambda_{n,i})_{i\in C_n}$ is a basis of $\frac{\mathcal{I}_n^c}{\mathcal{I}_{n-1}^c}$.
\end{lemma}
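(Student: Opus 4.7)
The plan is to proceed in three steps: first, verify that $\lambda_{n,i}\in\mathcal{I}_n^c$; second, compute $\lambda_{n,i}(\Phi(\Gamma_{n,j}))=\delta_{ij}$; third, conclude using Proposition \ref{propgn}.

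For the first step, the degree bound $\lambda_{n,i}\in\mathcal{I}_n$ comes from Lescop's universality of $Z_{KKT}$ with respect to LP-surgeries \cite{Les}, which asserts in particular that $Z_k$ is a finite type invariant of degree $2k$; thus $Z_{n/2}$ vanishes on $\mathcal{F}_{n+1}$. The role of the projection $p^c$ is to force additivity. Indeed, $Z_{KKT}$ is group-like with respect to connected sum: $Z(M\sharp N)=Z(M)\cdot Z(N)$ in the graded algebra of Jacobi diagrams under disjoint union. Expanding the degree $n/2$ component gives
\[
Z_{n/2}(M\sharp N)=Z_{n/2}(M)+Z_{n/2}(N)+\sum_{\substack{k+l=n/2\\ k,l\geq 1}}Z_k(M)\cdot Z_l(N),
\]
and each summand of the last term is a disjoint union of two nonempty diagrams, hence disconnected. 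Applying $p^c$ annihilates the last term and yields $p^c(Z_{n/2}(M\sharp N))=p^c(Z_{n/2}(M))+p^c(Z_{n/2}(N))$, so $\lambda_{n,i}$ is additive.

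For the second step, I would invoke the sharper form of Lescop's universality: for any Jacobi diagram $\Gamma$ of degree $n/2$, the degree $n/2$ part of $Z_{KKT}$ evaluated on $\Phi(\Gamma)\in\mathcal{G}_n$ equals the class of $\Gamma$ itself in $\mathcal{A}_{n/2}$. Taking $\Gamma=\Gamma_{n,j}$, which is already connected, one has $p^c(\Gamma_{n,j})=\Gamma_{n,j}$, and pairing with the dual basis gives $\lambda_{n,i}(\Phi(\Gamma_{n,j}))=\Gamma_{n,i}^*(\Gamma_{n,j})=\delta_{ij}$. In particular, the family $(\lambda_{n,i})_{i\in C_n}$ is linearly independent modulo $\mathcal{I}_{n-1}^c$: any relation $\sum_i a_i\lambda_{n,i}\in\mathcal{I}_{n-1}^c$ evaluated on $\Phi(\Gamma_{n,j})\in\mathcal{F}_n$ yields $a_j=0$, since invariants in $\mathcal{I}_{n-1}^c$ vanish on $\mathcal{F}_n$.

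For the third step, Proposition \ref{propgn}, together with the vanishing of additive invariants on $[S^3;\Gamma]$ for disconnected $\Gamma$ (established inside its proof), shows that the evaluation map
\[
\mathrm{ev}:\frac{\mathcal{I}_n^c}{\mathcal{I}_{n-1}^c}\to(\mathcal{A}_{n/2}^c)^*,\qquad \lambda\mapsto\bigl(\Gamma\mapsto\lambda(\Phi(\Gamma))\bigr),
\]
is well-defined and injective. The second step identifies $\mathrm{ev}(\lambda_{n,i})=\Gamma_{n,i}^*$, and since the $\Gamma_{n,i}^*$ form a basis of the target, $\mathrm{ev}$ is an isomorphism and $(\lambda_{n,i})_{i\in C_n}$ is a basis of $\mathcal{I}_n^c/\mathcal{I}_{n-1}^c$. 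The main obstacle is importing Lescop's universality in precisely the form required, both the degree statement ensuring $Z_{n/2}\in\mathcal{I}_n$ and the on-the-nose identification $Z_{n/2}(\Phi(\Gamma))=\Gamma$ in $\mathcal{A}_{n/2}$; granted these external inputs, the rest is a Hopf-algebra bookkeeping around the projection $p^c$.
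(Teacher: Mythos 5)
Your proposal is correct and follows essentially the same route as the paper: additivity of $p^c\circ Z_{n/2}$ from the multiplicativity of $Z_{KKT}$ under connected sum (the paper simply cites \cite[Theorem 1]{KT} for this), the degree bound and the evaluation $Z_{n/2}([S^3;\Gamma_{n,j}])=\Gamma_{n,j}$ from Lescop's splitting formulae, and the spanning/injectivity argument via Proposition \ref{propgn}. Your packaging of the last step as an isomorphism $\mathcal{I}_n^c/\mathcal{I}_{n-1}^c\to(\mathcal{A}_{n/2}^c)^*$ is just a rephrasing of the paper's explicit decomposition $\lambda=\sum_{i}\lambda([S^3;\Gamma_{n,i}])\lambda_{n,i}$.
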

\begin{proof}
By \cite[Theorem 1]{KT}, $p^c\circ Z_\frac{n}{2}$ is an additive invariant of $\mathbb{Q}$HS's, thus the $\lambda_{n,i}$ are additive. 
By \cite[Theorem 2.4]{Les} and \cite[Proposition 4.1]{AL}, $Z_\frac{n}{2}$ is a finite type invariant of degree n and satisfies 
$Z_\frac{n}{2}([S^3;\Gamma_{n,i}])=\Gamma_{n,i}\in\mathcal{A}_{\frac{n}{2}}$. Hence $\lambda_{n,i}\in\mathcal{I}_n^c$, 
and $\lambda_{n,i}([S^3;\Gamma_{n,j}])=\delta_{i,j}$. Consider $\lambda\in\mathcal{I}_n^c$. By Proposition \ref{propgn}, 
$\lambda=\sum_{i\in C_n} \lambda([S^3;\Gamma_{n,i}])\lambda_{n,i}$ in $\frac{\mathcal{I}_n^c}{\mathcal{I}_{n-1}^c}$. 
Hence $(\lambda_{n,i})_{i\in C_n}$ is a basis of $\frac{\mathcal{I}_n^c}{\mathcal{I}_{n-1}^c}$.
\end{proof}

    \section{The graded algebras $\mathcal{G}$ and $\mathcal{H}$} \label{secfil}

      \subsection{The products in $\mathcal{G}$ and $\mathcal{H}$}

Extend the connected sum to $\mathcal{F}_0$ by bilinearity: 
$$(\sum_{i\in I}a_iM_i) \sharp (\sum_{j\in J}b_jN_j) = \sum_{i\in I}\sum_{j\in J} a_ib_j(M_i\sharp N_j),$$
for any finite sets $I$ and $J$, any rational numbers $a_i$, $b_j$, and any $\mathbb{Q}$HS's $M_i$, $N_j$.
\begin{lemma}
$\mathcal{F}_n \sharp \mathcal{F}_m \subset \mathcal{F}_{n+m}$
\end{lemma}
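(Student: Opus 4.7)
The plan is to verify this on generators and exploit the fact that connected sum is a local operation that commutes with surgeries performed in the other summand. Take a generator $x = [M;(\tfrac{B_i}{A_i})_{1\leq i\leq n}]$ of $\mathcal{F}_n$ and a generator $y = [N;(\tfrac{B'_j}{A'_j})_{1\leq j\leq m}]$ of $\mathcal{F}_m$. I will show that $x \sharp y = [M\sharp N; (\tfrac{B_i}{A_i})_{1\leq i\leq n}, (\tfrac{B'_j}{A'_j})_{1\leq j\leq m}]$, where in the right-hand bracket the $A_i$ sit inside the $M$-summand and the $A'_j$ sit inside the $N$-summand of $M\sharp N$.

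To set this up cleanly, I first choose the connected sum balls used to form the various $M'\sharp N'$ to lie in a small region of $M$ and $N$ disjoint from all the $A_i$ and all the $A'_j$; this is possible because finitely many embedded $\mathbb{Q}$HH's cover only a compact part of each manifold. With that choice fixed, any LP-surgery performed on one of the $A_i\subset M$ is carried out away from the gluing region, and similarly for the $A'_j\subset N$. Consequently, for any $I\subset\{1,\dots,n\}$ and $J\subset\{1,\dots,m\}$,
$$M\bigl((\tfrac{B_i}{A_i})_{i\in I}\bigr)\sharp N\bigl((\tfrac{B'_j}{A'_j})_{j\in J}\bigr) \;=\; (M\sharp N)\bigl((\tfrac{B_i}{A_i})_{i\in I},(\tfrac{B'_j}{A'_j})_{j\in J}\bigr).$$

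Expanding $x\sharp y$ via bilinearity of $\sharp$ gives
$$x\sharp y \;=\; \sum_{I,J}(-1)^{|I|+|J|}\,M\bigl((\tfrac{B_i}{A_i})_{i\in I}\bigr)\sharp N\bigl((\tfrac{B'_j}{A'_j})_{j\in J}\bigr),$$
and substituting the identity above, together with $(-1)^{|I|+|J|}=(-1)^{|I\sqcup J|}$ where the index set is $\{1,\dots,n+m\}$, identifies the sum precisely with the alternating sum defining the bracket $[M\sharp N;(\tfrac{B_i}{A_i})_i,(\tfrac{B'_j}{A'_j})_j]$. Since this bracket involves $n+m$ disjoint LP-surgeries in a single $\mathbb{Q}$HS, it lies in $\mathcal{F}_{n+m}$ by definition.

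The only real technical point is the commutation of connected sum with LP-surgery performed in the other factor; this is immediate once the connected sum balls have been chosen disjoint from the $A_i$ and $A'_j$, so no serious obstacle is expected. Extending from generators to arbitrary elements of $\mathcal{F}_n$ and $\mathcal{F}_m$ follows by bilinearity of $\sharp$.
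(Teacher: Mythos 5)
Your proposal is correct and follows exactly the paper's argument: the paper's proof is the one-line identity $[M;(\frac{B_i}{A_i})_{1\leq i \leq n}]\sharp [M';(\frac{B'_i}{A'_i})_{1\leq i \leq m}]=[M\sharp M';(\frac{B_i}{A_i})_{1\leq i \leq n},(\frac{B'_i}{A'_i})_{1\leq i \leq m}]$, which you verify by expanding the alternating sums and choosing the connected-sum balls disjoint from the surgery pieces. You have simply supplied the routine details the paper leaves to the reader.
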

\begin{proof}
Just check that 
$[M;(\frac{B_i}{A_i})_{1\leq i \leq n}]\sharp [M';(\frac{B'_i}{A'_i})_{1\leq i \leq m}]=
[M\sharp M';(\frac{B_i}{A_i})_{1\leq i \leq n},(\frac{B'_i}{A'_i})_{1\leq i \leq m}]$.
\end{proof}
 
Thus the connected sum defines a product $\sharp: \mathcal{G}_n\times\mathcal{G}_m\to \mathcal{G}_{n+m}$ which
induces a graded algebra structure on $\mathcal{G}$.

Given two finite type invariants $\lambda$ and $\mu$, note that the product $\lambda\mu$ satisfies:
$$\lambda\mu(\sum_{i\in I}a_iM_i)=\sum_{i\in I}a_i\lambda(M_i)\mu(M_i),$$ for 
any finite set $I$, rational numbers $a_i$, and $\mathbb{Q}$HS's $M_i$.
\begin{lemma}
 If $\lambda\in\mathcal{I}_k$ and $\mu\in\mathcal{I}_\ell$, then $\lambda\mu\in\mathcal{I}_{k+\ell}$.
\end{lemma}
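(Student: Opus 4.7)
The plan is to prove that $\lambda\mu$ vanishes on $\mathcal{F}_{k+\ell+1}$, which is the definition of $\lambda\mu \in \mathcal{I}_{k+\ell}$. Take a generator $[M;(\frac{B_i}{A_i})_{1\leq i\leq n}]$ with $n = k+\ell+1$ and, writing $M_I = M((\frac{B_i}{A_i})_{i\in I})$, expand directly using the definition of the product on finite type invariants recalled just above the lemma:
$$(\lambda\mu)\lbp[M;(\tfrac{B_i}{A_i})_{1\leq i\leq n}]\rbp = \sum_{I\subset\{1,\dots,n\}} (-1)^{|I|}\lambda(M_I)\mu(M_I).$$

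The heart of the argument is a Leibniz-type identity:
$$\sum_{I\subset\{1,\dots,n\}} (-1)^{|I|}\lambda(M_I)\mu(M_I) = \sum_{S\subset\{1,\dots,n\}} \lambda\lbp[M;(\tfrac{B_i}{A_i})_{i\in S}]\rbp \cdot \mu\lbp[M_S;(\tfrac{B_i}{A_i})_{i\in S^c}]\rbp,$$
where $S^c = \{1,\dots,n\}\setminus S$. I would verify this purely combinatorially. Expand each factor on the right hand side as its own alternating sum; since the LP-surgeries are pairwise disjoint they commute, so $(M_S)_K = M_{S\cup K}$ whenever $S\cap K = \emptyset$. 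Reindex the resulting quadruple sum by the partition of $\{1,\dots,n\}$ into four blocks determined by the inclusions $J\subset S$ and $K\subset S^c$, then collect terms according to the pair $(P,Q) = (J,\, S\cup K)$, which satisfies $P\subset Q$. A short count shows the coefficient of $\lambda(M_P)\mu(M_Q)$ on the right hand side reduces to $(-1)^{|Q|}\sum_{B\subset Q\setminus P}(-1)^{|B|}$, which vanishes unless $P = Q$, and equals $(-1)^{|Q|}$ in that case. This recovers the left hand side.

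With the identity in hand the lemma is immediate: for $n = k+\ell+1$ any $S\subset\{1,\dots,n\}$ satisfies $|S|\geq k+1$ or $|S^c|\geq \ell+1$. In the first case $[M;(\frac{B_i}{A_i})_{i\in S}]\in \mathcal{F}_{k+1}$, so $\lambda$ kills it; in the second, $[M_S;(\frac{B_i}{A_i})_{i\in S^c}]\in\mathcal{F}_{\ell+1}$, so $\mu$ kills it. Every term on the right hand side vanishes, giving $(\lambda\mu)(\mathcal{F}_{k+\ell+1}) = 0$.

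The only real obstacle is bookkeeping in the Leibniz identity. I would probably prove it by induction on $n$: the single-index base case is just $\lambda(M)\mu(M) - \lambda(M_1)\mu(M_1) = (\lambda(M)-\lambda(M_1))\mu(M) + \lambda(M_1)(\mu(M)-\mu(M_1))$, which is exactly the $n=1$ instance, and each additional index duplicates the sum over $S$ by choosing whether the new $\Delta$ acts on the $\lambda$-factor or on the $\mu$-factor (the disjointness of the surgeries guarantees that the relevant shifts commute with all remaining differences). Everything else is a routine bookkeeping check.
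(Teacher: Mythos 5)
Your proposal is correct and follows essentially the same route as the paper: the same Leibniz-type identity $\lambda\mu([M;(\frac{B_i}{A_i})_{i\in I}])=\sum_{J\subset I}\lambda([M;(\frac{B_i}{A_i})_{i\in J}])\,\mu([M((\frac{B_i}{A_i})_{i\in J});(\frac{B_i}{A_i})_{i\in I\setminus J}])$, verified by the same expansion and the vanishing of $\sum_{K\subset J\subset L}(-1)^{|J|}$ unless $K=L$, followed by the same degree count on $|J|$ versus $|I\setminus J|$.
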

\begin{proof}
Consider $[M;(\frac{B_i}{A_i})_{i\in I}]$ with $|I|=k+\ell+1$. We have the following equality.
\begin{eqnarray} \label{eq}
 \lambda\mu([M;(\frac{B_i}{A_i})_{i\in I}])=\sum_{J\subset I}
 \lambda([M;(\frac{B_i}{A_i})_{i\in J}])\mu([M((\frac{B_i}{A_i})_{i\in J});(\frac{B_i}{A_i})_{i\in I\setminus J}])
\end{eqnarray}
Indeed, the right hand side is equal to: 
\begin{eqnarray*}
&&\sum_{J\subset I} (-1)^{|J|} \lbp \sum_{K\subset J} (-1)^{|K|} \lambda(M((\frac{B_i}{A_i})_{i\in K}))\rbp 
 \lbp \sum_{L\supset J} (-1)^{|L|} \mu(M((\frac{B_i}{A_i})_{i\in L}))\rbp \\
&=&\sum_{L\subset I}\sum_{K\subset L} (-1)^{|K|+|L|} \lambda(M((\frac{B_i}{A_i})_{i\in K})) \mu(M((\frac{B_i}{A_i})_{i\in L}))
 \lbp \sum_{K\subset J\subset L} (-1)^{|J|} \rbp.
\end{eqnarray*}
Since $\sum_{K\subset J\subset L} (-1)^{|J|}=\left\lbrace 
\begin{array}{l l} 0 & if\ K\subsetneq L \\ (-1)^{|K|} & if\ K=L \end{array}\right.$, we get (\ref{eq}).\\ [4pt]
In (\ref{eq}), we have, if $|J|>k$, $$\lambda([M;(\frac{B_i}{A_i})_{i\in J}])=0,$$ and, if $|J|\leq k$, then $|I\setminus J|>\ell$ and 
$$\mu([M((\frac{B_i}{A_i})_{i\in J});(\frac{B_i}{A_i})_{i\in I\setminus J}])=0.$$ Thus $$\lambda\mu([M;(\frac{B_i}{A_i})_{i\in I}])=0.$$
\end{proof}
Thus the product of finite type invariants induces a graded algebra structure on $\mathcal{H}$.

      \subsection{Dual systems in $\mathcal{G}$ and $\mathcal{H}$} \label{subsecdual}

For an even integer $n>1$, consider the basis $(\Gamma_{n,i})_{i\in C_n}$ of $\mathcal{A}_\frac{n}{2}^c$ and 
the associated invariants $\lambda_{n,i}$ defined in Lemma \ref{lemmainvadd}. For $n>1$ odd, set $C_n=\emptyset$. 
For $n=1$, let $C_1$ denote the set of all prime integers, and for any $p$ prime, set $\lambda_{1,p}=\nu_p$ 
and $\Gamma_{1,p}=\bullet_p\in\mathcal{A}_1^{aug}$. 
Note that adding to $\lambda_{n,i}$ a weighted sum of the $\lambda_{k,i}$, $0<k<n$, $i\in C_k$, does not change the values of 
$\lambda_{n,i}$ on $\mathcal{F}_n$. Thus we can (and we do) choose the basis $(\lambda_{n,i})_{i\in C_n}$ so that
$\lambda_{n,i}([S^3;\Gamma_{k,j}])=\delta_{nk}\delta_{ij}$ for all positive integers $n$ and $k$, all $i\in C_n$, all $j\in C_k$.

For a multi-index $\me=(\varepsilon_t)_{1\leq t\leq \ell}$, set $\ell(\me)=\ell$. For $n>0$, fix a total order on $C_n$. 
Let $\preccurlyeq$ denote the lexicographic order induced on $\bigcup_{n\in\mathbb{N}\setminus\{0\}} (\{n\}\times C_n)$. 
For $n>0$, let $\mathcal{T}_n^\pi$ denote the set of all triples $(\mk,\mi,\me)$ such that $\ell(\mk)=\ell(\mi)=\ell(\me)$, 
$\mk=(k_t)_{1\leq t\leq \ell(\mk)}$, $k_t\in\mathbb{N}$ and $0<k_t<n$ for all $t$, $\mi=(i_t)_{1\leq t\leq \ell(\mk)}$ 
with $i_t\in C_{k_t}$ for all $t$, $(k_1,i_1)\prec (k_2,i_2)\prec \dots\prec (k_{\ell(\mk)},i_{\ell(\mk)})$, 
$\me=(\varepsilon_t)_{1\leq t\leq \ell(\me)}$ with $\varepsilon_t\in\mathbb{N}\setminus \{0\}$ 
for all $t$, and $\sum_{1\leq t\leq \ell(\mk)}\varepsilon_t k_t=n$. Define a family $(\lambda_{n,\iota})_{\iota\in \mathcal{T}_n^\pi}$ 
of invariants of degree $n$ by $\lambda_{n,\iota}=\prod_{1\leq t\leq \ell(\mk)}\lambda_{k_t,i_t}^{\varepsilon_t}$ if $\iota=(\mk,\mi,\me)$.
Set $\mathcal{T}_n=C_n\sqcup\mathcal{T}_n^\pi$. We will see in Subsection \ref{subsecdec} that the family 
$(\lambda_{k,i})_{\begin{subarray}{l} 0<k\leq n \\ i\in \mathcal{T}_k \end{subarray}}$ is a basis  of $\frac{\mathcal{I}_n}{\mathcal{I}_0}$. 
The main goal of this subsection is to construct a family $(G_{k,i}^{(n)})_{\begin{subarray}{l} 0<k\leq n \\ i\in \mathcal{T}_k \end{subarray}}$ 
of $\frac{\mathcal{F}_1}{\mathcal{F}_{n+1}}$, dual to $(\lambda_{k,i})_{\begin{subarray}{l} 0<k\leq n \\ i\in \mathcal{T}_k \end{subarray}}$.

\begin{definition}
 $G\in\mathcal{F}_0$ is said to be $\emph{multiplicative}$ if $\lambda\mu(G)=\lambda(G)\mu(G)$ for all finite type invariants 
$\lambda$ and $\mu$ such that $\lambda(S^3)=0$ and $\mu(S^3)=0$.
\end{definition}
For any $p$ prime, set $G_{1,p}^{(1)}=M_p-S^3$. Note that the $G_{1,p}^{(1)}$ are multiplicative. 
Fix $n>1$. If $n$ is even, set $G_{n,i}^{(n)}=[S^3;\Gamma_{n,i}]$ for $i\in C_n$.
Since $[S^3;\Gamma_{n,i}]=S^3(\Gamma_{n,i})-S^3$, $G_{n,i}^{(n)}$ is multiplicative for all $i\in C_n$.
For $\iota=(\mk,\mi,\me)\in\mathcal{T}_n^\pi$, set 
$\tilde{G}_{n,\iota}^{(n)}=\cs{1\leq t\leq \ell(\mk)} (G_{k_t,i_t}^{(k_t)})^{\sharp\,\varepsilon_t}$. 

\begin{lemma} \label{lemmaprod}
 Consider positive integers $p$ and $q$, additive invariants $\lambda_1$, $\dots$, $\lambda_p$, and elements 
$[M_1;(\frac{B_u}{A_u})_{u\in U_1}]$, $\dots$, $[M_q;(\frac{B_u}{A_u})_{u\in U_q}]$ of 
$\mathcal{F}_0$, for non empty sets $U_j$. Then:
$$(\prod_{i=1}^p \lambda_i)(\cs{j=1}^q [M_j;(\frac{B_u}{A_u})_{u\in U_j}])=
\sum_{j\in E_{pq}} \prod_{\ell=1}^q (\prod_{i\in j^{-1}(\{\ell\})} \lambda_i)([M_\ell;(\frac{B_u}{A_u})_{u\in U_\ell}]),$$
where $E_{pq}$ is the set of all surjective maps $j: \{1,..,p\}\to \{1,..,q\}$.

In particular, if $p<q$, 
$$(\prod_{i=1}^p \lambda_i)(\cs{j=1}^q [M_j;(\frac{B_u}{A_u})_{u\in U_j}])=0,$$
and, if $p=q$,
$$(\prod_{i=1}^p \lambda_i)(\cs{j=1}^p [M_j;(\frac{B_u}{A_u})_{u\in U_j}])=
\sum_{\sigma\in\mathcal{S}_p}\prod_{\ell=1}^p \lambda_{\sigma(\ell)}([M_\ell;(\frac{B_u}{A_u})_{u\in U_\ell}]),$$
where $\mathcal{S}_p$ is the set of permutations of $\{1,\dots,p\}$.
\end{lemma}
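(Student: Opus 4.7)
The plan is to reduce the statement to the elementary additive identity and then track signs. First, for any $\mathbb{Q}$HS's $N_1,\dots,N_q$ and any single additive invariant $\lambda$, iterated additivity gives $\lambda(N_1\sharp\cdots\sharp N_q)=\sum_{\ell=1}^q\lambda(N_\ell)$. Applying this inside the product $\prod_{i=1}^p\lambda_i$ and expanding factor by factor yields
$$\lbp\prod_{i=1}^p\lambda_i\rbp(N_1\sharp\cdots\sharp N_q)=\prod_{i=1}^p\sum_{\ell=1}^q\lambda_i(N_\ell)=\sum_{f:\{1,\dots,p\}\to\{1,\dots,q\}}\prod_{\ell=1}^q\prod_{i\in f^{-1}(\ell)}\lambda_i(N_\ell),$$
where the outer sum runs over \emph{all} maps $f$, not only surjective ones.

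Next I would substitute $N_j=M_j((\frac{B_u}{A_u})_{u\in I_j})$ and sum with the signs $(-1)^{|I_j|}$ over subsets $I_j\subset U_j$ for each $j$. Because the formula above is linear in each $N_\ell$ when the other $N_j$ are fixed (the factor involving $M_\ell$ separates from the rest), one may interchange the sums and obtain
$$\lbp\prod_{i=1}^p\lambda_i\rbp\lbp\cs{j=1}^q[M_j;(\tfrac{B_u}{A_u})_{u\in U_j}]\rbp=\sum_{f:[p]\to[q]}\prod_{\ell=1}^q\lbp\prod_{i\in f^{-1}(\ell)}\lambda_i\rbp([M_\ell;(\tfrac{B_u}{A_u})_{u\in U_\ell}]).$$

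The last step is to see that non-surjective $f$ contribute zero, which forces the sum to be indexed by $E_{pq}$. If $f^{-1}(\ell_0)=\emptyset$ for some $\ell_0$, the corresponding factor is the empty product $\prod_{i\in\emptyset}\lambda_i$, which is the constant function $1$. Evaluated on the bracket $[M_{\ell_0};(\frac{B_u}{A_u})_{u\in U_{\ell_0}}]=\sum_{I\subset U_{\ell_0}}(-1)^{|I|}M_{\ell_0}((\frac{B_u}{A_u})_{u\in I})$, this gives $\sum_{I\subset U_{\ell_0}}(-1)^{|I|}=0$ because $U_{\ell_0}$ is nonempty. Hence only surjective $f$ survive.

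The two special cases are immediate: if $p<q$ there are no surjections $[p]\twoheadrightarrow[q]$, so the whole sum vanishes; if $p=q$, surjections are exactly the permutations $\sigma\in\mathcal{S}_p$, and each fibre $\sigma^{-1}(\ell)$ is a singleton, so the product over $\ell$ reduces to $\prod_\ell\lambda_{\sigma(\ell)}([M_\ell;\dots])$ (after relabelling $i=\sigma(\ell)$, equivalently $\ell=\sigma^{-1}(i)$, depending on the convention). There is no substantial obstacle here; the only point requiring care is the empty-product/empty-fibre argument that eliminates the non-surjective maps.
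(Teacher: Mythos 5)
Your proof is correct and follows essentially the same route as the paper's: expand the product of the additive values over a connected sum as a sum over all maps $f:\{1,\dots,p\}\to\{1,\dots,q\}$, factor the resulting multi-sum over the subsets $I_\ell\subset U_\ell$ into a product over $\ell$, and kill the non-surjective maps via $\sum_{I\subset U_{\ell_0}}(-1)^{|I|}=0$ for $U_{\ell_0}\neq\emptyset$. The only quibble is your phrase ``linear in each $N_\ell$'': the correct justification for interchanging the sums is simply that the summand is a product of factors each depending on a single $I_\ell$, so the multi-sum distributes, and each resulting factor is by definition $(\prod_{i\in f^{-1}(\{\ell\})}\lambda_i)([M_\ell;\dots])$ under the diagonal extension of products of invariants to $\mathcal{F}_0$.
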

\begin{proof} \ \\
 $\displaystyle (\prod_{i=1}^p \lambda_i)(\cs{j=1}^q [M_j;(\frac{B_u}{A_u})_{u\in U_j}]) $ 
\vspace{-3ex} \begin{eqnarray*}
  &=& \sum_{V_1\subset U_1}\dots\sum_{V_q\subset U_q}(-1)^{\sum_{\ell=1}^q |V_\ell|} 
    \prod_{i=1}^p\lambda_i\lbp\cs{j=1}^q M_j((\frac{B_u}{A_u})_{u\in V_j})\rbp \\
  &=& \sum_{V_1\subset U_1}\dots\sum_{V_q\subset U_q}(-1)^{\sum_{\ell=1}^q |V_\ell|} 
    \prod_{i=1}^p\sum_{j=1}^q \lambda_i\lbp M_j((\frac{B_u}{A_u})_{u\in V_j})\rbp \\
  &=& \sum_{j:\{1,..,p\}\to\{1,..,q\}} \sum_{V_1\subset U_1}\dots\sum_{V_q\subset U_q}(-1)^{\sum_{\ell=1}^q |V_\ell|} 
    \prod_{i=1}^p\lambda_i\lbp M_{j(i)}((\frac{B_u}{A_u})_{u\in V_{j(i)}})\rbp \\
  &=& \sum_{j:\{1,..,p\}\to\{1,..,q\}} \prod_{\ell=1}^q \lhp\sum_{V_\ell\subset U_\ell}(-1)^{|V_\ell|}
    \prod_{i\in j^{-1}(\{\ell\})}\lambda_i\lbp M_\ell((\frac{B_u}{A_u})_{u\in V_\ell})\rbp\rhp\\
  &=& \sum_{j\in E_{pq}} \prod_{\ell=1}^q (\prod_{i\in j^{-1}(\{\ell\})} \lambda_i)([M_\ell;(\frac{B_u}{A_u})_{u\in U_\ell}])
\end{eqnarray*}
\end{proof}

\begin{lemma} \label{lemmad1}
 Let $n$ and $k$ be positive integers. For $\iota=(\mk,\mi,\me)\in\mathcal{T}_k^\pi$, set 
$\mathcal{T}_n(\iota)=\{(\mk,\mi,\meta)\in\mathcal{T}_n^\pi\,|\,\forall t,\,\eta_t\geq\varepsilon_t\}$. 
For $\kappa\in\mathcal{T}_n^\pi$, we have $\lambda_{n,\kappa}(\tilde{G}_{k,\iota}^{(k)})\neq 0$ 
if and only if $\kappa\in\mathcal{T}_n(\iota)$.
\end{lemma}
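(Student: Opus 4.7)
The plan is to apply Lemma \ref{lemmaprod} to
$$\lambda_{n,\kappa}(\tilde G_{k,\iota}^{(k)}) \;=\; \Bigl(\prod_{s}\lambda_{k'_s,i'_s}^{\eta'_s}\Bigr)\Bigl(\cs{t} (G_{k_t,i_t}^{(k_t)})^{\sharp\varepsilon_t}\Bigr),$$
writing $\kappa=(\mk',\mi',\meta')$ to distinguish from the fixed $\iota=(\mk,\mi,\me)$. With $P:=\sum_s\eta'_s$ and $N:=\sum_t\varepsilon_t$, Lemma \ref{lemmaprod} expresses the evaluation as a sum over surjections $j:\{1,\dots,P\}\to\{1,\dots,N\}$ whose $j$-th term is $\prod_\ell\bigl(\prod_{i\in j^{-1}(\ell)}\lambda_{f(i)}\bigr)(G_{k_{t(\ell)},i_{t(\ell)}}^{(k_{t(\ell)})})$, and which is automatically $0$ when $P<N$.

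The technical heart will be the sub-claim that for any $m\geq 1$ and any labels,
$$\Bigl(\prod_{r=1}^m\lambda_{a_r,b_r}\Bigr)(G_{c,d}^{(c)}) \;=\; \prod_{r=1}^m\delta_{(a_r,b_r),(c,d)}.$$
For $c=1$, $G_{1,d}^{(1)}=M_d-S^3$ and the normalisation $\lambda_{a,b}([S^3;\Gamma_{k,j}])=\delta_{ak}\delta_{bj}$ from Subsection \ref{subsecdual} together with additivity $\lambda_{a,b}(S^3)=0$ yield $\lambda_{a,b}(M_d)=\delta_{(a,b),(1,d)}$, and pointwise multiplication gives the formula directly. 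For $c\geq 2$, $G_{c,d}^{(c)}=[S^3;\Gamma_{c,d}]$ is a signed sum of $\mathbb{Z}$HS's $S^3(V)$ (Y-link surgery preserves integral homology), so any factor $\lambda_{1,b_r}(S^3(V))=0$ settles every case with some $a_r=1$, matching $\delta_{a_r,c}=0$. When all $a_r\geq 2$, a mismatch with $(c,d)$ is killed by a Milnor--Moore-type vanishing of products of additive invariants on the connected-diagram bracket $[S^3;\Gamma_{c,d}]$, while in the matched case one uses Lemma \ref{lemmadisk} to arrange every proper sub-surgery $S^3(V)$ (with $V\subsetneq\Gamma_{c,d}$) to be the identity, so that only the top subset contributes and $\lambda_{c,d}^m([S^3;\Gamma_{c,d}])=1$.

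Granted the sub-claim, each summand in the Lemma \ref{lemmaprod} expansion is a product of Kronecker deltas that survives only when $j$ is \emph{type-respecting}: every invariant in $j^{-1}(\ell)$ must carry the label of the bracket at $\ell$. Such surjections exist iff $(\mk',\mi')=(\mk,\mi)$ and $\eta'_t\geq\varepsilon_t$ for every $t$ -- exactly the condition $\kappa\in\mathcal{T}_n(\iota)$. In that case they factor type-by-type into $\prod_t\varepsilon_t!\,S(\eta'_t,\varepsilon_t)$ surjections (where $S$ denotes the Stirling number of the second kind), each contributing $1$ to the total, which is thus a positive integer; outside $\mathcal{T}_n(\iota)$ every term is annihilated. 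The main obstacle will be the Milnor--Moore-style vanishing inside the sub-claim, which does not follow from Lemma \ref{lemmaprod} alone and requires exploiting the connectedness of $\Gamma_{c,d}$ together with the specific normalisation of the basis $\lambda_{n,i}$.
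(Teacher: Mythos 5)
Your proposal follows essentially the same route as the paper's proof: expand $\lambda_{n,\kappa}(\tilde{G}_{k,\iota}^{(k)})$ via Lemma \ref{lemmaprod} as a sum over surjections, reduce each summand to a product of single evaluations $\lambda_{a,b}(G_{c,d}^{(c)})=\delta_{ac}\delta_{bd}$, and note that a type-respecting surjection exists if and only if $\kappa\in\mathcal{T}_n(\iota)$ (your count $\prod_t\varepsilon_t!\,S(\eta'_t,\varepsilon_t)$ makes the positivity explicit, which the paper leaves implicit). The one place you overcomplicate matters is the ``mismatch'' case of your sub-claim, which you flag as an unresolved Milnor--Moore-style obstacle: no extra vanishing result is needed there, because the sub-claim is precisely the conjunction of (i) the multiplicativity of $G_{c,d}^{(c)}$ --- already recorded in Subsection \ref{subsecdual} from $[S^3;\Gamma_{c,d}]=S^3(\Gamma_{c,d})-S^3$, i.e.\ exactly the disk/connectedness argument you invoke --- which turns $\bigl(\prod_r\lambda_{a_r,b_r}\bigr)(G_{c,d}^{(c)})$ into $\prod_r\lambda_{a_r,b_r}(G_{c,d}^{(c)})$, and (ii) the normalisation $\lambda_{a,b}([S^3;\Gamma_{c,d}])=\delta_{ac}\delta_{bd}$ arranged when the basis was chosen. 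With that observation your argument closes and coincides with the paper's.
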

Note that the set $\mathcal{T}_n(\iota)$ is finite.
\begin{proof}
 Set $\kappa=(\ml,\mj,\meta)$. We have $\lambda_{n,\kappa}=\prod_{1\leq s\leq \ell(\ml)} \lambda_{\ell_s,j_s}^{\eta_s}$ 
and $\tilde{G}_{k,\iota}^{(k)}=\cs{1\leq t\leq \ell(\mk)} (G_{k_t,i_t}^{(k_t)})^{\sharp \varepsilon_t}$. By Lemma \ref{lemmaprod}:
$$\lambda_{n,\kappa}(\tilde{G}_{k,\iota}^{(k)})=
\sum_{\xi\in E_{\meta\me}} \prod_{(t,u)\in\Theta(\me)}
\lbp \prod_{(s,v)\in\xi^{-1}(\{(t,u)\})} \lambda_{\ell_s,j_s}\rbp (G_{k_t,i_t}^{(k_t)}),$$
where $\Theta(\me)=\{(t,u)|\,1\leq t\leq \ell(\me);\forall\,t,1\leq u\leq \varepsilon_t\}$ 
and $E_{\meta\me}$ is the set of all surjective maps $\xi:\Theta(\meta)\twoheadrightarrow\Theta(\me)$.
Since the $G_{k_t,i_t}^{(k_t)}$ are multiplicative, we get:
$$\lambda_{n,\kappa}\lbp\tilde{G}_{k,\iota}^{(k)}\rbp=
\sum_{\xi\in E_{\meta\me}} \prod_{(t,u)\in\Theta(\me)}
\prod_{(s,v)\in\xi^{-1}(\{(t,u)\})}\lbp  \lambda_{\ell_s,j_s} (G_{k_t,i_t}^{(k_t)})\rbp.$$
Recall $\lambda_{\ell_s,j_s} (G_{k_t,i_t}^{(k_t)})=\delta_{\ell_sk_t}\delta_{j_si_t}$. 
Hence $\lambda_{n,\kappa}\lbp\tilde{G}_{k,\iota}^{(k)}\rbp\neq 0$
if and only if $\ml=\mk$, $\mj=\mi$ and $\eta_t\geq\varepsilon_t$ for all $t$.
\end{proof}

For $n>1$ and $\iota\in\mathcal{T}_n^\pi$, set 
$\displaystyle G_{n,\iota}^{(n)}=\frac{1}{\lambda_{n,\iota}(\tilde{G}_{n,\iota}^{(n)})}\tilde{G}_{n,\iota}^{(n)}$, 
so that $\lambda_{n,\iota}(G_{n,\iota}^{(n)})=1$. 
Note that, for all $n$ and all $i\in \mathcal{T}_n$, $G_{n,i}^{(n)}\in\mathcal{F}_n$.

Let $n$ and $k$ be positive integers. For $\iota\in C_k$, set 
$\mathcal{T}_n(\iota)=\{(\mk,\mi,\meta)\in\mathcal{T}_n^\pi\,|\,\mk=(k),\mi=(\iota)\}$. 
The following result is an easy generalisation of Lemma \ref{lemmad1}.
\begin{lemma} \label{lemmad1'}
 Let $n$ and $k$ be positive integers. For $\kappa\in \mathcal{T}_n$ and $\iota\in \mathcal{T}_k$, we have 
$\lambda_{n,\kappa}(G_{k,\iota}^{(k)})\neq 0$ if and only if $\kappa\in\mathcal{T}_n(\iota)$.
\end{lemma}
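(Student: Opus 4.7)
The plan is a case analysis in $\iota\in\mathcal{T}_k=C_k\sqcup\mathcal{T}_k^\pi$ and $\kappa\in\mathcal{T}_n=C_n\sqcup\mathcal{T}_n^\pi$. The case $\iota\in\mathcal{T}_k^\pi$, $\kappa\in\mathcal{T}_n^\pi$ is Lemma \ref{lemmad1} itself, since $G^{(k)}_{k,\iota}$ and $\tilde G^{(k)}_{k,\iota}$ differ only by a non-zero scalar (the scalar is non-zero precisely because $\iota\in\mathcal{T}_k(\iota)$ in the sense of Lemma \ref{lemmad1}). When $\iota\in\mathcal{T}_k^\pi$ and $\kappa\in C_n$, I will apply Lemma \ref{lemmaprod} with $p=1$ and $q=\sum_t\varepsilon_t$: the constraint $k_t<k$ in the definition of $\mathcal{T}_k^\pi$ forces $q\geq 2$, so Lemma \ref{lemmaprod} gives $\lambda_{n,\kappa}(\tilde G^{(k)}_{k,\iota})=0$; on the other hand $\mathcal{T}_n(\iota)\subset\mathcal{T}_n^\pi$ is disjoint from $C_n$, so the equivalence holds trivially.

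For the remaining case $\iota\in C_k$, the key new ingredient is the identity
\[
G^{(k)}_{k,\iota}=M-S^3\quad\text{in }\mathcal{F}_0,
\]
where $M=M_\iota$ if $k=1$ and $M=S^3(\Gamma_{k,\iota})$ if $k>1$ is even. The case $k=1$ is by definition. For $k>1$ even, this is the identity $[S^3;\Gamma_{k,\iota}]=S^3(\Gamma_{k,\iota})-S^3$ already invoked by the paper in its multiplicativity claim for $G^{(k)}_{k,\iota}$; I will justify it by expanding $[S^3;\tilde\Gamma_{k,\iota}]=\sum_{I\subset[k]}(-1)^{|I|}S^3(\tilde\Gamma_I)$ and collapsing every intermediate term. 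Indeed, for any proper subset $I\subsetneq[k]$, connectedness of $\Gamma_{k,\iota}$ provides some $i\in I$ incident to an edge of $\Gamma_{k,\iota}$ going to a vertex outside $I$; the corresponding leaf of $\Gamma_i\subset\tilde\Gamma_I$ has lost its Hopf partner and so bounds a disk in the complement of $\tilde\Gamma_I$, and Lemma \ref{lemmadisk} removes the surgery on $\Gamma_i$ without changing the result. Iterating this removal reduces $\tilde\Gamma_I$ to the empty sub-link, yielding $S^3(\tilde\Gamma_I)=S^3$; only $I=\emptyset$ and $I=[k]$ survive in the alternating sum.

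Once the identity is established, I write $\lambda_{n,\kappa}=\prod_t\lambda_{k_t,i_t}^{\eta_t}$ and use additivity together with $\lambda_{k_t,i_t}(S^3)=0$ to get
\[
\lambda_{n,\kappa}(G^{(k)}_{k,\iota})=\prod_t \lambda_{k_t,i_t}(M)^{\eta_t}=\prod_t \lambda_{k_t,i_t}([S^3;\Gamma_{k,\iota}])^{\eta_t}=\prod_t(\delta_{k_t,k}\delta_{i_t,\iota})^{\eta_t},
\]
thanks to the normalisation $\lambda_{n,i}([S^3;\Gamma_{k,j}])=\delta_{nk}\delta_{ij}$ of the basis. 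For $\kappa\in\mathcal{T}_n^\pi$, non-vanishing forces every $(k_t,i_t)=(k,\iota)$; the strict order $\prec$ on the indices then forces $\ell(\mk)=1$, so $\kappa=((k),(\iota),(n/k))$ with $n/k\geq 2$, i.e.\ exactly $\kappa\in\mathcal{T}_n(\iota)$. For $\kappa\in C_n$ the product collapses to the single factor $\delta_{n,k}\delta_{\kappa,\iota}$, matching the statement under the natural identification of $\iota\in C_n$ with an element of $\mathcal{T}_n(\iota)$ in the degenerate case $n=k$.

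The hard part will be the collapse identity $[S^3;\Gamma_{k,\iota}]=S^3(\Gamma_{k,\iota})-S^3$: it relies on connectedness of $\Gamma_{k,\iota}$ in an essential way (to guarantee a dangling leaf at every stage of the inductive removal) and on a careful iterated application of Lemma \ref{lemmadisk}. Once it is in place, everything else is a direct product computation using additivity and the diagonal normalisation of the basis of $\mathcal{I}^c$.
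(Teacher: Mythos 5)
Your proof is correct, and it is essentially the argument the paper intends: the paper gives no proof of this lemma at all, dismissing it as ``an easy generalisation of Lemma \ref{lemmad1}'', and your four-way case analysis (Lemma \ref{lemmad1} itself for $\iota\in\mathcal{T}_k^\pi$, $\kappa\in\mathcal{T}_n^\pi$, since the normalising scalar is non-zero; Lemma \ref{lemmaprod} with $p=1<q=\sum_t\varepsilon_t$ for an additive $\lambda_{n,\kappa}$ evaluated on a genuine connected sum; and multiplicativity of $M-S^3$ together with the diagonal normalisation $\lambda_{n,i}([S^3;\Gamma_{k,j}])=\delta_{nk}\delta_{ij}$ when $\iota\in C_k$) is exactly the filling-in the author had in mind. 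Two points where you supply content the paper omits are worth flagging. First, the collapse identity $[S^3;\Gamma_{k,\iota}]=S^3(\Gamma_{k,\iota})-S^3$ is also asserted without proof in Subsection \ref{subsecdual}; your justification by iterated application of Lemma \ref{lemmadisk} to the dangling $0$-framed Hopf leaves created by passing to a proper sub-link is the right one, and connectedness of $\Gamma_{k,\iota}$ is indeed the essential hypothesis guaranteeing a dangling leaf at every stage. Second, you correctly noticed that for $\iota\in C_k$ the set $\mathcal{T}_n(\iota)$ is literally a subset of $\mathcal{T}_n^\pi$, so in the degenerate case $n=k$, $\kappa=\iota\in C_n$ the lemma as written would fail ($\lambda_{n,\iota}(G_{n,\iota}^{(n)})=1$ while $\mathcal{T}_n(\iota)$ contains no element of $C_n$); Corollary \ref{cord2} shows that the identification of $\iota$ with the degenerate tuple $((k),(\iota),(1))$ that you adopt is the intended reading, so this is a (harmless) imprecision in the paper rather than a gap in your argument.
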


\begin{corollary} \label{cord2}
 For $n>0$, $i\in \mathcal{T}_n$, $j\in \mathcal{T}_n$, we have $\lambda_{n,i}(G^{(n)}_{n,j})=\delta_{ij}$.
\end{corollary}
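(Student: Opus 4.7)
The plan is to deduce the corollary directly from Lemma~\ref{lemmad1'} together with the normalizations built into the definitions of $\lambda_{n,j}$ and $G_{n,j}^{(n)}$, splitting on whether $j$ lies in $\mathcal{T}_n^\pi$ or in $C_n$. When $j = (\mk,\mi,\me) \in \mathcal{T}_n^\pi$, the value $\lambda_{n,j}(G_{n,j}^{(n)}) = 1$ is immediate from $G_{n,j}^{(n)} = \tilde{G}_{n,j}^{(n)}/\lambda_{n,j}(\tilde{G}_{n,j}^{(n)})$. For $i \in \mathcal{T}_n^\pi$ with $i \neq j$, Lemma~\ref{lemmad1'} reduces matters to showing $\mathcal{T}_n(j) \cap \mathcal{T}_n^\pi = \{j\}$: any $\kappa = (\mk,\mi,\meta)$ in the intersection must satisfy $\eta_t \geq \varepsilon_t$ for all $t$, and then $\sum_t \eta_t k_t = n = \sum_t \varepsilon_t k_t$ forces $\meta = \me$.

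For $i \in C_n$ and $j \in \mathcal{T}_n^\pi$, I would use that $\lambda_{n,i}$ is additive (being either $\nu_p$ or $\Gamma_{n,i}^* \circ p^c \circ Z_{n/2}$), while $\tilde{G}_{n,j}^{(n)} = \cs{1\leq t\leq \ell(\mk)}(G_{k_t,i_t}^{(k_t)})^{\sharp\varepsilon_t}$ is the connected-sum bracket of $\sum_t \varepsilon_t$ factors. Since each $k_t < n$ and $\sum_t \varepsilon_t k_t = n$, one must have $\sum_t \varepsilon_t \geq 2$, so Lemma~\ref{lemmaprod} (with $p = 1$ and $q = \sum_t \varepsilon_t$) forces $\lambda_{n,i}(\tilde{G}_{n,j}^{(n)}) = 0$ and hence $\lambda_{n,i}(G_{n,j}^{(n)}) = 0$.

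When $j \in C_n$, the values $\lambda_{n,j}(G_{n,j}^{(n)}) = 1$ and $\lambda_{n,i}(G_{n,j}^{(n)}) = 0$ (for $i \in C_n$, $i \neq j$) follow from the normalization of the basis $(\lambda_{n,i})_{i \in C_n}$ fixed just before the definition of $G_{n,j}^{(n)}$, or, for $n = 1$, directly from $\nu_q(M_p - S^3) = \delta_{pq}$. For $i \in \mathcal{T}_n^\pi$, Lemma~\ref{lemmad1'} reduces the question to whether $i \in \mathcal{T}_n(j)$; but this set is empty, since its defining condition imposes $\mk = (n)$ as a single-entry tuple, which contradicts the requirement $k_t < n$ built into $\mathcal{T}_n^\pi$. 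The main obstacle is the subcase $j \in \mathcal{T}_n^\pi$, $i \in C_n$, where one must carefully count the connected-sum slots of $\tilde{G}_{n,j}^{(n)}$ to confirm that there are at least two, so that Lemma~\ref{lemmaprod} applied to the single additive invariant $\lambda_{n,i}$ yields vanishing.
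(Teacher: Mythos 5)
Your proposal is correct and follows the same route the paper intends: the paper states this corollary without proof as an immediate consequence of Lemma \ref{lemmad1'} together with the normalizations $\lambda_{n,\iota}(G_{n,\iota}^{(n)})=1$ and $\lambda_{n,i}([S^3;\Gamma_{n,j}])=\delta_{ij}$, and your case analysis simply makes that explicit (your fallback to Lemma \ref{lemmaprod} in the mixed case $i\in C_n$, $j\in\mathcal{T}_n^\pi$ is exactly the mechanism underlying Lemma \ref{lemmad1'} there). All four cases check out, including the observations that $\mathcal{T}_n(j)=\{j\}$ for $j\in\mathcal{T}_n^\pi$ and that $\mathcal{T}_n(j)=\emptyset$ for $j\in C_n$.
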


For $n>1$, define $G_{k,i}^{(n)}\in\mathcal{F}_k$ for $0<k<n$ and $i\in \mathcal{T}_k$, by induction on $n$, by: 
$$G_{k,i}^{(n)}=G_{k,i}^{(n-1)}-\sum_{\iota\in \mathcal{T}_n(i)} \lambda_{n,\iota}(G_{k,i}^{(n-1)}) G_{n,\iota}^{(n)}.$$
Note that $G_{k,i}^{(n)}=G_{k,i}^{(m)}$ in $\mathcal{G}_m$ if $m\leq n$.

\begin{lemma} \label{lemmad3}
 Let $n$ be a positive integer. The family $(G^{(n)}_{k,i})_{\begin{subarray}{l} 0<k\leq n \\ i\in \mathcal{T}_k \end{subarray}}$ of 
$\frac{\mathcal{F}_1}{\mathcal{F}_{n+1}}$ is dual to the family 
$(\lambda_{k,i})_{\begin{subarray}{l} 0<k\leq n \\ i\in \mathcal{T}_k \end{subarray}}$ of $\frac{\mathcal{I}_n}{\mathcal{I}_0}$.
\end{lemma}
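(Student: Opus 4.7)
The plan is to induct on $n$. The base case $n=1$ is direct: $\lambda_{1,p}(G_{1,q}^{(1)}) = \nu_p(M_q - S^3) = \delta_{pq}$. For the inductive step from $n-1$ to $n$, I will check $\lambda_{k,i}(G_{\ell,j}^{(n)}) = \delta_{(k,i),(\ell,j)}$ case by case. The case $k=\ell=n$ is exactly Corollary~\ref{cord2}. The case $\ell=n$, $k<n$ follows because $G_{n,j}^{(n)} \in \mathcal{F}_n$ while $\lambda_{k,i}$ has degree $k<n$, hence vanishes on $\mathcal{F}_n$. For $\ell<n$ and $k<n$, expanding via the recursive definition of $G_{\ell,j}^{(n)}$ and using the previous case to kill the correction terms $\lambda_{k,i}(G_{n,\iota}^{(n)}) = 0$, one is reduced to $\lambda_{k,i}(G_{\ell,j}^{(n-1)})$, which equals $\delta_{(k,i),(\ell,j)}$ by the inductive hypothesis. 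When $\ell<n$, $k=n$, and $i \in \mathcal{T}_n(j)$, the correction term at index $\iota=i$ exactly cancels $\lambda_{n,i}(G_{\ell,j}^{(n-1)})$ by Corollary~\ref{cord2}, yielding $0 = \delta_{(n,i),(\ell,j)}$.

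The remaining case $\ell<n$, $k=n$, $i \notin \mathcal{T}_n(j)$ will be the main obstacle. After expansion, the correction sum has no summand with $\iota=i$, so it vanishes term by term, and the problem reduces to showing $\lambda_{n,i}(G_{\ell,j}^{(n-1)}) = 0$; this quantity is not governed by the inductive hypothesis of the lemma itself. To handle it, I will introduce an auxiliary claim: for every $\ell \leq m$, $j \in \mathcal{T}_\ell$, and $i' \in \mathcal{T}_{m+1}$ with $i' \notin \mathcal{T}_{m+1}(j)$, one has $\lambda_{m+1,i'}(G_{\ell,j}^{(m)}) = 0$. Setting $m=n-1$ and $i'=i$ supplies the missing piece.

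The auxiliary claim will itself be proved by a nested induction on $m \geq \ell$. The base $m=\ell$ follows directly from Lemma~\ref{lemmad1'}. For the inductive step, the recursive definition gives
\[
\lambda_{m+1,i'}(G_{\ell,j}^{(m)}) = \lambda_{m+1,i'}(G_{\ell,j}^{(m-1)}) - \sum_{\iota \in \mathcal{T}_m(j)} \lambda_{m,\iota}(G_{\ell,j}^{(m-1)})\,\lambda_{m+1,i'}(G_{m,\iota}^{(m)}),
\]
and Lemma~\ref{lemmad1'} kills every summand with $i' \notin \mathcal{T}_{m+1}(\iota)$. The combinatorial heart of the argument will be to verify the transitivity: if $\iota \in \mathcal{T}_m(j)$ and $i' \in \mathcal{T}_{m+1}(\iota)$, then $i' \in \mathcal{T}_{m+1}(j)$. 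When $j=(\mk,\mi,\me) \in \mathcal{T}_\ell^\pi$, $\iota \in \mathcal{T}_m(j)$ forces $\iota=(\mk,\mi,\meta)$ with $\meta \geq \me$, and $i' \in \mathcal{T}_{m+1}(\iota)$ then forces $i'=(\mk,\mi,\meta')$ with $\meta' \geq \meta \geq \me$, so $i' \in \mathcal{T}_{m+1}(j)$. When $j \in C_\ell$ with $\ell>1$, existence of $\iota$ requires $\ell \mid m$ and existence of $i'$ requires $\ell \mid (m+1)$, which is impossible, so the hypothesis is vacuous; the case $\ell=1$ is immediate since then $\mathcal{T}_{m+1}(j) = \{((1),(j),(m+1))\}$ already contains the forced form of $i'$. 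Taking the contrapositive of this transitivity, for $i' \notin \mathcal{T}_{m+1}(j)$ the sum is empty, so $\lambda_{m+1,i'}(G_{\ell,j}^{(m)}) = \lambda_{m+1,i'}(G_{\ell,j}^{(m-1)})$, which vanishes by the inner induction.
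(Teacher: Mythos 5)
Your case analysis coincides with the paper's: the paper also inducts on $n$, disposes of the cases $(k,\ell)\neq(n,\cdot)$ and $\ell=n$ exactly as you do, isolates $\lambda_{n,i}(G_{\ell,j}^{(n)})$ with $\ell<n$ as the only real work, and kills it via Lemma~\ref{lemmad1'} together with the transitivity $\bigl(\iota\in\mathcal{T}_m(j)$ and $i'\in\mathcal{T}_N(\iota)\bigr)\Rightarrow i'\in\mathcal{T}_N(j)$ --- which the paper merely asserts and you actually verify, a worthwhile addition. The only difference in packaging is that the paper unrolls the recursion completely, writing $G_{\ell,j}^{(n)}=G_{\ell,j}^{(\ell)}-\sum_{\ell<m\leq n}\sum_{\iota\in\mathcal{T}_m(j)}\lambda_{m,\iota}(G_{\ell,j}^{(m-1)})\,G_{m,\iota}^{(m)}$, and then applies $\lambda_{n,i}$ once to every term (each correction term dies by Lemma~\ref{lemmad1'} and transitivity, and the leading term $\lambda_{n,i}(G_{\ell,j}^{(\ell)})$ dies by Lemma~\ref{lemmad1'} directly), whereas you encapsulate the same content in an auxiliary claim proved by a nested induction.

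That encapsulation is where your write-up does not quite close. Your auxiliary claim ties the degree of the invariant to the induction variable ($\lambda_{m+1,i'}$ against $G^{(m)}_{\ell,j}$), so the hypothesis available at level $m-1$ concerns $\lambda_{m,\cdot}(G_{\ell,j}^{(m-1)})$; but after one unrolling step you are left with $\lambda_{m+1,i'}(G_{\ell,j}^{(m-1)})$, where the gap between the invariant's degree and the superscript is now $2$, and the stated hypothesis says nothing about it. The fix is immediate: fix the invariant's degree $N$ once and for all and induct on the superscript $m$ (claim: $\lambda_{N,i'}(G_{\ell,j}^{(m)})=0$ for all $\ell\leq m<N$ and $i'\notin\mathcal{T}_N(j)$), or equivalently unroll all the way to $G_{\ell,j}^{(\ell)}$ as the paper does. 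Note that with $N$ decoupled from $m$ your ``vacuous by $\ell\mid m$ and $\ell\mid(m+1)$'' shortcut for $j\in C_\ell$, $\ell>1$, no longer applies, but transitivity still holds trivially there: any $\iota\in\mathcal{T}_M(j)$ has $\underline{k}=(\ell)$, $\underline{i}=(j)$, hence so does any $i'\in\mathcal{T}_N(\iota)$, which puts $i'$ in $\mathcal{T}_N(j)$ by definition.
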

\begin{proof}
We proceed by induction on $n$. The result is clear for $n=1$. Fix $n>1$. We shall prove that 
$\lambda_{\ell,j}(G_{k,i}^{(n)})=\delta_{\ell k}\delta_{ji}$ for all $0<\ell\leq n$, $j\in \mathcal{T}_\ell$, $0<k\leq n$, $i\in \mathcal{T}_k$. 
If $\ell=n$ and $k=n$, it is given by Corollary \ref{cord2}. If $\ell<n$ and $k=n$, it is clear since $G_{n,i}^{(n)}\in\mathcal{F}_n$. 
If $\ell<n$ and $k<n$, it follows from the induction hypothesis. It remains to show that $\lambda_{n,j}(G_{k,i}^{(n)})=0$ if $k<n$.
It is immediate if $j\in\mathcal{T}_n(i)$. Consider $j\in \mathcal{T}_n\setminus\mathcal{T}_n(i)$. We have: 
$$G_{k,i}^{(n)}=G_{k,i}^{(k)}-\sum_{k<m\leq n} \sum_{\iota\in\mathcal{T}_m(i)}\lambda_{m,\iota}(G_{k,i}^{(m-1)}) G_{m,\iota}^{(m)}.$$
By Lemma \ref{lemmad1'}, for $k\leq m\leq n$ and $\iota\in\mathcal{T}_m(i)$, $\lambda_{n,j}(G_{m,\iota}^{(m)})\neq 0$ 
if and only if $j\in\mathcal{T}_n(\iota)$, and this implies $j\in\mathcal{T}_n(i)$. Hence, for $j\notin\mathcal{T}_n(i)$, 
$\lambda_{n,j}(G_{k,i}^{(n)})=0$.
\end{proof}

      \subsection{The coproduct on $\mathcal{H}$} \label{subsecdec}

In the previous subsection, we have constructed dual systems $(G_{k,i}^{(n)})_{\begin{subarray}{l} 0<k\leq n \\ i\in \mathcal{T}_k \end{subarray}} 
\subset\frac{\mathcal{F}_1}{\mathcal{F}_{n+1}}$ and $(\lambda_{k,i})_{\begin{subarray}{l} 0<k\leq n \\ i\in \mathcal{T}_k \end{subarray}}
\subset\frac{\mathcal{I}_n}{\mathcal{I}_0}$ that satisfy the following properties :
\begin{itemize}
\item $\lambda_{n,i}$ is a finite type invariant of degree $n$,
\item $\mathcal{T}_n=C_n\sqcup\mathcal{T}_n^\pi$, $\lambda_{n,i}$ is additive if $i\in C_n$, $\lambda_{n,i}$ is a product of some $\lambda_{k,i}$,  
  $k<n$, $i\in C_k$, if $i\in\mathcal{T}_n^\pi$,
\item $\displaystyle \frac{\mathcal{I}_n^c}{\mathcal{I}_{n-1}^c}=\prod_{i\in C_n} \mathbb{Q} \lambda_{n,i}$,
\item if $i\in C_n$, $G_{n,i}^{(n)}$ is multiplicative,
\item $G_{k,i}^{(n)}\in\mathcal{F}_k$, and, if $m \leq n$,  $G_{k,i}^{(n)}=G_{k,i}^{(m)}$ in $\mathcal{G}_m$.
\end{itemize}

\begin{proposition} \label{propinduction}
The family $(G_{k,i}^{(n)})_{\begin{subarray}{l} 0<k\leq n \\ i\in \mathcal{T}_k \end{subarray}}$ is a basis of 
$\frac{\mathcal{F}_1}{\mathcal{F}_{n+1}}$. The family $(\lambda_{k,i})_{\begin{subarray}{l} 0<k\leq n \\ i\in \mathcal{T}_k \end{subarray}}$ 
is the dual basis of $\frac{\mathcal{I}_n}{\mathcal{I}_0}$. Moreover:
$$\frac{\mathcal{I}_n}{\mathcal{I}_{n-1}}=\prod_{i\in \mathcal{T}_n} \mathbb{Q} \lambda_{n,i},
    \quad \frac{\mathcal{I}_n^\pi}{\mathcal{I}_{n-1}^\pi}=\prod_{i\in \mathcal{T}_n^\pi} \mathbb{Q} \lambda_{n,i},
    \quad \mathcal{G}_n=\bigoplus_{i\in \mathcal{T}_n} \mathbb{Q}\,G_{n,i}^{(n)}.$$
\end{proposition}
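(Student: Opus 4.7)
The plan is induction on $n$. The base case $n=1$ is covered by Proposition \ref{propdeg1} (giving the basis $(G_{1,p}^{(1)})_{p \text{ prime}}$ of $\mathcal{G}_1$) together with Corollary \ref{cordeg1}; since $\mathcal{T}_1 = C_1$ and $\mathcal{T}_1^\pi = \emptyset$, the three displayed decompositions collapse to these statements. For the inductive step, linear independence of $(G_{k,i}^{(n)})_{0 < k \leq n,\, i\in\mathcal{T}_k}$ in $\mathcal{F}_1/\mathcal{F}_{n+1}$ is immediate from the duality in Lemma \ref{lemmad3}: a vanishing linear combination paired with $\lambda_{\ell,j}$ forces each coefficient to be zero. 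Using the short exact sequence $0 \to \mathcal{G}_n \to \mathcal{F}_1/\mathcal{F}_{n+1} \to \mathcal{F}_1/\mathcal{F}_n \to 0$, the induction hypothesis, and the stabilisation identity $G_{k,i}^{(n)} = G_{k,i}^{(n-1)}$ in $\mathcal{F}_1/\mathcal{F}_n$, the basis claim reduces to showing that $(G_{n,i}^{(n)})_{i \in \mathcal{T}_n}$ spans $\mathcal{G}_n$.

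This spanning is the main obstacle. I would start from Lemma \ref{lemmared}, which reduces any element of $\mathcal{G}_n$ to brackets $[M; (\frac{E_i'}{E_i})_{1 \leq i \leq n}]$ with elementary surgeries, and then treat the $E_i$ by type. A connected-sum surgery $\frac{B_j}{B^3}$ (gluing in a rational ball that defines a $\mathbb{Q}$HS $M_j$) can be peeled off via the splitting $[M;\frac{B_j}{B^3},\ldots] = [M;\ldots] - [M\sharp M_j;\ldots]$; the difference factors modulo $\mathcal{F}_{n+1}$ as a connected sum of an element of $\mathcal{G}_1$ with an $(n-1)$-surgery bracket, and the induction hypothesis expands this into connected-sum products $G_{k_1,i_1}^{(k_1)} \sharp \cdots \sharp G_{k_r,i_r}^{(k_r)}$, that is, into $G_{n,\iota}^{(n)}$'s with $\iota \in \mathcal{T}_n^\pi$. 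A $d$-torus surgery can be traded locally, in its neighbourhood, for connected sums modulo $\mathcal{F}_{n+1}$ by Proposition \ref{propinvtori} (mirroring the reduction carried out in Lemma \ref{lemmatype2}), returning to the previous case. Once only borromean surgeries remain, Corollary \ref{corJacobi} rewrites their contribution as a sum of $[M;J]$ for Jacobi diagrams $J$ of degree $n/2$ (and as zero if $n$ is odd); the connected-sum identity $[M;J] = [S^3;J] - [S^3;\frac{M \setminus B^3}{B^3}, J]$, whose second term lies in $\mathcal{F}_{n+1}$, reduces to $[S^3;J]$. A disconnected $J = J_1 \sqcup J_2$ embedded in disjoint balls then gives $[S^3;J] = [S^3;J_1] \sharp [S^3;J_2]$, a product falling into $\mathcal{T}_n^\pi$, while a connected $J$ contributes to the $C_n$-part via $\Phi(\mathcal{A}_{n/2}^c)$.

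Once spanning is established, the dual basis statement follows from Lemma \ref{lemmad3} and the isomorphism $\mathcal{H}_n \cong (\mathcal{G}_n)^*$ noted in the introduction, and the decompositions $\mathcal{G}_n = \bigoplus_{i \in \mathcal{T}_n} \mathbb{Q}\,G_{n,i}^{(n)}$ and $\mathcal{I}_n/\mathcal{I}_{n-1} = \prod_{i \in \mathcal{T}_n} \mathbb{Q}\,\lambda_{n,i}$ are immediate from restricting to degree $n$. The identification $\mathcal{I}_n^\pi/\mathcal{I}_{n-1}^\pi = \prod_{i \in \mathcal{T}_n^\pi} \mathbb{Q}\,\lambda_{n,i}$ then follows from the fact that $\lambda_{n,\iota}$ is by construction a product of lower-degree additive invariants for $\iota \in \mathcal{T}_n^\pi$, combined with the linear independence already established in the larger quotient. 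The hardest step will be the careful bookkeeping in the spanning argument when several types of elementary surgeries coexist in a single bracket, ensuring that each reduction stays within the span of the $(G_{n,i}^{(n)})_{i \in \mathcal{T}_n}$ and that disconnected Jacobi diagram decompositions are correctly matched to the indices in $\mathcal{T}_n^\pi$.
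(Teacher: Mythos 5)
Your architecture (induction, base case from Proposition \ref{propdeg1} and Corollary \ref{cordeg1}, linear independence from Lemma \ref{lemmad3}, reduction to spanning of $\mathcal{G}_n$ via the exact sequence) matches the paper, but your treatment of the crucial spanning step is genuinely different. The paper never works directly in $\mathcal{G}_n$: it first proves a coproduct formula --- for any $\lambda\in\mathcal{I}_n$ with $\lambda(S^3)=0$, the defect $\lambda(M_1\sharp M_2)-\lambda(M_1)-\lambda(M_2)$ is shown (by fixing $M_2$, checking the result is an invariant of degree $\leq n-1$, and iterating on the coefficient invariants $\beta_{k,i}$) to be a polynomial expression in lower-degree $\lambda_{k,i}$'s, with coefficients made symmetric via commutativity and associativity of $\sharp$. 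This explicit Milnor--Moore argument yields $\mathcal{I}_n=\mathcal{I}_0\oplus\mathcal{I}_n^c\oplus\mathcal{I}_n^\pi$ and hence $\mathcal{I}_n/\mathcal{I}_{n-1}=\prod_{i\in\mathcal{T}_n}\mathbb{Q}\lambda_{n,i}$, and the spanning of $\mathcal{G}_n$ by the $G_{n,i}^{(n)}$ then follows \emph{dually}: for $G\in\mathcal{G}_n$ one checks that $\sum_i\lambda_{n,i}(G)G_{n,i}^{(n)}$ is a finite sum (only finitely many primes divide the homology of the $\mathbb{Q}$HS's occurring in $G$) and that its difference with $G$ is killed by all of $(\mathcal{G}_n)^*$. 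You instead run the reduction primally: elementary surgeries via Lemma \ref{lemmared}, peeling connected sums, trading genus-$1$ surgeries, and Corollary \ref{corJacobi}. This is viable, and arguably more constructive, but the cost is concentrated exactly where you defer it: Proposition \ref{propinvtori} and Lemmas \ref{lemmatype1}--\ref{lemmatype2} are statements in $\mathcal{G}_1(\Sigma_1)$, and to use them inside a degree-$n$ bracket you must check that the underlying relations hold modulo $\mathcal{F}_{n+1}$ in the presence of the $n-1$ remaining surgeries. This does work --- those relations are derived from explicit manifold identities inside the torus, so gluing the complement (carrying its surgeries, which maps $\mathcal{F}_2(\Sigma_1)$ into $\mathcal{F}_{n+1}$) transports them --- but it is a genuine relative upgrade, not a citation of the propositions as stated; the paper sidesteps it by only ever needing the relative genus-$0$ and genus-$1$ reductions at the level of \emph{evaluating additive invariants} (Lemmas \ref{lemmat1} and \ref{lemmat2}, already in place for Proposition \ref{propgn}). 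In exchange, the paper's route produces the Hopf-algebra structure on $\mathcal{H}$ and Proposition \ref{propadd} as byproducts, while your route gets $\mathcal{I}_n^\pi/\mathcal{I}_{n-1}^\pi$ and Proposition \ref{propadd} only a posteriori from the dual basis (which is fine, since each $\lambda_{k,i}$ is by construction either additive or a product of additive invariants, and additive invariants vanish on all $\sharp$-products of elements of $\mathcal{F}_1$). Neither issue is a fatal gap, but if you carry out your plan you should make the ``relative'' versions of the degree-$1$ reductions explicit, and keep track of the finiteness of the sums arising from the infinite index set $C_1$.
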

This result implies Proposition \ref{propadd}.

\begin{proof}
We will proceed by induction. For $n=1$, the result follows from Proposition \ref{propdeg1} and Corollary \ref{cordeg1}. 
Fix $n>1$. We will write $\meta\leq\me$ if $\eta_t\leq\varepsilon_t$ for all $t$, $\meta<\me$ if $\meta\leq\me$ and $\meta\neq\me$, 
and $\mo<\meta$ if $\eta_t>0$ for at least one $t$.
\begin{lemma}
 Consider $\lambda\in\mathcal{I}_n$ such that $\lambda(S^3)=0$. There are constants $\alpha_{m,\iota}$, for 
$1\leq m\leq n$ and $\iota\in\mathcal{T}_m^\pi$, such that:
$$\lambda(M_1\sharp M_2)=\lambda(M_1)+\lambda(M_2)+\sum_{m=1}^n\sum_{\iota=(\mk,\mi,\me)\in\mathcal{T}_m^\pi}
\alpha_{m,\iota} \sum_{\mo<\meta<\me}
 \prod_{1\leq t\leq \ell(\mk)} \binom{\varepsilon_t}{\eta_t} \lambda_{k_t,i_t}^{\eta_t}(M_1) \lambda_{k_t,i_t}^{\varepsilon_t-\eta_t}(M_2),$$
for all $\mathbb{Q}$HS's $M_1$ and $M_2$.
\end{lemma}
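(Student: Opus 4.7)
The plan is to argue by induction on $n$. For the base case $n=1$, Corollary~\ref{cordeg1} tells us that every degree~$1$ invariant vanishing at $S^3$ is additive, so the defect $D_\lambda(M_1,M_2) := \lambda(M_1\sharp M_2) - \lambda(M_1) - \lambda(M_2)$ vanishes identically and the asserted identity holds with all $\alpha_{m,\iota}=0$.

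For the inductive step, I assume the lemma together with Proposition~\ref{propinduction} at every positive integer strictly less than $n$. I first extend $D_\lambda$ bilinearly to $\mathcal{F}_0\otimes\mathcal{F}_0$ by setting $D_\lambda(x,y)=\lambda(x\sharp y)-\epsilon(x)\lambda(y)-\lambda(x)\epsilon(y)$, where $\epsilon$ is the augmentation. Since $\sharp$ is filtered ($\mathcal{F}_a\sharp\mathcal{F}_b\subset\mathcal{F}_{a+b}$) and $\epsilon$ vanishes on $\mathcal{F}_1$, this form kills $\mathcal{F}_a\otimes\mathcal{F}_b$ whenever $a,b\geq 1$ and $a+b\geq n+1$, so it descends to a symmetric bilinear form on $(\mathcal{F}_1/\mathcal{F}_n)^{\otimes 2}$. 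By the induction hypothesis, $(G_{k,i}^{(n-1)})_{0<k\leq n-1,\,i\in\mathcal{T}_k}$ is a basis of $\mathcal{F}_1/\mathcal{F}_n$ dual to $(\lambda_{k,i})$, and since $\lambda_{k,i}(S^3)=0$, every $\mathbb{Q}$HS $M$ satisfies $M-S^3\equiv\sum_{k,i}\lambda_{k,i}(M)G_{k,i}^{(n-1)}$ in $\mathcal{F}_1/\mathcal{F}_n$. Expanding both entries of $D_\lambda$ in the dual basis yields
\[D_\lambda(M_1,M_2)=\sum_{(k,i),(\ell,j)}c_{(k,i),(\ell,j)}\,\lambda_{k,i}(M_1)\,\lambda_{\ell,j}(M_2),\]
with $c_{(k,i),(\ell,j)}=\lambda(G_{k,i}^{(n-1)}\sharp G_{\ell,j}^{(n-1)})$, which vanishes unless $k+\ell\leq n$ and is symmetric in its two arguments.

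Next, since every $\lambda_{k,i}$ is a monomial in the prime additive invariants $\lambda_{p,q}$ (with $q\in C_p$), I regroup the above expansion by the combined multi-set of prime factors appearing to obtain
\[D_\lambda(M_1,M_2)=\sum_{m=1}^{n}\sum_{\iota=(\mk,\mi,\me)\in\mathcal{T}_m^\pi}\sum_{\mo<\meta<\me}\gamma_{m,\iota,\meta}\prod_{t}\lambda_{k_t,i_t}^{\eta_t}(M_1)\,\lambda_{k_t,i_t}^{\varepsilon_t-\eta_t}(M_2)\]
for scalars $\gamma_{m,\iota,\meta}$. The conclusion reduces to showing the factorisation $\gamma_{m,\iota,\meta}=\alpha_{m,\iota}\prod_t\binom{\varepsilon_t}{\eta_t}$ for a constant $\alpha_{m,\iota}$ independent of $\meta$, after which the desired $\alpha_{m,\iota}$ can simply be read off.

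The main obstacle is precisely this final factorisation. I plan to derive it from the cocycle identity
\[D_\lambda(M_1,M_2)+D_\lambda(M_1\sharp M_2,M_3)=D_\lambda(M_2,M_3)+D_\lambda(M_1,M_2\sharp M_3)\]
coming from the associativity of $\sharp$, combined with the symmetry $\gamma_{m,\iota,\meta}=\gamma_{m,\iota,\me-\meta}$ and with the binomial expansion of $\lambda_{k,i}(M\sharp M')$ in its prime factors, which is valid because these factors are additive. A direct combinatorial comparison of coefficients under these constraints should force the $\gamma$'s to decompose as stated, completing the induction.
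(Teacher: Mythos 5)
Your argument is essentially the paper's: it introduces the same defect $\mu(M_1,M_2)=\lambda(M_1\sharp M_2)-\lambda(M_1)-\lambda(M_2)$, shows it has degree $<n$ in each slot (the paper does this by absorbing the connected sum with $M_2$ as an extra LP-surgery), expands it in the inductively known dual bases (the paper proceeds in two stages, first in $M_1$ and then in the coefficient invariants $\beta_{k,i}$ of $M_2$, which amounts to your one-shot expansion on $(\mathcal{F}_1/\mathcal{F}_n)^{\otimes 2}$), and pins down the coefficients via commutativity and associativity of $\sharp$. The one step you only sketch --- the ``direct combinatorial comparison'' --- is carried out in the paper exactly along the lines you propose: set $M_1=N_1\sharp N_2$, expand $\lambda_{k_t,i_t}^{\eta_t}(N_1\sharp N_2)$ binomially using additivity, use the identity $\binom{\varepsilon_t}{\eta_t}\binom{\eta_t}{\nu_t}=\binom{\varepsilon_t}{\varepsilon_t-\nu_t}\binom{\varepsilon_t-\nu_t}{\varepsilon_t-\eta_t}$ together with the symmetry $\alpha_{m,\iota}^{(\meta)}=\alpha_{m,\iota}^{(\me-\meta)}$ to obtain $\alpha_{m,\iota}^{(\meta)}=\alpha_{m,\iota}^{(\mnu)}$ whenever $\mo<\mnu\leq\meta$, and conclude for arbitrary admissible $\meta,\mnu$ by a short two-case argument.
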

\paragraph{Remark}
The above expression of $\lambda(M_1\sharp M_2)$ defines a coproduct $\Delta$ on the algebra $\mathcal{H}$:
$$\Delta(\lambda)=\lambda\otimes 1+1\otimes\lambda+\sum_{m=1}^n\sum_{\iota=(\mk,\mi,\me)\in\mathcal{T}_m^\pi}
\alpha_{m,\iota} \sum_{\mo<\meta<\me}\prod_{1\leq t\leq \ell(\mk)} \binom{\varepsilon_t}{\eta_t} 
\lambda_{k_t,i_t}^{\eta_t}\otimes\lambda_{k_t,i_t}^{\varepsilon_t-\eta_t}.$$
Thus $\mathcal{H}$ has a Hopf algebra structure.
The primitive elements associated with this coproduct (the invariants $\lambda$ satisfying 
$\Delta(\lambda)=\lambda\otimes 1+1\otimes\lambda$) 
are the additive invariants. Milnor and Moore (\cite{MM}) proved that, under conditions, a Hopf algebra is generated 
as an algebra by its primitive elements. Here, we give an explicit and elementary proof of this result in our setting.

\begin{proof}
Define a bilinear map $\mu$ on $\mathcal{F}_0$ by
$$\mu(M_1,M_2)=\lambda(M_1\sharp M_2)-\lambda(M_1)-\lambda(M_2)$$ for all $\mathbb{Q}$HS's $M_1$ and $M_2$. Fix $M_2$, and 
consider $[M;(\frac{A_i'}{A_i})_{1\leq i\leq n}] \in\mathcal{F}_n$. We have:
\begin{eqnarray*}
 \mu([M;(\frac{A_i'}{A_i})_{1\leq i\leq n}],M_2) &=& \sum_{I\subset\{1,..,n\}}(-1)^{|I|} \mu(M((\frac{A_i'}{A_i})_{i\in I}),M_2) \\
 &=&\sum_{I\subset\{1,..,n\}}(-1)^{|I|}\lbp\lambda(M((\frac{A_i'}{A_i})_{i\in I})\sharp M_2)-\lambda(M((\frac{A_i'}{A_i})_{i\in I}))\rbp \\
 &=& -\lambda([M;(\frac{A_i'}{A_i})_{1\leq i\leq n},\frac{B_2}{B^3}]) \\
 &=& 0,
\end{eqnarray*}
where $B_2$ is a rational homology ball obtained from $M_2$ by removing an open ball.
Thus $\mu(.,M_2)$ is an invariant of degree at most $n-1$. Note that $\mu(S^3,M_2)=0$. 
By induction, $\mathcal{I}_{n-1}/\mathcal{I}_0$ is freely generated by the $\lambda_{k,i}$ for $0<k<n$ and $i\in \mathcal{T}_k$.
Hence we can write:
$$\mu(M_1,M_2)=\sum_{0<k<n}\sum_{i\in \mathcal{T}_k} \beta_{k,i}(M_2) \lambda_{k,i}(M_1).$$
Note that the sum may be infinite.
We have $\beta_{k,i}(M_2)=\mu(G_{k,i}^{(n)},M_2)$ and $\beta_{k,i}(S^3)=0$. Extend $\beta_{k,i}$ to $\mathcal{F}_0$ by linearity. 
Consider $[M;(\frac{A_i'}{A_i})_{i\in I}] \in\mathcal{F}_{n-k+1}$, 
$|I|=n-k+1$, and set $G_{k,i}^{(n)}=\sum_{u\in U} c_u [N_u;(\frac{B_j'}{B_j})_{j\in J_u}]$, where the $c_u$ are 
rational numbers and $|J_u|=k$ for all $u$.
\begin{eqnarray*}
 \beta_{k,i}([M;(\frac{A_i'}{A_i})_{i\in I}]) &=& 
 \sum_{u\in U} c_u \sum_{I'\subset I}\sum_{K_u\subset J_u}(-1)^{|I'|+|K_u|} 
  \mu(N_u((\frac{B_j'}{B_j})_{j\in K_u}),M((\frac{A_i'}{A_i})_{i\in I'}))\\
 &=& \sum_{u\in U} c_u \lambda([M\sharp N_u;(\frac{A_i'}{A_i})_{i\in I},(\frac{B_j'}{B_j})_{j\in J_u}]) \\
 &=& 0
\end{eqnarray*}
Thus $\beta_{k,i}$ is an invariant of degree at most $n-k$. Using the induction hypothesis, we can decompose the invariants 
$\beta_{k,i}$ and get:
$$\mu(M_1,M_2)=\sum_{m=1}^n\sum_{\iota=(\mk,\mi,\me)\in\mathcal{T}_m^\pi}\sum_{\mo<\meta<\me}
\alpha_{m,\iota}^{(\meta)} \prod_{1\leq t\leq \ell(\mk)} \binom{\varepsilon_t}{\eta_t} \lambda_{k_t,i_t}^{\eta_t}(M_1) 
\lambda_{k_t,i_t}^{\varepsilon_t-\eta_t}(M_2),$$
where the $\alpha_{m,\iota}^{(\meta)}$ are rational constants. 
It gives:
$$\lambda(M_1\sharp M_2)=
 \lambda(M_1)+\lambda(M_2)+\sum_{m=1}^n\sum_{\iota=(\mk,\mi,\me)\in\mathcal{T}_m^\pi}
\sum_{\mo<\meta<\me}\alpha_{m,\iota}^{(\meta)} \prod_{1\leq t\leq \ell(\mk)} \binom{\varepsilon_t}{\eta_t} \lambda_{k_t,i_t}^{\eta_t}(M_1) 
\lambda_{k_t,i_t}^{\varepsilon_t-\eta_t}(M_2). $$
Now, we use the commutativity and associativity of the connected sum to show that the well-determined constants 
$\alpha_{m,\iota}^{(\meta)}$ do not depend on $\meta$. 
The commutativity gives $\alpha_{m,\iota}^{(\me-\meta)} =\alpha_{m,\iota}^{(\meta)} $.
Consider $M_1=N_1\sharp N_2$.
\begin{eqnarray*}  && \hspace{-0.7cm} \lambda(N_1\sharp N_2\sharp M_2)=\\ && \lambda(N_1)+\lambda(N_2)+\lambda(M_2)
 +\sum_{m=1}^n\sum_{\iota=(\mk,\mi,\me)\in\mathcal{T}_m^\pi}\sum_{\mo<\meta<\me}\alpha_{m,\iota}^{(\meta)}  
\prod_{1\leq t\leq \ell(\mk)} \binom{\varepsilon_t}{\eta_t} \lambda_{k_t,i_t}^{\eta_t}(N_1) \lambda_{k_t,i_t}^{\varepsilon_t-\eta_t}(N_2)\\
 && \hspace{0.2cm} +\sum_{m=1}^n\sum_{\iota=(\mk,\mi,\me)\in\mathcal{T}_m^\pi}\sum_{\mo<\meta<\me}\alpha_{m,\iota}^{(\meta)}  
\sum_{\mo\leq\mnu\leq\meta}\prod_{1\leq t\leq \ell(\mk)} \binom{\varepsilon_t}{\eta_t} \binom{\eta_t}{\nu_t} \lambda_{k_t,i_t}^{\nu_t}(N_1) 
\lambda_{k_t,i_t}^{\eta_t-\nu_t}(N_2) \lambda_{k_t,i_t}^{\varepsilon_t-\eta_t}(M_2) \end{eqnarray*}
Consider $\mnu$ such that $\mo<\mnu\leq\meta$.
The terms $$\prod_{1\leq t\leq \ell(\mk)} \lambda_{k_t,i_t}^{\nu_t}(N_1) 
\lambda_{k_t,i_t}^{\eta_t-\nu_t}(N_2) \lambda_{k_t,i_t}^{\varepsilon_t-\eta_t}(M_2)$$ and   
$$\prod_{1\leq t\leq \ell(\mk)} \lambda_{k_t,i_t}^{\varepsilon_t-\eta_t}(N_1) 
\lambda_{k_t,i_t}^{\eta_t-\nu_t}(N_2) \lambda_{k_t,i_t}^{\nu_t}(M_2)$$ must have the same coefficient. 
Since $\binom{\varepsilon_t}{\eta_t} \binom{\eta_t}{\nu_t} = 
\binom{\varepsilon_t}{\varepsilon_t-\nu_t} \binom{\varepsilon_t-\nu_t}{\varepsilon_t-\eta_t}$, 
we have $\alpha_{m,\iota}^{\meta}=\alpha_{m,\iota}^{\me-\mnu}=\alpha_{m,\iota}^{\mnu}$. Now, consider any $\meta$ 
and $\mnu$ with $\mo<\meta,\mnu<\me$. Either there is $\mtau>\mo$ with $\mtau\leq\meta$ and $\mtau\leq\mnu$,
or we have $\meta\leq\me-\mnu$. In both cases, we get $\alpha_{m,\iota}^{\meta}=\alpha_{m,\iota}^{\mnu}$.
Finally:
$$\lambda(M_1\sharp M_2)=\lambda(M_1)+\lambda(M_2)+\sum_{m=1}^n\sum_{\iota=(\mk,\mi,\me)\in\mathcal{T}_m^\pi}
\alpha_{m,\iota} \sum_{\mo<\meta<\me}
 \prod_{1\leq t\leq \ell(\mk)} \binom{\varepsilon_t}{\eta_t} \lambda_{k_t,i_t}^{\eta_t}(M_1) \lambda_{k_t,i_t}^{\varepsilon_t-\eta_t}(M_2),$$
where $\alpha_{m,\iota}$ is the common value of the $\alpha_{m,\iota}^{(\meta)}$.
\end{proof}

Back to the proof of Proposition \ref{propinduction}, use the constants $\alpha_{m,\iota}$ given 
by the lemma to define an invariant $\tilde{\lambda}$:
$$\tilde{\lambda}=\lambda-\sum_{m=1}^n\sum_{\iota=(\mk,\mi,\me)\in\mathcal{T}_m^\pi}\alpha_{m,\iota}
\prod_{1\leq t\leq \ell(\mk)} \lambda_{k_t,i_t}^{\varepsilon_t}.$$
It is easy to see that $\tilde{\lambda}$ is additive. Thus $\lambda\in\mathcal{I}_n^c\oplus\mathcal{I}_n^\pi$, 
and $(\lambda_{k,i})_{\begin{subarray}{l} 0<k\leq n \\ i\in\mathcal{T}_k^\pi \end{subarray}}$ is a basis of 
$\mathcal{I}_n^\pi$.

It remains to show that $(G_{k,i}^{(n)})_{\begin{subarray}{l} 0<k\leq n \\ i\in \mathcal{T}_k \end{subarray}}$ is a basis of 
$\frac{\mathcal{F}_1}{\mathcal{F}_{n+1}}$. It suffices to show that $(G_{n,i}^{(n)})_{i\in \mathcal{T}_n}$ is a basis of $\mathcal{G}_n$. 
Consider $G\in\mathcal{G}_n$. We shall prove that the sum $\sum_{i\in \mathcal{T}_n}\lambda_{n,i}(G)G_{n,i}^{(n)}$ is finite 
and equal to $G$ in $\mathcal{G}_n$. The term $G$ is a finite linear combination of $\mathbb{Q}$HS's. 
Let $C_1(G)\subset C_1$ denote the set of all prime integers $p$ such that $\nu_p(M)\neq 0$ for a $\mathbb{Q}$HS $M$ in this 
combination. The set $C_1(G)$ is finite. If an invariant $\lambda_{n,i}$ is a multiple of an invariant $\nu_p$ 
for some $p\notin C_1(G)$, then $\lambda_{n,i}(G)=0$. Thus if $\lambda_{n,i}(G)\neq0$, then $\lambda_{n,i}$ is a product of invariants 
$\nu_p$ for $p\in C_1(G)$ and $\lambda_{k,j}$ for $1<k\leq n$ and $j\in C_k$. Recall the set $C_k$ is finite for all $k>1$. 
Hence the sum $\sum_{i\in \mathcal{T}_n}\lambda_{n,i}(G)G_{n,i}^{(n)}$ 
is well defined in $\mathcal{G}_n$, and is equal to $G$ since the $\lambda_{n,i}$ generate $\frac{\mathcal{I}_n}{\mathcal{I}_{n-1}}$.
\end{proof}

\begin{lemma} \label{lemma+}
 Let $M$ and $N$ be $\mathbb{Q}$HS's. For $n>0$:
$$\lbp (M-N)\in\mathcal{F}_{n+1} \rbp \Leftrightarrow \lbp Z_{k,KKT}(M-N)=0\textrm{ for all } k\leq \frac{n}{2} 
\textrm{ and } |H_1(M;\mathbb{Z})|\hspace{-0.12pt}=\hspace{-0.12pt}|H_1(N;\mathbb{Z})| \rbp.$$
\end{lemma}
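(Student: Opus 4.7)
The plan is to use the dual bases constructed in Proposition \ref{propinduction}: the family $(G_{k,i}^{(n)})_{0<k\le n,\,i\in\mathcal{T}_k}$ is a basis of $\mathcal{F}_1/\mathcal{F}_{n+1}$ and the family $(\lambda_{k,i})_{0<k\le n,\,i\in\mathcal{T}_k}$ is the dual basis of $\mathcal{I}_n/\mathcal{I}_0$. Since $M-N\in\mathcal{F}_1$, membership in $\mathcal{F}_{n+1}$ is equivalent to the vanishing of all the coordinates $\lambda_{k,i}(M-N)$ in the basis expansion in $\mathcal{F}_1/\mathcal{F}_{n+1}$. I will therefore translate the two hypotheses ($|H_1(M)|=|H_1(N)|$ and vanishing of $Z_{k,KKT}$) into the vanishing of these coordinates, and vice versa.

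For the forward direction, assume $M-N\in\mathcal{F}_{n+1}$. Then $\lambda(M-N)=0$ for every $\lambda\in\mathcal{I}_n$ with $\lambda(S^3)=0$. Applying this to $\lambda=\nu_p$ for each prime $p$ yields $v_p(|H_1(M)|)=v_p(|H_1(N)|)$, hence $|H_1(M)|=|H_1(N)|$. For $k\le n/2$ and any Jacobi diagram $\Gamma$ of degree $k$, the composition $\Gamma^*\circ p^c\circ Z_{k,KKT}$ is a finite type invariant of degree $2k\le n$ by \cite[Theorem 2.4]{Les} and \cite[Proposition 4.1]{AL} (as used in Lemma \ref{lemmainvadd}), hence vanishes on $M-N$; since $\Gamma$ is arbitrary, $Z_{k,KKT}(M)=Z_{k,KKT}(N)$.

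For the backward direction, assume $|H_1(M)|=|H_1(N)|$ and $Z_{k,KKT}(M)=Z_{k,KKT}(N)$ for $k\le n/2$. I need $\lambda_{k,i}(M-N)=0$ for every $0<k\le n$ and every $i\in\mathcal{T}_k=C_k\sqcup\mathcal{T}_k^\pi$. If $i\in C_k$ is additive: when $k=1$, $\lambda_{1,p}=\nu_p$ vanishes on $M-N$ by the equality of orders; when $k$ is even with $2\le k\le n$, $\lambda_{k,i}=\Gamma_{k,i}^*\circ p^c\circ Z_{k/2,KKT}$ vanishes on $M-N$ because $k/2\le n/2$; when $k$ is odd and $k\ge 3$, $C_k=\emptyset$. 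If $\iota=(\mk,\mi,\me)\in\mathcal{T}_k^\pi$ is a product, then $\lambda_{k,\iota}=\prod_t \lambda_{k_t,i_t}^{\varepsilon_t}$ with $k_t<k\le n$ and $i_t\in C_{k_t}$, so the additive case (just treated, at the strictly smaller degrees $k_t$) gives $\lambda_{k_t,i_t}(M)=\lambda_{k_t,i_t}(N)$ for each $t$, hence $\lambda_{k,\iota}(M)=\lambda_{k,\iota}(N)$.

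Writing $M-N=\sum c_{k,i}G_{k,i}^{(n)}$ in $\mathcal{F}_1/\mathcal{F}_{n+1}$ (a finite sum, since $(G_{k,i}^{(n)})$ is a Hamel basis), duality yields $c_{k,i}=\lambda_{k,i}(M-N)=0$, so $M-N\in\mathcal{F}_{n+1}$. The main point to watch is the degree bookkeeping that makes the backward direction work: every product $\lambda_{k,\iota}$ appearing in the basis at level $k\le n$ has factors of degrees $k_t<k\le n$, so all additive invariants one needs to control are already controlled by the hypotheses at half-degree $\le n/2$ (for the KKT part) or at degree $1$ (for the $\nu_p$ part), with no loss.
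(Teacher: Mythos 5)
Your overall strategy is exactly the paper's (much terser) one: prove the direct implication by noting that the relevant invariants have degree at most $n$, and the converse by combining Proposition \ref{propinduction} with the fact that every basis invariant $\lambda_{k,i}$, $0<k\leq n$, $i\in\mathcal{T}_k$, is built by linear combinations and products out of the $\nu_p$ and out of invariants extracted from $Z_{j,KKT}$, $j\leq\frac{n}{2}$. The backward direction and the final duality argument are fine. There is, however, one step in your forward direction that does not do what you claim: the functionals $\Gamma^*\circ p^c$ all vanish on the non-connected part of $\mathcal{A}_k$, so letting $\Gamma$ range over (a basis of) connected diagrams only yields $p^c(Z_{k,KKT}(M-N))=0$, which is strictly weaker than $Z_{k,KKT}(M-N)=0$. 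To recover the full statement from the connected parts you would need the group-like structure of $Z_{KKT}$, which you do not invoke. The repair is immediate and is what the paper does: by \cite[Theorem 2.4]{Les} and \cite[Proposition 4.1]{AL}, the full $\mathcal{A}_k$-valued invariant $Z_{k,KKT}$ is of degree $2k\leq n$ (i.e.\ $\ell\circ Z_{k,KKT}\in\mathcal{I}_{2k}$ for \emph{every} linear form $\ell$ on $\mathcal{A}_k$, not only those factoring through $p^c$), hence it vanishes on $\mathcal{F}_{n+1}$; simply drop the $p^c$. A second, harmless omission: the additive invariants $\lambda_{n,i}$, $i\in C_n$, used in Proposition \ref{propinduction} are not literally $\Gamma_{n,i}^*\circ p^c\circ Z_{\frac{n}{2},KKT}$ but are normalised by adding weighted sums of lower-degree additive invariants (Subsection \ref{subsecdual}); since those corrections are again linear combinations of $\nu_p$'s and KKT-derived invariants, your backward direction goes through unchanged.
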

\begin{proof}
 The direct implication is clear since the $Z_{k,KKT}$, $k\leq\frac{n}{2}$, and the $\nu_p$, $p$ prime, are finite type invariants of degree 
at most $n$. To see that Proposition \ref{propinduction} implies the converse implication, recall that the invariants $\lambda_{k,i}$, for 
$0<k\leq n$ and $i\in\mathcal{T}_k$, were defined in Subsection \ref{subsecdual} as products of linear combinations of the $\nu_p$, $p$ prime,  
and the $\lambda_{k,i}$, $0<k\leq n$, $i\in C_k$, that were defined from the $Z_{k,KKT}$, $k\leq\frac{n}{2}$ in Lemma \ref{lemmainvadd}.
\end{proof}
\proofof{Theorem \ref{th+}}
Since the LMO invariant is additive under connected sum, according to \cite{LMO}, and since Massuyeau proved that $Z_{LMO}$ 
satisfies the same splitting formulae as $Z_{KKT}$ in \cite{Mas}, the invariants $(\lambda_{n,i})_{i \in C_n}$ of Lemma \ref{lemmainvadd} 
could have been defined with $Z_{LMO}$ instead of $Z_{KKT}$. Therefore, Lemma \ref{lemma+} holds for $Z_{LMO}$ instead of $Z_{KKT}$ as well.
\makebox[\textwidth][r]{\fin}

\end{document}